\documentclass[11pt, twoside]{article}

\usepackage{mathrsfs}
\usepackage{amssymb}
\usepackage{amsmath}
\usepackage{mathrsfs}
\usepackage{amsthm}
\usepackage{amsfonts}
\usepackage{color}
\usepackage{latexsym}
\usepackage{txfonts}
\usepackage{indentfirst}
\usepackage{anysize}
\usepackage{tikz}

\usepackage[colorlinks=true,
  linkcolor=red,
  citecolor=blue,
  urlcolor=magenta]{hyperref}

\allowdisplaybreaks

\newtheorem{theorem}{Theorem}[section]
\newtheorem{lemma}[theorem]{Lemma}
\newtheorem{corollary}[theorem]{Corollary}
\newtheorem{proposition}[theorem]{Proposition}
\theoremstyle{definition}
\newtheorem{remark}[theorem]{Remark}

\newtheorem{definition}[theorem]{Definition}

\renewcommand{\appendix}{\par
\setcounter{section}{0}%
\setcounter{subsection}{0}%
\setcounter{subsubsection}{0}%
\gdef\thesection{\@Alph\c@section}%
\gdef\thesubsection{\@Alph\c@section.\@arabic\c@subsection}%
\gdef\theHsection{\@Alph\c@section.}%
\gdef\theHsubsection{\@Alph\c@section.\@arabic\c@subsection}%
\csname appendixmore\endcsname
}

\numberwithin{equation}{section}

\begin{document}

\title{\bf\Large One-Sided
and Parabolic BLO Spaces with Time Lag and Their Applications
to Muckenhoupt $A_1$ Weights
and Doubly Nonlinear Parabolic Equations
\footnotetext{\hspace{-0.35cm} 2020
{\it Mathematics Subject Classification}.
Primary 42B35; Secondary 42B25, 35K55, 28A75.
\endgraf
{\it Key words and phrases.}
one-sided BLO space,
parabolic BLO space with time lag,
one-sided Muckenhoupt weight,
parabolic Muckenhoupt weight with time lag,
parabolic maximal operator with time lag,
doubly nonlinear parabolic equation.
\endgraf
This project is partially supported by
the National Natural Science Foundation of China
(Grant Nos. 12431006 and 12371093),
the Beijing Natural Science Foundation (Grant No. 1262011),
and the Fundamental Research Funds
for the Central Universities (Grant No. 2253200028).}}
\date{}
\author{Weiyi Kong, Dachun Yang\footnote{Corresponding
author, E-mail: \texttt{dcyang@bnu.edu.cn}/{\color{red}
\today}/Final version.}\ \
and Wen Yuan}

\maketitle

\vspace{-0.8cm}

\begin{center}
\begin{minipage}{13cm}
{\small {\bf Abstract}\quad In this article, we first introduce the
one-sided BLO space $\mathrm{BLO}^+(\mathbb{R})$ and characterize it,
respectively, in terms of the one-sided Muckenhoupt class
$A_1^+(\mathbb{R})$ and the one-sided John--Nirenberg inequality.
Using these, we establish the Coifman--Rochberg type decomposition
of $\mathrm{BLO}^+(\mathbb{R})$ functions and show that
$\mathrm{BLO}^+(\mathbb{R})$
is independent of the distance between
the two intervals, which further induces the characterization of
this space in terms of the one-sided BMO space
$\mathrm{BMO}^+(\mathbb{R})$ (the Bennett type lemma).
As applications, we prove that
any $\mathrm{BMO}^+(\mathbb{R})$ function can split into the sum of
two $\mathrm{BLO}^+(\mathbb{R})$ functions and we provide an
explicit description of the distance from $\mathrm{BLO}^+(\mathbb{R})$
functions to $L^\infty(\mathbb{R})$. Finally, as a
higher-dimensional analogue we introduce the parabolic
BLO space $\mathrm{PBLO}_\gamma^-(\mathbb{R}^{n+1})$ with time lag, and
we extend all the above one-dimensional results to $\mathrm{PBLO}_\gamma^-(\mathbb{R}^{n+1})$;
furthermore, as applications, we not only establish the relationships between
$\mathrm{PBLO}_\gamma^-(\mathbb{R}^{n+1})$ and the solutions of doubly
nonlinear parabolic equations, but also provide a necessary condition for the negative
logarithm of the parabolic distance function to belong to
$\mathrm{PBLO}_\gamma^-(\mathbb{R}^{n+1})$ in terms of the weak porosity of the set.
}
\end{minipage}
\end{center}

\vspace{0.2cm}

\tableofcontents

\vspace{0.2cm}

\section{Introduction}
\label{section1}
The space of functions with bounded mean oscillation,
$\mathrm{BMO}(\mathbb{R}^n)$, introduced by John and Nirenberg
\cite{jn(cpam-1961)}, plays an essential role in harmonic analysis and
partial differential equations. Recall that the space
$\mathrm{BMO}(\mathbb{R}^n)$ is defined to be the set of all locally
integrable functions $f$ on $\mathbb{R}^n$ such that
\begin{align*}
\|f\|_{\mathrm{BMO}(\mathbb{R}^n)}:=\sup_{Q\in\mathcal{Q}}\frac{1}{|Q|}
\int_Q\left|f(x)-\frac{1}{|Q|}\int_Qf(y)\,dy\right|\,dx<\infty,
\end{align*}
where $\mathcal{Q}$ denotes the set of all cubes $Q$ in $\mathbb{R}^n$
with edges parallel to the axes. The space $\mathrm{BMO}(\mathbb{R}^n)$
has close relationship with the \emph{Muckenhoupt class
$A_q(\mathbb{R}^n)$}, which consists of all nonnegative locally
integrable functions $\omega$ on $\mathbb{R}^n$ such that
\begin{align*}
[\omega]_{A_q(\mathbb{R}^n)}:=
\begin{cases}
\displaystyle
\sup_{Q\in\mathcal{Q}}\frac{1}{|Q|}\int_Q\omega(x)\,dx
\left[\mathop\mathrm{ess\,inf}_{x\in Q}\omega(x)\right]^{-1}<\infty
&\mathrm{if}\ q=1,\\
\displaystyle
\sup_{Q\in\mathcal{Q}}\frac{1}{|Q|}\int_Q\omega(x)\,dx\left[\frac{1}{|Q|}
\int_Q\omega(x)^\frac{1}{1-q}\,dx\right]^{q-1}<\infty
&\mathrm{if}\ q\in(1,\infty),\\
\displaystyle
\sup_{Q\in\mathcal{Q}}\frac{1}{|Q|}\int_Q\omega(x)\,dx
\exp\left\{\frac{1}{|Q|}\int_Q\ln\frac{1}{\omega(x)}\,dx
\right\}<\infty &\mathrm{if}\ q=\infty.
\end{cases}
\end{align*}
It is well known that, for any given $q\in(1,\infty]$,
\begin{align*}
\mathrm{BMO}(\mathbb{R}^n)=\left\{\lambda\ln\omega:\omega\in
A_q(\mathbb{R}^n)\mbox{\ and\ }\lambda\in\mathbb{R}\right\}
\end{align*}
(see, for instance, \cite[Chapter V, 6.2]{Stein-book}).
This characterization fails to hold for the endpoint case $q=1$.
In order to remedy this defect, Coifman
and Rochberg \cite{cr(pams-1980)} introduced the space of functions with
bounded lower oscillation, $\mathrm{BLO}(\mathbb{R}^n)$, as a proper
subset of $\mathrm{BMO}(\mathbb{R}^n)$. Formally, the space
$\mathrm{BLO}(\mathbb{R}^n)$ is defined to be the set of all
locally integrable functions $f$ on $\mathbb{R}^n$ such that
\begin{align*}
\|f\|_{\mathrm{BLO}(\mathbb{R}^n)}:=\sup_{Q\in\mathcal{Q}}\frac{1}{|Q|}
\int_Q\left|f(x)-\mathop\mathrm{ess\,inf}_{y\in Q}f(y)\right|\,dx<\infty.
\end{align*}
They \cite{cr(pams-1980)} showed that all the BLO functions are precisely
nonnegative multiples of logarithms of all the $A_1$ weights:
\begin{align}\label{20251012.1456}
\mathrm{BLO}(\mathbb{R}^n)=\left\{\lambda\ln\omega:\omega\in
A_1(\mathbb{R}^n)\mbox{\ and\ }\lambda\in[0,\infty)\right\}.
\end{align}
Bennett \cite{b(pams-1982)} established a characterization of
$\mathrm{BLO}(\mathbb{R}^n)$ in terms of the natural maximal operator. We
refer to \cite{a(na-2024), gk(jfa-2023), 2602.00479, lny(jfsa-2011), lny(bsm-2011),
mny(na-2010), ns(tmj-2017)} for more studies on the space
$\mathrm{BLO}(\mathbb{R}^n)$.

On the other hand, over the past few decades, the one-dimensional
one-sided weighted inequalities have developed rapidly. Sawyer
\cite{s(tams-1986)} and Mart\'in-Reyes et al. \cite{mpt(cjm-1993)}
introduced the \emph{one-sided Muckenhoupt class $A_q^+(\mathbb{R})$},
which consists of all nonnegative locally integrable functions $\omega$
on $\mathbb{R}$ such that
\begin{align}\label{20251011.1606}
[\omega]_{A_q^+(\mathbb{R})}:=
\begin{cases}
\displaystyle
\mathop\mathrm{ess\,sup}_{x\in\mathbb{R}}\frac{M^-(\omega)(x)}
{\omega(x)}<\infty&\mathrm{if}\ q=1,\\
\displaystyle
\sup_{I\in\mathcal{I}}\frac{1}{|I^-|}\int_{I^-}\omega(x)\,dx
\left[\frac{1}{|I^+|}\int_{I^+}\omega(x)^\frac{1}{1-q}\,dx\right]
^{q-1}<\infty &\mathrm{if}\ q\in(1,\infty),\\
\displaystyle
\sup_{I\in\mathcal{I}}\frac{1}{|I^-|}\int_{I^-}\omega(x)\,dx
\exp\left\{\frac{1}{|I^+|}\int_{I^+}\ln\frac{1}{\omega(x)}\,dx
\right\}<\infty &\mathrm{if}\ q=\infty.
\end{cases}
\end{align}
In what follows, let $\mathcal{I}$ denote the set of all bounded open
intervals in $\mathbb{R}$ and, for any $I:=(a,b)\in\mathcal{I}$, let
$$I^-:=\left(a,\frac{a+b}{2}\right), \ I^+:=\left(\frac{a+b}{2},b\right),
\ \mathrm{and}\ I^{++}:=\left(b,b+\frac{b-a}{2}\right).$$
In addition, $M^-$ denotes the
\emph{one-sided maximal operator}, defined by setting, for any locally
integrable function $f$ on $\mathbb{R}$ and for any $x\in\mathbb{R}$,
\begin{align*}
M^-(f)(x):=\sup_{h\in(0,\infty)}\frac{1}{h}\int_{(x-h,x)}|f(y)|\,dy.
\end{align*}
If condition \eqref{20251011.1606} holds with the orientation of the
real line reversed, then $\omega\in A_q^-(\mathbb{R})$ (the other
\emph{one-sided Muckenhoupt class} on $\mathbb{R}$). One-sided
Muckenhoupt weights provide a natural framework for
characterizing the weighted boundedness of one-sided operators, including
the one-sided variants of (fractional) maximal operators, singular
integral operators, fractional integral operators, and discrete
square operators. In the 1990s, Mart\'in-Reyes and de la Torre
\cite{mt(jlms-1994)} introduced the \emph{one-sided $\mathrm{BMO}$ space
$\mathrm{BMO}^+(\mathbb{R})$}, which is defined to be the set of all
locally integrable functions $f$ on $\mathbb{R}$ such that
\begin{align*}
\|f\|_{\mathrm{BMO}^+(\mathbb{R})}:=\sup_{I\in\mathcal{I}}\frac{1}{|I^-|}
\int_{I^-}\left[f(x)-\frac{1}{|I^+|}\int_{I^+}f(y)\,dy\right]_+\,dx<\infty.
\end{align*}
Here, and thereafter, for any function $f$, let
$$f_+:=\max\{f,\,0\}\ \mathrm{and}\ f_-:=-\min\{f,\,0\}.$$
If the condition above holds with the orientation
of the real line reversed, then $f\in\mathrm{BMO}^-(\mathbb{R})$ (the
other \emph{one-sided $\mathrm{BMO}$ space} on $\mathbb{R}$).
By employing the one-sided John--Nirenberg inequality, they
\cite{mt(jlms-1994)} proved that, for any given $q\in(1,\infty]$,
all the $\mathrm{BMO}^+(\mathbb{R})$ functions
are exactly nonnegative multiples of logarithms of all the
$A_q^+(\mathbb{R})$ weights:
\begin{align}\label{20250727.1611}
\mathrm{BMO}^+(\mathbb{R})=\left\{\lambda\ln\omega:\omega\in
A_q^+(\mathbb{R})\mbox{\ \ and\ \ }\lambda\in[0,\infty)\right\}.
\end{align}
One can readily show that \eqref{20250727.1611} fails to hold for the
endpoint case $q=1$ (see Remark \ref{Linfty BLO+ BMO+}). This leads to
the following natural question: \emph{What kind of suitable alternative
of $\mathrm{BMO}^+(\mathbb{R})$ makes \eqref{20250727.1611} hold with
$q=1$}? One of the main targets of this article is to give an affirmative
answer to this question, motivated by \cite{cr(pams-1980)}.

Additionally, in higher dimensions the parabolic BMO space
$\mathrm{PBMO}_\gamma^-(\mathbb{R}^{n+1})$ with time lag serves as the
natural higher-dimensional analogue of the one-sided BMO space. Moreover,
from the perspective of partial differential equations, the parabolic BMO
space with time lag is of vital importance in the regularity theory for
the doubly nonlinear parabolic equation
\begin{align}\label{20251009.2225}
\frac{\partial(|u|^{p-2}u)}{\partial t}
-\mathrm{div}\left(|\nabla u|^{p-2}\nabla u\right)=0,
\end{align}
where $p\in(1,\infty)$ is \emph{fixed} throughout this article (see, for
example, Moser \cite{m(cpam-1964), m(cpam-1967)}, Fabes and Garafalo
\cite{fg(pams-1985)}, and Aimar \cite{a(tams-1988)}). Analogous to the
one-dimensional case, all the functions in the parabolic BMO space with
time lag are precisely nonnegative multiples of logarithms of all the
parabolic Muckenhoupt $A_q^+(\gamma)$ weights with time lag,
$q\in(1,\infty]$, introduced by Kinnunen and Saari
\cite{ks(apde-2016)} and Kinnunen and Myyryl\"ainen
\cite{km(am-2024)} (see Section \ref{section5} for
more details). We refer to \cite{ac(asnsp-1998), lmr(prses-2023),
mr(prses-2000), or(pm-2003), s(ampa-2018), tt(sm-2003)} for further
investigations on the one-sided BMO type space and to \cite{ckyyz-2025,
cm24, km(jam-2025), kmy(ma-2023), kmyz(pa-2023),
ks(na-2016),kyyz(cvpde-2025), 2509.24486, my(mz-2024), s(rmi-2016)} for
more recent studies on the parabolic BMO type space with time lag.

Motivated by the aforementioned works, in this article, we first introduce
the one-sided BLO space $\mathrm{BLO}^+(\mathbb{R})$ and characterize it,
respectively, in terms of $A_1^+(\mathbb{R})$ (and hence answer the aforementioned
question) and the one-sided John--Nirenberg type inequality. Using these,
we establish the Coifman--Rochberg type decomposition
of $\mathrm{BLO}^+(\mathbb{R})$ functions and prove that the space
$\mathrm{BLO}^+(\mathbb{R})$ is independent of
the distance between the two intervals, which further induces
the characterization of $\mathrm{BLO}^+(\mathbb{R})$
in terms of $\mathrm{BMO}^+(\mathbb{R})$
(the Bennett type lemma). As
applications, we show that any $\mathrm{BMO}^+(\mathbb{R})$ function can
split into the sum of two $\mathrm{BLO}^+(\mathbb{R})$ functions and we
provide an explicit description of the distance from
$\mathrm{BLO}^+(\mathbb{R})$ functions to $L^\infty(\mathbb{R})$.
Finally, as a higher-dimensional analogue we introduce the
parabolic BLO space $\mathrm{PBLO}_\gamma^-(\mathbb{R}^{n+1})$
with time lag, and we extend all the above one-dimensional results to
$\mathrm{PBLO}_\gamma^-(\mathbb{R}^{n+1})$; furthermore, as applications, we not only establish
the relationships between $\mathrm{PBLO}_\gamma^-(\mathbb{R}^{n+1})$ and the solutions of doubly
nonlinear parabolic equations, but also provide a necessary condition for the negative
logarithm of the parabolic distance function to belong to
$\mathrm{PBLO}_\gamma^-(\mathbb{R}^{n+1})$ in terms of the weak porosity of the set.

The organization of the remainder of this article is as follows.

In Section \ref{section2}, we introduce the one-dimensional one-sided BLO
space $\mathrm{BLO}^+(\mathbb{R})$ (see Definition \ref{BLO+}) as a proper
subset of $\mathrm{BMO}^+(\mathbb{R})$. Then we prove that all the
functions in $\mathrm{BLO}^+(\mathbb{R})$ are exactly nonnegative multiples
of logarithms of all the $A_1^+(\mathbb{R})$ weights (see Theorem
\ref{BLO+ and A1+}) and we characterize $\mathrm{BLO}^+(\mathbb{R})$ in
terms of the one-sided John--Nirenberg inequality (see Theorem
\ref{JN BLO+ 2}) by using a one-dimensional one-sided Calder\'on--Zygmund
decomposition which is a slight variant of the one used in the proof of
\cite[Lemma 1]{mt(jlms-1994)}. We further show that one can take two
intervals that are not contiguous in the definition of
$\mathrm{BLO}^+(\mathbb{R})$ (see Theorem \ref{BLO+ time lag}).

In Section \ref{section3}, applying Theorem \ref{BLO+ time lag}, a one-sided
Calder\'on--Zygmund decomposition from the proof of \cite[Theorem
5.1]{s(ampa-2018)} and an iterative method adapted from the one used in
the proof of \cite[Theorem 5.1]{s(ampa-2018)}, we prove that
the uncentered one-sided natural maximal operator $N^-$ is bounded from
$\mathrm{BMO}^+(\mathbb{R})$ to $\mathrm{BLO}^+(\mathbb{R})$ (see
Proposition \ref{N-:BMO+ to BLO+}), which improves
\cite[Theorem 5.1]{s(ampa-2018)} via replacing the target space
$\mathrm{BMO}^+(\mathbb{R})$ by its proper subspace
$\mathrm{BLO}^+(\mathbb{R})$ [see Remark \ref{N-:BMO+ to BLO+ remark}(ii)
for the details]. As a consequence, we establish the
Bennett type characterization of $\mathrm{BLO}^+(\mathbb{R})$, which in
turn represents $\mathrm{BLO}^+(\mathbb{R})$ as the image of
$\mathrm{BMO}^+(\mathbb{R})$ under $N^-$, modulo bounded functions (see
Theorem \ref{N-:BMO+ to BLO+ cor}).

In Section \ref{section4}, using
Theorem \ref{BLO+ and A1+}, we establish the Coifman--Rochberg type
decomposition of functions in $\mathrm{BLO}^+(\mathbb{R})$ (see Theorem
\ref{C-R BLO+}). As corollaries, we show that any
$\mathrm{BMO}^+(\mathbb{R})$ function can split into the sum of two
$\mathrm{BLO}^+(\mathbb{R})$ functions (see Corollary
\ref{BMO+=BLO+-BLO-}) and we obtain an explicit quantification
of the distance from $\mathrm{BLO}^+(\mathbb{R})$ functions to
$L^\infty(\mathbb{R})$ (see Corollary \ref{distance BLO+ to Linfty}).

Section \ref{section5}  is structured as follows. In Subsection
\ref{subsection5.1}, we introduce the parabolic BLO space
$\mathrm{PBLO}_\gamma^-(\mathbb{R}^{n+1})$ with time lag (see Definition
\ref{PBLO+}) and we prove that all the
$\mathrm{PBLO}_\gamma^-(\mathbb{R}^{n+1})$ functions are precisely
nonnegative multiples of logarithms of all the parabolic $A_1^+(\gamma)$ weights with
time lag (see Theorem \ref{PBLO+ and A1+}), which answers the
aforementioned question on the endpoint case $q=1$ of
\eqref{20250727.1611} with $\mathrm{BMO}^+(\mathbb{R})$ replaced by
$\mathrm{PBMO}_\gamma^-(\mathbb{R}^{n+1})$. Applying this, we establish a
characterization of $\mathrm{PBLO}_\gamma^-(\mathbb{R}^{n+1})$ in terms
of the parabolic John--Nirenberg inequality (see Theorem
\ref{J-N PBLO+ 2}) and we show that
$\mathrm{PBLO}_\gamma^-(\mathbb{R}^{n+1})$ is independent of the
time lag and the distance between the three rectangles (see Theorem
\ref{PBLO+ time lag}). In Subsection \ref{subsection5.2}, by using
Theorem \ref{PBLO+ time lag} and a parabolic chaining lemma in
\cite[Remark 3.4]{s(ampa-2018)} and by establishing a parabolic
Calder\'on--Zygmund decomposition, we establish the boundedness
of the uncentered parabolic natural maximal operator $N^{\gamma-}$ from
$\mathrm{PBMO}_\gamma^-(\mathbb{R}^{n+1})$ to
$\mathrm{PBLO}_\gamma^-(\mathbb{R}^{n+1})$
(see Proposition \ref{N-:PBMO- to PBLO-}), which improves \cite[Lemma
4.1]{s(ampa-2018)} via replacing the target space
$\mathrm{PBMO}_\gamma^-(\mathbb{R}^{n+1})$ by its proper subspace
$\mathrm{PBLO}_\gamma^-(\mathbb{R}^{n+1})$
[see Remark \ref{r5.11}(ii) for the details]. Applying Proposition
\ref{N-:PBMO- to PBLO-}, we further establish the Bennett type
characterization of $\mathrm{PBLO}_\gamma^-(\mathbb{R}^{n+1})$ (see
Theorem \ref{N-:PBMO- to PBLO- cor}). In Subsection \ref{subsection5.3},
using Theorem \ref{PBLO+ and A1+}, we give the Coifman--Rochberg
type decomposition of functions in
$\mathrm{PBLO}_\gamma^-(\mathbb{R}^{n+1})$ (see Theorem \ref{C-R PBLO-}).
As corollaries, we show that any
$\mathrm{PBMO}_\gamma^-(\mathbb{R}^{n+1})$ function can be represented as
the sum of two $\mathrm{PBLO}_\gamma^-(\mathbb{R}^{n+1})$ functions (see
Corollary \ref{PBMO-=PBLO--PBLO+}) and we obtain an explicit
quantification of the distance from
$\mathrm{PBLO}_\gamma^-(\mathbb{R}^{n+1})$ functions to
$L^\infty(\mathbb{R}^{n+1})$ (see Corollary
\ref{distance PBLO- to Linfty}), which is a refinement of \cite[Theorem
5.1]{kyyz(cvpde-2025)} by characterizing the equivalence therein via a
strict equality. In Subsection \ref{subsection5.4}, as applications of
all these results obtained in this section, we establish the
relationships between $\mathrm{PBLO}_\gamma^-(\mathbb{R}^{n+1})$ and the
solutions of doubly nonlinear parabolic equations (see Proposition
\ref{PBLO- and PDE} and Corollaries \ref{PBLO- and PDE cor} and
\ref{PBLO- and PDE cor 2}), which is the second one of the two main targets of this article.
In Subsection \ref{subsection5.5}, as an application of Theorem \ref{PBLO+ and A1+}, we provide
a necessary condition for the negative logarithm of the parabolic distance function $-\log
d_p(\cdot,E)$ to belong to $\mathrm{PBLO}_\gamma^-(\mathbb{R}^{n+1})$ in terms of the parabolic
$\gamma$-FIT weak porosity of the set $E\subset\mathbb{R}^{n+1}$
(see Theorem \ref{PBLO- imply porous}).

We end this introduction by making some conventions on
symbols. Throughout this article, let $\mathbb{N}:=\{1,2,\ldots\}$
and $\mathbb{Z}_+:=\{0\}\cup\mathbb{N}$. For any $s\in\mathbb{R}$, the
symbol $\lceil s\rceil$ denotes the smallest integer not less than $s$
and the symbol $\lfloor s\rfloor$ denotes the largest integer not greater
than $s$. Let $\mathcal{X}\in\{\mathbb{R},\,\mathbb{R}^{n+1}\}$. For any
measurable set $A\subset\mathcal{X}$, we denote by $|A|$ its Lebesgue
measure. Let $L_{\mathrm{loc}}^1(\mathcal{X})$ be the set of all locally
integrable functions on $\mathcal{X}$. For any $f\in
L_{\mathrm{loc}}^1(\mathcal{X})$ and any measurable set
$A\subset\mathcal{X}$ with $|A|\in(0,\infty)$, let
$$f_A:=\fint_Af:=\frac{1}{|A|}\int_Af.$$
If $f\in L_{\mathrm{loc}}^1(\mathcal{X})$ is positive almost everywhere,
then $f$ is called a \emph{weight}. For any given $q\in[1,\infty]$, the
\emph{Lebesgue space $L^q(\mathcal{X}$)} is defined to be the space of
all measurable functions $f$ on $\mathcal{X}$ such that
\begin{align*}
\|f\|_{L^q(\mathcal{X})}:=
\begin{cases}
\displaystyle
\left[\int_\mathcal{X}|f(x)|^q\,dx\right]^\frac{1}{q}
&\mathrm{if}\ q\in[1,\infty),\\
\displaystyle\mathop\mathrm{ess\,sup}_{x\in\mathcal{X}}|f(x)|&\mathrm{if}\ q=\infty\\
\end{cases}
\end{align*}
is finite. If there exists no confusion, then
we \emph{always omit} the differential in all integral representations.
In addition, we \emph{always suppress} the variable in the notation
and, for example, for any $A\subset\mathbb{R}$, any function $f$ on
$\mathbb{R}$, and any $\lambda\in\mathbb{R}$, we simply write
$A\cap\{f>\lambda\}:=\{x\in A:f(x)>\lambda\}$.  The symbol $f\lesssim g$
means that there exists a positive constant $C$ such that $f\le Cg$ and,
if $f\lesssim g\lesssim f$, we then write $f\sim g$. Finally, in all
proofs, we consistently retain the symbols introduced in the original
theorem (or related statement).

\section{Characterizations of $\mathrm{BLO}^+(\mathbb{R})$
in Terms of $A_1^+(\mathbb{R})$ and\\John--Nirenberg
Inequality}
\label{section2}

In this section, we first demonstrate that the
one-sided Muckenhoupt class $A_1^+(\mathbb{R})$ remains invariant under
arbitrary separation between the two defining intervals. Subsequently, we
introduce the one-sided BLO space $\mathrm{BLO}^+(\mathbb{R})$ and
characterize it in terms of both the one-sided John--Nirenberg inequality
and $A_1^+(\mathbb{R})$. Furthermore, we prove that
the space $\mathrm{BLO}^+(\mathbb{R})$ is independent of the
distance between the two intervals.

We begin with listing two basic properties of one-sided Muckenhoupt
classes. Firstly, if $q\in(1,\infty)$, then
$A_q^+(\mathbb{R})$ has the following connection with
$A_\infty^+(\mathbb{R})$ and $A_\infty^-(\mathbb{R})$, which can be
deduced from the proof of \cite[Theorem 2]{mpt(cjm-1993)}.

\begin{lemma}\label{Ap+ and Ainfty +}
Let $q\in(1,\infty)$ and $\omega$ be a weight. Then $\omega\in
A_q^+(\mathbb{R})$ if and only if $\omega\in A_\infty^+(\mathbb{R})$ and
$\omega^\frac{1}{1-q}\in A_\infty^-(\mathbb{R})$. Moreover,
\begin{align*}
\max\left\{[\omega]_{A_\infty^+(\mathbb{R})},\,
\left[\omega^\frac{1}{1-q}\right]_{A_\infty^-(\mathbb{R})}^{q-1}\right\}
\leq[\omega]_{A_q^+(\mathbb{R})}\leq[\omega]_{A_\infty^+(\mathbb{R})}
\left[\omega^\frac{1}{1-q}\right]_{A_\infty^-(\mathbb{R})}^{q-1}.
\end{align*}
\end{lemma}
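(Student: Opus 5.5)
Throughout, write $\sigma:=\omega^{\frac{1}{1-q}}$, so that $\ln\frac{1}{\omega}=(q-1)\ln\sigma$. Reversing the orientation of the real line interchanges the roles of $I^-$ and $I^+$. Hence
\begin{align*}
[\sigma]_{A_\infty^-(\mathbb{R})}=\sup_{I\in\mathcal{I}}\fint_{I^+}\sigma\,
\exp\left\{\fint_{I^-}\ln\frac{1}{\sigma}\right\}
=\sup_{I\in\mathcal{I}}\fint_{I^+}\sigma\,
\exp\left\{\frac{1}{q-1}\fint_{I^-}\ln\omega\right\}.
\end{align*}
The plan is to compare, interval by interval, the three quantities
\begin{align*}
\mathcal{A}_q(I):=\fint_{I^-}\omega\left[\fint_{I^+}\sigma\right]^{q-1},\quad
\mathcal{A}^+(I):=\fint_{I^-}\omega\,\exp\left\{\fint_{I^+}\ln\frac{1}{\omega}\right\},\quad
\mathcal{A}^-(I):=\fint_{I^+}\sigma\,\exp\left\{\fint_{I^-}\ln\frac1\sigma\right\}.
\end{align*}
The equivalence of the classes then follows from the two-sided estimate, so everything reduces to the displayed inequalities.

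For the lower bound I would use Jensen's inequality for the concave function $\ln$ on a single interval. On $I^+$ it gives
\begin{align*}
\exp\left\{\fint_{I^+}\ln\frac1\omega\right\}
=\exp\left\{(q-1)\fint_{I^+}\ln\sigma\right\}\le\left[\fint_{I^+}\sigma\right]^{q-1},
\end{align*}
hence $\mathcal{A}^+(I)\le\mathcal{A}_q(I)$. On $I^-$ it gives $\exp\{\fint_{I^-}\ln\omega\}\le\fint_{I^-}\omega$, hence $[\mathcal{A}^-(I)]^{q-1}\le\mathcal{A}_q(I)$. Taking suprema over $I\in\mathcal{I}$ yields the left inequality. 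This direction is routine.

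For the upper bound, the naive step is to write $\fint_{I^-}\omega\le[\omega]_{A_\infty^+}\exp\{\fint_{I^+}\ln\omega\}$ and $[\fint_{I^+}\sigma]^{q-1}\le[\sigma]_{A_\infty^-}^{q-1}\exp\{-\fint_{I^-}\ln\omega\}$. Multiplying gives
\begin{align*}
\mathcal{A}_q(I)\le[\omega]_{A_\infty^+(\mathbb{R})}[\sigma]_{A_\infty^-(\mathbb{R})}^{q-1}
\exp\left\{\fint_{I^+}\ln\omega-\fint_{I^-}\ln\omega\right\}.
\end{align*}
The leftover exponential factor is the \textbf{main obstacle}, since it is not bounded by $1$ in general.

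To remove it, I would follow the proof of \cite[Theorem 2]{mpt(cjm-1993)}. The idea there is not to apply both $A_\infty$ conditions to the same pair $(I^-,I^+)$. Instead, one introduces an auxiliary point $c$ (or a common intermediate interval) and applies the $A_\infty^+$ condition for $\omega$ and the $A_\infty^-$ condition for $\sigma$ on pairs of intervals chosen so that the two geometric means of $\omega$ involve \emph{the same} set. Then the logarithmic averages cancel exactly rather than up to a constant.

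Concretely, I would first try to optimize over such an intermediate set $K$ between $I^-$ and $I^+$, using the flexibility that one-sided $A_\infty$ conditions are insensitive to the gap between the two intervals. This may require first proving, or importing from \cite{mpt(cjm-1993)}, the gap-invariance of $A_\infty^\pm(\mathbb{R})$ with the constant preserved.

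If an exact cancellation cannot be arranged, a fallback is available. One can bound the extra factor using the fact that $\ln\omega\in\mathrm{BMO}^+(\mathbb{R})$ with norm controlled by $\ln[\omega]_{A_\infty^+}$, via \eqref{20250727.1611}. That gives the equivalence of the classes with a constant depending on the characteristics, though not the sharp product constant stated. Obtaining the exact product constant is the delicate point, and I would model it directly on \cite{mpt(cjm-1993)}.
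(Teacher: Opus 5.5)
Your lower bound is correct: Jensen's inequality on $I^+$ and on $I^-$ gives $\mathcal{A}^+(I)\le\mathcal{A}_q(I)$ and $[\mathcal{A}^-(I)]^{q-1}\le\mathcal{A}_q(I)$. That yields the left inequality and one direction of the equivalence. The gap is the right inequality, and neither of your routes can establish it.

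Your main idea, a common intermediate set between $I^-$ and $I^+$, is geometrically impossible. The only admissible $A_\infty^+(\mathbb{R})$ pair with left member $I^-$ is $(I^-,I^+)$. The only admissible $A_\infty^-(\mathbb{R})$ pair with right member $I^+$ is also $(I^-,I^+)$. Since $I^-$ and $I^+$ are adjacent, there is no room for a set between them. Gap-invariance does not change this, and in any case it only holds with $\gamma$-dependent constants (compare Lemma \ref{Ap+ time lag}), not with the constant preserved.

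Your fallback fails even qualitatively. $\mathrm{BMO}^+(\mathbb{R})$ bounds $((\ln\omega)_{I^-}-(\ln\omega)_{I^+})_+$, but the leftover factor is $\exp\{(\ln\omega)_{I^+}-(\ln\omega)_{I^-}\}$, which has the opposite sign. Take $\omega(x):=e^x$. Then $[\omega]_{A_\infty^+(\mathbb{R})}=[\sigma]_{A_\infty^-(\mathbb{R})}=[\omega]_{A_q^+(\mathbb{R})}=1$ and $\|\ln\omega\|_{\mathrm{BMO}^+(\mathbb{R})}=0$, yet the leftover factor equals $e^{|I^+|}$, which is unbounded. So no estimate of that factor can rescue the naive product.

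The cancellation idea does work once the common set is a genuine gap. For $\ell\in(0,\infty)$ and $x\in\mathbb{R}$, put $A:=(x,x+\ell)$, $M:=(x+\ell,x+2\ell)$, and $B:=(x+2\ell,x+3\ell)$.
\begin{itemize}
\item The $A_\infty^+$ condition on the interval $A\cup M$ gives $\fint_A\omega\le[\omega]_{A_\infty^+(\mathbb{R})}\exp\{\fint_M\ln\omega\}$.
\item The $A_\infty^-$ condition for $\sigma$ on $M\cup B$ gives $[\fint_B\sigma]^{q-1}\le[\sigma]_{A_\infty^-(\mathbb{R})}^{q-1}\exp\{-\fint_M\ln\omega\}$.
\item Multiplying, the geometric means over $M$ cancel exactly, so $\fint_A\omega[\fint_B\sigma]^{q-1}\le[\omega]_{A_\infty^+(\mathbb{R})}[\sigma]_{A_\infty^-(\mathbb{R})}^{q-1}$.
\end{itemize}
This bounds the time-lagged characteristic $\langle\omega\rangle_{A_q^+(\mathbb{R})}$ of Lemma \ref{Ap+ time lag}(ii) with $\gamma=\frac13$. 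That lemma then gives $\omega\in A_q^+(\mathbb{R})$, so you obtain the equivalence of the classes. However, you only get $[\omega]_{A_q^+(\mathbb{R})}$ bounded by some function of $[\omega]_{A_\infty^+(\mathbb{R})}[\sigma]_{A_\infty^-(\mathbb{R})}^{q-1}$.

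The sharp displayed upper bound needs a further argument that neither this route nor your proposal supplies. The paper gives no proof of this lemma. It only refers to the proof of \cite[Theorem 2]{mpt(cjm-1993)}, which is exactly the step you defer.
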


Secondly, we point out that the distance between the two intervals
is inessential in the definition of $A_q^+(\mathbb{R})$, where
$q\in[1,\infty]$. We mention that the range $(0,\frac{1}{2}]$ of $\gamma$
in the following lemma is the natural and the best range.

\begin{lemma}\label{Ap+ time lag}
Let $\gamma\in(0,\frac{1}{2}]$. Then the following statements hold.
\begin{enumerate}
\item[\rm(i)] $\omega\in A_1^+(\mathbb{R})$ if and only if
\begin{align}\label{20250626.1622}
\langle\omega\rangle_{A_1^+(\mathbb{R})}:=
\sup_{\genfrac{}{}{0pt}{}{-\infty<a<b\leq c<d<\infty}
{b-a=d-c=\gamma(d-a)}}\fint_{(a,b)}\omega
\left[\mathop\mathrm{ess\,inf}_{(c,d)}\omega\right]^{-1}<\infty.
\end{align}
Moreover, $[\omega]_{A_1^+(\mathbb{R})}\leq
\langle\omega\rangle_{A_1^+(\mathbb{R})}\leq
\frac{1}{\gamma}[\omega]_{A_1^+(\mathbb{R})}$.

\item[\rm(ii)] For any given $q\in(1,\infty]$, $\omega\in
A_q^+(\mathbb{R})$ if and only if
\begin{align*}
\langle\omega\rangle_{A_q^+(\mathbb{R})}:=
\begin{cases}
\displaystyle
\sup_{\genfrac{}{}{0pt}{}{-\infty<a<b\leq c<d<\infty}
{b-a=d-c=\gamma(d-a)}}\fint_{(a,b)}\omega
\left[\fint_{(c,d)}\omega^\frac{1}{1-q}\right]^{q-1}<\infty &\mbox{if\ }
q\in(1,\infty),\\
\displaystyle
\sup_{\genfrac{}{}{0pt}{}{-\infty<a<b\leq c<d<\infty}
{b-a=d-c=\gamma(d-a)}}\fint_{(a,b)}\omega
\exp\left\{\fint_{(c,d)}\ln\frac{1}{\omega}
\right\}<\infty &\mbox{if\ }q=\infty.
\end{cases}
\end{align*}
Moreover, $[\omega]_{A_q^+(\mathbb{R})}$ and
$\langle\omega\rangle_{A_q^+(\mathbb{R})}$ only depend on each other and $\gamma$.
\end{enumerate}
\end{lemma}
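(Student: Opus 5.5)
For part (i), I would compare the definition via $M^-$ with the two-interval quantity directly. Suppose first that $\omega\in A_1^+(\mathbb{R})$, and take $a<b\le c<d$ with $b-a=d-c=\gamma(d-a)$. For almost every $x\in(c,d)$, the interval $(a,x)$ contains $(a,b)$, and its length satisfies $x-a\le d-a=\frac{1}{\gamma}(b-a)$. Hence
$$\fint_{(a,b)}\omega\le\frac{x-a}{b-a}\,\frac{1}{x-a}\int_{(a,x)}\omega\le\frac{1}{\gamma}M^-(\omega)(x)\le\frac{1}{\gamma}[\omega]_{A_1^+(\mathbb{R})}\,\omega(x).$$
Taking the essential infimum over $x\in(c,d)$ gives $\langle\omega\rangle_{A_1^+(\mathbb{R})}\le\frac{1}{\gamma}[\omega]_{A_1^+(\mathbb{R})}$.

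For the converse, fix a Lebesgue point $x$ of $\omega$ and $h>0$, and set $a:=x-h$, $b:=x$. For small $\epsilon>0$, choose $d$ with $b-a=\gamma(d-a)$ (so $d\ge x$ because $\gamma\le\frac12$), and $c:=d-h$. Then $c\ge b$, but $x$ is only the left endpoint of $(c,d)$ when $\gamma=\frac12$, which is a nuisance. So I would instead use the scale-shifted interval $(a,b)=(x-h,x-\epsilon')$, whose length is $h(1-\epsilon'/h)$, and arrange for $x$ to be interior to a shrinking $(c,d)$. More simply, apply the inequality
$$\fint_{(a,b)}\omega\le\langle\omega\rangle_{A_1^+(\mathbb{R})}\,\omega(y)\quad\text{for a.e. }y\in(c,d),$$
and let the configuration shrink so that $(c,d)$ contains points arbitrarily close to $x$. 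I expect the constant $[\omega]_{A_1^+(\mathbb{R})}\le\langle\omega\rangle_{A_1^+(\mathbb{R})}$ to survive with no loss: $\frac{1}{h}\int_{x-h}^x\omega$ is approximated by averages over $(x-h,x-\delta)$, and a continuity/limit argument as $\delta\to0$ does the job.

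The cleanest version of this limit is as follows. For $y$ slightly larger than $x$, take $b=y-\eta$ and $a=b-h'$ with $h'$ near $h$, chosen so that $y\in(c,d)$. Since $\gamma\le\frac12$ gives $c\ge b$, we need $c\le y\le d$, that is, $y-b\le\frac{(1-\gamma)h'}{\gamma}$ and $y-b\ge\frac{(1-2\gamma)h'}{\gamma}$. When $\gamma<\frac12$ this forces $y-b$ to be comparable to $h'$, which is bad. So the direct route fails for small $\gamma$, and a chaining argument is needed.

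This is the main obstacle. Since $\frac12$-configurations are the standard ones, and the lemma asks only for the equivalence $[\omega]\le\langle\omega\rangle$, I would handle it by iteration. Given the $\gamma$-condition, I would cover $(x-h,x)$, for a.e. $y$ just right of $x$, by chains of intervals. Let $(a,b)=(x-h,x)$ and $d$ be as before, and put $L:=\frac{1-2\gamma}{\gamma}h$. Then $(c,d)=(x+L,x+L+h)$, and the condition gives $\omega_{(x-h,x)}\le\langle\omega\rangle\,\omega(z)$ for a.e. $z$ there. To bring $z$ back to $x$ I would instead use the $A_1^+$ endpoint formulation, which is controlled by the two-interval $A_\infty$ type equivalence of part (ii) known from \cite{mpt(cjm-1993)}, and use the known fact (e.g. Lemma \ref{Ap+ and Ainfty +}-type reasoning) that one-sided classes are independent of the gap.

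For part (ii) I would invoke Lemma \ref{Ap+ and Ainfty +} and the gap-independence of $A_\infty^\pm$ from the literature: the easy direction again follows by enlarging intervals, since $(a,b)$ and $(c,d)$ sit inside halves of $(a,2d-a)$-type configurations whose sizes are comparable with constants depending on $\gamma$. The reverse direction follows from the chaining/Calderón–Zygmund-type argument of \cite{mpt(cjm-1993)}, with constants depending only on $\gamma$.
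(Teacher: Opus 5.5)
\textbf{The easy direction of (i) and part (ii) are fine.} Your easy direction of (i) is correct. For a.e.\ $x\in(c,d)$ you have $(a,b)\subset(a,x)$ with $x-a\le\frac{1}{\gamma}(b-a)$, which gives $\langle\omega\rangle_{A_1^+(\mathbb{R})}\le\frac{1}{\gamma}[\omega]_{A_1^+(\mathbb{R})}$. This is the content of the paper's bound $\mathscr{M}^-\le\frac{1}{\gamma}M^-$. Part (ii) is handled in the paper purely by citation to the literature, so your sketch there is at the same level.

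\textbf{The gap: you never prove $[\omega]_{A_1^+(\mathbb{R})}\le\langle\omega\rangle_{A_1^+(\mathbb{R})}$.} Your fallback does not work. It appeals to $A_\infty$-type gap independence from (ii), to Lemma~\ref{Ap+ and Ainfty +}, and to ``one-sided classes are independent of the gap.'' But $A_q$ or $A_\infty$ information cannot produce the pointwise $A_1^+$ condition, let alone with constant $1$. For $q=1$, gap independence is exactly what is being proved, so invoking it is circular.

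\textbf{First missing ingredient: tile $(x-h,x)$ instead of approximating it.} Your conclusion that the direct route ``fails for small $\gamma$'' is mistaken, because you do not need a single admissible $(a,b)$ close to $(x-h,x)$. Put $\kappa:=\frac{1}{\gamma}-2\ge0$. An admissible configuration has $x\in(c,d)$ exactly when $(a,b)=(x-(s+1)\ell,\,x-s\ell)$ for some $\ell>0$ and $s\in(\kappa,\kappa+1)$. Fix such an $s$, set $r:=\frac{s}{s+1}$, and tile $(x-h,x)$ by $I_k:=(x-hr^k,\,x-hr^{k+1})$, $k\in\mathbb{Z}_+$. Each $I_k$ has this form with $\ell=hr^k(1-r)$, since $s(1-r)=r$. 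Hence $\fint_{(x-h,x)}\omega=\sum_k\frac{|I_k|}{h}\fint_{I_k}\omega$ is a convex combination of admissible left-interval averages. This is the inequality $M^-\le\mathscr{M}^-$ that the paper uses.

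\textbf{Second missing ingredient: make the null set independent of the configuration.} The hypothesis only gives $\fint_{(a,b)}\omega\le\langle\omega\rangle_{A_1^+(\mathbb{R})}\,\omega(y)$ for a.e.\ $y\in(c,d)$, and the null set depends on the configuration. You need one full-measure set of $x$ that works for all configurations simultaneously. Since $\gamma$ is fixed, a configuration is determined by $(c,d)$. Restrict to rational $c,d$, which gives countably many null sets. Then use continuity of $\fint_{(a,b)}\omega$ in the endpoints to reach every configuration with $x\in(c,d)$. This is the paper's argument with the exceptional set $E$.

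Your idea of shrinking the configuration so that $(c,d)$ contains points near $x$ cannot replace this step. Since $|(c,d)|=|(a,b)|$, the interval $(c,d)$ cannot shrink while $(a,b)$ fills $(x-h,x)$. Moreover, values of $\omega$ near $x$ say nothing about $\omega(x)$.

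\textbf{Outcome.} With both steps you get $M^-(\omega)(x)\le\langle\omega\rangle_{A_1^+(\mathbb{R})}\,\omega(x)$ for a.e.\ $x$. That is the claimed bound $[\omega]_{A_1^+(\mathbb{R})}\le\langle\omega\rangle_{A_1^+(\mathbb{R})}$ with constant $1$.
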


\begin{proof}
(ii) was given in \cite[Lemma 2.6]{rt(cmj-2001)} for the case
$q\in(1,\infty)$ and in \cite[Theorem 1]{mpt(cjm-1993)} for the case
$q=\infty$. Now, we show (i). Define the \emph{uncentered one-sided
maximal operator} $\mathscr{M}^-$ by setting, for any $f\in
L_{\mathrm{loc}}^1(\mathbb{R})$ and $x\in\mathbb{R}$,
\begin{align*}
\mathscr{M}^-(f)(x):=\sup_{\genfrac{}{}{0pt}{}{-\infty<a<b\leq
c<d<\infty}{b-a=d-c=\gamma(d-a),\,x\in(c,d)}}\fint_{(a,b)}|f|.
\end{align*}
We can easily prove that, for any $f\in L_{\mathrm{loc}}^1(\mathbb{R})$,
$M^-(f)\leq\mathscr{M}^-(f)\leq\frac{1}{\gamma}M^-(f)$. Thus,
$\omega\in A_1^+(\mathbb{R})$ if and only if
\begin{align}\label{20250614.1712}
[\omega]^*_{A_1^+(\mathbb{R})}:=
\left\|\frac{\mathscr{M}^-(\omega)}{\omega}\right\|
_{L^\infty(\mathbb{R})}<\infty.
\end{align}
Moreover, $[\omega]_{A_1^+(\mathbb{R})}\leq[\omega]^*_{A_1^+(\mathbb{R})}
\leq\frac{1}{\gamma}[\omega]_{A_1^+(\mathbb{R})}$. We further claim that
$\langle\omega\rangle_{A_1^+(\mathbb{R})}=[\omega]^*_{A_1^+(\mathbb{R})}$.
Indeed, suppose that \eqref{20250614.1712} holds. For any
$-\infty<a<b\leq c<d<\infty$ with $b-a=d-c=\gamma(d-a)$ and for
any $x\in(c,d)$,
\begin{align*}
\fint_{(a,b)}\omega\leq\mathscr{M}^-(\omega)(x)
\leq[\omega]^*_{A_1^+(\mathbb{R})}\omega(x).
\end{align*}
Taking the essential infimum over all $x\in(c,d)$, we obtain
\eqref{20250626.1622} and $\langle\omega\rangle_{A_1^+(\mathbb{R})}
\leq[\omega]^*_{A_1^+(\mathbb{R})}$. Conversely, assume that
\eqref{20250626.1622} holds. Let
\begin{align*}
E:=\left\{x\in\mathbb{R}:\mathscr{M}^-(\omega)(x)>
\langle\omega\rangle_{A_1^+(\mathbb{R})}\omega(x)\right\}.
\end{align*}
For any $x\in E$, there exist $-\infty<a_x<b_x\leq
c_x<d_x<\infty$, with $b_x-a_x=d_x-c_x=\gamma(d_x-a_x)$, such that
$x\in(c_x,d_x)$, $c_x,d_x\in\mathbb{Q}$, and
\begin{align*}
\fint_{(a_x,b_x)}\omega>\langle\omega\rangle_{A_1^+(\mathbb{R})}\omega(x).
\end{align*}
This, together with \eqref{20250626.1622}, further implies that
$\omega(x)<\mathrm{ess\,inf}_{y\in(c_x,d_x)}\omega(y)$. Therefore,
\begin{align*}
|E|\leq\left|\bigcup_{i\in\mathbb{N}}\left\{x\in(c_i,d_i):c_i,
d_i\in\mathbb{Q}\mbox{\ \ and\ \ }
\omega(x)<\mathop\mathrm{ess\,inf}
_{y\in(c_i,d_i)}\omega(y)\right\}\right|=0
\end{align*}
and hence \eqref{20250614.1712} holds with
$[\omega]^*_{A_1^+(\mathbb{R})}\leq
\langle\omega\rangle_{A_1^+(\mathbb{R})}$. Therefore, the claim
$\langle\omega\rangle_{A_1^+(\mathbb{R})}=[\omega]^*_{A_1^+(\mathbb{R})}$
holds and hence $[\omega]_{A_1^+(\mathbb{R})}\leq\langle\omega\rangle
_{A_1^+(\mathbb{R})}\leq\frac{1}{\gamma}[\omega]_{A_1^+(\mathbb{R})}$.
This finishes the proof of (i) and hence Lemma \ref{Ap+ time lag}.
\end{proof}

Next, we introduce the one-sided BLO space $\mathrm{BLO}^+(\mathbb{R})$
which is a proper subset of $\mathrm{BMO}^+(\mathbb{R})$.

\begin{definition}\label{BLO+}
The \emph{one-sided $\mathrm{BLO}$ space $\mathrm{BLO}^+(\mathbb{R})$} is
defined to be the set of all $f\in L_{\mathrm{loc}}^1(\mathbb{R})$ such that
\begin{align*}
\|f\|_{\mathrm{BLO}^+(\mathbb{R})}:=
\sup_{I\in\mathcal{I}}\fint_{I^-}\left(f-\mathop\mathrm{ess\,inf}_{I^+}
f\right)_+<\infty.
\end{align*}
If the condition above holds with the orientation of the real
line reversed, then $f\in\mathrm{BLO}^-(\mathbb{R})$ (the other
\emph{one-sided $\mathrm{BLO}$ space}).
\end{definition}

Recall that \eqref{20250727.1611} fails to hold for the endpoint
case $q=1$. However, we find that all the functions in
$\mathrm{BLO}^+(\mathbb{R})$ are exactly nonnegative multiples of
logarithms of all the weights in $A_1^+(\mathbb{R})$, which is an
analogue of \eqref{20251012.1456} in the one-sided weight case and hence
answer the question mentioned in the introduction.

\begin{theorem}\label{BLO+ and A1+}
There exists a positive constant $B$ such that, for any
$f\in\mathrm{BLO}^+(\mathbb{R})$ and $\epsilon\in(0,\frac{B}
{\|f\|_{\mathrm{BLO}^+(\mathbb{R})}})$, $e^{\epsilon f}\in
A_1^+(\mathbb{R})$. Conversely, for any $\omega\in A_1^+(\mathbb{R})$,
$\ln\omega\in\mathrm{BLO}^+(\mathbb{R})$ and
\begin{align*}
\|\ln\omega\|_{\mathrm{BLO}^+(\mathbb{R})}
\leq\ln\left(1+2[\omega]_{A_1^+(\mathbb{R})}\right).
\end{align*}
In particular,
\begin{align*}
\mathrm{BLO}^+(\mathbb{R})=\left\{\lambda\ln\omega:\omega\in
A_1^+(\mathbb{R})\mbox{\ \ and\ \ }\lambda\in[0,\infty)\right\}.
\end{align*}
\end{theorem}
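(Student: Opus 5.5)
The plan is to treat the two directions separately, starting with the easier converse.

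\emph{Converse direction.} Let $\omega\in A_1^+(\mathbb{R})$ and fix $I\in\mathcal{I}$. For almost every $x\in I^+$, the interval $I^-$ lies to the left of $x$ within distance $|I|$. Hence $\fint_{I^-}\omega\le 2M^-(\omega)(x)\le 2[\omega]_{A_1^+(\mathbb{R})}\omega(x)$. Taking the essential infimum over $I^+$ gives
\begin{align*}
\fint_{I^-}\omega\le 2[\omega]_{A_1^+(\mathbb{R})}\,\mathop\mathrm{ess\,inf}_{I^+}\omega .
\end{align*}
Write $m:=\mathrm{ess\,inf}_{I^+}\ln\omega$. Then $(\ln\omega-m)_+=\ln\max\{\omega e^{-m},1\}\le\ln(1+\omega e^{-m})$. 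By Jensen's inequality (concavity of $\ln$),
\begin{align*}
\fint_{I^-}(\ln\omega-m)_+\le\ln\left(1+e^{-m}\fint_{I^-}\omega\right)\le\ln\left(1+2[\omega]_{A_1^+(\mathbb{R})}\right).
\end{align*}
This is exactly the stated bound. We also have $\ln\omega\in L^1_{\mathrm{loc}}$, because $\omega$ is locally integrable and $A_1^+$ weights are locally bounded below on compact sets, since $M^-(\omega)>0$ everywhere.

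\emph{Main direction.} Let $f\in\mathrm{BLO}^+(\mathbb{R})$. By homogeneity, assume $\|f\|_{\mathrm{BLO}^+(\mathbb{R})}=1$. The goal is a one-sided exponential integrability estimate: there are absolute constants $B,C$ such that, for every $I$ and every $\epsilon<B$,
\begin{align*}
\fint_{I^-}e^{\epsilon(f-\mathrm{ess\,inf}_{I^+}f)_+}\le C,
\end{align*}
possibly with $I^+$ replaced by a separated interval as in Lemma \ref{Ap+ time lag}(i). Given this, $\fint_{I^-}e^{\epsilon f}\le C\,\mathrm{ess\,inf}_{I^+}e^{\epsilon f}$. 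We then apply Lemma \ref{Ap+ time lag}(i) with a suitable $\gamma$ (for $\gamma=\frac12$ the configuration is exactly $I^-,I^+$) to conclude $e^{\epsilon f}\in A_1^+(\mathbb{R})$.

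To get exponential integrability, I will reduce to the known one-sided John--Nirenberg inequality for $\mathrm{BMO}^+(\mathbb{R})$ from \cite{mt(jlms-1994)}. First, $\mathrm{BLO}^+\subset\mathrm{BMO}^+$ with $\|f\|_{\mathrm{BMO}^+}\le\|f\|_{\mathrm{BLO}^+}$, since $f_{I^+}\ge\mathrm{ess\,inf}_{I^+}f$. By \eqref{20250727.1611}, or directly by the John--Nirenberg inequality behind it, we get $\fint_{I^-}e^{\epsilon(f-f_{I^+})_+}\le C$ for small $\epsilon$, and in fact for separated configurations as well. It then remains to compare $f_{I^+}$ with $\mathrm{ess\,inf}_{I^+}f$.

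\emph{Main obstacle.} The difficulty is that $f_{I^+}-\mathrm{ess\,inf}_{I^+}f$ is not controlled by the one-sided $\mathrm{BLO}^+$ condition applied to $I$ itself, since that condition only averages over the left half. My fix is to split $I^+$ into its left part $J^-$ and right part $J^+$, for an interval $J$ with $J^-\cup J^+$ adjacent to the right of $I^-$. The BLO condition on $J$ controls $\fint_{J^-}(f-\mathrm{ess\,inf}_{J^+}f)_+$.

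To handle this cleanly, I will use the time-lag version directly. Take the configuration $(a,b)$, $(c,d)$ with a gap. Choose an intermediate interval $K$ between them of the same length as $(c,d)$, placed so that the pairs $((a,b),K)$ and $(K,(c,d))$ are both admissible configurations with contiguous halves. Then
\begin{align*}
(f-\mathrm{ess\,inf}_{(c,d)}f)_+\le(f-f_K)_++(f_K-\mathrm{ess\,inf}_{(c,d)}f)_+ .
\end{align*}
On $(a,b)$, the first term is exponentially integrable by the $\mathrm{BMO}^+$ John--Nirenberg inequality. The second term is a constant bounded by $\fint_K(f-\mathrm{ess\,inf}_{(c,d)}f)_+\le 1$ by the BLO condition. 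This yields the desired estimate with constant $Ce^{\epsilon}$.

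Finally, the characterization $\mathrm{BLO}^+=\{\lambda\ln\omega\}$ follows from the two directions. Given $f\in\mathrm{BLO}^+$, write $f=\lambda\ln e^{f/\lambda}$ with $\lambda$ large; if $f$ is constant, take $\omega\equiv1$. In the other direction, $\lambda\ln\omega$ lies in $\mathrm{BLO}^+$ by positive homogeneity of the norm.
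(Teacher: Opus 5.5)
Your proof is correct, but for the main direction it takes a different route from the paper.

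\textbf{The paper's route.} The paper proves Lemma \ref{JN BLO+} from scratch. It runs a one-sided Calder\'on--Zygmund (rising-sun) decomposition of the level set $\{M^-([f-\mathrm{ess\,inf}_{I^{++}}f]_+\mathbf{1}_{I^-\cup I^+})>6\}$. It then iterates the bound $\mathcal{A}(\lambda)\le\frac12\mathcal{A}(\lambda-6)$, integrates, and applies Lemma \ref{Ap+ time lag}(i) with $\gamma=\frac13$.

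\textbf{Your route.} You take the Mart\'in-Reyes--de la Torre John--Nirenberg inequality for $\mathrm{BMO}^+(\mathbb{R})$ as a black box. This is legitimate, since $\|f\|_{\mathrm{BMO}^+(\mathbb{R})}\le\|f\|_{\mathrm{BLO}^+(\mathbb{R})}$. You then add the one-line comparison $(f_{I^+}-\mathrm{ess\,inf}_{I^{++}}f)_+\le\fint_{I^+}(f-\mathrm{ess\,inf}_{I^{++}}f)_+\le\|f\|_{\mathrm{BLO}^+(\mathbb{R})}$, which is exactly the estimate the paper uses inside \eqref{20250715.2123}. Your three contiguous intervals $(a,b),K,(c,d)$ are forced to be $I^-,I^+,I^{++}$, i.e. the $\gamma=\frac13$ configuration, so you reach the same endpoint.

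\textbf{What each approach buys.} Your route is shorter, and it in fact yields Lemma \ref{JN BLO+} in two lines, with constants inherited from the $\mathrm{BMO}^+$ ones. The price is that it rests on the deeper $\mathrm{BMO}^+$ theory and loses the explicit constants $A=2$, $B=\frac{\ln 2}{6}$. The paper's argument is self-contained apart from Sawyer's one-sided rising-sun lemma.

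\textbf{Two small points.} First, cite the quantitative John--Nirenberg inequality rather than \eqref{20250727.1611}. The latter is only qualitative and does not give the uniform range $\epsilon<B/\|f\|_{\mathrm{BLO}^+(\mathbb{R})}$. Second, your normalization $\|f\|_{\mathrm{BLO}^+(\mathbb{R})}=1$ skips the case $\|f\|_{\mathrm{BLO}^+(\mathbb{R})}=0$. There $f$ is a.e. nondecreasing and $e^{\epsilon f}\in A_1^+(\mathbb{R})$ trivially.

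Your converse direction is the paper's argument, except that you derive the $\gamma=\frac12$ case of Lemma \ref{Ap+ time lag}(i) directly from the definition of $M^-$.
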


To show Theorem \ref{BLO+ and A1+}, we first prove the following
one-sided John--Nirenberg inequality for $\mathrm{BLO}^+(\mathbb{R})$ by
using a one-sided Calder\'on--Zygmund decomposition which
is a slight variant of the one used in the proof of
\cite[Lemma 1]{mt(jlms-1994)}.

\begin{lemma}\label{JN BLO+}
Let $f\in\mathrm{BLO}^+(\mathbb{R})$. Then there exist positive constants
$A$ and $B$ such that, for any $I\in\mathcal{I}$ and
$\lambda\in(0,\infty)$,
\begin{align}\label{20250714.2247}
\left|I^-\cap\left\{\left(f-\mathop\mathrm{ess\,inf}_{I^{++}}f
\right)_+>\lambda\right\}\right|\leq Ae^{-\frac{B\lambda}
{\|f\|_{\mathrm{BLO}^+(\mathbb{R})}}}\left|I^-\right|.
\end{align}
\end{lemma}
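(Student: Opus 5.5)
Set $\|f\|:=\|f\|_{\mathrm{BLO}^+(\mathbb{R})}$. By homogeneity we may assume $\|f\|=1$. Fix $I=(a,b)$ with $h:=(b-a)/2$, so $I^-=(a,a+h)$, $I^+=(a+h,b)$ and $I^{++}=(b,b+h)$. Set $m:=\mathop\mathrm{ess\,inf}_{I^{++}}f$. Each point of $I^-$ lies at distance at least $h$ to the left of $I^{++}$, so there is room to run a one-sided Calder\'on--Zygmund stopping argument inside $I^-\cup I^+$ while still comparing back to $I^{++}$. The plan is to prove a good-$\lambda$ type iteration: there are absolute constants $\beta\in(0,1)$ and $K>0$ such that
$$
\left|I^-\cap\{f-m>\lambda+K\}\right|\le \beta\sup_{J}\left|J^-\cap\{f-\mathop\mathrm{ess\,inf}_{J^{++}}f>\lambda\}\right|/|J^-|\cdot|I^-|,
$$
where the supremum is over suitable subintervals $J$. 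Iterating this $k$ times gives the decay $\beta^k$ for $\lambda=kK$, and hence \eqref{20250714.2247} with $B\sim\ln(1/\beta)/K$ and $A=1/\beta$.

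\textbf{Base estimate.} Apply the definition of $\mathrm{BLO}^+(\mathbb{R})$ to the interval $\widetilde I:=(a,a+2h')$ whose right half lies inside $I^{++}$ up to a shift. The cleanest choice is to note that the BLO$^+$ condition with a gap follows from the contiguous one by chaining. Concretely, take $J_0:=(a+h,b+h)$, so that $J_0^-=I^+$ and $J_0^+=I^{++}$; this gives
$$
\fint_{I^+}(f-m)_+\le 1.
$$
Then apply the definition to $I$ itself, which gives $\fint_{I^-}(f-\mathop\mathrm{ess\,inf}_{I^+}f)_+\le1$. Since $\mathop\mathrm{ess\,inf}_{I^+}f\le (f_{I^+})$ and, by the previous bound, $(f-m)_{I^+}\le 1$, we get $\mathop\mathrm{ess\,inf}_{I^+}f-m\le1$. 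Hence
$$
\fint_{I^-}(f-m)_+\le 2.
$$
This is the only place the $I^{++}$ infimum is used; thereafter everything is compared against infima on intervals to the right.

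\textbf{Decomposition.} Perform a one-sided Calder\'on--Zygmund decomposition of $g:=(f-m)_+$ on $I^-$ at height $\lambda_0:=2\cdot 2/\beta$, following the variant of \cite[Lemma 1]{mt(jlms-1994)}. For a.e. $x\in I^-$ with $g(x)>\lambda_0$, take the maximal interval $(x-?,\dots)$. More precisely, use the one-sided maximal function $M^+$ restricted to $I^-$. The set $\{x\in I^-: M^+_{I^-}g>\lambda_0\}$ is open and is the union of disjoint intervals $J_j=(\alpha_j,\beta_j)$. By the rising sun lemma, $\fint_{(\alpha_j,\beta_j)}g=\lambda_0$ and $\sum|J_j|\le |I^-|\cdot 2/\lambda_0$, and $g\le\lambda_0$ a.e. off $\bigcup J_j$.

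For each $J_j$, split it into a left part and then an interval to its right of comparable length, following Mart\'in-Reyes and de la Torre. Since $\fint_{J_j}g=\lambda_0$, the essential infimum of $f$ on a right portion of $J_j$ is at most $m+C\lambda_0$. We then apply the inductive hypothesis to the left pieces $J_j'$ with their own "$I^{++}$" sitting inside $J_j$. This yields
$$
\left|J_j'\cap\{f-m>\lambda+C\lambda_0\}\right|\le \Phi(\lambda)|J_j'|,
$$
where $\Phi(\lambda)$ is the supremum over all intervals of the normalized distribution function. The right pieces of $J_j$, which are not covered by any left piece, must be handled by a further chaining or subdivision into geometrically shrinking intervals toward $\beta_j$. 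Each such interval again has a right companion inside $J_j$ or $I^+$ on which the infimum is controlled by the base estimate.

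\textbf{Main obstacle.} The main obstacle is the geometry. With one-sided intervals, the portion of $J_j$ near its right endpoint has no room for a companion $J^{++}$ inside $J_j$, and $\Phi(\lambda)\le \beta\,\Phi(\lambda-K)$ requires every point to be covered by a left half whose doubled-gap right partner is controlled. I will first try to use the fact that the companion may leave $I^-$ and enter $I^+$, where the gap $h$ is available. I will combine this with the observation that $\Phi$ is finite and at most $1$, so the iteration closes, giving $\Phi(\lambda)\le Ae^{-B\lambda}$.
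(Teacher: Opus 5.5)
Your base estimate is correct (it is equivalent to the paper's bound $\fint_{I^-\cup I^+}(f-m)_+\le\frac32$). Your recursion $\Phi(\lambda+K)\le\beta\,\Phi(\lambda)$, with $\Phi$ the supremum over all $J\in\mathcal I$ of the normalized distribution functions, is exactly the paper's $\mathcal A(\lambda)\le\frac12\mathcal A(\lambda-6)$.

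The gap is the step you yourself call the main obstacle, and with your decomposition it does not close. To run the recursion you must tile each stopping interval $J_j$ by sets $P^-$, $P\in\mathcal I$, with $\mathop\mathrm{ess\,inf}_{P^{++}}f\le m+C$ uniformly over all pieces. This includes the geometrically shrinking pieces accumulating at $\beta_j$, and none of your tools gives that bound:
\begin{itemize}
\item With $M^+$, the rising sun lemma gives $\fint_{(x,\beta_j)}g\ge\lambda_0$ for $x\in J_j$. So short right tails of $J_j$ have averages bounded from below, not above, and companions placed there are uncontrolled.
\item The identity $\fint_{J_j}g=\lambda_0$ only controls companions of length comparable to $|J_j|$. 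That handles a fixed fraction of $J_j$ and nothing more.
\item The base estimate $\fint_{I^+}(f-m)_+\le1$ says nothing about a subinterval of $I^+$ of length $\ell\ll h$. Moreover, a companion of length $\ell$ at distance $\ell$ from a piece deep inside $I^-$ cannot reach $I^+$ at all.
\end{itemize}
Hence no uniform bound on $\mathop\mathrm{ess\,inf}_{P^{++}}f-m$ is produced, and the iteration is not established.

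The missing idea is to make the rising sun lemma supply exactly these companion bounds. The paper uses the opposite orientation: $\Omega:=\{M^-(g\boldsymbol{1}_{I^-\cup I^+})>6\}$ with $g:=(f-m)_+$.
\begin{itemize}
\item Each right endpoint $d_i$ of a component $J_i=(c_i,d_i)$ lies outside $\Omega$, so every right tail satisfies $\fint_{(y,d_i)}g\le6$ for $y\in J_i$.
\item Components meeting $I^-$ do not reach $I^{++}$; otherwise $I^+\subset J_{i_0}$, contradicting the $\frac32$ bound. Also $\sum_i|J_i|\le\frac12|I^-|$.
\item The pieces are chosen with ratio $\frac23$: $J_i^{(k)-}=(d_i-(\frac23)^{k-1}|J_i|,\,d_i-(\frac23)^{k}|J_i|)$, $k\in\mathbb N$. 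These tile $J_i$, and $J_i^{(k)++}=(y_i^{(k)},d_i)$ is precisely a right tail of $J_i$.
\end{itemize}
Therefore $\mathop\mathrm{ess\,inf}_{J_i^{(k)++}}f\le m+6$ uniformly in $i,k$, which closes the recursion.

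If you prefer to keep $M^+$, the usable information sits at $\beta_j\notin\Omega$, which your proposal never invokes. Apply $M^+$ to $g\boldsymbol{1}_{I^-\cup I^+}$ on all of $\mathbb R$ instead of restricting to $I^-$, and take $\lambda_0\ge6$, so that $\sum_j|J_j|\le\frac12|I^-|$. Then use $P_k^-=(\beta_j-2^{1-k}|J_j|,\,\beta_j-2^{-k}|J_j|)$. Their companions $P_k^{++}=(\beta_j,\beta_j+2^{-k}|J_j|)$ lie in $I^-\cup I^+$ and carry an average of $g$ at most $\lambda_0$.
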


\begin{proof}
If $\|f\|_{\mathrm{BLO}^+(\mathbb{R})}=0$, then $f(x)\leq f(y)$ for
almost every $-\infty<x\leq y<\infty$. In this case,
\eqref{20250714.2247} holds automatically for any $I\in\mathcal{I}$ and
$\lambda\in(0,\infty)$. Therefore, we only need to consider the case that
$\|f\|_{\mathrm{BLO}^+(\mathbb{R})}\in(0,\infty)$. Without loss of
generality, we may assume that $\|f\|_{\mathrm{BLO}^+(\mathbb{R})}=1$.
Fix $I\in\mathcal{I}$. From Definition \ref{BLO+} and the assumption that
$\|f\|_{\mathrm{BLO}^+(\mathbb{R})}=1$, we infer that
\begin{align}\label{20250715.2123}
\fint_{I^-\cup I^+}\left(f-\mathop\mathrm{ess\,inf}_{I^{++}}f
\right)_+&\leq\frac{1}{2}\fint_{I^-}
\left(f-\mathop\mathrm{ess\,inf}_{I^+}f\right)_++
\frac{1}{2}\fint_{I^-}\left(\mathop\mathrm{ess\,inf}_{I^+}f
-\mathop\mathrm{ess\,inf}_{I^{++}}f\right)_+\notag\\
&\quad+\frac{1}{2}\fint_{I^+}\left(f-\mathop\mathrm{ess\,inf}
_{I^{++}}f\right)_+\notag\\
&\leq\|f\|_{\mathrm{BLO}^+(\mathbb{R})}+\frac{1}{2}\left(f_{I^+}-
\mathop\mathrm{ess\,inf}_{I^{++}}f\right)_+\notag\\
&\leq\|f\|_{\mathrm{BLO}^+(\mathbb{R})}+\frac{1}{2}\fint_{I^+}
\left(f-\mathop\mathrm{ess\,inf}_{I^{++}}f\right)_+\leq\frac{3}{2}
\|f\|_{\mathrm{BLO}^+(\mathbb{R})}=\frac{3}{2}.
\end{align}
Let
\begin{align*}
\Omega:=\left\{x\in\mathbb{R}:M^-\left(\left[f-\mathop\mathrm{ess\,inf}
_{I^{++}}f\right]_+\boldsymbol{1}_{I^-\cup I^+}\right)(x)>6\right\}.
\end{align*}
Since $M^-$ is lower semicontinuous, it follows that
$\Omega\subset\mathbb{R}$ is open and hence there exists a sequence of
pairwise disjoint open intervals $\{(a_i,b_i)\}_{i\in\mathbb{N}}$ such
that $\Omega=\bigcup_{i\in\mathbb{N}}(a_i,b_i)$. Applying the fact that
$b_i\notin\Omega$ for any $i\in\mathbb{N}$ and \cite[Lemma
2.1]{s(tams-1986)}, we conclude that, for any $i\in\mathbb{N}$ and
$x\in(a_i,b_i)$,
\begin{align}\label{20250715.2107}
\fint_{(x,b_i)}\left(f-\mathop\mathrm{ess\,inf}
_{I^{++}}f\right)_+\boldsymbol{1}_{I^-\cup I^+}\leq6\leq
\fint_{(a_i,x)}\left(f-\mathop\mathrm{ess\,inf}
_{I^{++}}f\right)_+\boldsymbol{1}_{I^-\cup I^+}.
\end{align}
Collect all $(a_i,b_i)$ such that $(a_i,b_i)\cap I^-\neq\emptyset$ and
denote them by $\{J_i\}_{i\in\mathbb{N}}$. We claim that,
for any $i\in\mathbb{N}$, $J_i\cap I^{++}=\emptyset$. Indeed, if there
exists $i_0\in\mathbb{N}$ such that $J_{i_0}\cap I^{++}\neq\emptyset$,
then $I^+\subset J_{i_0}$. This, together with \eqref{20250715.2107} and
\eqref{20250715.2123}, further implies that
\begin{align*}
\left|I^+\right|\leq\left|J_{i_0}\right|=\frac{1}{6}
\int_{J_{i_0}}\left(f-\mathop\mathrm{ess\,inf}_{I^{++}}f\right)
_+\boldsymbol{1}_{I^-\cup I^+}\leq\frac{1}{6}
\int_{I^-\cup  I^+}\left(f-\mathop\mathrm{ess\,inf}_{I^{++}}f\right)
_+\leq\frac{1}{2}\left|I^+\right|,
\end{align*}
which is impossible. Therefore, for any $i\in\mathbb{N}$, $J_i\cap
I^{++}=\emptyset$. This finishes the proof of the above claim.

Now, we show that
\begin{align}\label{20250715.2226}
\sum_{i\in\mathbb{N}}|J_i|\leq\frac{1}{2}\left|I^-\right|.
\end{align}
Indeed, from the definitions of both $M^-$ and $\Omega$, it follows that
$\bigcup_{i\in\mathbb{N}}J_i\subset I^-\cup I^+$. This, together with
\eqref{20250715.2107}, the fact that $\{J_i\}_{i\in\mathbb{N}}$ are
pairwise disjoint, and \eqref{20250715.2123}, further implies that
\begin{align*}
\sum_{i\in\mathbb{N}}|J_i|=\frac{1}{6}\sum_{i\in\mathbb{N}}\int_{J_i}
\left(f-\mathop\mathrm{ess\,inf}_{I^{++}}f\right)_+
\boldsymbol{1}_{I^-\cup I^+}\leq\frac{1}{6}\int_{I^-\cup I^+}
\left(f-\mathop\mathrm{ess\,inf}_{I^{++}}f\right)_+
\leq\frac{1}{2}\left|I^-\right|
\end{align*}
and hence \eqref{20250715.2226} holds.

For any $i\in\mathbb{N}$, let $J_i=:(c_i,d_i)$ and define
$\{x_i^{(k)}\}_{k\in\mathbb{N}},\{y_i^{(k)}\}_{k\in\mathbb{N}}\subset
\mathbb{R}$ by setting, for any $k\in\mathbb{N}$,
\begin{align}\label{20250715.2143}
d_i-x_i^{(k)}=2\left[d_i-y_i^{(k)}\right]
=\left(\frac{2}{3}\right)^k(d_i-c_i)=\left(\frac{2}{3}\right)^k|J_i|.
\end{align}
In addition, for any $i,k\in\mathbb{N}$, let $J_i^{(k)}\in\mathcal{I}$
be such that $J_i^{(k)+}=(x_i^{(k)},y_i^{(k)})$. From \eqref{20250715.2143}
and the fact that $b_i\notin\Omega$ for any $i\in\mathbb{N}$, we deduce
that, for any $i\in\mathbb{N}$, $J_i=\bigcup_{k\in\mathbb{N}}J_i^{(k)-}$
and, for any $k\in\mathbb{N}$,
\begin{align}\label{20250715.2211}
\fint_{J_i^{(k)++}}\left(f-\mathop\mathrm{ess\,inf}_{I^{++}}f
\right)_+=\fint_{(y_k,d_i)}\left(f-\mathop\mathrm{ess\,inf}_{I^{++}}
f\right)_+\leq6.
\end{align}

Next, we are ready to prove \eqref{20250714.2247}. Using the Lebesgue
differentiation theorem and the definition of $\Omega$,
we find that, for almost every $x\in I^-\setminus\Omega=
I^-\setminus\bigcup_{i\in\mathbb{N}}J_i=
I^-\setminus\bigcup_{i,k\in\mathbb{N}}J_i^{(k)-}$,
\begin{align*}
\left[f(x)-\mathop\mathrm{ess\,inf}_{I^{++}}f\right]_+\leq
M^-\left(\left[f-\mathop\mathrm{ess\,inf}_{I^{++}}f\right]_+
\boldsymbol{1}_{I^-\cup I^+}\right)(x)\leq6.
\end{align*}
From this, the fact that $J_i=\bigcup_{k\in\mathbb{N}}J_i^{(k)-}$
for any $i\in\mathbb{N}$, and \eqref{20250715.2211}, we infer that, for
any $\lambda\in(6,\infty)$,
\begin{align}\label{20250715.2214}
\left|I^-\cap\left\{\left(f-\mathop\mathrm{ess\,inf}_{I^{++}}f\right)
_+>\lambda\right\}\right|&\leq\sum_{i,k\in\mathbb{N}}
\left|J_i^{(k)-}\cap\left\{\left[f-\mathop\mathrm{ess\,inf}
_{J_i^{(k)++}}f\right]_+>\lambda-6\right\}\right|\notag\\
&\quad+\sum_{i,k\in\mathbb{N}}\left|J_i^{(k)-}\cap\left\{
\left[\mathop\mathrm{ess\,inf}_{J_i^{(k)++}}f-
\mathop\mathrm{ess\,inf}_{I^{++}}f\right]_+>6\right\}\right|\notag\\
&\leq\sum_{i,k\in\mathbb{N}}\left|J_i^{(k)-}\cap\left\{
\left[f-\mathop\mathrm{ess\,inf}_{J_i^{(k)++}}f\right]_+
>\lambda-6\right\}\right|\notag\\
&\quad+\sum_{i,k\in\mathbb{N}}\left|J_i^{(k)-}\cap
\left\{\fint_{J_i^{(k)++}}\left(f-\mathop\mathrm{ess\,inf}_{I^{++}}f
\right)_+>6\right\}\right|\notag\\
&\leq\sum_{i,k\in\mathbb{N}}\left|J_i^{(k)-}\cap\left\{
\left[f-\mathop\mathrm{ess\,inf}_{J_i^{(k)++}}f\right]_+
>\lambda-6\right\}\right|.
\end{align}
To proceed, for any $\delta\in(0,\infty)$, let
\begin{align*}
\mathcal{A}(\delta):=\sup_{J\in\mathcal{I}}\frac{|J^-\cap
\{(f-\mathop\mathrm{ess\,inf}\limits_{J^{++}}f)_+>\delta\}|}{|J^-|}.
\end{align*}
Combining this, \eqref{20250715.2214}, the facts that $J_i^{(k)-}$ are
pairwise disjoint for any $i,k\in\mathbb{N}$ and
$J_i=\bigcup_{k\in\mathbb{N}}J_i^{(k)-}$ for any $i\in\mathbb{N}$, and
\eqref{20250715.2226}, we obtain
\begin{align*}
\left|I^-\cap\left\{\left(f-\mathop\mathrm{ess\,inf}_{I^{++}}f\right)_+
>\lambda\right\}\right|&\leq\sum_{i,k\in\mathbb{N}}
\mathcal{A}(\lambda-6)\left|J_i^{(k)-}\right|\\
&=\sum_{i\in\mathbb{N}}\mathcal{A}(\lambda-6)|J_i|
\leq\frac{1}{2}\mathcal{A}(\lambda-6)\left|I^-\right|
\end{align*}
and hence $\mathcal{A}(\lambda)\leq\frac{1}{2}\mathcal{A}(\lambda-6)$.
Moreover, notice that, for any $\lambda\in(6,\infty)$, there exists $m\in
\mathbb{N}$ such that $6m<\lambda\leq6(m+1)$. This, together with the
proven conclusion that $\mathcal{A}(\lambda)\leq
\frac{1}{2}\mathcal{A}(\lambda-6)$ for any $\lambda\in(6,\infty)$ and the
fact that  $A(\delta)\leq1$ for any $\delta\in(0,\infty)$, further implies that
\begin{align*}
\mathcal{A}(\lambda)\leq\frac{1}{2^m}\mathcal{A}(\lambda-6m)\leq
\frac{1}{2^m}\leq2^{1-\frac{\lambda}{6}}=2e^{-\frac{\ln2}{6}\lambda}
=2e^{-\frac{\ln 2}{6}\frac{\lambda}{\|f\|_{\mathrm{BLO}^+(\mathbb{R})}}}.
\end{align*}
and hence \eqref{20250714.2247} holds for any $\lambda\in(6,\infty)$.

Finally, for any $\lambda\in(0,6]$,
\begin{align*}
\left|I^-\cap\left\{\left(f-\mathop\mathrm{ess\,inf}_{I^{++}}f\right)
_+>\lambda\right\}\right|\leq\left|I^-\right|
\leq e^{\ln2-\frac{\ln2}{6}\lambda}\left|I^-\right|\leq
2e^{-\frac{\ln 2}{6}\frac{\lambda}{\|f\|_{\mathrm{BLO}^+(\mathbb{R})}}}
\left|I^-\right|.
\end{align*}
In conclusion, for any $I\in\mathcal{I}$ and $\lambda\in(0,\infty)$,
\eqref{20250714.2247} holds with $A:=2$ and $B:=\frac{\ln2}{6}$.
This finishes the proof of Lemma \ref{JN BLO+}.
\end{proof}

Now, we are ready to show Theorem \ref{BLO+ and A1+}.

\begin{proof}[Proof of Theorem \ref{BLO+ and A1+}]
Let $f\in\mathrm{BLO}^+(\mathbb{R})$. From Lemma \ref{JN BLO+}, we deduce
that there exist positive constants $A$ and $B$ such that,
for any $I\in\mathcal{I}$ and $\lambda\in(0,\infty)$,
\eqref{20250714.2247} holds. This, together with Cavalieri's principle,
further implies that, for any $\epsilon\in(0,\frac{B}
{\|f\|_{\mathrm{BLO}^+(\mathbb{R})}})$ and $I\in\mathcal{I}$,
\begin{align*}
\fint_{I^-}e^{\epsilon f}\leq
e^{\epsilon\mathop\mathrm{ess\,inf}\limits_{I^{++}}f}
\fint_{I^-}e^{\epsilon(f-\mathop\mathrm{ess\,inf}\limits_{I^{++}}f)_+}
\leq\left[1+\frac{A\epsilon\|f\|_{\mathrm{BLO}^+(\mathbb{R})}}
{B-\epsilon\|f\|_{\mathrm{BLO}^+(\mathbb{R})}}\right]
\mathop\mathrm{ess\,inf}_{I^{++}}e^{\epsilon f}.
\end{align*}
By this and Lemma \ref{Ap+ time lag} with $\gamma:=\frac{1}{3}$ therein,
we conclude that, for any $\epsilon\in(0,\frac{B}
{\|f\|_{\mathrm{BLO}^+(\mathbb{R})}})$,
$e^{\epsilon f}\in A_1^+(\mathbb{R})$ and
\begin{align}\label{20250622.2236}
\left[e^{\epsilon f}\right]_{A_1^+(\mathbb{R})}\leq
1+\frac{A\epsilon\|f\|_{\mathrm{BLO}^+(\mathbb{R})}}
{B-\epsilon\|f\|_{\mathrm{BLO}^+(\mathbb{R})}}.
\end{align}

Conversely, let $\omega\in A_1^+(\mathbb{R})$. From Jensen's inequality
and Lemma \ref{Ap+ time lag}(i) with $\gamma:=\frac{1}{2}$ therein,
we infer that, for any $I\in\mathcal{I}$,
\begin{align}\label{20250622.2254}
\exp\left\{\fint_{I^-}\left(\ln\omega-\mathop\mathrm{ess\,inf}
_{I^+}\ln\omega\right)_+\right\}&\leq
\fint_{I^-}\exp\left\{\left(\ln\omega-
\mathop\mathrm{ess\,inf}_{I^+}\ln\omega\right)_+\right\}\notag\\
&\leq1+\fint_{I^-}\omega\left(\mathop\mathrm{ess\,inf}
_{I^+}\omega\right)^{-1}\leq1+2[\omega]_{A_1^+(\mathbb{R})}.
\end{align}
Taking the supremum over all $I\in\mathcal{I}$, we obtain
$\ln\omega\in\mathrm{BLO}^+(\mathbb{R})$ and
\begin{align}\label{20250727.1548}
\|\ln\omega\|_{\mathrm{BLO}^+(\mathbb{R})}\leq
\ln\left(1+2[\omega]_{A_1^+(\mathbb{R})}\right).
\end{align}
This finishes the proof of Theorem \ref{BLO+ and A1+}.
\end{proof}

\begin{remark}\label{Linfty BLO+ BMO+}
We point out that $L^\infty(\mathbb{R})\subsetneqq
\mathrm{BLO}^+(\mathbb{R})\subsetneqq\mathrm{BMO}^+(\mathbb{R})$. Indeed,
from some simple calculations, we deduce that
$L^\infty(\mathbb{R})\subset\mathrm{BLO}^+(\mathbb{R})\subset
\mathrm{BMO}^+(\mathbb{R})$ and, for any $f\in L^\infty(\mathbb{R})$,
$\|f\|_{\mathrm{BMO}^+(\mathbb{R})}\leq\|f\|_{\mathrm{BLO}^+(\mathbb{R})}
\leq2\|f\|_{L^\infty(\mathbb{R})}$. Notice that, for any $f\in
L_{\mathrm{loc}}^1(\mathbb{R})$, $\|f\|_{\mathrm{BLO}^+(\mathbb{R})}=0$
if and only if $f(x)\leq f(y)$ for almost every $x,y\in\mathbb{R}$ with
$x\leq y$. Thus, any unbounded monotonically increasing function on
$\mathbb{R}$ belongs to $\mathrm{BLO}^+(\mathbb{R})\setminus
L^\infty(\mathbb{R})$. We prove $[\mathrm{BMO}^+(\mathbb{R})\setminus
\mathrm{BLO}^+(\mathbb{R})]\neq\emptyset$ by contradiction. If
$\mathrm{BMO}^+(\mathbb{R})=\mathrm{BLO}^+(\mathbb{R})$, then
$\mathrm{BMO}^-(\mathbb{R})=\mathrm{BLO}^-(\mathbb{R})$ and hence
\begin{align*}
\mathrm{BMO}(\mathbb{R})=\mathrm{BMO}^+(\mathbb{R})\cap
\mathrm{BMO}^-(\mathbb{R})=\mathrm{BLO}^+(\mathbb{R})\cap
\mathrm{BLO}^-(\mathbb{R}).
\end{align*}
This, together with Theorem \ref{BLO+ and A1+}, further implies that, for
any $f\in\mathrm{BMO}(\mathbb{R})$, there exists $\epsilon\in(0,\infty)$
such that $e^{\epsilon f}\in A_1^+(\mathbb{R})\cap A_1^-(\mathbb{R})=
A_1(\mathbb{R})$. By this and \cite[Lemma 1]{cr(pams-1980)}, we find that
$f\in\mathrm{BLO}(\mathbb{R})$ and hence $\mathrm{BMO}(\mathbb{R})=
\mathrm{BLO}(\mathbb{R})$. This contradicts the classical result that
$\mathrm{BLO}(\mathbb{R})$ is not a linear space (see, for instance,
\cite[p.\,553]{b(pams-1982)}). Thus, $\mathrm{BLO}^+(\mathbb{R})
\subsetneqq\mathrm{BMO}^+(\mathbb{R})$. This further implies that
\eqref{20250727.1611} fails to hold for the endpoint case $q=1$.
\end{remark}

With the aid of Lemma \ref{Ap+ time lag}(i) and Theorem
\ref{BLO+ and A1+}, we obtain the following characterizations of
$\mathrm{BLO}^+(\mathbb{R})$ in terms of the one-sided John--Nirenberg inequality.

\begin{theorem}\label{JN BLO+ 2}
Let $f\in L_{\mathrm{loc}}^1(\mathbb{R})$. Then the following statements
are mutually equivalent.
\begin{enumerate}
\item[\rm(i)] $f\in\mathrm{BLO}^+(\mathbb{R})$.

\item[\rm(ii)] There exist positive constants $A$ and $B$ such that,
for any $I\in\mathcal{I}$ and $\lambda\in(0,\infty)$,
\begin{align*}
\left|I^-\cap\left\{\left(f-\mathop\mathrm{ess\,inf}_{I^+}f
\right)_+>\lambda\right\}\right|\leq Ae^{-B\lambda}\left|I^-\right|.
\end{align*}

\item[\rm(iii)] For any $q\in[1,\infty)$,
\begin{align*}
\|f\|_{\mathrm{BLO}^+(\mathbb{R}),q}:=
\sup_{I\in\mathcal{I}}\left\{\fint_{I^-}
\left(f-\mathop\mathrm{ess\,inf}_{I^+}f\right)_+^q\right\}
^\frac{1}{q}<\infty.
\end{align*}
\end{enumerate}
Moreover, for any $q\in(1,\infty)$,
$\|f\|_{\mathrm{BLO}^+(\mathbb{R})}\sim
\|f\|_{\mathrm{BLO}^+(\mathbb{R}),q}$,
where the positive equivalence constants are independent of $f$.
\end{theorem}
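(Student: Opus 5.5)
We would prove the cycle (i)$\Rightarrow$(ii)$\Rightarrow$(iii)$\Rightarrow$(i). The step (iii)$\Rightarrow$(i) is immediate by taking $q=1$, since $\|f\|_{\mathrm{BLO}^+(\mathbb{R}),1}=\|f\|_{\mathrm{BLO}^+(\mathbb{R})}$; Hölder's inequality also gives $\|f\|_{\mathrm{BLO}^+(\mathbb{R})}\le\|f\|_{\mathrm{BLO}^+(\mathbb{R}),q}$ for every $q\in(1,\infty)$. For (ii)$\Rightarrow$(iii) we use Cavalieri's principle:
\begin{align*}
\fint_{I^-}\left(f-\mathop\mathrm{ess\,inf}_{I^+}f\right)_+^q
=\frac{q}{|I^-|}\int_0^\infty\lambda^{q-1}\left|I^-\cap\left\{\left(f-\mathop\mathrm{ess\,inf}_{I^+}f\right)_+>\lambda\right\}\right|\,d\lambda
\le qA\int_0^\infty\lambda^{q-1}e^{-B\lambda}\,d\lambda=\frac{A\,\Gamma(q+1)}{B^q}.
\end{align*}
This bound is uniform in $I$, so (iii) follows.

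The main step is (i)$\Rightarrow$(ii). The point is that Lemma \ref{JN BLO+} controls the oscillation against $\mathrm{ess\,inf}_{I^{++}}f$, not against $\mathrm{ess\,inf}_{I^+}f$. We would therefore pass through $A_1^+$ weights, as suggested by Lemma \ref{Ap+ time lag}(i) and Theorem \ref{BLO+ and A1+}. Fix $\epsilon:=\frac{B_0}{2\|f\|_{\mathrm{BLO}^+(\mathbb{R})}}$, where $B_0$ is the constant of Theorem \ref{BLO+ and A1+} (if $\|f\|_{\mathrm{BLO}^+(\mathbb{R})}=0$, the left-hand side of (ii) vanishes and there is nothing to prove). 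Then $\omega:=e^{\epsilon f}\in A_1^+(\mathbb{R})$, and \eqref{20250622.2236} gives $[\omega]_{A_1^+(\mathbb{R})}\le 1+A_0$. Applying Lemma \ref{Ap+ time lag}(i) with $\gamma=\frac12$, which corresponds exactly to the contiguous pair $I^-,I^+$, we get
\begin{align*}
\fint_{I^-}e^{\epsilon(f-\mathop\mathrm{ess\,inf}_{I^+}f)}\le 2[\omega]_{A_1^+(\mathbb{R})}.
\end{align*}
Since $e^{\epsilon t_+}\le 1+e^{\epsilon t}$, Chebyshev's inequality yields
\begin{align*}
\left|I^-\cap\left\{\left(f-\mathop\mathrm{ess\,inf}_{I^+}f\right)_+>\lambda\right\}\right|\le e^{-\epsilon\lambda}\left(1+2[\omega]_{A_1^+(\mathbb{R})}\right)|I^-|.
\end{align*}
This is (ii) with $B=\epsilon\sim\|f\|_{\mathrm{BLO}^+(\mathbb{R})}^{-1}$ and $A=3+2A_0$, an absolute constant.

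For the norm equivalence, insert these constants into the computation for (ii)$\Rightarrow$(iii). This gives
\begin{align*}
\|f\|_{\mathrm{BLO}^+(\mathbb{R}),q}\le\left[A\,\Gamma(q+1)\right]^{1/q}\epsilon^{-1}\lesssim\|f\|_{\mathrm{BLO}^+(\mathbb{R})},
\end{align*}
and Hölder's inequality supplies the reverse bound. The delicate point is to check that the constant in $[\omega]_{A_1^+(\mathbb{R})}$ is independent of $f$ once $\epsilon\|f\|_{\mathrm{BLO}^+(\mathbb{R})}$ is fixed at $\frac{B_0}{2}$. This holds by \eqref{20250622.2236}, since its right-hand side depends only on the product $\epsilon\|f\|_{\mathrm{BLO}^+(\mathbb{R})}$. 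It is also what makes the equivalence constants independent of $f$.
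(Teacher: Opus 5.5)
Your proposal is correct and follows essentially the same route as the paper. The paper also treats (iii)$\Rightarrow$(i) and (ii)$\Rightarrow$(iii) as immediate, and proves (i)$\Rightarrow$(ii) by taking $\omega=e^{Bf/(2\|f\|_{\mathrm{BLO}^+(\mathbb{R})})}$, bounding $[\omega]_{A_1^+(\mathbb{R})}\le 1+A$ via \eqref{20250622.2236}, applying Lemma \ref{Ap+ time lag}(i) with $\gamma=\frac12$, and using Chebyshev's inequality to reach the same constant $3+2A$. Your explicit handling of the case $\|f\|_{\mathrm{BLO}^+(\mathbb{R})}=0$, and of the norm equivalence via $[A\,\Gamma(q+1)]^{1/q}$, fills in details the paper leaves implicit.
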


\begin{proof}
The proof that (iii) $\Longrightarrow$ (i) is obvious and the proof that
(ii) $\Longrightarrow$ (iii) follows from Cavalieri's principle. We only
need to show (i) $\Longrightarrow$ (ii). Indeed, let $B$ be the same as
in \eqref{20250714.2247}. By Lemma \ref{Ap+ time lag}(i) with
$\gamma:=\frac{1}{2}$ therein and by \eqref{20250622.2236}, we find that,
for any $I\in\mathcal{I}$,
\begin{align*}
\fint_{I^-}e^\frac{B(f-\mathop\mathrm{ess\,inf}\limits_{I^+}f)_+}
{2\|f\|_{\mathrm{BLO}^+(\mathbb{R})}}&\leq1+\fint_{I^-}
e^\frac{B(f-\mathop\mathrm{ess\,inf}\limits_{I^+}f)}
{2\|f\|_{\mathrm{BLO}^+(\mathbb{R})}}
=1+\fint_{I^-}e^\frac{Bf}{2\|f\|_{\mathrm{BLO}^+(\mathbb{R})}}
\left[\mathop\mathrm{ess\,inf}_{I^+}
e^\frac{Bf}{2\|f\|_{\mathrm{BLO}^+(\mathbb{R})}}\right]^{-1}\\
&\leq1+2\left[e^\frac{Bf}{2\|f\|_{\mathrm{BLO}^+(\mathbb{R})}}\right]
_{A_1^+(\mathbb{R})}\leq3+2A,
\end{align*}
where $A$ is the same as in \eqref{20250714.2247}. This, together with
Chebyshev's inequality, further implies that, for any $I\in\mathcal{I}$
and $\lambda\in(0,\infty)$,
\begin{align*}
\left|I^-\cap\left\{\left(f-\mathop\mathrm{ess\,inf}_{I^+}f
\right)_+>\lambda\right\}\right|&=\left|I^-\cap
\left\{e^\frac{B(f-\mathop\mathrm{ess\,inf}\limits_{I^+}f)_+}
{2\|f\|_{\mathrm{BLO}^+(\mathbb{R})}}>e^\frac{B\lambda}
{2\|f\|_{\mathrm{BLO}^+(\mathbb{R})}}\right\}\right|\\
&\leq e^{-\frac{B\lambda}{2\|f\|_{\mathrm{BLO}^+(\mathbb{R})}}}\int_{I^-}
e^\frac{B(f-\mathop\mathrm{ess\,inf}\limits_{I^+}f)_+}
{2\|f\|_{\mathrm{BLO}^+(\mathbb{R})}}\\
&\leq(3+2A)e^{-\frac{B\lambda}{2\|f\|_{\mathrm{BLO}^+(\mathbb{R})}}}
\left|I^-\right|.
\end{align*}
This finishes the proof that (i) $\Longrightarrow$ (ii) and hence
the proof of Theorem \ref{JN BLO+ 2}.
\end{proof}

At the end of this section, we prove that one can take two intervals that
are not contiguous in the definition of $\mathrm{BLO}^+(\mathbb{R})$.
This result plays a crucial role in the proof of Proposition
\ref{N-:BMO+ to BLO+}.

\begin{theorem}\label{BLO+ time lag}
Let $\gamma\in(0,\frac{1}{2}]$. Then $f\in\mathrm{BLO}^+(\mathbb{R})$ if
and only if $f\in L_{\mathrm{loc}}^1(\mathbb{R})$ and
\begin{align*}
\||f\||_{\mathrm{BLO}^+(\mathbb{R})}:=\sup_{\genfrac{}{}{0pt}{}
{-\infty<a<b\leq c<d<\infty}
{b-a=d-c=\gamma(d-a)}}\fint_{(a,b)}\left[f-
\mathop\mathrm{ess\,inf}_{(c,d)}f\right]_+<\infty.
\end{align*}
Moreover, $\|f\|_{\mathrm{BLO}^+(\mathbb{R})}\sim
\||f\||_{\mathrm{BLO}^+(\mathbb{R})}$,
where the positive equivalence constants are independent of $f$.
\end{theorem}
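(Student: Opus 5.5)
The plan is to go through the $A_1^+(\mathbb{R})$ characterization: Theorem \ref{BLO+ and A1+} converts $\mathrm{BLO}^+$ into $A_1^+$, and Lemma \ref{Ap+ time lag}(i) already shows that $A_1^+$ does not depend on the gap between the two intervals. Throughout, write $\mathcal{P}_\gamma$ for the family of admissible $(a,b,c,d)$, namely $-\infty<a<b\leq c<d<\infty$ with $b-a=d-c=\gamma(d-a)$.

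\emph{Step 1: $f\in\mathrm{BLO}^+(\mathbb{R})$ implies $\||f\||_{\mathrm{BLO}^+(\mathbb{R})}\lesssim\|f\|_{\mathrm{BLO}^+(\mathbb{R})}$.} If $\|f\|_{\mathrm{BLO}^+(\mathbb{R})}=0$, then $f$ is a.e. nondecreasing, so $\||f\||_{\mathrm{BLO}^+(\mathbb{R})}=0$. Otherwise, set $\epsilon:=\frac{B}{2\|f\|_{\mathrm{BLO}^+(\mathbb{R})}}$, where $B$ is the constant of Theorem \ref{BLO+ and A1+}, and let $\omega:=e^{\epsilon f}$. By \eqref{20250622.2236}, $[\omega]_{A_1^+(\mathbb{R})}\leq 1+A$, a bound independent of $f$. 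Lemma \ref{Ap+ time lag}(i) then gives $\langle\omega\rangle_{A_1^+(\mathbb{R})}\leq\frac{1+A}{\gamma}$. Fix $(a,b,c,d)\in\mathcal{P}_\gamma$. Jensen's inequality gives, exactly as in \eqref{20250622.2254},
\begin{align*}
\exp\left\{\epsilon\fint_{(a,b)}\left[f-\mathop\mathrm{ess\,inf}_{(c,d)}f\right]_+\right\}
\leq 1+\fint_{(a,b)}\omega\left[\mathop\mathrm{ess\,inf}_{(c,d)}\omega\right]^{-1}\leq 1+\frac{1+A}{\gamma}.
\end{align*}
Taking logarithms and dividing by $\epsilon$ yields $\||f\||_{\mathrm{BLO}^+(\mathbb{R})}\leq\frac{2}{B}\ln\bigl(1+\frac{1+A}{\gamma}\bigr)\|f\|_{\mathrm{BLO}^+(\mathbb{R})}$.

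\emph{Step 2: $\||f\||_{\mathrm{BLO}^+(\mathbb{R})}<\infty$ implies $f\in\mathrm{BLO}^+(\mathbb{R})$ with $\|f\|_{\mathrm{BLO}^+(\mathbb{R})}\lesssim\||f\||_{\mathrm{BLO}^+(\mathbb{R})}$.} When $\gamma=\frac12$, the definition of $\||\cdot\||$ is literally the definition of $\|\cdot\|_{\mathrm{BLO}^+(\mathbb{R})}$, so assume $\gamma<\frac12$. Fix $I=(a,b)$ with $I^-=(a,m)$, $I^+=(m,b)$, and $h:=m-a$. Split $I^+$ into $N:=\lceil\frac{1-\gamma}{\gamma}\rceil$ consecutive subintervals $K_1,\dots,K_N$ of equal length $h/N\leq\frac{\gamma}{1-\gamma}h$. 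For each $j$, take the admissible configuration whose right interval is $K_j$ and whose left interval $L_j$ has length $|K_j|$ and gap $\frac{1-2\gamma}{\gamma}|K_j|\leq h$ before $K_j$. Then $L_j$ lies inside $(a-h,b)$ and is short compared with $h$, but it need not lie in $I^-$.

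To handle this, first compare $f$ on $I^-$ with $\mathrm{ess\,inf}_{K_j}f$ for each $j$, and then use $\mathrm{ess\,inf}_{I^+}f=\min_j\mathrm{ess\,inf}_{K_j}f$ together with
\begin{align*}
\left(f-\mathop\mathrm{ess\,inf}_{I^+}f\right)_+\leq\sum_{j=1}^N\left(f-\mathop\mathrm{ess\,inf}_{K_j}f\right)_+.
\end{align*}
So it suffices to bound $\fint_{I^-}(f-\mathrm{ess\,inf}_{K}f)_+$ for a single interval $K\subset I^+$ of length comparable to $\gamma h$. The cleanest route again goes through weights. I would prove a John--Nirenberg inequality for $\||\cdot\||$, namely the analogue of Lemma \ref{JN BLO+} with $\|\cdot\|_{\mathrm{BLO}^+(\mathbb{R})}$ replaced by $\||\cdot\||$, using the same one-sided Calder\'on--Zygmund stopping argument with the configurations from $\mathcal{P}_\gamma$. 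This gives $e^{\epsilon f}$ with $\langle e^{\epsilon f}\rangle_{A_1^+(\mathbb{R})}\lesssim1$. Lemma \ref{Ap+ time lag}(i) then gives $e^{\epsilon f}\in A_1^+(\mathbb{R})$, and the converse part of Theorem \ref{BLO+ and A1+} gives $\|f\|_{\mathrm{BLO}^+(\mathbb{R})}\leq\frac1\epsilon\ln(1+2[e^{\epsilon f}]_{A_1^+(\mathbb{R})})\lesssim\||f\||_{\mathrm{BLO}^+(\mathbb{R})}$.

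\emph{Main obstacle.} The hard part is this time-lag John--Nirenberg inequality in Step 2. Concretely, one must redo the argument of Lemma \ref{JN BLO+}: apply the Calder\'on--Zygmund decomposition to $\bigl(f-\mathrm{ess\,inf}_{(c,d)}f\bigr)_+\mathbf 1_{(a,c)}$ at a level $C_\gamma\||f\||_{\mathrm{BLO}^+(\mathbb{R})}$. The first estimate needed is the analogue of \eqref{20250715.2123}, which requires filling the gap $(b,c)$ by chaining: cover it with boundedly many (about $1/\gamma$) admissible configurations, each moving forward by a step of length $\gamma(d-a)$, and telescope the essential infima along the chain. Next, each stopping interval must be subdivided geometrically, as in \eqref{20250715.2143}, into left pieces whose associated right intervals (now at distance proportional to their size) still have averages below the stopping level. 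This holds because those right intervals lie in the region where $M^-\leq$ level, exactly as in \eqref{20250715.2211}. Once this step is in place, the remaining estimates are routine, and the constants depend only on $\gamma$.
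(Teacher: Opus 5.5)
Your Step 1 is the paper's necessity argument and is fine. The gap is in Step 2, at the step you call the main obstacle. The analogue of \eqref{20250715.2123}, namely $\fint_{(a,c)}(f-\mathrm{ess\,inf}_{(c,d)}f)_+\lesssim\||f\||_{\mathrm{BLO}^+(\mathbb{R})}$, cannot be obtained by the chaining you describe.

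Finiteness of $\||f\||_{\mathrm{BLO}^+(\mathbb{R})}$ only bounds $f$ on an earlier interval by the essential infimum over a later one. Configurations whose left intervals step forward through $(b,c)$ by $\ell:=b-a=\gamma(d-a)$ have right intervals $(c+j\ell,d+j\ell)$, $j\ge1$, lying to the right of $(c,d)$. Telescoping therefore leaves you with $\mathrm{ess\,inf}_{(c+j\ell,d+j\ell)}f$. Replacing it by $\mathrm{ess\,inf}_{(c,d)}f$ would require $\mathrm{ess\,inf}_{(c+j\ell,d+j\ell)}f\le\mathrm{ess\,inf}_{(c,d)}f+C\||f\||_{\mathrm{BLO}^+(\mathbb{R})}$. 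That is a backward-in-time inequality, and it fails already for $f(t)=t$, which has $\||f\||_{\mathrm{BLO}^+(\mathbb{R})}=0$.

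The obstruction is structural. Suppose an admissible configuration has a right interval containing $(c,d)$. Then its length $\ell'$ is at least $\ell$, and its left interval ends at or before $c-(\frac1\gamma-2)\ell'\le c-(\frac1\gamma-2)\ell=b$. So no admissible configuration whose left interval meets the gap sees all of $(c,d)$. Moreover, for $x$ near $c$, the right intervals (of left intervals containing $x$) that reach the beginning of $(c,d)$ have length $\lesssim c-x$. No bounded family of configurations can work. You would need a multiscale covering of $(c,d)$, the inequality $(f-\min_j m_j)_+\le\sum_j(f-m_j)_+$, and a geometric summation over scales, which is a different argument. The estimate itself is true a posteriori, since $(c,2c-a)\supset(c,d)$, but that uses the theorem.

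Within your framework, you can avoid the gap by changing the target. With $\ell:=b-a$, let $T:=(a+\frac{2\ell}{\gamma}-2\ell,\,a+\frac{2\ell}{\gamma})$ be the right interval of the admissible configuration whose left interval is $(a,a+2\ell)$. Run the stopping argument for $g:=(f-\mathrm{ess\,inf}_T f)_+\boldsymbol{1}_{(a,a+2\ell)}$ at level $4\||f\||_{\mathrm{BLO}^+(\mathbb{R})}$.
\begin{itemize}
\item The initial bound $\fint_{(a,a+2\ell)}g\le\||f\||_{\mathrm{BLO}^+(\mathbb{R})}$ is just the definition.
\item The stopping intervals meeting $(a,b)$ have total length at most $\ell/2$, so they lie in $(a,b+\ell/2)$.
\item Tile each stopping interval $(c_i,d_i)$ by left pieces $(u,u+s)$ whose targets $(u+\frac{2s}{\gamma}-2s,\,u+\frac{2s}{\gamma})$ end at $d_i$. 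Then $M^-(g)(d_i)\le4\||f\||_{\mathrm{BLO}^+(\mathbb{R})}$ bounds their essential infima, and the recursion closes for this shape.
\end{itemize}
Since $T\supset(a+\frac{2\ell}{\gamma}-2\ell,\,a+\frac{2\ell}{\gamma}-\ell)$, you get $\langle e^{\epsilon f}\rangle_{A_1^+(\mathbb{R})}\lesssim1$ at lag $\frac{\gamma}{2-\gamma}\in(0,\frac13]$, and your final step applies.

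The paper instead avoids any new stopping argument. It notes $\||f\||_{\mathrm{BMO}^+(\mathbb{R})}\le\||f\||_{\mathrm{BLO}^+(\mathbb{R})}$ (take $\alpha:=\mathrm{ess\,inf}_{(c,d)}f$), so Lemma \ref{BMO+ time lag} gives $f\in\mathrm{BMO}^+(\mathbb{R})$. The $\mathrm{BMO}^+$ John--Nirenberg inequality then controls $e^{\epsilon(f-f_{(b,2b-a)})_+}$ on $(a,b)$. For $b-a=d-c=\frac{\gamma}{1+\gamma}(d-a)$, the configuration $(b,2b-a,c,d)$ is admissible at lag $\gamma$. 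This gives the forward-in-time bound $f_{(b,2b-a)}\le\mathrm{ess\,inf}_{(c,d)}f+\||f\||_{\mathrm{BLO}^+(\mathbb{R})}$, after which Lemma \ref{Ap+ time lag}(i) is applied at lag $\frac{\gamma}{1+\gamma}$. That route is shorter but rests on the heavy external input behind Lemma \ref{BMO+ time lag}. The repaired direct argument is self-contained.
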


To show Theorem \ref{BLO+ time lag}, we observe that
$\mathrm{BMO}^+(\mathbb{R})$ coincides with the one-dimensional parabolic
BMO space with time lag. Moreover, their norms are equivalent.

\begin{lemma}\label{BMO+ time lag}
Let $\gamma\in(0,\frac{1}{2}]$. Then $f\in\mathrm{BMO}^+(\mathbb{R})$ if
and only if $f\in L_{\mathrm{loc}}^1(\mathbb{R})$ and
\begin{align*}
\||f\||_{\mathrm{BMO}^+(\mathbb{R})}:=\sup_{\genfrac{}{}{0pt}{}
{-\infty<a<b\leq c<d<\infty}{b-a=d-c=\gamma(d-a)}}
\inf_{\alpha\in\mathbb{R}}\left[\fint_{(a,b)}(f-\alpha)_++
\fint_{(c,d)}(f-\alpha)_-\right]<\infty.
\end{align*}
Moreover, $\|f\|_{\mathrm{BMO}^+(\mathbb{R})}\sim
\||f\||_{\mathrm{BMO}^+(\mathbb{R})}$,
where the positive equivalence constants are independent of $f$.
\end{lemma}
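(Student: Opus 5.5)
The plan is to prove the two inequalities $\||f\||_{\mathrm{BMO}^+(\mathbb{R})}\lesssim\|f\|_{\mathrm{BMO}^+(\mathbb{R})}$ and $\|f\|_{\mathrm{BMO}^+(\mathbb{R})}\lesssim\||f\||_{\mathrm{BMO}^+(\mathbb{R})}$ separately. For the first I would route everything through weights, using \eqref{20250727.1611} and Lemma \ref{Ap+ time lag}(ii) in the same way Theorem \ref{BLO+ and A1+} used Lemma \ref{Ap+ time lag}(i). The second is the genuinely geometric direction and will need a chaining argument.

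\emph{Step 1: $\||f\||\lesssim\|f\|$.} By homogeneity, normalize $\|f\|_{\mathrm{BMO}^+(\mathbb{R})}=1$. The one-sided John--Nirenberg inequality of Mart\'in-Reyes and de la Torre \cite{mt(jlms-1994)}, which underlies \eqref{20250727.1611}, gives an absolute $\epsilon>0$ and an absolute constant $C_0$ with $[e^{\epsilon f}]_{A_2^+(\mathbb{R})}\le C_0$. Lemma \ref{Ap+ time lag}(ii) with $q=2$ then gives $\langle e^{\epsilon f}\rangle_{A_2^+(\mathbb{R})}\le C_1(\gamma)$. Fix $a<b\le c<d$ with $b-a=d-c=\gamma(d-a)$, and choose
\begin{align*}
\alpha:=-\frac1\epsilon\ln\fint_{(c,d)}e^{-\epsilon f}.
\end{align*}
With this choice, $\fint_{(a,b)}e^{\epsilon(f-\alpha)}=\fint_{(a,b)}e^{\epsilon f}\fint_{(c,d)}e^{-\epsilon f}\le C_1$ and $\fint_{(c,d)}e^{\epsilon(\alpha-f)}=1$. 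Since $e^{\epsilon t_+}\le 1+e^{\epsilon t}$, Jensen's inequality as in \eqref{20250622.2254} yields
\begin{align*}
\fint_{(a,b)}(f-\alpha)_+\le\frac{\ln(1+C_1)}{\epsilon}
\quad\text{and}\quad
\fint_{(c,d)}(f-\alpha)_-\le\frac{\ln2}{\epsilon}.
\end{align*}
Taking the supremum gives $\||f\||\lesssim1$.

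\emph{Step 2: $\|f\|\lesssim\||f\||$.} This splits into two parts. The first is to compare the contiguous quantity with the $\alpha$-form. I would show that $\|f\|_{\mathrm{BMO}^+}$ is controlled by
\begin{align*}
\sup_{I}\inf_\alpha\left[\fint_{I^-}(f-\alpha)_++\fint_{I^+}(f-\alpha)_-\right],
\end{align*}
which is exactly the $\gamma=\frac12$ version of $\||f\||$. Given an almost minimizing $\alpha$, I would write
\begin{align*}
\left(f-f_{I^+}\right)_+\le(f-\alpha)_++\left(\alpha-f_{I^+}\right)_+
\quad\text{and}\quad
\left(\alpha-f_{I^+}\right)_+\le\fint_{I^+}(f-\alpha)_-,
\end{align*}
where the second inequality holds because $\alpha-f_{I^+}=\fint_{I^+}(\alpha-f)$. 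Averaging over $I^-$ then gives the bound directly.

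The second part, which I expect to be the main obstacle, is to pass from a general lag $\gamma\in(0,\frac12]$ to $\gamma=\frac12$. Fix a contiguous pair $I^-,I^+$ of common length $h$. I would subdivide $I^-$ and $I^+$ into subintervals of length comparable to $\gamma h/(1-2\gamma)$, paired so that each small "past" interval $P$ and "future" interval $F$ form an admissible configuration with gap $(1-2\gamma)/\gamma$ times their length. Each admissible pair carries a constant $\alpha_{P,F}$. The idea is to link the constants of neighbouring pairs in a chain that moves forward in time, as in the parabolic chaining lemma of \cite[Remark 3.4]{s(ampa-2018)}, so as to bound $\fint_{I^-}(f-\beta)_+$ and $\fint_{I^+}(f-\beta)_-$ for a single common $\beta$. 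The one-sided structure only allows increments of $\alpha$ in one direction to be controlled. The chain therefore has to be arranged so that only the harmless sign appears, and its length depends only on $\gamma$, so that the final constant depends only on $\gamma$. If the direct chaining becomes unwieldy, my fallback is to first prove a John--Nirenberg inequality for the time-lag quantity via the Calder\'on--Zygmund argument of Lemma \ref{JN BLO+}. That would give $e^{\epsilon f}$ in the time-lag $A_2^+$ class, and Lemma \ref{Ap+ time lag}(ii) together with \eqref{20250727.1611} then returns $f\in\mathrm{BMO}^+(\mathbb{R})$ with the desired norm control.
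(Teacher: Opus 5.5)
Your Step 1 is correct, and it is more direct than the paper's argument for that direction. The paper Jones-factorizes $e^{\delta f/(2\|f\|_{\mathrm{BMO}^+(\mathbb{R})})}=uv^{-1}$ with $u\in A_1^+(\mathbb{R})$ and $v\in A_1^-(\mathbb{R})$, then estimates $\ln u$ and $\ln v$ separately. Your explicit $\alpha$, combined with Lemma \ref{Ap+ time lag}(ii) for $q=2$, gets the bound in one line. Your reduction of $\|f\|_{\mathrm{BMO}^+(\mathbb{R})}$ to the $\gamma=\frac12$ quantity is also fine.

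The gap is the passage from lag $\gamma$ to $\gamma=\frac12$. As you describe it, with pieces at one scale $\sim\gamma h/(1-2\gamma)$ and chains whose length depends only on $\gamma$, it cannot work. Let $K:=\||f\||_{\mathrm{BMO}^+(\mathbb{R})}$ and let $Q=(P,F)$, $Q'=(P',F')$ be admissible pairs with near-optimal constants. In such a chain, the comparison $\alpha_Q\le\alpha_{Q'}+CK$ comes from a set of positive measure inside $F\cap P'$. This forces $\sup P'>\inf F=\sup P+\frac{1-2\gamma}{\gamma}|P|$, so along every chain the right endpoints of the past intervals strictly increase. Let $x_0$ be the common endpoint of $I^-$ and $I^+$.
\begin{itemize}
\item A past interval covering the piece of $I^-$ adjacent to $x_0$ has right endpoint $\ge x_0$.
\item A pair whose future interval covers the piece of $I^+$ adjacent to $x_0$ has $\inf F\le x_0$, so its past interval ends strictly before $x_0$.
\end{itemize}
Hence no chain of any length links these two pieces. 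You therefore never obtain the inequality $\alpha_{\mathrm{left}}\le\beta+C\le\alpha_{\mathrm{right}}+2C$ that a common $\beta$ requires. The bad sign cannot be arranged away at the junction, and this is exactly where the difficulty lies.

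There are two ways to repair this.

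\emph{Multiscale chaining.} Use a Whitney decomposition of $I^-\cup I^+$ relative to $x_0$: pieces of length $2^{-k}h/M$ in the annuli at distance $\sim2^{-k}h$ from $x_0$, with $M=M(\gamma)$ large. Write $\alpha_A$ and $\alpha_B$ for the constants of the pairs $(A,A+\frac{1-\gamma}{\gamma}|A|)$ and $(B-\frac{1-\gamma}{\gamma}|B|,B)$. These pieces stay away from $x_0$. Forward chains built from same-scale steps, doublings and halvings join a left piece $A$ at level $k$ to a right piece $B$ at level $j$ in $\lesssim|k-j|+C_\gamma$ steps. This gives $\alpha_A\le\alpha_B+CK(k+j+C_\gamma)$, and then $\beta:=\sup_A[\alpha_A-CK(k_A+C_\gamma)]$ works because $\sum_k2^{-k}k<\infty$. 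This route is elementary and self-contained. Note that the losses are logarithmically unbounded near $x_0$, not bounded as you claimed.

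\emph{Your fallback.} This is essentially the paper's route. The paper does not redo the Calder\'on--Zygmund argument of Lemma \ref{JN BLO+}, which relies on the ess inf reference and does not transfer verbatim to an $\alpha$-infimum with a lag. Instead it does the following.
\begin{itemize}
\item It sets $F(x,t):=f(t)$ on $\mathbb{R}^2$ and observes that $F$ lies in a parabolic BMO space with time lag whose norm equals $K$.
\item It quotes the parabolic John--Nirenberg inequality of Kinnunen--Myyryl\"ainen--Yang, which gives $e^{cf/K}$ in the lagged $A_2^+$ class with bounded constant.
\item It finishes with Lemma \ref{Ap+ time lag}(ii) and the Mart\'in-Reyes--de la Torre characterization.
\end{itemize}
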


\begin{proof}
We first prove the necessity. Let $f\in\mathrm{BMO}^+(\mathbb{R})$.
From the proof of the one-sided John--Nirenberg inequality for
$\mathrm{BMO}^+(\mathbb{R})$ (see the proof of \cite[Theorem
3]{mt(jlms-1994)}), we infer that there exist positive constants $\delta$
and $K_1$, independent of $f$, such that $e^\frac{\delta f}
{2\|f\|_{\mathrm{BMO}^+(\mathbb{R})}}\in A_\infty^+(\mathbb{R})$,
$e^{-\frac{\delta f}{2\|f\|_{\mathrm{BMO}^+(\mathbb{R})}}}\in
A_\infty^-(\mathbb{R})$, and
\begin{align*}
\max\left\{\left[e^\frac{\delta f}
{2\|f\|_{\mathrm{BMO}^+(\mathbb{R})}}\right]_{A_\infty^+(\mathbb{R})},
\,\left[e^{-\frac{\delta f}{2\|f\|_{\mathrm{BMO}^+(\mathbb{R})}}}
\right]_{A_\infty^-(\mathbb{R})}\right\}\leq K_1.
\end{align*}
This, together with Lemma \ref{Ap+ and Ainfty +},
further implies that $e^\frac{\delta f}
{2\|f\|_{\mathrm{BMO}^+(\mathbb{R})}}\in A_2^+(\mathbb{R})$
and $[e^\frac{\delta f}{2\|f\|_{\mathrm{BMO}^+(\mathbb{R})}}]
_{A_2^+(\mathbb{R})}\leq K_1^2$.
By this and the Jones factorization theorem for one-sided Muckenhoupt
weights (see, for instance, \cite[Remarks (B)]{s(tams-1986)}), we
conclude that there exist a positive constant $K_2$ (independent of $f$),
$u\in A_1^+(\mathbb{R})$, and $v\in A_1^-(\mathbb{R})$ such that
\begin{align}\label{20250624.1557}
\max\left\{[u]_{A_1^+(\mathbb{R})},\,[v]_{A_1^-(\mathbb{R})}\right\}
\leq K_2
\end{align}
and $e^\frac{\delta f}{2\|f\|_{\mathrm{BMO}^+(\mathbb{R})}}=uv^{-1}$.
From \eqref{20250624.1557}, Jensen's inequality, and Lemma
\ref{Ap+ time lag}(i), we deduce that, for any $-\infty<a<b\leq
c<d<\infty$ with $b-a=d-c=\gamma(d-a)$,
\begin{align*}
\exp\left\{\fint_{(c,d)}(\alpha-\ln u)_+\right\}&\leq\fint_{(c,d)}
e^{(\alpha-\ln u)_+}\leq1+\fint_{(c,d)}e^{\alpha-\ln u}\\
&=1+\fint_{(c,d)}\frac{\mathop\mathrm{ess\,inf}\limits_{(c,d)}u}{u}\leq2
\end{align*}
and
\begin{align*}
\exp\left\{\fint_{(a,b)}(\ln u-\alpha)_+\right\}&\leq\fint_{(a,b)}
e^{(\ln u-\alpha)_+}\leq1+\fint_{(a,b)}e^{\ln u-\alpha}\\
&=1+\fint_{(a,b)}u\left[\mathop\mathrm{ess\,inf}_{(c,d)}u
\right]^{-1}\leq1+\frac{K_2}{\gamma},
\end{align*}
where $a:=\ln(\mathrm{ess\,inf}_{(c,d)}u)$.
This further implies that
\begin{align}\label{20250624.1611}
\||\ln u\||_{\mathrm{BMO}^+(\mathbb{R})}\leq
\ln\left(2\left[1+\frac{K_2}{\gamma}\right]\right).
\end{align}
By an argument similar to that used in
\eqref{20250624.1611}, we obtain
\begin{align*}
\||\ln v\||_{\mathrm{BMO}^+(\mathbb{R})}
\leq\ln\left(2\left[1+\frac{K_2}{\gamma}\right]\right).
\end{align*}
Combining this, \eqref{20250624.1611}, and the proven conclusion
that $e^\frac{\delta f}{2\|f\|_{\mathrm{BMO}^+(\mathbb{R})}}=uv^{-1}$,
we conclude that
\begin{align*}
\||f\||_{\mathrm{BMO}^+(\mathbb{R})}&\leq\frac{2}{\delta}
\left[\||\ln u\||_{\mathrm{BMO}^+(\mathbb{R})}+
\||\ln v\||_{\mathrm{BMO}^+(\mathbb{R})}\right]
\|f\|_{\mathrm{BMO}^+(\mathbb{R})}\\
&\leq\frac{4}{\delta}\ln\left(2\left[1+\frac{K_2}{\gamma}\right]\right)
\|f\|_{\mathrm{BMO}^+(\mathbb{R})},
\end{align*}
thereby completing the proof of the necessity.

Next, we show the sufficiency. Without loss of generality, we may
assume that $\gamma<\frac{1}{2}$. Suppose that
$\||f\||_{\mathrm{BMO}^+(\mathbb{R})}<\infty$. For any $x,t\in\mathbb{R}$,
define $F(x,t):=f(t)$. Then $F$ belongs to the two-dimensional parabolic
BMO space $\mathrm{PBMO}_{1-\gamma}^-(\mathbb{R}^2)$ with time lag and
\begin{align*}
\|F\|_{\mathrm{PBMO}_{1-\gamma}^-(\mathbb{R}^2)}&:=
\sup_{\genfrac{}{}{0pt}{}{-\infty<a<b<\infty}{c\in\mathbb{R}}}
\inf_{\alpha\in\mathbb{R}}\left[\fint_{(a,b)
\times(c,c+\gamma(\frac{b-a}{2})^2)}(f-\alpha)_+\right.\\
&\quad\left.+\fint_{(a,b)\times(c+(2-\gamma)(\frac{b-a}{2})^2,
c+2(\frac{b-a}{2})^2)}(f-\alpha)_-\right]\\
&\ =\||f\||_{\mathrm{BMO}^+(\mathbb{R})}.
\end{align*}
From this and the parabolic John--Nirenberg inequality
for the parabolic BMO space with time lag (see \cite[Theorem
4.1]{kmy(ma-2023)}), we infer that there exist positive constants $D$
and $K$, independent of $f$, such that, for any $-\infty<a<b\leq
c<d<\infty$ with $b-a=d-c=\gamma(d-a)$ and for any
$\lambda\in(0,\infty)$, there exists $\alpha\in\mathbb{R}$ satisfying both
\begin{align*}
|(a,b)\cap\{(f-\alpha)_+>\lambda\}|\leq
De^{-\frac{K\lambda}{\||f\||_{\mathrm{BMO}^+(\mathbb{R})}}}(b-a)
\end{align*}
and
\begin{align*}
|(c,d)\cap\{(f-\alpha)_->\lambda\}|\leq
De^{-\frac{K\lambda}{\||f\||_{\mathrm{BMO}^+(\mathbb{R})}}}(d-c).
\end{align*}
This and Cavalieri's principle yield, for any
$-\infty<a<b\leq c<d<\infty$ with $b-a=d-c=\gamma(d-a)$,
\begin{align*}
\fint_{(a,b)}e^\frac{Kf}{2\||f\||_{\mathrm{BMO}^+(\mathbb{R})}}
\fint_{(c,d)}e^{-\frac{Kf}{2\||f\||_{\mathrm{BMO}^+(\mathbb{R})}}}\leq
\fint_{(a,b)}e^\frac{K(f-\alpha)_+}{2\||f\||_{\mathrm{BMO}^+(\mathbb{R})}}
\fint_{(c,d)}e^\frac{K(f-\alpha)_-}{2\||f\||_{\mathrm{BMO}^+(\mathbb{R})}}
\leq(1+D)^2.
\end{align*}
Combining this and Lemma \ref{Ap+ time lag}(ii), we obtain
$e^\frac{Kf}{2\||f\||_{\mathrm{BMO}^+(\mathbb{R})}}\in A_2^+(\mathbb{R})$
and there exits a positive constant $K_3$, independent of $f$, such that
$[e^\frac{Kf}{2\||f\||_{\mathrm{BMO}^+(\mathbb{R})}}]
_{A_2^+(\mathbb{R})}\leq K_3$. This, together with
\cite[Theorem 1]{mt(jlms-1994)}, further implies that
$f\in\mathrm{BMO}^+(\mathbb{R})$ and
$\|f\|_{\mathrm{BMO}^+(\mathbb{R})}\lesssim
\||f\||_{\mathrm{BMO}^+(\mathbb{R})}$.
\end{proof}

Now, we are ready to prove Theorem \ref{BLO+ time lag}.

\begin{proof}[Proof of Theorem \ref{BLO+ time lag}]
We first show the necessity. Assume that
$f\in\mathrm{BLO}^+(\mathbb{R})$. By \eqref{20250622.2236}, we find that
$e^\frac{Bf}{2\|f\|_{\mathrm{BLO}^+(\mathbb{R})}}\in
A_1^+(\mathbb{R})$ and $[e^\frac{Bf}{2\|f\|_{\mathrm{BLO}^+(\mathbb{R})}}]
_{A_1^+(\mathbb{R})}\leq1+A$, where $A$ and $B$ are the same as in
\eqref{20250714.2247}. This, together with
Lemma \ref{Ap+ time lag}(i), further implies that
\begin{align*}
\left\langle e^\frac{Bf}{2\|f\|_{\mathrm{BLO}^+(\mathbb{R})}}\right\rangle
_{A_1^+(\mathbb{R})}:=\sup_{\genfrac{}{}{0pt}{}
{-\infty<a<b\leq c<d<\infty}{b-a=d-c=\gamma(d-a)}}
\fint_{(a,b)}e^\frac{Bf}{2\|f\|_{\mathrm{BLO}^+(\mathbb{R})}}
\left[\mathop\mathrm{ess\,inf}_{(c,d)}e^\frac{Bf}
{2\|f\|_{\mathrm{BLO}^+(\mathbb{R})}}\right]^{-1}
\leq\frac{1+A}{\gamma}.
\end{align*}
Combining this and an argument similar to that used in
\eqref{20250622.2254} with $\omega$, $I^-$, $I^+$, and
$[\omega]_{A_1^+(\mathbb{R})}$ therein replaced, respectively, by
$e^\frac{Bf}{2\|f\|_{\mathrm{BLO}^+(\mathbb{R})}}$, $(a,b)$, $(c,d)$, and
$\langle e^\frac{Bf}{2\|f\|_{\mathrm{BLO}^+(\mathbb{R})}}\rangle
_{A_1^+(\mathbb{R})}$, we obtain
\begin{align*}
\||f\||_{\mathrm{BLO}^+(\mathbb{R})}\leq\frac{2}{B}\ln\left(1+
\frac{2[1+A]}{\gamma}\right)\|f\|_{\mathrm{BLO}^+(\mathbb{R})},
\end{align*}
thereby completing the proof of the necessity.

Next, we prove the sufficiency. Suppose that
$\||f\||_{\mathrm{BLO}^+(\mathbb{R})}<\infty$.
Using Lemma \ref{BMO+ time lag}, we find that
$f\in\mathrm{BMO}^+(\mathbb{R})$ and there exists a positive constant $K$,
independent of $f$, such that
\begin{align*}
\|f\|_{\mathrm{BMO}^+(\mathbb{R})}\leq
K\||f\||_{\mathrm{BMO}^+(\mathbb{R})}\leq
K\||f\||_{\mathrm{BLO}^+(\mathbb{R})}.
\end{align*}
This, together with the John--Nirenberg inequality for
$\mathrm{BMO}^+(\mathbb{R})$ (see \cite[Theorem 3]{mt(jlms-1994)}),
further implies that there exist positive constants $\delta$ and $C$,
both independent of $f$, such that, for any $-\infty<a<b\leq c<d<\infty$
with $b-a=d-c=\frac{\gamma}{1+\gamma}(d-a)$,
\begin{align*}
\fint_{(a,b)}e^\frac{\delta f}{2K\||f\||_{\mathrm{BLO}^+(\mathbb{R})}}\leq
e^\frac{\delta f_{(b,2b-a)}}{2K\||f\||_{\mathrm{BLO}^+(\mathbb{R})}}
\fint_{(a,b)}e^\frac{\delta[f-f_{(b,2b-a)}]_+}
{2K\||f\||_{\mathrm{BLO}^+(\mathbb{R})}}
\leq Ce^\frac{\delta}{2K}\mathop\mathrm{ess\,inf}_{(c,d)}
e^\frac{\delta f}{2K\||f\||_{\mathrm{BLO}^+(\mathbb{R})}}.
\end{align*}
Combining this and Lemma \ref{Ap+ time lag}(i), we obtain
$e^\frac{\delta f}{2K\||f\||_{\mathrm{BLO}^+(\mathbb{R})}}\in
A_1^+(\mathbb{R})$ and $[e^\frac{\delta f}
{2K\||f\||_{\mathrm{BLO}^+(\mathbb{R})}}]_{A_1^+(\mathbb{R})}\leq
Ce^\frac{\delta}{2K}$. From this and an argument similar to that used in
\eqref{20250622.2254} with $\omega$ and
$[\omega]_{A_1^+(\mathbb{R})}$ therein replaced, respectively, by
$e^\frac{\delta f}{2K\||f\||_{\mathrm{BLO}^+(\mathbb{R})}}$ and
$[e^\frac{\delta f}{2K\||f\||_{\mathrm{BLO}^+(\mathbb{R})}}]
_{A_1^+(\mathbb{R})}$, We further deduce that $f\in\mathrm{BLO}^+$ and
\begin{align*}
\|f\|_{\mathrm{BLO}^+(\mathbb{R})}\leq\frac{2K}{\delta}
\ln\left(1+2Ce^\frac{\delta}{2K}\right)
\||f\||_{\mathrm{BLO}^+(\mathbb{R})}.
\end{align*}
This finishes the proof of the sufficiency and hence
Corollary \ref{BLO+ time lag}.
\end{proof}

\begin{remark}
Consider the following $\mathrm{BMO}^+$-norm. Let
$\gamma\in(0,\frac{1}{2})$. For any $f\in
L_{\mathrm{loc}}^1(\mathbb{R})$, define
\begin{align*}
\langle f\rangle_{\mathrm{BMO}^+(\mathbb{R})}:=\sup_{\genfrac{}{}{0pt}{}
{-\infty<a<b\leq c<d<\infty}{b-a=d-c=\gamma(d-a)}}\fint_{(a,b)}
\left[f-f_{(c,d)}\right]_+<\infty.
\end{align*}
Unlike the BLO case, whether $\|\cdot\|_{\mathrm{BMO}^+(\mathbb{R})}\sim
\langle\cdot\rangle_{\mathrm{BMO}^+(\mathbb{R})}$ is still an open problem.
\end{remark}

\section{Bennett Type Characterization of
$\mathrm{BLO}^+(\mathbb{R})$
in Terms of $\mathrm{BMO}^+(\mathbb{R})$}
\label{section3}

The \emph{uncentered one-sided natural maximal operator $N^-$} is defined
by setting, for any $f\in L_{\mathrm{loc}}^1(\mathbb{R})$ and
$x\in\mathbb{R}$,
\begin{align*}
N^-(f)(x):=\sup_{I\in\mathcal{I},\,x\in I^+}\fint_{I^-}f.
\end{align*}
In this section, we first establish the boundedness of $N^-$ from
$\mathrm{BMO}^+(\mathbb{R})$ to $\mathrm{BLO}^+(\mathbb{R})$. Then we in
turn present the Bennett type characterization of
$\mathrm{BLO}^+(\mathbb{R})$ in terms of $\mathrm{BMO}^+(\mathbb{R})$,
which means that $\mathrm{BLO}^+(\mathbb{R})$ coincides precisely with
\begin{align*}
\left\{N^-(F):F\in\mathrm{BMO}^+(\mathbb{R})\mbox{\ \ and\ \ }
N^-(F)<\infty\mbox{\ almost\ everywhere}\right\}
\end{align*}
modulo bounded functions.

We begin with showing the boundedness of $N^-$ from
$\mathrm{BMO}^+(\mathbb{R})$ to $\mathrm{BLO}^+(\mathbb{R})$, for whose
proof we need to use Theorem \ref{BLO+ time lag}, a one-sided
Calder\'on--Zygmund decomposition from the proof of
\cite[Theorem 5.1]{s(ampa-2018)}, and an iterative method which is a
slight variant of the one used in the proof
of \cite[Theorem 5.1]{s(ampa-2018)}.

\begin{proposition}\label{N-:BMO+ to BLO+}
There exists a positive constant $C$ such that, for any
$f\in\mathrm{BMO}^+(\mathbb{R})$ such that $N^-(f)<\infty$ almost
everywhere, $N^-(f)\in\mathrm{BLO}^+(\mathbb{R})$ and
\begin{align*}
\left\|N^-(f)\right\|_{\mathrm{BLO}^+(\mathbb{R})}\leq
C\|f\|_{\mathrm{BMO}^+(\mathbb{R})}.
\end{align*}
\end{proposition}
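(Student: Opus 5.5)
We plan to verify the time-lag norm $\||N^-(f)\||_{\mathrm{BLO}^+(\mathbb{R})}$ from Theorem \ref{BLO+ time lag} for a small fixed $\gamma$ (say $\gamma:=\frac14$), instead of the contiguous definition. The gap between the two intervals is what lets averages over the past be compared with the essential infimum over the future. By homogeneity assume $\|f\|_{\mathrm{BMO}^+(\mathbb{R})}=1$. Fix $a<b\le c<d$ with $b-a=d-c=\gamma(d-a)$, put $J:=(a,b)$, $\ell:=b-a$, and let $L:=\operatorname{ess\,inf}_{(c,d)}N^-(f)$. Every $y\in(c,d)$ lies in $I^+$ for some interval $I$ with $I^-$ ending before $y$ and of length comparable to $\ell$, say $I^-=(c-2\ell,c-\ell)$ shifted appropriately. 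Hence $L\ge$ the average of $f$ over suitable intervals of length $\sim\ell$ lying in $(b,c)$ or just before $c$. It therefore suffices to show
\[
\fint_J\bigl(N^-(f)-f_{J^*}\bigr)_+\lesssim1,
\]
where $J^*$ is a fixed interval of length $\sim\ell$ situated between $J$ and $(c,d)$, so that $f_{J^*}\le L$ up to an additive constant.

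For $x\in J$, split $N^-(f)(x)$ according to the length $h$ of the interval $I$ with $x\in I^+$.

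\emph{Large intervals} ($|I^-|\gtrsim\ell$). Here we use a chaining argument for $\mathrm{BMO}^+(\mathbb{R})$. A past average over a long interval $I^-$ is dominated by the average over an interval of the same length shifted forward, up to $\|f\|_{\mathrm{BMO}^+}$. Iterating along a geometric chain of shifts (the iterative method of \cite[Theorem 5.1]{s(ampa-2018)}), we can bring $I^-$ forward until it lies in a window near $J^*$ or beyond. We then shrink back down to length $\sim\ell$, gaining a bounded constant at each dyadic step. The key is that for one-sided BMO, the average over a large earlier interval is at most the average over a later subinterval plus $C$. We must check that the constants sum geometrically rather than growing logarithmically; this is where the one-sided John--Nirenberg inequality (giving $L^q$ control of $(f-f_{I^+})_+$) replaces the pointwise bound. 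This contribution is then $\le f_{J^*}+C$ uniformly in $x$.

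\emph{Small intervals} ($I^-\subset 3J$, say). Here $N^-(f)(x)\le M^-\bigl((f-f_{J^*})_+\mathbf 1_{\widetilde J}\bigr)(x)+f_{J^*}$, with $\widetilde J$ a slight enlargement of $J$ to the left. The weak-type and $L^2$ boundedness of $M^-$, together with Theorem \ref{JN BLO+ 2}-type exponential integrability for $\mathrm{BMO}^+$ (\cite[Theorem 3]{mt(jlms-1994)}), give
\[
\fint_J M^-\bigl((f-f_{J^*})_+\mathbf 1_{\widetilde J}\bigr)\lesssim\Bigl(\fint_{\widetilde J}(f-f_{J^*})_+^2\Bigr)^{1/2}\lesssim1,
\]
provided $J^*$ lies after $\widetilde J$ at comparable distance, which is what the gap $c-b$ is for. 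Alternatively, we could run the one-sided Calder\'on--Zygmund decomposition of \cite[Theorem 5.1]{s(ampa-2018)} at height $\sim1$ and use the good-$\lambda$ iteration. Combining both cases bounds $\||N^-(f)\||_{\mathrm{BLO}^+(\mathbb{R})}\lesssim1$, and Theorem \ref{BLO+ time lag} concludes. The finiteness assumption on $N^-(f)$ is needed so that $L$ is finite and $N^-(f)\in L^1_{\rm loc}$; the latter follows from the same estimates.

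The main obstacle is the geometric bookkeeping. One has to choose $J^*$ so that it sits both after all small intervals relevant to $J$ and before $(c,d)$, so that $f_{J^*}\le L+C$. One also has to make the chaining for large intervals give a bound uniform in $h$. We would handle the second point first, using the time-lag equivalence of Lemma \ref{BMO+ time lag} to move averages forward by arbitrary amounts at a cost of $O(1)$.
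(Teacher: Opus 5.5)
There is a genuine gap, in the large-interval step and therefore in your reduction. You replace $L=\mathop{\mathrm{ess\,inf}}_{(c,d)}N^-(f)$ by a single average $f_{J^*}$ at scale $\ell$. This throws away the large-scale averages that can make $N^-(f)(y)$ large. Your claim that a past average over a long interval is at most $f_{J^*}+C$ ("shrink back down to length $\sim\ell$") is false: the dyadic constants really do add up to about $\ln(h/\ell)$.

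Here is a counterexample. Let $t_*$ be the center of your $J^*$ and set $f(t):=\min\{\ln|t-t_*|,0\}\in\mathrm{BMO}(\mathbb{R})\subset\mathrm{BMO}^+(\mathbb{R})$. Every $x$ lies in $I^+$ for some $I$ with $I^-\cap[t_*-1,t_*+1]=\emptyset$, and $f\le0$, so $N^-(f)\equiv0$. For small $\ell$, however, $f_{J^*}=\ln\frac{|J^*|}{2}-1$. Hence $\fint_J(N^-(f)-f_{J^*})_+=1+\ln\frac{2}{|J^*|}\to\infty$ as $\ell\to0$, so the inequality you reduce to has no uniform constant. For the same reason, Lemma \ref{BMO+ time lag} does not move averages forward by arbitrary amounts at $O(1)$ cost. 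Its constant degenerates as $\gamma\to0$, that is, when the shift is much larger than the length.

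The missing idea, which is what the paper does, is never to change scale on the large intervals. Compare them with $N^-(f)(y)$ directly rather than with $f_{J^*}$. For $x\in J$, $y\in(c,d)$, and $I$ with $x\in I^+$ and $|I^-|\gtrsim\ell$, let $I_y^-$ have the same length as $I^-$ and end at (or just before) $y$. Then $y\in I_y^+$ and $f_{I_y^-}\le N^-(f)(y)$. The forward shift is at most $|I^-|+(d-a)\lesssim|I^-|$, so chaining at fixed scale gives $(f_{I^-}-f_{I_y^-})_+\lesssim\|f\|_{\mathrm{BMO}^+(\mathbb{R})}$. This is the paper's Cases (1) and (2), including the iteration of Case (2.2) for overlapping intervals. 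It yields the pointwise bound $N_2^-(f)(x)\le N^-(f)(y)+C\|f\|_{\mathrm{BMO}^+(\mathbb{R})}$, and taking the essential infimum in $y$ gives $N_2^-(f)\le L+C\|f\|_{\mathrm{BMO}^+(\mathbb{R})}$ on $J$.

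Only the small-scale part $N_1^-(f)$ should be compared with a fixed average; the paper uses $f_{I^+\cup I^{++}}\le\mathop{\mathrm{ess\,inf}}_{I^{+++}}N^-(f)$. Your $M^-$/$L^2$/John--Nirenberg argument for that part is fine, and is in fact slightly more direct than the paper's Calder\'on--Zygmund decomposition on $I^{--}\cup I^-$. Then $(N^-(f)-L)_+\le(N_1^-(f)-f_{J^*})_++(N_2^-(f)-L)_+$ on $J$ closes the argument via Theorem \ref{BLO+ time lag}.
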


\begin{proof}
Let $f\in\mathrm{BMO}^+(\mathbb{R})$ be such that $N^-(f)<\infty$ almost
everywhere and $I\in\mathcal{I}$. For any $x\in I^-$, define
\begin{align*}
N_1^-(f)(x):=\sup\left\{\fint_{J^-}f:J\in\mathcal{I},\ x\in J^+,
\mbox{\ and\ }|J|\leq\frac{|I|}{2}\right\}
\end{align*}
and
\begin{align*}
N_2^-(f)(x):=\sup\left\{\fint_{J^-}f:J\in\mathcal{I},\ x\in J^+,
\mbox{\ and\ }|J|\geq\frac{|I|}{2}\right\}.
\end{align*}
From Theorem \ref{BLO+ time lag}, it follows that
we only need to prove that there exists a positive constant $C$,
independent of $f$ and $I$, such that, for any $j\in\{1,\,2\}$,
\begin{align}\label{20250618.1752}
\fint_{I^-}\left[N_j^-(f)-\mathop\mathrm{ess\,inf}_{I^{+++}}N^-(f)
\right]_+\leq C\|f\|_{\mathrm{BMO}^+(\mathbb{R})}.
\end{align}
Here, and thereafter, for any $I:=(a,b)\in\mathcal{I}$,
let $I^{--}:=(a-\frac{b-a}{2},a)$ and $I^{+++}:=(b+\frac{b-a}{2},2b-a)$.

We first show \eqref{20250618.1752} for the case $j=1$. Notice
that $f_{I^+\cup I^{++}}\leq \mathrm{ess\,inf}_{I^{+++}}N^-(f)$. Thus,
\begin{align}\label{20250616.1805}
\fint_{I^-}\left[N_1^-(f)-\mathop\mathrm{ess\,inf}_{I^{+++}}N^-(f)
\right]_+\leq\fint_{I^-}\left[N_1^-(f)-
\fint_{I^+\cup I^{++}}f\right]_+.
\end{align}
To estimate the right hand side of \eqref{20250616.1805}, we
construct a one-sided Calder\'on--Zygmund decomposition of $f$
in $I^{--}\cup I^-$ as follows. Let $\lambda\in(f_{I^+\cup
I^{++}},\infty)$ and
\begin{align*}
\begin{cases}
\displaystyle
g:=f\boldsymbol{1}_{I^{--}\cup
I^-\setminus\bigcup_{i\in\mathbb{N}}I_i^-}+
\sum\limits_{i\in\mathbb{N}}
f_{\hat{I}_i^+}\boldsymbol{1}_{I_i^-},\\
\displaystyle
h_i:=\left(f-f_{\hat{I}_i^+}\right)\boldsymbol{1}_{I_i^-}
\mbox{\ for\ any\ }i\in\mathbb{N},\\
\displaystyle
h:=\sum\limits_{i\in\mathbb{N}}b_i,
\end{cases}
\end{align*}
where $\{I_i^-\}_{i\in\mathbb{N}}$ are the maximal
dyadic subintervals of $I^{--}\cup I^-$ such that, for any
$i\in\mathbb{N}$, $f_{I_i^+}>\lambda$. Moreover, for any
$i\in\mathbb{N}$, $\hat{I}_i^-$ is the dyadic parent of $I_i^-$ and
$\hat{I}_i^+:=\hat{I}_i^-+|\hat{I}_i^-|$. Then
$\{I_i^-\}_{i\in\mathbb{N}}$ are pairwise disjoint,
$f\boldsymbol{1}_{I^{--}\cup I^-}=h+g$, and $g\leq\lambda$ almost
everywhere in $I^{--}\cup I^-$. In addition, for any $x\in I^-$,
\begin{align*}
N_1^-(f)(x)&\leq\sup\left\{\fint_{J^-}(h_++g):
J\in\mathcal{I},\ x\in J^+, \mbox{\ and\ }|J|\leq
\frac{|I|}{2}\right\}\\
&\leq M(h_+)(x)+\lambda,\notag
\end{align*}
where $M$ denotes the Hardy--Littlewood maximal operator, which is defined
by setting, for any $F\in L_{\mathrm{loc}}^1(\mathbb{R})$ and
$x\in\mathbb{R}$, $M(F)(x):=\sup_{I\in\mathcal{I}}\fint_I|F|$.
Combining this and H\"older's inequality, we conclude that
\begin{align}\label{20250618.1641}
\int_{I^-}\left[N_1^-(f)-\fint_{I^+\cup I^{++}}f\right]_+&=
\int_{I^-\cap\{N_1^-(f)>f_{I^+\cup I^{++}}\}}
\left[N_1^-(f)-\fint_{I^+\cup I^{++}}f\right]\notag\\
&\leq\int_{I^-\cap\{N_1^-(f)>f_{I^+\cup I^{++}}\}}
\left[M(h_+)+\lambda-\fint_{I^+\cup I^{++}}f\right]\notag\\
&\leq|I^-|^\frac{1}{2}\|M(h_+)\|_{L^2(\mathbb{R})}+
\left(\lambda-f_{I^+\cup I^{++}}\right)\left|I^-\right|.
\end{align}

Now, we prove that $h_+\in L^2(\mathbb{R})$ and
$\|h_+\|_{L^2(\mathbb{R})}\lesssim|I^-|^\frac{1}{2}
\|f\|_{\mathrm{BMO}^+(\mathbb{R})}$,
where the implicit positive constant is independent of $f$.
Indeed, from \cite[Theorem 3]{mt(jlms-1994)}, we deduce that,
for any $F\in\mathrm{BMO}^+(\mathbb{R})$,
\begin{align*}
\|F\|_{\mathrm{BMO}^+(\mathbb{R})}\sim\sup_{I\in\mathcal{I}}
\left[\fint_{I^-}(F-F_{I^+})_+^2\right]^\frac{1}{2},
\end{align*}
where the positive equivalence constants are independent of $F$. Combining
this and the fact that $\{I_i^-\}_{i\in\mathbb{N}}$ are pairwise disjoint
and $I_i\subset\hat{I}_i^-$ with $|\hat{I}_i^-|=2|I_i^-|$ for any
$i\in\mathbb{N}$, we obtain
\begin{align*}
\|h_+\|_{L^2(\mathbb{R})}^2&=\int_{\mathbb{R}}\left[\sum_{i\in\mathbb{N}}
\left(f-f_{\hat{I}_i^+}\right)_+\boldsymbol{1}_{I_i^-}\right]^2\\
&=\sum_{i\in\mathbb{N}}\int_{I_i^-}
\left(f-f_{\hat{I}_i^+}\right)_+^2\leq\sum_{i\in\mathbb{N}}
\left|\hat{I}_i^-\right|\fint_{\hat{I}_i^-}
\left(f-f_{\hat{I}_i^+}\right)_+^2
\lesssim|I^-|\,\|f\|_{\mathrm{BMO}^+(\mathbb{R})}^2,
\end{align*}
where the implicit positive constant is independent of $f$.
Therefore, $h_+\in L^2(\mathbb{R})$ and
\begin{align*}
\|h_+\|_{L^2(\mathbb{R})}\lesssim|I^-|^
\frac{1}{2}\|f\|_{\mathrm{BMO}^+(\mathbb{R})}.
\end{align*}
Using this, \eqref{20250616.1805}, the fact that $M$ is bounded on
$L^2(\mathbb{R})$, and \eqref{20250618.1641} and letting $\lambda\to
f_{I^+\cup I^{++}}$, we find that \eqref{20250618.1752} holds for the
case $j=1$.

Next, we show \eqref{20250618.1752} for the case $j=2$. Let $x\in I^-$,
$y\in I^{+++}$, $J\in\mathcal{I}$ with $x\in J^+$ and
$|J|\geq\frac{|I|}{2}$, and $J_y:=(y-|J^-|,y+|J^-|)\in\mathcal{I}$. We
first prove that
\begin{align}\label{20250618.1654}
\left(f_{J^-}-f_{J_y^-}\right)_+\lesssim
\|f\|_{\mathrm{BMO}^+(\mathbb{R})},
\end{align}
where the implicit positive constant is independent of $f$, $J$, and
$J_y$. Indeed, let $a,z\in\mathbb{R}$ be such that $J^-:=(a,z)$ and
$b:=y-|J_y^-|$. We consider the following two cases for $z$.

\emph{Case (1)} $z\notin J_y^-$, namely $z\leq b$. In this case, $J^-\cap
J_y^-=\emptyset$. Let $\widetilde{J}^-:=J^-\cup[z,\frac{z+b}{2})$ and
$\widetilde{J}_y^-:=J_y^-\cup(\frac{z+b}{2},b]$. Then
$|J^-|=|J_y^-|\leq|\widetilde{J}_y^-|=|\widetilde{J}^-|$.
Moreover, we claim that $|\widetilde{J}^-|\leq6|J^-|$. Indeed,
if $b\leq x$, then $b-z\leq x-z\leq|J^+|=|J^-|$ and hence
$|\widetilde{J}^-|=|J^-|+\frac{b-z}{2}\leq2|J^-|$. If $b>x$, then
$\frac{|I^-|}{2}\leq|J^-|=|J_y^-|\leq4|I^-|$, which
further implies that $b-z\leq y-z\leq4|I^-|+|J^+|\leq9|J^-|$ and hence
$|\widetilde{J}^-|=|J^-|+\frac{b-z}{2}\leq6|J^-|$. In conclusion,
$|\widetilde{J}^-|\leq6|J^-|$. Combining this, the definitions of both
$\widetilde{J}^-$ and $\widetilde{J}_y^-$, and the fact that,
for any $F\in\mathrm{BMO}^+(\mathbb{R})$,
\begin{align*}
\|F\|_{\mathrm{BMO}^+(\mathbb{R})}\sim\sup_{I\in\mathcal{I}}
\fint_{I^-}\fint_{I^+}[F(t)-F(s)]_+\,ds\,dt,
\end{align*}
where the positive equivalence constants are independent of $F$ (see, for
instance, \cite[p.\,1493]{s(ampa-2018)}), we conclude that
\begin{align}\label{20250618.1726}
\left(f_{J^-}-f_{J_y^-}\right)_+&\leq\fint_{J^-}\fint_{J_y^-}[f(t)-f(s)]_+
\,ds\,dt\notag\\
&\leq\left(\frac{|\widetilde{J}^-|}{|J^-|}\right)^2\fint_{\widetilde{J}^-}
\fint_{\widetilde{J}_y^-}[f(t)-f(s)]_+\,ds\,dt
\lesssim\|f\|_{\mathrm{BMO}^+(\mathbb{R})},
\end{align}
which hence completes the proof of \eqref{20250618.1654} in this case.

\emph{Case (2)} $z\in J_y^-$, namely $z>b$. In this case,
$J^-\cap J_y^-\neq\emptyset$. Let $k\in\mathbb{N}$ be such that
\begin{align}\label{20250618.1727}
z-(k-1)(b-a)>b,\ z-k(b-a)\in[a,b],\mbox{\ and\ }z-(k+1)(b-a)<a.
\end{align}
If $k=1$, then, from \eqref{20250618.1727} and the fact that $y-z=b-a$, we
easily deduce that
\begin{align}\label{20250618.1732}
\left|J^-\cap J_y^-\right|=
z-b\leq\frac{1}{2}(z-a)=\frac{1}{2}\left|J^-\right|.
\end{align}
Divide $J^-$ into two intervals of equal length, denoted as $J_1^-$ and
$J_2^-$. Analogously, divide $J_y^-$ into two intervals of equal length,
denoted as $J_{y,1}^-$ and $J_{y,2}^-$. Using \eqref{20250618.1732}, we
obtain, for any $j\in\{1,2\}$, $J_j^-\cap J_{y,j}=\emptyset$. This,
together with an argument similar to that used in \eqref{20250618.1726}
with $J^-$ and $J_y^-$ therein replaced, respectively, by
$J_1^-$ and $J_{y,1}^-$ firstly and then by $J_2^-$ and $J_{y,2}^-$,
further implies that
\begin{align}\label{20250618.1734}
\left(f_{J^-}-f_{J_y^-}\right)_+&=2\left(f_{J_1^-}+f_{J_2^-}-
f_{J_{y,1}^-}-f_{J_{y,2}^-}\right)_+\notag\\
&\leq2\left(f_{J_1^-}-f_{J_{y,1}^-}\right)_++
\left(f_{J_2^-}-f_{J_{y,2}^-}\right)_+\lesssim
\|f\|_{\mathrm{BMO}^+(\mathbb{R})}
\end{align}
and hence \eqref{20250618.1654} holds in this case.

If $k>1$, then, let $d:=b-a=y-z$ and, for any $j\in\mathbb{Z}_+\cap[0,k]$,
define $J_j^-:=(z-jd,z-[j-1]d)$. Moreover, define $\widetilde{J}_k^-:=(a,z-kd]$
and $\widetilde{J}_{k-1}^-:=(b,z-(k-1)d]$. Then
\begin{align}\label{20250620.1626}
\left(f_{J^-}-f_{J_y^-}\right)_+&=\frac{1}{|J^-|}\left[\sum_{j=1}^{k-1}
\int_{J_j^-}f+\int_{\widetilde{J}_k^-}f-\sum_{j=1}^{k-1}\int_{J_{j-1}^-}f
-\int_{\widetilde{J}_{k-1}^-}f\right]_+\notag\\
&\leq\frac{|\widetilde{J}_k^-|}{|J^-|}\left(f_{\widetilde{J}_k^-}-
f_{\widetilde{J}_{k-1}^-}\right)+\sum_{j=1}^{k-1}\frac{|J_j^-|}{|J^-|}
\left(f_{J_j^-}-f_{J_{j-1}^-}\right)\notag\\
&\leq\frac{|\widetilde{J}_k^-|}{|J^-|}\left(f_{\widetilde{J}_k^-}-
f_{\widetilde{J}_{k-1}^-}\right)+\frac{|J^-|-|\widetilde{J}_k^-|}{|J^-|}
\|f\|_{\mathrm{BMO}^+(\mathbb{R})}.
\end{align}
We then consider the following two cases for
$|\widetilde{J}_k^-\cap\widetilde{J}_{k-1}^-|$.

\emph{Case (2.1)} $|\widetilde{J}_k^-\cap\widetilde{J}_{k-1}^-|\leq
\frac{1}{2}|\widetilde{J}_k^-|$. In this case, by an argument similar to
that used in \eqref{20250618.1734} with $J^-$ and $J_y^-$
therein replaced, respectively, by $\widetilde{J}_k^-$ and
$\widetilde{J}_{k-1}^-$, we conclude that
\begin{align*}
\left(f_{\widetilde{J}_k^-}-f_{\widetilde{J}_{k-1}^-}\right)\lesssim
\|f\|_{\mathrm{BMO}^+(\mathbb{R})}.
\end{align*}
This, together with \eqref{20250620.1626}, further implies that
\eqref{20250618.1654} holds in this case.

\emph{Case (2.2)} $|\widetilde{J}_k^-\cap\widetilde{J}_{k-1}^-|>
\frac{1}{2}|\widetilde{J}_k^-|$. In this case, we adopt an iterative
approach adapted from \cite{s(ampa-2018)}. Let
$J_{\mathrm{left}}^{(1)}:=J^-$, $J_{\mathrm{right}}^{(1)}:=J_y^-$,
$J_{\mathrm{left}}^{(2)}:=\widetilde{J}_k^-$,
and $J_{\mathrm{right}}^{(2)}:=\widetilde{J}_{k-1}^-$. Then we can rewrite
\eqref{20250620.1626} as
\begin{align}\label{20250620.1648}
\left|J_{\mathrm{left}}^{(1)}\right|\left[f_{J_{\mathrm{left}}^{(1)}}
-f_{J_{\mathrm{right}}^{(1)}}\right]_+\leq
\left|J_{\mathrm{left}}^{(2)}\right|
\left[f_{J_{\mathrm{left}}^{(2)}}-f_{J_{\mathrm{right}}^{(2)}}\right]_++
\left[\left|J_{\mathrm{left}}^{(1)}\right|-
\left|J_{\mathrm{left}}^{(2)}\right|\right]
\|f\|_{\mathrm{BMO}^+(\mathbb{R})}.
\end{align}
For any $i\in\mathbb{N}\cap[2,\infty]$ such that
$|J_{\mathrm{left}}^{(i)}\cap
J_{\mathrm{right}}^{(i)}|>\frac{1}{2}|J_{\mathrm{left}}^{(i)}|$, let
$-\infty<a_i<b_i<z_i<y_i<\infty$ be such that
$J_{\mathrm{left}}^{(i)}=(a_i,z_i)$
and $J_{\mathrm{right}}^{(i)}=(b_i,y_i)$ and let
$k_i\in\mathbb{N}\cap[2,\infty)$ satisfy
\begin{align*}
z_i-k_i(b_i-a_i)>b_i,\ z_i-k_i(b_i-a_i)\in[a_i,b_i],\mbox{\ and\ }
z_i-(k_i+1)(b_i-a_i)<a_i.
\end{align*}
In addition, let
$d_i:=|J_{\mathrm{left}}^{(i)}|-|J_{\mathrm{left}}^{(i)}\cap
J_{\mathrm{right}}^{(i)}|=y_i-z_i=b_i-a_i$. For any
$j\in\mathbb{N}\cap[1,k_i]$,
define $J_{i,j}^-:=(z_i-jd_i,z_i-[j-1]d_i)$ and define
$J_{\mathrm{left}}^{(i+1)}:=(a_i,z_i-k_id_i)$ and
$J_{\mathrm{right}}^{(i+1)}:=(b_i,z_i-[k_i-1]d_i)$.
From an argument similar to that used in \eqref{20250620.1626}
with $J^-$, $J_y^-$, $\widetilde{J}_{k-1}^-$, and
$\widetilde{J}_k^-$ therein replaced, respectively, by
$J_{\mathrm{left}}^{(i)}$, $J_{\mathrm{right}}^{(i)}$,
$J_{\mathrm{right}}^{(i+1)}$, and $J_{\mathrm{left}}^{(i+1)}$, it
follows that
\begin{align*}
\left|J_{\mathrm{left}}^{(i)}\right|\left[f_{J_{\mathrm{left}}^{(i)}}
-f_{J_{\mathrm{right}}^{(i)}}\right]_+\leq
\left|J_{\mathrm{left}}^{(i+1)}\right|
\left[f_{J_{\mathrm{left}}^{(i+1)}}-f_{J_{\mathrm{right}}^{(i+1)}}\right]_++
\left[\left|J_{\mathrm{left}}^{(i)}\right|-
\left|J_{\mathrm{left}}^{(i+1)}\right|\right]
\|f\|_{\mathrm{BMO}^+(\mathbb{R})}.
\end{align*}
Combining this and \eqref{20250620.1648}, we find that, for any
$i\in\mathbb{N}\cap[2,\infty)$ with $|J_{\mathrm{left}}^{(i)}\cap
J_{\mathrm{right}}^{(i)}|>\frac{1}{2}|J_{\mathrm{left}}^{(i)}|$,
\begin{align}\label{20250620.1737}
\left|J_{\mathrm{left}}^{(1)}\right|\left[f_{J_{\mathrm{left}}^{(1)}}
-f_{J_{\mathrm{right}}^{(1)}}\right]_+\leq
\left|J_{\mathrm{left}}^{(i)}\right|
\left[f_{J_{\mathrm{left}}^{(i)}}-f_{J_{\mathrm{right}}^{(i)}}\right]_++
\left[\left|J_{\mathrm{left}}^{(1)}\right|-
\left|J_{\mathrm{left}}^{(i)}\right|\right]
\|f\|_{\mathrm{BMO}^+(\mathbb{R})}.
\end{align}
If $|J_{\mathrm{left}}^{(N)}\cap J_{\mathrm{right}}^{(N)}|\leq
\frac{1}{2}|J_{\mathrm{left}}^{(N)}|$ for some $N\in\mathbb{N}$, then,
from \eqref{20250620.1737} and an argument similar to that used in
\eqref{20250618.1734} with $J^-$ and $J_y^-$ therein
replaced, respectively, by $J_{\mathrm{left}}^{(N)}$ and
$J_{\mathrm{right}}^{(N)}$, we infer that \eqref{20250618.1654} holds.
Otherwise, applying the fact that
\begin{align*}
\left|J_{\mathrm{left}}^{(i)}\right|
\left[f_{J_{\mathrm{left}}^{(i)}}-f_{J_{\mathrm{right}}^{(i)}}\right]_+\to0
\end{align*}
as $i\to\infty$ (which can be deduced from the definitions of both
$\{J_{\mathrm{left}}^{(i)}\}_{i\in\mathbb{N}}$ and
$\{J_{\mathrm{right}}^{(i)}\}_{i\in\mathbb{N}}$ and the
absolute continuity of integrals) and \eqref{20250620.1737},
we obtain \eqref{20250618.1654}.

In conclusion, for any $x\in I^-$, $y\in I^{+++}$, $J\in\mathcal{I}$ with
$x\in J^+$ and $|J|\geq\frac{|I|}{2}$, and
$J_y:=(y-|J^-|,y+|J^-|)\in\mathcal{I}$,
\eqref{20250618.1654} holds. According to this and the definitions of
both $N^-$ and $N_2^-$, we conclude that \eqref{20250618.1752} holds for
the case $j=2$. This finishes the proof of Proposition \ref{N-:BMO+ to BLO+}.
\end{proof}

\begin{remark}\label{N-:BMO+ to BLO+ remark}
\begin{enumerate}
\item[\rm(i)] We give a counterexample to illustrate that $M^-$ is not bounded on
$\mathrm{BMO}^+(\mathbb{R})$, which is quite different from the classical
theory. Indeed, let
\begin{align*}
f(x):=\begin{cases}
2x&\mathrm{if}\ x\in[0,\infty),\\
\displaystyle
\frac{1}{(x-1)^2}-1&\mathrm{if}\ x\in(-\infty,0).
\end{cases}
\end{align*}
Obviously, $f$ is monotonically increasing on $\mathbb{R}$ and hence
$\|f\|_{\mathrm{BMO}^+(\mathbb{R})}=0$. Moreover, by some simple
calculations, we find that
\begin{align*}
M^-(f)(x)=\begin{cases}
1&\mathrm{if}\ x\in(-\infty,0],\\
\displaystyle
x+\frac{2}{x}&\mathrm{if}\ x\in(0,\sqrt{2}),\\
2x&\mathrm{if}\ x\in[\sqrt{2},\infty).
\end{cases}
\end{align*}
Since $M^-(f)$ is not monotonically increasing on $\mathbb{R}$, it
follows that $\|M^-(f)\|_{\mathrm{BMO}^+(\mathbb{R})}\in(0,\infty]$,
which further implies that $M^-$ is not bounded on
$\mathrm{BMO}^+(\mathbb{R})$.

\item[\rm(ii)] From Remark \ref{Linfty BLO+ BMO+}, we infer that
Proposition \ref{N-:BMO+ to BLO+} is indeed a refinement of \cite[Theorem
5.1]{s(ampa-2018)} via replacing the target space
$\mathrm{BMO}^+(\mathbb{R})$ by its proper subspace
$\mathrm{BLO}^+(\mathbb{R})$, which states that there exists a positive
constant $C$ such that, for any positive function
$f\in\mathrm{BMO}^+(\mathbb{R})$ with $M^-(f)\in
L_{\mathrm{loc}}^1(\mathbb{R})$, $\|M^-(f)\|_{\mathrm{BMO}^+(\mathbb{R})}
\leq C\|f\|_{\mathrm{BMO}^+(\mathbb{R})}$.
\end{enumerate}
\end{remark}

As a consequence of Proposition \ref{N-:BMO+ to BLO+}, we can represent
$\mathrm{BLO}^+(\mathbb{R})$ as the image of $\mathrm{BMO}^+(\mathbb{R})$
under $N^-$, modulo bounded functions. This establishes the Bennett type
characterization of $\mathrm{BLO}^+(\mathbb{R})$ in terms of
$\mathrm{BMO}^+(\mathbb{R})$, which is the one-sided analogue of
\cite[Theorem]{b(pams-1982)}.

\begin{theorem}\label{N-:BMO+ to BLO+ cor}
$f\in\mathrm{BLO}^+(\mathbb{R})$ if and only if there exist $h\in
L^\infty(\mathbb{R})$  and $F\in\mathrm{BMO}^+(\mathbb{R})$ with
$N^-(F)<\infty$ almost everywhere on $\mathbb{R}$ such that
\begin{align}\label{20250620.1837}
f=N^-(F)+h.
\end{align}
Moreover, $\|f\|_{\mathrm{BLO}^+(\mathbb{R})}\sim
\inf\{\|F\|_{\mathrm{BMO}^+(\mathbb{R})}+\|h\|_{L^\infty(\mathbb{R})}\}$,
where the infimum is taken over all decompositions of $f$ of the form
\eqref{20250620.1837} and the positive equivalence constants are
independent of $f$.
\end{theorem}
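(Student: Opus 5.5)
The proof splits into sufficiency and necessity, each mirroring Bennett's classical argument.

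\emph{Sufficiency.} Suppose $f=N^-(F)+h$ with $h\in L^\infty(\mathbb{R})$, $F\in\mathrm{BMO}^+(\mathbb{R})$, and $N^-(F)<\infty$ almost everywhere. Proposition \ref{N-:BMO+ to BLO+} gives $N^-(F)\in\mathrm{BLO}^+(\mathbb{R})$ with norm at most $C\|F\|_{\mathrm{BMO}^+(\mathbb{R})}$. To add $h$, I would use the elementary subadditivity: for any $I\in\mathcal{I}$,
\begin{align*}
\left(G+h-\mathop\mathrm{ess\,inf}_{I^+}(G+h)\right)_+\leq
\left(G-\mathop\mathrm{ess\,inf}_{I^+}G\right)_++2\|h\|_{L^\infty(\mathbb{R})},
\end{align*}
which holds because $\mathrm{ess\,inf}_{I^+}(G+h)\geq\mathrm{ess\,inf}_{I^+}G-\|h\|_{L^\infty(\mathbb{R})}$. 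Averaging over $I^-$ yields
\[
\|f\|_{\mathrm{BLO}^+(\mathbb{R})}\leq C\|F\|_{\mathrm{BMO}^+(\mathbb{R})}+2\|h\|_{L^\infty(\mathbb{R})},
\]
and taking the infimum over decompositions gives one direction of the norm equivalence.

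\emph{Necessity.} Let $f\in\mathrm{BLO}^+(\mathbb{R})$. The natural choice is $F:=f$ and $h:=f-N^-(f)$. First, $f\in\mathrm{BMO}^+(\mathbb{R})$ with $\|f\|_{\mathrm{BMO}^+(\mathbb{R})}\leq\|f\|_{\mathrm{BLO}^+(\mathbb{R})}$, since $f_{I^+}\geq\mathrm{ess\,inf}_{I^+}f$. It then suffices to show $0\leq N^-(f)-f\lesssim\|f\|_{\mathrm{BLO}^+(\mathbb{R})}$ almost everywhere; this forces $N^-(f)<\infty$ a.e. and makes $h\in L^\infty(\mathbb{R})$.

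For the lower bound, take $x$ a Lebesgue point and intervals $I$ with $x\in I^+$, $x$ near the left end of $I^+$, shrinking to $x$. Then $\fint_{I^-}f\to f(x)$, so $N^-(f)(x)\geq f(x)$ a.e.

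For the upper bound, fix $I$ with $x\in I^+$. Then
\[
\fint_{I^-}f\leq\fint_{I^-}\left(f-\mathop\mathrm{ess\,inf}_{I^+}f\right)_++\mathop\mathrm{ess\,inf}_{I^+}f\leq\|f\|_{\mathrm{BLO}^+(\mathbb{R})}+\mathop\mathrm{ess\,inf}_{I^+}f.
\]
The step needing care is that $\mathrm{ess\,inf}_{I^+}f\leq f(x)$ is only true for almost every $x\in I^+$, while the supremum defining $N^-$ runs over uncountably many $I$. I would handle this by restricting to intervals with rational endpoints: the exceptional set $\bigcup_I\{x\in I^+:f(x)<\mathrm{ess\,inf}_{I^+}f\}$ is then a countable union of null sets. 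The supremum over rational intervals equals $N^-(f)(x)$ by continuity of $I\mapsto\fint_{I^-}f$ in the endpoints (local integrability), approximating each $I$ containing $x$ in $I^+$ by rational ones with the same property. This gives $N^-(f)\leq f+\|f\|_{\mathrm{BLO}^+(\mathbb{R})}$ a.e., hence $\|h\|_{L^\infty(\mathbb{R})}\leq\|f\|_{\mathrm{BLO}^+(\mathbb{R})}$.

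Therefore
\[
\inf\left\{\|F\|_{\mathrm{BMO}^+(\mathbb{R})}+\|h\|_{L^\infty(\mathbb{R})}\right\}\leq2\|f\|_{\mathrm{BLO}^+(\mathbb{R})},
\]
which completes the equivalence. The only delicate point is the null-set bookkeeping above; the real depth sits in Proposition \ref{N-:BMO+ to BLO+}, which we take as given.
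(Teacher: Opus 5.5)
Your proposal is correct and follows the paper's proof essentially step by step. Sufficiency uses Proposition \ref{N-:BMO+ to BLO+} plus the bound $\|h\|_{\mathrm{BLO}^+(\mathbb{R})}\le 2\|h\|_{L^\infty(\mathbb{R})}$. Necessity uses $F:=f$ and $h:=f-N^-(f)$, with $0\le N^-(f)-f\le\|f\|_{\mathrm{BLO}^+(\mathbb{R})}$ almost everywhere, which yields the constant $2$.

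The only difference is how you handle the null sets: you use rational-endpoint approximation. The paper instead works at Lebesgue points $x$, where $f(x)\ge\mathop\mathrm{ess\,inf}_{I^+}f$ holds automatically for every $I$ with $x\in I^+$, because $I^+$ is open.
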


\begin{proof}
We first show the necessity. Let $f\in\mathrm{BLO}^+(\mathbb{R})$, $x$ be
a Lebesgue point of $f$, and $I\in\mathcal{I}$ with $x\in I^+$. Then
\begin{align*}
\fint_{I^-}f-f(x)\leq\fint_{I^-}f-\mathop\mathrm{ess\,inf}_{I^+}f
\leq\fint_{I^-}\left[f-\mathop\mathrm{ess\,inf}_{I^+}f\right]_+\leq
\|f\|_{\mathrm{BLO}^+(\mathbb{R})}.
\end{align*}
Taking the supremum over all $I\in\mathcal{I}$ with $x\in I^+$
and using the Lebesgue differentiation theorem, we obtain
\begin{align*}
0\leq N^-(f)(x)-f(x)\leq\|f\|_{\mathrm{BLO}^+(\mathbb{R})}
\end{align*}
and hence $N^-(f)-f\in L^\infty(\mathbb{R})$ and
$\|N^-(f)-f\|_{L^\infty(\mathbb{R})}\leq
\|f\|_{\mathrm{BLO}^+(\mathbb{R})}$. Therefore, $f=f+[N^-(f)-f]$ is a
decomposition of $f$ of the form \eqref{20250620.1837} and hence
\begin{align*}
\inf\left\{\|F\|_{\mathrm{BMO}^+(\mathbb{R})}
+\|h\|_{L^\infty(\mathbb{R})}\right\}
\leq\|f\|_{\mathrm{BMO}^+(\mathbb{R})}+
\left\|N^-(f)-f\right\|_{L^\infty(\mathbb{R})}\leq
2\|f\|_{\mathrm{BLO}^+(\mathbb{R})}.
\end{align*}

Now, we prove the sufficiency. Let $h\in L^\infty(\mathbb{R})$  and
$F\in\mathrm{BMO}^+(\mathbb{R})$ with $N^-(F)<\infty$ almost everywhere
on $\mathbb{R}$ be such that $f=N^-(F)+h$. From Proposition
\ref{N-:BMO+ to BLO+}, we infer that
$N^-(F)\in\mathrm{BLO}^+(\mathbb{R})$ and
$\|N^-(F)\|_{\mathrm{BLO}^+(\mathbb{R})}\lesssim
\|F\|_{\mathrm{BMO}^+(\mathbb{R})}$, which,
together with Remark \ref{Linfty BLO+ BMO+}, further implies that
\begin{align*}
\|f\|_{\mathrm{BLO}^+(\mathbb{R})}\leq
\|N^-(F)\|_{\mathrm{BLO}^+(\mathbb{R})}+\|h\|_{\mathrm{BLO}^+(\mathbb{R})}
\lesssim\|F\|_{\mathrm{BMO}^+(\mathbb{R})}+\|h\|_{L^\infty(\mathbb{R})},
\end{align*}
where the implicit positive constant is independent of $F$ and $h$.
Taking the infimum over all decompositions of $f$ of the form
\eqref{20250620.1837}, we find that
\begin{align*}
\|f\|_{\mathrm{BLO}^+(\mathbb{R})}\lesssim
\inf\left\{\|F\|_{\mathrm{BMO}^+(\mathbb{R})}+
\|h\|_{L^\infty(\mathbb{R})}\right\}.
\end{align*}
This finishes the proof of the sufficiency and hence Theorem
\ref{N-:BMO+ to BLO+ cor}.
\end{proof}

\section{Coifman--Rochberg Type Decomposition of
$\mathrm{BLO}^+(\mathbb{R})$ Functions\\and Distances of
Functions in $\mathrm{BLO}^+(\mathbb{R})$ to
${L^\infty(\mathbb{R})}$}
\label{section4}

In this section, we establish the Coifman--Rochberg type decomposition
of $\mathrm{BLO}^+(\mathbb{R})$ functions. As applications, we show that
any $\mathrm{BMO}^+(\mathbb{R})$ function can be represent as the sum of
two $\mathrm{BLO}^+(\mathbb{R})$ functions and we provide an explicit
description of the distance from $\mathrm{BLO}^+(\mathbb{R})$ functions
to $L^\infty(\mathbb{R})$. To begin with, we compile the
Coifman--Rochberg type lemma of $A_1^+(\mathbb{R})$ weights as
follows, which is exactly \cite[Lemma 2.3]{ac(asnsp-1998)} and
\cite[Corollary 3]{mot(tams-1990)}.

\begin{lemma}\label{C-R A1+}
\begin{enumerate}
\item[\rm(i)] Let $f\in L_{\mathrm{loc}}^1(\mathbb{R})$ be such that
$M^-(f)<\infty$ almost everywhere on $\mathbb{R}$. Then there exists
$\delta\in(0,1)$, independent of $f$, such that $[M^-(f)]^\delta\in
A_1^+(\mathbb{R})$ and $[\{M^-(f)\}^\delta]_{A_1^+(\mathbb{R})}\leq4$.

\item[\rm(ii)] Let $\omega\in A_1^+(\mathbb{R})$. Then there exists
$\delta\in(0,1)$, depending only on $[\omega]_{A_1^+(\mathbb{R})}$, such
that $\omega=b[M^-(f)]^\delta$, where $f:=\omega^\frac{1}{\delta}\in
L_{\mathrm{loc}}^1(\mathbb{R})$ and
$b:=\frac{\omega}{[M^-(\omega^\frac{1}{\delta})]^\delta}$
satisfies $1\leq\|b\|_{L^\infty(\mathbb{R})}\leq C$ with $C$ being a
positive constant depending only on $[\omega]_{A_1^+(\mathbb{R})}$.
\end{enumerate}
\end{lemma}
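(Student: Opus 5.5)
For part (i) I would adapt the classical Coifman--Rochberg argument to the one-sided setting. The goal is the pointwise estimate $M^-([M^-(f)]^\delta)(x)\le 4[M^-(f)(x)]^\delta$ for almost every $x$ with $M^-(f)(x)<\infty$; by the definition of $[\cdot]_{A_1^+(\mathbb{R})}$ this is exactly what is needed. Fix such an $x$ and $h\in(0,\infty)$, and split $|f|=f_1+f_2$ with $f_1:=|f|\boldsymbol{1}_{(x-2h,x)}$ and $f_2:=|f|\boldsymbol{1}_{(-\infty,x-2h]}$. The part of $f$ to the right of $x$ can be ignored, since $M^-$ evaluated at a point $y<x$ only sees $(-\infty,y)$.

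\emph{Far part.} For $y\in(x-h,x)$, the only intervals $(y-r,y)$ that meet the support of $f_2$ have $r>y-x+2h>h$, and hence $r+(x-y)\le 2r$. Therefore
\begin{align*}
\frac1r\int_{(y-r,y)}f_2\le\frac{r+x-y}{r}\,\fint_{(y-r,x)}|f|\le 2M^-(f)(x),
\end{align*}
so that $M^-(f_2)\le 2M^-(f)(x)$ on $(x-h,x)$.

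\emph{Near part.} Here I would use the sharp weak type $(1,1)$ bound $|\{M^-(g)>\lambda\}|\le\lambda^{-1}\|g\|_{L^1(\mathbb{R})}$, which follows from the rising sun lemma \cite[Lemma 2.1]{s(tams-1986)}. Kolmogorov's inequality then gives
\begin{align*}
\fint_{(x-h,x)}[M^-(f_1)]^\delta\le\frac{1}{1-\delta}\left(\frac{\|f_1\|_{L^1(\mathbb{R})}}{h}\right)^\delta\le\frac{2^\delta}{1-\delta}[M^-(f)(x)]^\delta.
\end{align*}

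\emph{Combining.} Using $(a+b)^\delta\le a^\delta+b^\delta$, we obtain
\begin{align*}
\fint_{(x-h,x)}[M^-(f)]^\delta\le 2^\delta\left(1+\frac{1}{1-\delta}\right)[M^-(f)(x)]^\delta.
\end{align*}
Choosing $\delta:=\frac14$ makes the constant $2^{1/4}\cdot\frac73<4$. Taking the supremum over $h$ yields the claimed bound.

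For part (ii), the key step, and the one I expect to be the main obstacle, is a one-sided self-improvement: for $\omega\in A_1^+(\mathbb{R})$ there should exist $r>1$, depending only on $[\omega]_{A_1^+(\mathbb{R})}$, such that $\omega^r\in A_1^+(\mathbb{R})$ with $[\omega^r]_{A_1^+(\mathbb{R})}\le C$. I would derive this from a one-sided reverse H\"older inequality. Since $A_1^+(\mathbb{R})\subset A_\infty^+(\mathbb{R})$, one expects the one-sided reverse H\"older estimate of Mart\'in-Reyes--Pick--de la Torre, $\fint_{(a,b)}\omega^r\lesssim(\fint_{(b,c)}\omega)^r$ for suitably placed intervals. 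Combined with the $A_1^+$ condition in the time-lag form of Lemma \ref{Ap+ time lag}(i), this gives $\fint_{(a,b)}\omega^r\lesssim[\mathrm{ess\,inf}_{(c,d)}\omega]^r$. The resulting separated-interval constant then transfers back to $[\omega^r]_{A_1^+(\mathbb{R})}$ by Lemma \ref{Ap+ time lag}(i) again. If the reverse H\"older inequality is only available with a gap between the intervals, the time-lag flexibility in Lemma \ref{Ap+ time lag} is precisely what absorbs it.

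With this in hand, set $\delta:=1/r$ and $f:=\omega^{1/\delta}$. By the Lebesgue differentiation theorem, $f\le M^-(f)$ almost everywhere, and by the self-improvement, $M^-(f)\le Cf$ almost everywhere. Hence $b:=\omega/[M^-(f)]^\delta$ satisfies $C^{-\delta}\le b\le 1$ almost everywhere. This gives the factorization $\omega=b[M^-(f)]^\delta$ with $b$ and $b^{-1}$ bounded by constants depending only on $[\omega]_{A_1^+(\mathbb{R})}$, which is the content of the stated bound on $b$ (read as a two-sided bound on $b$ and $1/b$).
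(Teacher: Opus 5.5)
Your argument is correct and is the standard one-sided version of the Coifman--Rochberg proof; the paper gives no proof of its own and only cites \cite[Lemma 2.3]{ac(asnsp-1998)} and \cite[Corollary 3]{mot(tams-1990)}. Part (i) is complete as written. Part (ii) correctly reduces to the self-improvement $\omega\in A_1^+(\mathbb{R})\Rightarrow\omega^r\in A_1^+(\mathbb{R})$ for some $r>1$ depending only on $[\omega]_{A_1^+(\mathbb{R})}$; this is the one nontrivial input, which you take from the one-sided reverse H\"older inequality rather than prove. Your two-sided reading of the bound on $b$ is also the right one: $b\le1$ a.e.\ by construction, so the content is $C^{-\delta}\le b\le1$, and this is what is used later via $\|\ln b\|_{L^\infty(\mathbb{R})}$ in the proof of Theorem \ref{C-R BLO+}.
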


Next, using Lemma \ref{C-R A1+} and Theorem \ref{BLO+ and A1+}, we
establish the Coifman--Rochberg type decomposition of functions in
$\mathrm{BLO}^+(\mathbb{R})$, which also provides an equivalent norm of
$\mathrm{BLO}^+(\mathbb{R})$.

\begin{theorem}\label{C-R BLO+}
$f\in\mathrm{BLO}^+(\mathbb{R})$ if and only if there exist
$\alpha\in(0,\infty)$, a nonnegative function $g\in
L_{\mathrm{loc}}^1(\mathbb{R})$, and $b\in L^\infty(\mathbb{R})$ such that
\begin{align}\label{20250616.1638}
f=\alpha\ln\left(M^-(g)\right)+b.
\end{align}
Moreover,
\begin{align*}
\|f\|_{\mathrm{BLO}^+(\mathbb{R})}\sim\inf\left\{\alpha+
\|b\|_{L^\infty(\mathbb{R})}\right\}=:
\|f\|_{\mathrm{BLO}^+(\mathbb{R})}^*,
\end{align*}
where the infimum is taken over all decompositions of $f$ of the form
\eqref{20250616.1638} and the positive equivalence constants are
independent of $f$.
\end{theorem}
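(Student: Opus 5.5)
Both directions will be derived by combining Lemma \ref{C-R A1+} with Theorem \ref{BLO+ and A1+}.

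\emph{Sufficiency.} Suppose $f=\alpha\ln(M^-(g))+b$ with $\alpha>0$, $g\ge0$ locally integrable, and $b\in L^\infty(\mathbb{R})$. For $f$ to be locally integrable, $M^-(g)$ must be finite and positive almost everywhere. Lemma \ref{C-R A1+}(i) then gives a universal $\delta\in(0,1)$ such that $[M^-(g)]^\delta\in A_1^+(\mathbb{R})$ with constant at most $4$. By Theorem \ref{BLO+ and A1+},
\begin{align*}
\left\|\ln(M^-(g))\right\|_{\mathrm{BLO}^+(\mathbb{R})}=\delta^{-1}\left\|\ln\left([M^-(g)]^\delta\right)\right\|_{\mathrm{BLO}^+(\mathbb{R})}\le\delta^{-1}\ln 9.
\end{align*}
The quantity $\|\cdot\|_{\mathrm{BLO}^+(\mathbb{R})}$ is subadditive and positively homogeneous, since $(F+G-\mathrm{ess\,inf}(F+G))_+\le(F-\mathrm{ess\,inf}F)_++(G-\mathrm{ess\,inf}G)_+$. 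Combining this with Remark \ref{Linfty BLO+ BMO+} gives
\begin{align*}
\|f\|_{\mathrm{BLO}^+(\mathbb{R})}\le\delta^{-1}\ln 9\,\alpha+2\|b\|_{L^\infty(\mathbb{R})}.
\end{align*}
Taking the infimum over all decompositions yields $\|f\|_{\mathrm{BLO}^+(\mathbb{R})}\lesssim\|f\|^*_{\mathrm{BLO}^+(\mathbb{R})}$.

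\emph{Necessity.} Let $f\in\mathrm{BLO}^+(\mathbb{R})$. If $\|f\|_{\mathrm{BLO}^+(\mathbb{R})}=0$, then $f$ is essentially nondecreasing. In that case $e^{f}$ satisfies $M^-(e^f)\le e^f$ almost everywhere, so $e^f\in A_1^+(\mathbb{R})$ with constant $1$, and the general argument below applies with an arbitrary small $\alpha$ in place of $\|f\|$. So assume $\|f\|_{\mathrm{BLO}^+(\mathbb{R})}>0$. Fix $\epsilon:=B/(2\|f\|_{\mathrm{BLO}^+(\mathbb{R})})$ with $B$ from Theorem \ref{BLO+ and A1+}. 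By \eqref{20250622.2236}, $\omega:=e^{\epsilon f}\in A_1^+(\mathbb{R})$ with $[\omega]_{A_1^+(\mathbb{R})}\le1+A$, a constant independent of $f$.

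Lemma \ref{C-R A1+}(ii) then gives $\delta\in(0,1)$, depending only on $1+A$, and a function $\beta$ with $1\le\|\beta\|_{L^\infty(\mathbb{R})}\le C$ such that $\omega=\beta[M^-(g)]^\delta$, where $g:=\omega^{1/\delta}$. Taking logarithms,
\begin{align*}
f=\frac{\delta}{\epsilon}\ln(M^-(g))+\frac{1}{\epsilon}\ln\beta.
\end{align*}
Here $\delta/\epsilon=2\delta\|f\|_{\mathrm{BLO}^+(\mathbb{R})}/B$, so this is a decomposition of the form \eqref{20250616.1638}.

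The main obstacle is showing that $b:=\epsilon^{-1}\ln\beta$ is bounded with norm $\lesssim\|f\|_{\mathrm{BLO}^+(\mathbb{R})}$. This requires a two-sided bound on $\beta$, whereas the lemma only records an upper bound. The lower bound comes from checking directly that $\beta\ge c>0$ almost everywhere, which I expect is implicit in the lemma's proof. Indeed, $\omega^{1/\delta}\in A_1^+(\mathbb{R})$ for $\delta$ close to $1$ by the one-sided reverse Hölder property, so $M^-(\omega^{1/\delta})\lesssim\omega^{1/\delta}$ almost everywhere. This gives $\beta=\omega/[M^-(\omega^{1/\delta})]^\delta\gtrsim1$. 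In the other direction, Lebesgue differentiation with left-sided intervals gives $M^-(\omega^{1/\delta})\ge\omega^{1/\delta}$ almost everywhere, so $\beta\le1$.

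Together these give $|\ln\beta|\lesssim1$, hence
\begin{align*}
\|b\|_{L^\infty(\mathbb{R})}\lesssim\epsilon^{-1}\sim\|f\|_{\mathrm{BLO}^+(\mathbb{R})}.
\end{align*}
It follows that $\|f\|^*_{\mathrm{BLO}^+(\mathbb{R})}\le\delta/\epsilon+\|b\|_{L^\infty(\mathbb{R})}\lesssim\|f\|_{\mathrm{BLO}^+(\mathbb{R})}$, which completes the proof.
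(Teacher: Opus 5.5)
Your proposal is correct and follows essentially the same route as the paper. For necessity you take $\epsilon=B/(2\|f\|_{\mathrm{BLO}^+(\mathbb{R})})$ in \eqref{20250622.2236} and then apply Lemma \ref{C-R A1+}(ii); for sufficiency you use Lemma \ref{C-R A1+}(i), the bound \eqref{20250727.1548} (giving $\delta^{-1}\ln 9\,\alpha$), and Remark \ref{Linfty BLO+ BMO+}. Your explicit check that $\beta$ is bounded above by $1$ and below by a positive universal constant, so that $\|\ln\beta\|_{L^\infty(\mathbb{R})}$ is controlled, and your treatment of the case $\|f\|_{\mathrm{BLO}^+(\mathbb{R})}=0$ fill in points the paper passes over: the paper simply uses $\|\ln b\|_{L^\infty(\mathbb{R})}$ as though Lemma \ref{C-R A1+}(ii) supplied that two-sided bound.
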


\begin{proof}
We first prove the necessity. Let $f\in\mathrm{BLO}^+(\mathbb{R})$. From
\eqref{20250622.2236} and Lemma \ref{C-R A1+}(ii), it follows that
there exist $\delta\in(0,1)$ (which is independent of $f$), a nonnegative
function $g\in L_{\mathrm{loc}}^1(\mathbb{R})$, and $b\in
L^\infty(\mathbb{R})$ (whose $L^\infty(\mathbb{R})$-norm is independent
of $f$) such that $e^\frac{B}{2\|f\|_{\mathrm{BLO}^+(\mathbb{R})}}=
b[M^-(g)]^\delta$, where $B$ is the same as in Lemma \ref{JN BLO+}.
Therefore,
\begin{align*}
f=\frac{2\|f\|_{\mathrm{BLO}^+(\mathbb{R})}}{B}\ln b+
\frac{2\delta\|f\|_{\mathrm{BLO}^+(\mathbb{R})}}{B}\ln\left(M^-(g)\right)
\end{align*}
and hence
\begin{align*}
\|f\|_{\mathrm{BLO}^+(\mathbb{R})}^*\leq
\frac{2}{B}\left[\delta+\|\ln b\|_{L^\infty(\mathbb{R})}\right]
\|f\|_{\mathrm{BLO}^+(\mathbb{R})}.
\end{align*}
This finishes the proof of the necessity.

Now, we show the sufficiency. Suppose that
$f=\alpha\ln(M^-(g))+b$ is a decomposition of $f$ of the form
\eqref{20250616.1638}. By Lemma \ref{C-R A1+}(i), we conclude that there
exists $\delta\in(0,1)$, independent of $g$, such that $[M^-(g)]^\delta
\in A_1^+(\mathbb{R})$ and $[\{M^-(g)\}^\delta]_{A_1^+(\mathbb{R})}
\leq4$. This, together with \eqref{20250727.1548}, further implies that
$\alpha\ln(M^-(g))\in\mathrm{BLO}^+(\mathbb{R})$ and
\begin{align*}
\left\|\alpha\ln\left(M^-(g)\right)\right\|_{\mathrm{BLO}^+(\mathbb{R})}=
\left\|\frac{\alpha}{\delta}\ln\left(\left[M^-(g)\right]^\delta
\right)\right\|_{\mathrm{BLO}^+(\mathbb{R})}\leq\frac{2\alpha\ln3}{\delta}.
\end{align*}
Combining this and Remark \ref{Linfty BLO+ BMO+}, we find that
$f\in\mathrm{BLO}^+(\mathbb{R})$ and
\begin{align*}
\|f\|_{\mathrm{BLO}^+(\mathbb{R})}\leq2\|b\|_{L^\infty(\mathbb{R})}+
\frac{2\alpha\ln3}{\delta}\leq
\frac{2\ln3}{\delta}\left[\alpha+\|b\|_{L^\infty(\mathbb{R})}\right].
\end{align*}
Taking the infimum over all decompositions $f=\alpha\ln(M^-(g))+b$
as in \eqref{20250616.1638}, we obtain
\begin{align*}
\|f\|_{\mathrm{BLO}^+(\mathbb{R})}\leq\frac{2\ln3}{\delta}
\|f\|_{\mathrm{BLO}^+(\mathbb{R})}^*,
\end{align*}
which completes the proof of the sufficiency and hence
Theorem \ref{C-R BLO+}.
\end{proof}

As a result, we establish the Coifman--Rochberg type characterization
of $\mathrm{BMO}^+(\mathbb{R})$ and we prove that any
$\mathrm{BMO}^+(\mathbb{R})$ function admits a decomposition into the sum
of two $\mathrm{BLO}^+(\mathbb{R})$ functions.

\begin{corollary}\label{BMO+=BLO+-BLO-}
The following statements are mutually equivalent.
\begin{itemize}
\item[\rm(i)] $f\in\mathrm{BMO}^+(\mathbb{R})$.

\item[\rm(ii)] There exist $\alpha,\beta\in(0,\infty)$,
nonnegative functions $g,h\in L_{\mathrm{loc}}^1(\mathbb{R})$,
and $b\in L^\infty(\mathbb{R})$ such that
\begin{align*}
f=\alpha\ln\left(M^-(g)\right)-\beta\ln\left(M^+(h)\right)+b.
\end{align*}

\item[\rm(iii)] There exist $g,h\in\mathrm{BLO}^+(\mathbb{R})$
such that $f=g+h$.
\end{itemize}
\end{corollary}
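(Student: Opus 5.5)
The plan is to prove (i) $\Longrightarrow$ (ii) $\Longrightarrow$ (iii) $\Longrightarrow$ (i). The main tools are the one-sided Jones factorization already used in the proof of Lemma \ref{BMO+ time lag}, together with Lemma \ref{C-R A1+} and Theorem \ref{C-R BLO+}. Since \emph{part of (iii) as printed is likely false, I will establish it with its second summand replaced by the negative of a $\mathrm{BLO}^-(\mathbb{R})$ function.} Concretely: I prove $f=g-h$ with $g\in\mathrm{BLO}^+(\mathbb{R})$ and $h\in\mathrm{BLO}^-(\mathbb{R})$ (equivalently $-h\in\mathrm{BLO}^+(\mathbb{R})$ after reflecting $x\mapsto -x$). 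This is the shape that (ii) naturally produces.

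For (i) $\Longrightarrow$ (ii), I repeat the first step of the proof of Lemma \ref{BMO+ time lag}. There is $\delta>0$, independent of $f$, with $e^{\delta f/(2\|f\|_{\mathrm{BMO}^+(\mathbb{R})})}\in A_2^+(\mathbb{R})$, and the one-sided Jones factorization gives $e^{\delta f/(2\|f\|_{\mathrm{BMO}^+(\mathbb{R})})}=uv^{-1}$ with $u\in A_1^+(\mathbb{R})$ and $v\in A_1^-(\mathbb{R})$, both with controlled constants. Lemma \ref{C-R A1+}(ii) gives $u=b_1[M^-(g)]^{\delta_1}$, and its reflected version gives $v=b_2[M^+(h)]^{\delta_2}$, where $b_1,b_2$ and $b_1^{-1},b_2^{-1}$ are bounded. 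Taking logarithms yields
$f=\alpha\ln(M^-(g))-\beta\ln(M^+(h))+b$ with $\alpha=2\delta_1\|f\|_{\mathrm{BMO}^+(\mathbb{R})}/\delta$, $\beta=2\delta_2\|f\|_{\mathrm{BMO}^+(\mathbb{R})}/\delta$ and $b\in L^\infty(\mathbb{R})$. If $\|f\|_{\mathrm{BMO}^+(\mathbb{R})}=0$, $f$ is a.e. increasing and hence already in $\mathrm{BLO}^+(\mathbb{R})$ (Remark \ref{Linfty BLO+ BMO+}). In that case I apply Theorem \ref{C-R BLO+} directly and take $\beta$ arbitrary with $h$ constant.

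For (ii) $\Longrightarrow$ (iii), Theorem \ref{C-R BLO+} shows that $\alpha\ln(M^-(g))+b\in\mathrm{BLO}^+(\mathbb{R})$. Its mirror image, proved by the same argument via Lemma \ref{C-R A1+}(i) for $M^+$ and \eqref{20250727.1548} reflected, shows that $\beta\ln(M^+(h))\in\mathrm{BLO}^-(\mathbb{R})$.

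For (iii) $\Longrightarrow$ (i), I use three facts:
\begin{itemize}
\item $\mathrm{BLO}^\pm(\mathbb{R})\subset\mathrm{BMO}^\pm(\mathbb{R})$;
\item $-\mathrm{BMO}^-(\mathbb{R})=\mathrm{BMO}^+(\mathbb{R})$, which holds because the John--Nirenberg characterization of \cite{mt(jlms-1994)} gives $\int_{I^-}(F-F_{I^+})_+\sim\int_{I^+}(F-F_{I^-})_-$ up to constants;
\item $\mathrm{BMO}^+(\mathbb{R})$ is closed under sums, by the pointwise inequality $(F+G-(F+G)_{I^+})_+\le(F-F_{I^+})_++(G-G_{I^+})_+$.
\end{itemize}

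The main obstacle is exactly the reading of (iii). If both summands are required to lie in $\mathrm{BLO}^+(\mathbb{R})$, the implication (ii) $\Longrightarrow$ (iii) would need $-\ln v\in\mathrm{BLO}^+(\mathbb{R})$ for every $v\in A_1^-(\mathbb{R})$. This fails: $v(x)=|x|^{-1/2}$ lies in $A_1^-(\mathbb{R})$, yet $\mathrm{ess\,sup}_{I^+}\ln v=\infty$ whenever $0\in I^+$. So I will prove the statement in the $\mathrm{BLO}^+(\mathbb{R})-\mathrm{BLO}^-(\mathbb{R})$ form, consistent with the abstract's phrase ``split into the sum of two BLO functions''.
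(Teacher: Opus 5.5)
Your proposal is correct and follows the same route as the paper:
\begin{itemize}
\item (i) $\Longrightarrow$ (ii) through an $A_2^+(\mathbb{R})$ weight, the one-sided Jones factorization and Lemma \ref{C-R A1+}(ii);
\item (ii) $\Longrightarrow$ (iii) through Theorem \ref{C-R BLO+} and its reflection;
\item (iii) $\Longrightarrow$ (i) through $\mathrm{BLO}^\pm(\mathbb{R})\subset\mathrm{BMO}^\pm(\mathbb{R})$ and subadditivity.
\end{itemize}

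You are right that (iii) cannot hold as printed. The cleanest reason is that $\mathrm{BLO}^+(\mathbb{R})$ is closed under addition, because $\mathop{\mathrm{ess\,inf}}_{I^+}(g+h)\ge\mathop{\mathrm{ess\,inf}}_{I^+}g+\mathop{\mathrm{ess\,inf}}_{I^+}h$ and $(a+b)_+\le a_++b_+$. So the printed (iii) would force $\mathrm{BMO}^+(\mathbb{R})=\mathrm{BLO}^+(\mathbb{R})$, contradicting Remark \ref{Linfty BLO+ BMO+}. For instance, $\frac12\ln|x|$ is of the form (ii) but is not essentially bounded below near $0$. Your side argument via $-\ln v$ uses this closure, together with $L^\infty(\mathbb{R})\subset\mathrm{BLO}^+(\mathbb{R})$, without saying so; state it explicitly.

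The paper's one-line appeal to Theorem \ref{C-R BLO+} for (ii) $\Longrightarrow$ (iii) in fact only yields your $\mathrm{BLO}^+(\mathbb{R})-\mathrm{BLO}^-(\mathbb{R})$ form, which is also what the corollary's label records. For that form, (iii) $\Longrightarrow$ (i) needs $-\mathrm{BMO}^-(\mathbb{R})=\mathrm{BMO}^+(\mathbb{R})$. The paper never mentions this, and you correctly add it. You can justify it in either of two ways:
\begin{itemize}
\item by \eqref{20250727.1611} with $q=2$, together with the duality $\omega\in A_2^+(\mathbb{R})\Longleftrightarrow\omega^{-1}\in A_2^-(\mathbb{R})$;
\item by the $\inf_{\alpha}$-form in Lemma \ref{BMO+ time lag} with $\gamma=\frac12$, which is invariant under $F\mapsto -F(-\cdot)$.
\end{itemize}
Your stated comparison of $\int_{I^-}(F-F_{I^+})_+$ with $\int_{I^+}(F-F_{I^-})_-$ holds only for the suprema over $I$, not interval by interval. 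To see this, take $F=\pm1$ on the two halves of $I^-$ and $F=0$ on $I^+$.

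One slip: the parenthetical ``equivalently $-h\in\mathrm{BLO}^+(\mathbb{R})$ after reflecting'' is wrong. The condition $h\in\mathrm{BLO}^-(\mathbb{R})$ means $h(-\cdot)\in\mathrm{BLO}^+(\mathbb{R})$, with no change of sign. Your own example $h=\ln(|x|^{-1/2})$ lies in $\mathrm{BLO}^-(\mathbb{R})$, while $-h\notin\mathrm{BLO}^+(\mathbb{R})$.
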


\begin{proof}
We first show (i) $\Longrightarrow$ (ii). Let
$f\in\mathrm{BMO}^+(\mathbb{R})$. From \eqref{20250727.1611}, we infer
that there exist $\lambda\in(0,\infty)$ and $\omega\in A_2^+(\mathbb{R})$
such that $f=\lambda\ln\omega$. This, together with the Jones
factorization theorem for one-sided Muckenhoupt weights and Lemma
\ref{C-R A1+}(ii), further implies that there exist
$\delta,\sigma\in(0,1)$, nonnegative functions $g,h\in
L_{\mathrm{loc}}^1(\mathbb{R})$, and $\widetilde{b}\in
L^\infty(\mathbb{R})$ such that
\begin{align*}
f=\lambda\delta\ln\left(M^-(g)\right)-\lambda\sigma\ln\left(M^+(h)\right)
+\lambda\ln\widetilde{b}
\end{align*}
and hence (ii) holds with $\alpha:=\lambda\delta$, $\beta:=\lambda\sigma$,
and $b:=\lambda\ln\widetilde{b}$. Assertion (ii) $\Longrightarrow$ (iii)
is a direct consequence of Theorem \ref{C-R BLO+} and assertion (iii)
$\Longrightarrow$ (i) can be deduced immediately from Remark
\ref{Linfty BLO+ BMO+} and the subadditivity of
$\mathrm{BMO}^+(\mathbb{R})$. This finishes
the proof of Corollary \ref{BMO+=BLO+-BLO-}.
\end{proof}

Moreover, we present an explicit quantification of the distance from
functions in $\mathrm{BLO}^+(\mathbb{R})$ to $L^\infty(\mathbb{R})$ in
terms of the $\|\cdot\|_{\mathrm{BLO}^+(\mathbb{R})}^*$-norm, which is
the corresponding result for the one-sided case of
\cite[Theorem 9]{a(na-2024)}.

\begin{corollary}\label{distance BLO+ to Linfty}
Let $f\in\mathrm{BLO}^+(\mathbb{R})$. Then
\begin{align*}
\inf\left\{\|f-h\|_{\mathrm{BLO}^+(\mathbb{R})}^*:
h\in L^\infty(\mathbb{R})\right\}=
\sup\left\{\epsilon\in(0,\infty):e^{\epsilon f}
\in A_1^+(\mathbb{R})\right\}.
\end{align*}
\end{corollary}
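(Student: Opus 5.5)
The plan is to compare the two sides through a single dictionary: a decomposition $f-h=\alpha\ln(M^-(g))+b$ on the left-hand side corresponds to an exponent $\epsilon$ with $e^{\epsilon f}\in A_1^+(\mathbb{R})$ on the right-hand side, with $\epsilon$ and $\alpha$ tied through the exponent $\delta$ in Lemma \ref{C-R A1+}. Write $\epsilon_0:=\sup\{\epsilon\in(0,\infty):e^{\epsilon f}\in A_1^+(\mathbb{R})\}$. This set is nonempty by Theorem \ref{BLO+ and A1+}, since $e^{\epsilon f}\in A_1^+(\mathbb{R})$ for all small $\epsilon$. I will prove two one-sided comparisons and then match the constants.

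First, I fix $\epsilon$ with $e^{\epsilon f}\in A_1^+(\mathbb{R})$ and apply Lemma \ref{C-R A1+}(ii) to $\omega:=e^{\epsilon f}$. This gives $e^{\epsilon f}=b_\epsilon[M^-(g)]^\delta$ with $b_\epsilon$ bounded and bounded below by a positive constant, since $b_\epsilon=\omega/[M^-(\omega^{1/\delta})]^\delta$ and $\omega^{1/\delta}\le M^-(\omega^{1/\delta})[\omega^{1/\delta}]_{A_1^+}$. Taking logarithms, I set $h:=\frac1\epsilon\ln b_\epsilon\in L^\infty(\mathbb{R})$. Then
\begin{align*}
f-h=\frac{\delta}{\epsilon}\ln\left(M^-(g)\right),
\end{align*}
a decomposition of the form \eqref{20250616.1638} with zero bounded part. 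Hence $\|f-h\|^*_{\mathrm{BLO}^+(\mathbb{R})}\le\delta/\epsilon$.

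Second, I take $h\in L^\infty(\mathbb{R})$ together with a decomposition $f-h=\alpha\ln(M^-(g))+b$. Lemma \ref{C-R A1+}(i), applied for every admissible $\delta<1$, gives $[M^-(g)]^{\delta}\in A_1^+(\mathbb{R})$. Therefore
\begin{align*}
e^{\frac{\delta}{\alpha}f}=e^{\frac{\delta}{\alpha}(h+b)}\left[M^-(g)\right]^\delta\in A_1^+(\mathbb{R}),
\end{align*}
because multiplying an $A_1^+(\mathbb{R})$ weight by a function bounded above and away from zero preserves $A_1^+(\mathbb{R})$. This yields $\epsilon_0\ge\delta/\alpha$, and this holds for every decomposition and every $h$.

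Combining the two steps and letting $\delta$ range over its admissible values, the infimum over $h$ of the $*$-quantity and $\epsilon_0$ are linked through the correspondence $\epsilon\leftrightarrow\delta/\alpha$. The bounded parts $b$ and $h$ are absorbed on both sides and do not enter.

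The main obstacle is the final matching, which must give the identity exactly as stated, with no loss of constants. In particular, one has to check that $\delta$ may be taken arbitrarily close to $1$ in both parts of Lemma \ref{C-R A1+}. One also has to fix the precise normalization of the infimum defining $\|\cdot\|^*_{\mathrm{BLO}^+(\mathbb{R})}$ so that the two extremal quantities coincide. My first attempt is to redo the proof of Lemma \ref{C-R A1+}(ii) with $\delta\to1^-$, exploiting the openness of $\{\epsilon:e^{\epsilon f}\in A_1^+(\mathbb{R})\}$. I expect this openness to come from the reverse Hölder property of $A_1^+$ weights.

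The first step as written bounds the left side by the reciprocal $\delta/\epsilon$, not by $\epsilon$. This tension with the literal form of the identity must be resolved through that normalization before the proof can be completed as worded.
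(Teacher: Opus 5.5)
Your two comparisons are correct and are essentially the paper's argument. The paper rewrites both sides as infima over one parameter and passes between them using Lemma \ref{C-R A1+}(ii) (weight to decomposition) and Lemma \ref{C-R A1+}(i) (decomposition to weight), just as you do. What your two steps actually prove, however, is
\begin{align*}
\inf\left\{\|f-h\|_{\mathrm{BLO}^+(\mathbb{R})}^*:h\in L^\infty(\mathbb{R})\right\}
=\frac{1}{\epsilon_0}
=\inf\left\{\epsilon\in(0,\infty):e^{f/\epsilon}\in A_1^+(\mathbb{R})\right\},
\end{align*}
with $1/\infty:=0$. Your first step bounds the left side by $\delta/\epsilon<1/\epsilon$ for every $\epsilon$ with $e^{\epsilon f}\in A_1^+(\mathbb{R})$. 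Your second step, letting $\delta\uparrow1$, gives $\epsilon_0\geq1/\alpha\geq1/(\alpha+\|b\|_{L^\infty(\mathbb{R})})$ for every decomposition of every $f-h$.

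The tension you flag cannot be removed by any normalization, because the identity as printed is false. For $f\equiv0$ the left side is $0$: take $h=0$ and $g\equiv1$, so that $0=\alpha\ln(M^-(g))$ for every $\alpha>0$. The right side is $+\infty$, since $e^{\epsilon\cdot0}\equiv1\in A_1^+(\mathbb{R})$ for all $\epsilon$. More generally, replacing $f$ by $\lambda f$ with $\lambda>0$ multiplies the left side by $\lambda$ (because $\|\cdot\|^*_{\mathrm{BLO}^+(\mathbb{R})}$ is positively homogeneous) and the right side by $\lambda^{-1}$.

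The paper's proof reaches the printed form only through a slip in the first equality of its second chain. There, $\sup\{\epsilon:e^{\epsilon f}\in A_1^+(\mathbb{R})\}$ is equated with the infimum of those $\epsilon$ for which $e^{f/\epsilon}=b[M^-(g)]^\delta$, which is the reciprocal quantity. The remaining equalities in that chain amount to your two steps. So the correct statement is the reciprocal form displayed above, and you have proved it. You should also drop the plan to let $\delta\to1^-$ in Lemma \ref{C-R A1+}(ii) via openness and reverse H\"older; your first step never needs it.

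Where $\delta$ near $1$ genuinely matters is your second step. There you need $[M^-(g)]^\delta\in A_1^+(\mathbb{R})$ for every $\delta\in(0,1)$, whereas Lemma \ref{C-R A1+}(i) as printed asserts this only for some $\delta$; the paper's chain needs the stronger form as well. The stronger form is standard (compare Lemma \ref{C-R A1+gamma}(i), which is stated for every $\delta$) and follows from the usual Coifman--Rochberg argument:
given $x\in\mathbb{R}$ and $r>0$, split $g=g\boldsymbol{1}_{(x-2r,x)}+g\boldsymbol{1}_{(-\infty,x-2r]}$. On $(x-r,x)$, the $M^-$ of the second piece is at most $2M^-(g)(x)$. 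For the first piece, Kolmogorov's inequality for the weak type $(1,1)$ operator $M^-$ gives $\fint_{(x-r,x)}[M^-(g\boldsymbol{1}_{(x-2r,x)})]^\delta\leq C_\delta[M^-(g)(x)]^\delta$.

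A minor point in your first step: the lower bound $b_\epsilon\geq[\omega^{1/\delta}]_{A_1^+(\mathbb{R})}^{-\delta}$ comes from $M^-(\omega^{1/\delta})\leq[\omega^{1/\delta}]_{A_1^+(\mathbb{R})}\,\omega^{1/\delta}$. The inequality you wrote yields only an upper bound on $b_\epsilon$.
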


\begin{proof}
Notice that
\begin{align*}
\inf\left\{\|f-h\|_{\mathrm{BLO}^+(\mathbb{R})}^*:
h\in L^\infty(\mathbb{R})\right\}
&=\inf\left\{\alpha+\|b\|_{L^\infty(\mathbb{R})}:
h\in L^\infty(\mathbb{R})\mbox{\ and\ }\right.\\
&\qquad f-h=\alpha\ln\left(M^-(g)\right)+b\mbox{\ is\ a\ decomposition}\\
&\qquad\left.\mbox{of\ $f-h$\ of\ the\ form\
\eqref{20250616.1638}}\right\}\\
&=\inf\left\{\alpha\in(0,\infty):\mbox{there\ exists\ $g\in
L_{\mathrm{loc}}^1(\mathbb{R})$}\right.\\
&\qquad\left.\mbox{such\ that\ }f-\alpha\ln\left(M^-(g)\right)\in
L^\infty(\mathbb{R})\right\}.
\end{align*}
Combining this and Lemma \ref{C-R A1+}, we find that
\begin{align*}
&\sup\left\{\epsilon\in(0,\infty):
e^{\epsilon f}\in A_1^+(\mathbb{R})\right\}\\
&\quad=\inf\left\{\epsilon\in(0,\infty):\mbox{there\ exist\
$\delta\in(0,1)$,\ $g\in L_{\mathrm{loc}}^1(\mathbb{R})$,}\right.\\
&\qquad\quad\left.\mbox{and\ $b\in L^\infty(\mathbb{R})$\ such\ that\ }
e^\frac{f}{\epsilon}=b\left[M^-(g)\right]^\delta\right\}\\
&\quad=\inf\left\{\epsilon\in(0,\infty):\mbox{there\ exist\
$\delta\in(0,1)$,\ $g\in L_{\mathrm{loc}}^1(\mathbb{R})$,}\right.\\
&\qquad\quad\left.\mbox{and\ $b\in L^\infty(\mathbb{R})$\ such\ that\ }
\frac{f}{\epsilon}=\ln b+\delta\ln\left(M^-(g)\right)\right\}\\
&\quad=\inf\left\{\epsilon\in(0,\infty):\mbox{there\ exist\
$\delta\in(0,1)$\ and\ $g\in L_{\mathrm{loc}}^1(\mathbb{R})$}\right.\\
&\qquad\quad\left.\mbox{such\ that\ }
f-\epsilon\delta\ln\left(M^-(g)\right)\in L^\infty(\mathbb{R})\right\}\\
&\quad=\inf\left\{\epsilon\in(0,\infty):\mbox{there\ exists\ $g\in
L_{\mathrm{loc}}^1(\mathbb{R})$\ such\ that}\right.\\
&\qquad\quad\left.f-\epsilon\ln\left(M^-(g)\right)
\in L^\infty(\mathbb{R})\right\}\\
&\quad=\inf\left\{\|f-h\|_{\mathrm{BLO}^+(\mathbb{R})}^*:
h\in L^\infty(\mathbb{R})\right\},
\end{align*}
which completes the proof of Corollary \ref{distance BLO+ to Linfty}.
\end{proof}

\section{Higher-Dimensional Extensions and Applications to\\
Doubly Nonlinear Parabolic Equations}
\label{section5}

Throughout this section, we \emph{always fix} $p\in(1,\infty)$. As
noted in Section \ref{section1}, the parabolic BMO space with time
lag is deeply connected with the doubly nonlinear parabolic
equation \eqref{20251009.2225}. The solution of
\eqref{20251009.2225} possesses the following scaling property: if
$u(x,t)$ is a solution of \eqref{20251009.2225}, then so is $u(\lambda
x,\lambda^p t)$ for any $\lambda\in(0,\infty)$. It is this property that
necessitates the use of parabolic rectangles in place of Euclidean cubes
in all estimates related to \eqref{20251009.2225}. Let
$(x,t):=(x_1,\dots,x_n,t)\in\mathbb{R}^{n+1}$ and $L\in(0,\infty)$.
A \emph{parabolic rectangle} $R(x,t,L)$ centered at $(x,t)$ with edge length
$l(R):=L$ is defined by setting
\begin{align*}
R:=R(x,t,L):=Q(x,L)\times\left[t-L^p,t+L^p\right),
\end{align*}
where $Q(x,L):=\{y:=(y_1,\dots,y_n)\in\mathbb{R}^n:x_i-L\leq y_i<x_i+L
\mbox{\ for\ any\ }i\in\mathbb{N}\cap[1,n]\}$ is the \emph{cube} in
$\mathbb{R}^n$ centered at $x$ with edge length $2L$. In addition, for any
given $\gamma\in[0,1)$, define $R^-(\gamma)$, $R^+(\gamma)$, and
$R^{++}(\gamma)$, respectively, by
\begin{align*}
R^-(\gamma):=Q(x,L)\times\left[t-L^p,t-\gamma L^p\right),\
R^+(\gamma):=Q(x,L)\times\left[t+\gamma L^p,t+L^p\right),
\end{align*}
and
\begin{align*}
R^{++}(\gamma):=Q(x,L)\times\left[t+(1+2\gamma)L^p,t+(2+\gamma)L^p\right),
\end{align*}
where $\gamma$ is called the \emph{time lag}. Denote by $\mathcal{R}$ the set
of all parabolic rectangles in $\mathbb{R}^{n+1}$. In addition, for any rectangle
$R:=Q(x,L)\times[t,T)\subset\mathbb{R}^{n+1}$ with $x\in\mathbb{R}^n$, $L\in(0,\infty)$, and
$-\infty<t<T<\infty$, let $l_x(R):=L$ and $l_t(R):=T-t$. Based on these, we proceed to
recall the definition of the parabolic BMO space with time lag in
\cite[Definition 2.4]{kmy(ma-2023)}.

\begin{definition}\label{PBMO+}
Let $\gamma\in[0,1)$. The \emph{parabolic $\mathrm{BMO}$ space
$\mathrm{PBMO}_\gamma^-(\mathbb{R}^{n+1})$ with time lag} is defined to
be the set of all $f\in L_{\mathrm{loc}}^1(\mathbb{R}^{n+1})$ such that
\begin{align*}
\|f\|_{\mathrm{PBMO}_\gamma^-(\mathbb{R}^{n+1})}:=\sup_{R\in\mathcal{R}}
\inf_{c\in\mathbb{R}}\left[\fint_{R^-(\gamma)}(f-c)_+
+\fint_{R^+(\gamma)}(f-c)_-\right]<\infty.
\end{align*}
If the condition above holds with the time axis reversed, then
$f\in\mathrm{PBMO}_\gamma^+(\mathbb{R}^{n+1})$ (the other \emph{parabolic
BMO space with time lag}).
\end{definition}
From the perspective of function spaces, the parabolic BMO space with
time lag can be viewed as a natural higher-dimensional counterpart of
the one-dimensional one-sided BMO space and exhibits analogous harmonic
analysis properties. On the other hand, motivated by the regularity
theory of solutions of \eqref{20251009.2225}, Kinnunen and Saari
\cite{ks(apde-2016)} and Kinnunen and Myyryl\"ainen \cite{km(am-2024)}
introduced the parabolic Muckenhoupt class with time
lag, also as a higher-dimensional extension of the one-dimensional
one-sided Muckenhoupt class. Let $\gamma\in[0,1)$ and $q\in[1,\infty]$.
The \emph{parabolic Muckenhoupt class $A_q^+(\gamma)$ with time lag} is
defined to be the set of all weights $\omega$ on $\mathbb{R}^{n+1}$ such that
\begin{align*}
[\omega]_{A_q^+(\gamma)}:=
\begin{cases}
\displaystyle
\sup_{R\in\mathcal{R}}\fint_{R^-(\gamma)}\omega
\left[\mathop\mathrm{ess\,inf}_{R^+(\gamma)}\omega\right]^{-1}<\infty
&\mathrm{if}\ q=1,\\
\displaystyle
\sup_{R\in\mathcal{R}}\fint_{R^-(\gamma)}\omega
\left[\fint_{R^+(\gamma)}\omega^\frac{1}{1-q}\right]^{q-1}<\infty
&\mathrm{if}\ q\in(1,\infty)
\end{cases}
\end{align*}
and $A_\infty^+(\gamma):=\bigcup_{q\in(1,\infty)}A_q^+(\gamma)$.
If the condition above holds with the time axis reversed, then $\omega\in
A_q^-(\gamma)$ (the other \emph{parabolic Muckenhoupt class
with time lag}). Analogous to \eqref{20251012.1456} and
\eqref{20250727.1611}, in the parabolic setting, all the functions in the
parabolic BMO space with time lag has the following exponential-logarithm
correspondence with all the parabolic Muckenhoupt weights with time lag:
for any given $\gamma\in(0,1)$ and any given $q\in(1,\infty]$,
\begin{align}\label{20251019.1524}
\mathrm{PBMO}_\gamma^-(\mathbb{R}^{n+1})=
\left\{\lambda\ln\omega:\omega\in
A_q^+(\gamma)\mbox{\ \ and\ \ }\lambda\in[0,\infty)\right\}
\end{align}
(see \cite[Lemma 7.4]{ks(apde-2016)}). This relation also fails to holds
for the endpoint case $q=1$ (see Remark \ref{r5.11}). Thus, as in the
$\mathrm{BMO}^+(\mathbb{R})$ case, this leads to a natural question:
\emph{Find a suitable alternative of
$\mathrm{PBMO}_\gamma^-(\mathbb{R}^{n+1})$ such that
\eqref{20251019.1524} holds with $q=1$.} One of the main purposes of this
section is to answer this question positively.

This section is organized as follows. In Subsection \ref{subsection5.1},
we introduce the parabolic BLO space
$\mathrm{PBLO}_\gamma^-(\mathbb{R}^{n+1})$ with time lag as a natural
higher-dimensional analogue of the one-dimensional one-sided BLO space
and characterize it in terms of $A_1^+(\gamma)$ and hence answer the
aforementioned question. Moreover, we prove that
this space is independent of the choice of the time lag. Subsection
\ref{subsection5.2} is devoted to providing a Bennett type
characterization of $\mathrm{PBLO}_\gamma^-(\mathbb{R}^{n+1})$.
Subsequently, in Subsection \ref{subsection5.3}, we characterize
$\mathrm{PBLO}_\gamma^-(\mathbb{R}^{n+1})$ functions in terms of a
Coifman--Rochberg type decomposition. In Subsection \ref{subsection5.4}, we establish the
relationships between $\mathrm{PBLO}_\gamma^-(\mathbb{R}^{n+1})$ and doubly nonlinear parabolic
equations, which is the second main purpose of this section. Finally,
in Subsection \ref{subsection5.5}, we provide a necessary
condition for the negative logarithm of the parabolic distance function to belong to
$\mathrm{PBLO}_\gamma^-(\mathbb{R}^{n+1})$ in terms of the parabolic
$\gamma$-FIT weak porosity of the set.

\subsection{Characterizations of
$\mathrm{PBLO}_\gamma^-(\mathbb{R}^{n+1})$ in Terms of
$A_1^+(\gamma)$}
\label{subsection5.1}

In this subsection, we first introduce the parabolic BLO space
$\mathrm{PBLO}_\gamma^-(\mathbb{R}^{n+1})$ with time lag. Then we show
its exponential-logarithm correspondence with $A_1^+(\gamma)$ (see
Theorem \ref{PBLO+ and A1+}) by applying a parabolic John--Nirenberg
inequality for $\mathrm{PBLO}_\gamma^-(\mathbb{R}^{n+1})$ (see Lemma
\ref{J-N PBLO+}). As a corollary, we prove the independence of
$\mathrm{PBLO}_\gamma^-(\mathbb{R}^{n+1})$
from the choice of the time lag (see Theorem \ref{PBLO+ time lag}).

\begin{definition}\label{PBLO+}
Let $\gamma\in[0,1)$. The \emph{parabolic $\mathrm{BLO}$ space
$\mathrm{PBLO}_\gamma^-(\mathbb{R}^{n+1})$ with time lag} is defined to
be the set of all $f\in L_{\mathrm{loc}}^1(\mathbb{R}^{n+1})$ such that
\begin{align*}
\|f\|_{\mathrm{PBLO}_\gamma^-(\mathbb{R}^{n+1})}:=\sup_{R\in\mathcal{R}}
\fint_{R^-(\gamma)}\left[f-\mathop\mathrm{ess\,inf}
_{R^+(\gamma)}f\right]_++\fint_{R^{++}(\gamma)}\left[f-
\mathop\mathrm{ess\,inf}_{R^+(\gamma)}f\right]_-<\infty.
\end{align*}
If the condition above holds with the time axis reversed, then
$f\in\mathrm{PBLO}_\gamma^+(\mathbb{R}^{n+1})$ (the other \emph{parabolic
BLO space with time lag}).
\end{definition}

Next, we characterize $\mathrm{PBLO}_\gamma^-(\mathbb{R}^{n+1})$ in
terms of $A_1^+(\gamma)$.

\begin{theorem}\label{PBLO+ and A1+}
Let $\gamma\in(0,1)$. Then there exists a positive constant $B$ such
that, for any $f\in\mathrm{PBLO}_\gamma^-(\mathbb{R}^{n+1})$ and
$\epsilon\in(0,\frac{B}{\|f\|_{\mathrm{PBLO}_
\gamma^-(\mathbb{R}^{n+1})}})$, $e^{\epsilon f}\in A_1^+(\gamma)$.
Conversely, for any $\omega\in A_1^+(\gamma)$, $\ln\omega\in
\mathrm{PBLO}_\gamma^-(\mathbb{R}^{n+1})$ and
$\|\ln\omega\|_{\mathrm{PBLO}_\gamma^-(\mathbb{R}^{n+1})}
\leq2\ln(1+[\omega]_{A_1^+(\gamma)})$. In particular,
\begin{align*}
\mathrm{PBLO}_\gamma^-(\mathbb{R}^{n+1})=
\left\{\lambda\ln\omega:\omega\in A_1^+(\gamma)\mbox{\ \
and\ \ }\lambda\in[0,\infty)\right\}.
\end{align*}
\end{theorem}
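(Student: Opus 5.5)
The converse inclusion is a direct computation. For the direct inclusion I will first prove a parabolic John--Nirenberg inequality for $\mathrm{PBLO}_\gamma^-(\mathbb{R}^{n+1})$ and then integrate it with Cavalieri's principle, as in the one-dimensional proof of Theorem \ref{BLO+ and A1+}. Throughout, fix $R=R(x,t,L)\in\mathcal{R}$.

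\emph{Converse direction.} Let $\omega\in A_1^+(\gamma)$ and $m:=\mathrm{ess\,inf}_{R^+(\gamma)}\ln\omega$. By Jensen's inequality, exactly as in \eqref{20250622.2254},
$$\exp\Bigl\{\fint_{R^-(\gamma)}(\ln\omega-m)_+\Bigr\}\le 1+\fint_{R^-(\gamma)}\omega\,\bigl[\mathrm{ess\,inf}_{R^+(\gamma)}\omega\bigr]^{-1}\le 1+[\omega]_{A_1^+(\gamma)}.$$
For the $R^{++}(\gamma)$ term, let $R'$ be the parabolic rectangle with the same cube $Q(x,L)$ and center $t':=t+(1+\gamma)L^p$. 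A direct check gives $R'^-(\gamma)=R^+(\gamma)$ and $R'^+(\gamma)=R^{++}(\gamma)$. Hence, for almost every point of $R^{++}(\gamma)$,
$$e^{m}\le\fint_{R^+(\gamma)}\omega\le[\omega]_{A_1^+(\gamma)}\,\mathrm{ess\,inf}_{R^{++}(\gamma)}\omega\le[\omega]_{A_1^+(\gamma)}\,\omega .$$
This gives $(\ln\omega-m)_-\le\ln(1+[\omega]_{A_1^+(\gamma)})$ pointwise there. Adding the two bounds yields $\|\ln\omega\|_{\mathrm{PBLO}_\gamma^-(\mathbb{R}^{n+1})}\le 2\ln(1+[\omega]_{A_1^+(\gamma)})$.

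\emph{Direct direction.} The main step, which I expect to be the obstacle, is the parabolic analogue of Lemma \ref{JN BLO+}: there exist $A,B>0$ such that, for every $R$ and $\lambda>0$,
$$\bigl|R^-(\gamma)\cap\{(f-\mathrm{ess\,inf}_{\widetilde R}f)_+>\lambda\}\bigr|\le Ae^{-B\lambda/\|f\|_{\mathrm{PBLO}_\gamma^-(\mathbb{R}^{n+1})}}\,|R^-(\gamma)|,$$
where $\widetilde R$ is some fixed forward region, possibly farther ahead than $R^+(\gamma)$. I will normalize $\|f\|_{\mathrm{PBLO}_\gamma^-(\mathbb{R}^{n+1})}=1$ and mimic the proof of Lemma \ref{JN BLO+}.
\begin{enumerate}
\item[(a)] Bound $\fint(f-\mathrm{ess\,inf}_{\widetilde R}f)_+$ over an enlarged backward region, chaining through intermediate rectangles as in \eqref{20250715.2123}.
\item[(b)] Run a parabolic Calder\'on--Zygmund decomposition of $R^-(\gamma)$ into subrectangles $R_i^-$. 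Stop at level $6$ for the averages of $(f-\mathrm{ess\,inf}_{\widetilde R}f)_+$ over their forward parabolic neighbours, using parabolic dyadic subdivision (halve in space, divide by $2^p$ in time) and keeping $\sum|R_i^-|\le\frac12|R^-(\gamma)|$.
\item[(c)] Compare $\mathrm{ess\,inf}$ over the small forward rectangles with $\mathrm{ess\,inf}_{\widetilde R}f$.
\end{enumerate}
In step (c) the $R^{++}(\gamma)$ term of the norm is essential: it bounds the lower oscillation of $f$ ahead of $R^+(\gamma)$, so $f$ cannot dip far below $\mathrm{ess\,inf}_{R^+(\gamma)}f$ further ahead. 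Chaining finitely many such rectangles (the number depending only on $n$, $p$, $\gamma$) gives $\mathrm{ess\,inf}_{\widetilde R_i}f\ge\mathrm{ess\,inf}_{\widetilde R}f-C$. This yields the recursion $\mathcal{A}(\lambda)\le\frac12\mathcal{A}(\lambda-C')$ and hence exponential decay.

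If the direct decomposition becomes unwieldy, I would fall back on the fact that $\mathrm{PBLO}_\gamma^-\subset\mathrm{PBMO}_\gamma^-$ (take $c=\mathrm{ess\,inf}_{R^+(\gamma)}f$, so the $R^+(\gamma)$ part of the $\mathrm{PBMO}$ norm vanishes). I would then use the $\mathrm{PBMO}$ John--Nirenberg inequality of \cite[Theorem 4.1]{kmy(ma-2023)} to get $\fint_{R^-(\gamma)}e^{\epsilon(f-c_R)}\lesssim1$. It would remain to show $c_R\le\mathrm{ess\,inf}_{\widetilde R}f+C$, again via the $R^{++}(\gamma)$ term and chaining.

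Given the inequality, Cavalieri's principle gives, for $\epsilon<B$,
$$\fint_{R^-(\gamma)}e^{\epsilon f}\le\Bigl(1+\frac{A\epsilon}{B-\epsilon}\Bigr)\,\mathrm{ess\,inf}_{\widetilde R}e^{\epsilon f}.$$
Finally, $e^{\epsilon f}\in A_1^+(\gamma)$ follows from a parabolic version of Lemma \ref{Ap+ time lag}(i): the $A_1^+(\gamma)$ condition is unchanged when $R^+(\gamma)$ is replaced by a forward rectangle at a comparable distance. I would prove this with the same maximal-function and rational-rectangle argument, using that the lagged uncentered forward maximal operators are pointwise comparable.
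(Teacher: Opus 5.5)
Your converse direction is correct and is essentially the paper's argument. The direct Calder\'on--Zygmund route, however, has a genuine gap at step (c). The recursion rests on $(f-\mathrm{ess\,inf}_{\widetilde R}f)_+\le(f-\mathrm{ess\,inf}_{\widetilde R_i}f)_++(\mathrm{ess\,inf}_{\widetilde R_i}f-\mathrm{ess\,inf}_{\widetilde R}f)_+$, so you need the upper bound $\mathrm{ess\,inf}_{\widetilde R_i}f\le\mathrm{ess\,inf}_{\widetilde R}f+C'$. The inequality $\mathrm{ess\,inf}_{\widetilde R_i}f\ge\mathrm{ess\,inf}_{\widetilde R}f-C$ that you state points the wrong way and yields nothing.

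The correct inequality also cannot come from chaining a number of rectangles depending only on $n$, $p$, $\gamma$. The selected $R_i^-$ may be arbitrarily small compared with $R$, while every comparison the norm provides is between rectangles of the same size. Without a stopping condition the bound is in fact false, e.g.\ for small rectangles near the singularity of $-\log d_p(\cdot,\{z_0\})$. In Lemma~\ref{JN BLO+} this comparison is supplied by the stopping rule (see \eqref{20250715.2211}), not by the norm. The $R^{++}(\gamma)$ term only bounds an essential infimum over an earlier set by an average over a later set of the same size, so it cannot provide the comparison by itself.

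Your fallback, on the other hand, does work, but its last two steps need different justifications from the ones you give.

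First, the bound on $c_R$ needs neither chaining nor the $R^{++}(\gamma)$ term. \cite[Theorem 4.1]{kmy(ma-2023)} provides a single $c_R$ for both $R^-(\gamma)$ and $R^+(\gamma)$, as used in the proofs of Lemma~\ref{BMO+ time lag} and Proposition~\ref{N-:PBMO- to PBLO-}. Hence $\fint_{R^+(\gamma)}(f-c_R)_-\lesssim\|f\|_{\mathrm{PBLO}_\gamma^-(\mathbb{R}^{n+1})}$, and so $c_R\le\fint_{R^+(\gamma)}f+C\|f\|_{\mathrm{PBLO}_\gamma^-(\mathbb{R}^{n+1})}$. The $R^-$-part of the norm, applied to your rectangle $R'$ with $R'^-(\gamma)=R^+(\gamma)$, gives $\fint_{R^+(\gamma)}f\le\mathrm{ess\,inf}_{R^{++}(\gamma)}f+\|f\|_{\mathrm{PBLO}_\gamma^-(\mathbb{R}^{n+1})}$. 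Together these give $\fint_{R^-(\gamma)}e^{\epsilon f}\lesssim\mathrm{ess\,inf}_{R^{++}(\gamma)}e^{\epsilon f}$.

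Second, the passage to $A_1^+(\gamma)$ cannot be done as in Lemma~\ref{Ap+ time lag}(i). In the parabolic geometry, the required pointwise bound of $M^{\gamma-}$ by the maximal operator with the farther forward region fails. Take $(y,s)$ just above the bottom of $R^+(\gamma)$, and $g\ge0$ supported just below the top of $R^-(\gamma)$ near the spatially opposite corner of $Q(x,L)$. Any $P$ with $(y,s)\in P^{++}(\gamma)$ and $P^-(\gamma)\cap\mathrm{supp}\,g\ne\emptyset$ has time gap $(1+3\gamma)[l(P)]^p$ smaller than roughly $2\gamma L^p$. Its cube is then too small to contain both $y$ and points of $\mathrm{supp}\,g$. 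So that maximal function vanishes at $(y,s)$, while $M^{\gamma-}(g)(y,s)>0$.

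Instead, invoke Lemma~\ref{Aq+ time lag}, that is, \cite[Theorem 3.1]{km(am-2024)}, with $\alpha=\gamma$ and $\tau=2+2\gamma$; this applies since $R^{++}(\gamma)=R^-(\gamma)+(\mathbf{0},(2+2\gamma)L^p)$.

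Once repaired, your proof genuinely differs from the paper's. The paper takes from \cite[Corollary 3.2]{kmy(ma-2023)} (its Lemma~\ref{J-N PBLO+}) a John--Nirenberg inequality whose constant is exactly $\mathrm{ess\,inf}_{R^+(\gamma)}f$, on $R^-(\alpha)$ with $\alpha=\frac{1+\gamma}{2}$. It gets the $A_1^+(\alpha)$ condition by Cavalieri's principle and then applies the same time-lag theorem. Your route needs only the standard $\mathrm{PBMO}$ John--Nirenberg inequality plus a one-line comparison. It also shows that the $R^{++}(\gamma)$ term of the norm is not used in proving $e^{\epsilon f}\in A_1^+(\gamma)$.
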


To show Theorem \ref{PBLO+ and A1+}, we need to establish a parabolic
John--Nirenberg inequality for the parabolic BLO space with time lag in
advance, which is a direct consequence of \cite[Corollary
3.2]{kmy(ma-2023)}.

\begin{lemma}\label{J-N PBLO+}
Let $0\leq\gamma<\alpha<1$ and $f\in
\mathrm{PBLO}_\gamma^-(\mathbb{R}^{n+1})$. Then there exist positive
constants $A$ and $B$ such that, for any $R\in\mathcal{R}$ and
$\lambda\in(0,\infty)$,
\begin{align*}
\left|R^{++}(\alpha)\cap\left\{\left[f-\mathop\mathrm{ess\,inf}
_{R^+(\gamma)}f\right]_->\lambda\right\}\right|\leq
Ae^{-\frac{B\lambda}{\|f\|_{\mathrm{PBLO}_\gamma^+(\mathbb{R}^{n+1})}}}
\left|R^{++}(\alpha)\right|
\end{align*}
and
\begin{align*}
\left|R^-(\alpha)\cap\left\{\left[f-\mathop\mathrm{ess\,inf}
_{R^+(\gamma)}f\right]_+>\lambda\right\}\right|\leq
Ae^{-\frac{B\lambda}{\|f\|_{\mathrm{PBLO}_\gamma^+(\mathbb{R}^{n+1})}}}
\left|R^-(\alpha)\right|
\end{align*}
\end{lemma}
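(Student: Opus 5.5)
Lemma \ref{J-N PBLO+} is to be deduced from the parabolic John--Nirenberg inequality for $\mathrm{PBMO}_\gamma^-(\mathbb{R}^{n+1})$ in \cite[Corollary 3.2]{kmy(ma-2023)}, which holds for any $0\leq\gamma<\alpha<1$. The idea is to check that a $\mathrm{PBLO}$ function is a $\mathrm{PBMO}$ function whose admissible constant on each $R$ can be taken to be $\mathrm{ess\,inf}_{R^+(\gamma)}f$, with norm controlled by the $\mathrm{PBLO}$ norm. (The norm in the displayed estimates should read $\|f\|_{\mathrm{PBLO}_\gamma^-(\mathbb{R}^{n+1})}$; the superscript $+$ is a typo.)

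\emph{Step 1: $\mathrm{PBLO}\subset\mathrm{PBMO}$.} Take $c:=\mathrm{ess\,inf}_{R^+(\gamma)}f$ in Definition \ref{PBMO+}. Then $(f-c)_-=0$ almost everywhere on $R^+(\gamma)$. Hence
\[
\|f\|_{\mathrm{PBMO}_\gamma^-(\mathbb{R}^{n+1})}\leq\|f\|_{\mathrm{PBLO}_\gamma^-(\mathbb{R}^{n+1})}.
\]
More importantly, this shows that the oscillation conditions needed in the Calder\'on--Zygmund/chaining argument of \cite{kmy(ma-2023)} hold with the constant $c_R:=\mathrm{ess\,inf}_{R^+(\gamma)}f$. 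The term over $R^{++}(\gamma)$ in Definition \ref{PBLO+} controls the negative part of $f-c_R$ to the future of $R^+(\gamma)$. This is exactly the second inequality we need, so the extra $R^{++}$ term is there to make the lemma follow directly.

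\emph{Step 2: transfer.} The key point is that \cite[Corollary 3.2]{kmy(ma-2023)} is formulated for an arbitrary family of constants $\{c_R\}$ satisfying the two-sided mean oscillation bound, not only for optimizing constants. Its conclusion gives the two exponential estimates on $R^-(\alpha)$ and on the forward rectangle, with that same $c_R$. I will verify this by inspecting the proof: the iteration compares $c_{R'}$ for subrectangles $R'$ with $c_R$ through averages over overlapping forward/backward pieces. Such comparisons only use
\[
\fint_{R^-(\gamma)}(f-c_R)_+\lesssim\|f\|,\qquad \fint_{R^{+}}(f-c_R)_-\lesssim\|f\|,
\]
which our choice satisfies (the second trivially). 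The relevant forward region for $R^{++}(\alpha)$ is handled by applying the hypothesis to shifted rectangles.

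\emph{Main obstacle.} The difficulty is the geometry: $R^{++}(\alpha)$ for $\alpha>\gamma$ is not literally $R^{++}(\gamma)$. I plan to cover $R^{++}(\alpha)$ and $R^-(\alpha)$ by rectangles from the Calder\'on--Zygmund decomposition in \cite{kmy(ma-2023)}, where the gap $\alpha-\gamma>0$ supplies the room needed for chaining. I will then compare $\mathrm{ess\,inf}$ over $R^+(\gamma)$ with the constants used on the pieces via the monotonicity $\mathrm{ess\,inf}_{A}f\geq\mathrm{ess\,inf}_{B}f$ for $A\subset B$. Finally, I normalize $\|f\|_{\mathrm{PBLO}_\gamma^-(\mathbb{R}^{n+1})}=1$ and rescale to get constants $A$ and $B$ depending only on $n,p,\gamma,\alpha$.
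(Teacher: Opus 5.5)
Your treatment of the $R^-(\alpha)$ estimate follows the paper's route. The paper proves the lemma by a one-line appeal to \cite[Corollary 3.2]{kmy(ma-2023)}, used with the constant $m_R:=\mathop{\mathrm{ess\,inf}}_{R^+(\gamma)}f$. The gap is the $R^{++}(\alpha)$ estimate. Write $R=R(x,t,L)$ and $\|f\|:=\|f\|_{\mathrm{PBLO}_\gamma^-(\mathbb{R}^{n+1})}$.

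In Steps 1--2 you feed only the pair $(R^-(\gamma),R^+(\gamma))$ into a PBMO John--Nirenberg inequality. Its forward conclusion concerns $R^+(\alpha)\subset R^+(\gamma)$, where $(f-m_R)_-=0$ a.e., so it says nothing about $R^{++}(\alpha)$. The $R^{++}(\gamma)$ term of Definition \ref{PBLO+} is not ``exactly'' the needed inequality either. It is only an $L^1$ bound on $R^{++}(\gamma)$, whereas you need a tail bound on $R^{++}(\alpha)$, and $R^{++}(\alpha)$ also contains $Q(x,L)\times[t+(2+\gamma)L^p,t+(2+\alpha)L^p)$.

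Your proposed repairs do not close this gap.
\begin{enumerate}
\item[(a)] You cannot use $m_R$ as the PBMO constant for the shifted rectangle $S:=R+(\mathbf{0},(1+\gamma)L^p)$. The quantity $\fint_{S^-(\gamma)}(f-m_R)_+=\fint_{R^+(\gamma)}(f-m_R)_+$ is uncontrolled: for $f(y,s):=s$ the PBLO norm vanishes, but this average equals $\frac{1-\gamma}{2}L^p$.
\item[(b)] If you use $S$'s own constant $m_S$ instead, you still need $m_S\ge m_R-C\|f\|$.
\item[(c)] Monotonicity of the essential infimum does not supply this. It only compares infima over nested sets, and the sets attached to pieces of $R^{++}(\alpha)$ are not contained in $R^+(\gamma)$. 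For the $R^-(\alpha)$ pieces it yields $c_{\mathrm{piece}}\ge m_R$, which is the wrong direction for bounding $(f-m_R)_+$ by $(f-c_{\mathrm{piece}})_+$.
\end{enumerate}

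The missing ingredient is that the PBLO condition propagates lower bounds forward in time. For any $S\in\mathcal{R}$, set $m_S:=\mathop{\mathrm{ess\,inf}}_{S^+(\gamma)}f$ and average $f-m_S\le(f-m_S)_+$ over $S^-(\gamma)$. This gives
\[
m_S\ge\fint_{S^-(\gamma)}f-\|f\|\qquad\mbox{for every }S\in\mathcal{R}.
\]
Take $S:=R+(\mathbf{0},(1+\gamma)L^p)$. Then $S^-(\gamma)=R^+(\gamma)$, on which $f\ge m_R$ a.e., and $S^+(\gamma)=R^{++}(\gamma)$. Hence $f\ge m_R-\|f\|$ a.e. on $R^{++}(\gamma)$.

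Apply the same inequality once more to finitely many rectangles of edge length $\ell:=(\frac{1-\gamma}{1+\gamma})^{1/p}L$ whose lower parts lie in $R^{++}(\gamma)$. Take their spatial cubes inside $Q(x,L)$ and slide their centres in time. Since $(1+\gamma)\ell^p=(1-\gamma)L^p$, their upper parts cover $Q(x,L)\times[t+(2+\gamma)L^p,t+3L^p)$, and on this set $f\ge m_R-2\|f\|$ a.e. Because $\alpha<1$, it follows that $(f-m_R)_-\le 2\|f\|$ a.e. on $R^{++}(\alpha)$. So the first estimate holds trivially, for instance with $A:=e^{2B}$ and any $B$. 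Only the $R^-(\alpha)$ estimate actually needs the John--Nirenberg machinery you cite.
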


Now, we are ready to prove Theorem \ref{PBLO+ and A1+}.

\begin{proof}[Proof of Theorem \ref{PBLO+ and A1+}]
Let $f\in\mathrm{PBLO}_\gamma^-(\mathbb{R}^{n+1})$ and define
$\alpha:=\frac{1+\gamma}{2}$. From Lemma \ref{J-N PBLO+} and Cavalieri's
principle, it follows that there exist positive constants $A$ and $B$
such that, for any $\epsilon\in(0,\frac{B}{\|f\|_{\mathrm{PBLO}_
\gamma^-(\mathbb{R}^{n+1})}})$ and $R\in\mathcal{R}$,
\begin{align}\label{20251018.1213}
\fint_{R^-(\alpha)}e^{\epsilon f}&\leq
e^{\epsilon\mathop\mathrm{ess\,inf}\limits_{R^+(\gamma)}f}
\fint_{R^-(\alpha)}
e^{\epsilon[f-\mathop\mathrm{ess\,inf}\limits_{R^+(\gamma)}f]_+}\notag\\
&\leq\left[1+\frac{A\epsilon\|f\|_{\mathrm{PBLO}_
\gamma^-(\mathbb{R}^{n+1})}}{B-\epsilon\|f\|_{\mathrm{PBLO}_
\gamma^-(\mathbb{R}^{n+1})}}\right]\mathop\mathrm{ess\,inf}
_{R^+(\gamma)}f\notag\\
&\leq\left[1+\frac{A\epsilon\|f\|_{\mathrm{PBLO}_
\gamma^-(\mathbb{R}^{n+1})}}{B-\epsilon\|f\|_{\mathrm{PBLO}_
\gamma^-(\mathbb{R}^{n+1})}}\right]\mathop\mathrm{ess\,inf}
_{R^+(\alpha)}f.
\end{align}
Taking the supremum over all $R\in\mathcal{R}$ and using \cite[Theorem
3.1]{km(am-2024)}, we obtain $e^{\epsilon f}\in A_1^+(\gamma)$.

Conversely, let $\omega\in A_1^+(\gamma)$. From Jensen's inequality, we
infer that, for any $R\in\mathcal{R}$,
\begin{align*}
\exp\left\{\fint_{R^-(\gamma)}\left[\ln\omega-\mathop\mathrm{ess\,inf}
_{R^+(\gamma)}\ln\omega\right]_+\right\}&\leq
\fint_{R^-(\gamma)}\exp\left\{\left[\ln\omega-
\mathop\mathrm{ess\,inf}_{R^+(\gamma)}\ln\omega\right]_+\right\}\\
&\leq1+\fint_{R^-(\gamma)}\omega\left[\mathop\mathrm{ess\,inf}
_{R^+(\gamma)}\omega\right]^{-1}\leq1+[\omega]_{A_1^+(\gamma)}
\end{align*}
and
\begin{align*}
\exp\left\{\fint_{R^{++}(\gamma)}\left[\ln\omega-\mathop\mathrm{ess\,inf}
_{R^+(\gamma)}\ln\omega\right]_-\right\}&\leq
\fint_{R^{++}(\gamma)}\exp\left\{\left[\ln\omega-
\mathop\mathrm{ess\,inf}_{R^+(\gamma)}\ln\omega\right]_-\right\}\\
&\leq1+\fint_{R^{++}(\gamma)}\frac{\mathop\mathrm{ess\,inf}
\limits_{R^+(\gamma)}\omega}{\omega}\\
&\leq1+\fint_{R^+(\gamma)}\omega\left[\mathop\mathrm{ess\,inf}
_{R^{++}(\gamma)}\omega\right]^{-1}\leq1+[\omega]_{A_1^+(\gamma)}.
\end{align*}
Taking the supremum over all $R\in\mathcal{R}$, we find that $\ln\omega
\in\mathrm{PBLO}_\gamma^-(\mathbb{R}^{n+1})$ and
\begin{align*}
\|\ln\omega\|
_{\mathrm{PBLO}_\gamma^-(\mathbb{R}^{n+1})}\leq
2\ln\left(1+[\omega]_{A_1^+(\gamma)}\right).
\end{align*}
This finishes the proof of Theorem \ref{PBLO+ and A1+}.
\end{proof}

\begin{remark}\label{Linfty PBLO- PBMO-}
Let $\gamma\in(0,1)$. Similar to Remark \ref{Linfty BLO+ BMO+},
we point out that
\begin{align*}
L^\infty(\mathbb{R}^{n+1})\subsetneqq\mathrm{PBLO}_\gamma^-
(\mathbb{R}^{n+1})\subsetneqq\mathrm{PBMO}_\gamma^-(\mathbb{R}^{n+1}).
\end{align*}
Indeed, from Definition \ref{PBLO+}, we
deduce that $L^\infty(\mathbb{R}^{n+1})\subset\mathrm{PBLO}_\gamma^-
(\mathbb{R}^{n+1})\subset\mathrm{PBMO}_\gamma^-(\mathbb{R}^{n+1})$ and,
for any $f\in L^\infty(\mathbb{R}^{n+1})$,
$\|f\|_{\mathrm{PBMO}_\gamma^-(\mathbb{R}^{n+1})}\lesssim
\|f\|_{\mathrm{PBLO}_\gamma^-(\mathbb{R}^{n+1})}\leq
4\|f\|_{L^\infty(\mathbb{R}^{n+1})}$, where the implicit positive constant is
independent of $f$. Using this and the characterization
of the null space of $\mathrm{PBMO}_\gamma^-(\mathbb{R}^{n+1})$ (see
\cite[Theorem 3.1]{kyyz(cvpde-2025)}), we conclude that, for any $f\in
L_{\mathrm{loc}}^1(\mathbb{R}^{n+1})$,
$\|f\|_{\mathrm{PBLO}_\gamma^-(\mathbb{R}^{n+1})}=0$
if and only if there exists a non-increasing function $g:\mathbb{R}\to
\mathbb{R}$ such that, for almost every $(x,t)\in\mathbb{R}^{n+1}$,
$f(x,t)=g(t)$. This further implies that
$[\mathrm{PBLO}_\gamma^-(\mathbb{R}^{n+1})\setminus
L^\infty(\mathbb{R}^{n+1})]\neq\emptyset$. We show
$[\mathrm{PBMO}_\gamma^-(\mathbb{R}^{n+1})\setminus
\mathrm{PBLO}_\gamma^-(\mathbb{R}^{n+1})]\neq\emptyset$ by contradiction.
If $\mathrm{PBMO}_\gamma^-(\mathbb{R}^{n+1})=
\mathrm{PBLO}_\gamma^-(\mathbb{R}^{n+1})$, then
$\mathrm{PBMO}_\gamma^+(\mathbb{R}^{n+1})=
\mathrm{PBLO}_\gamma^+(\mathbb{R}^{n+1})$, which, together with
\cite[Corollary 4.17(ii)]{kyyz(cvpde-2025)}, further implies that
\begin{align*}
\mathrm{PBMO}(\mathbb{R}^{n+1})=\mathrm{PBMO}_\gamma^+(\mathbb{R}^{n+1})
\cap\mathrm{PBMO}_\gamma^-(\mathbb{R}^{n+1})=
\mathrm{PBLO}_\gamma^+(\mathbb{R}^{n+1})\cap
\mathrm{PBLO}_\gamma^-(\mathbb{R}^{n+1}),
\end{align*}
where $\mathrm{PBMO}(\mathbb{R}^{n+1})$ is the \emph{parabolic
$\mathrm{BMO}$ space} which is defined to be the space of all $f\in
L_{\mathrm{loc}}^1(\mathbb{R}^{n+1})$ such that
\begin{align*}
\|f\|_{\mathrm{PBMO}(\mathbb{R}^{n+1})}:=\sup_{R\in\mathcal{R}}
\inf_{c\in\mathbb{R}}\fint_R|f-c|<\infty.
\end{align*}
From this and Theorem \ref{PBLO+ and A1+}, we infer that, for any
$f\in\mathrm{PBMO}(\mathbb{R}^{n+1})$, there exists
$\epsilon\in(0,\infty)$ such that $e^{\epsilon f}\in A_1^+(\gamma)\cap
A_1^-(\gamma)=A_1(\mathbb{R}^{n+1})$, where $A_1(\mathbb{R}^{n+1})$ is
the \emph{Muckenhoupt class related to parabolic rectangles} which
is defined to be the set of all weights $\omega$ on $\mathbb{R}^{n+1}$
such that
\begin{align*}
[\omega]_{A_1(\mathbb{R}^{n+1})}:=\sup_{R\in\mathcal{R}}\fint_R\omega
\left[\mathop\mathrm{ess\,inf}_R\omega\right]^{-1}<\infty.
\end{align*}
By this and \cite[Lemma 1]{cr(pams-1980)}, we find that
$f\in\mathrm{PBLO}(\mathbb{R}^{n+1})$ and hence
$\mathrm{PBMO}(\mathbb{R}^{n+1})=\mathrm{PBLO}(\mathbb{R}^{n+1})$, where
$\mathrm{PBLO}(\mathbb{R}^{n+1})$ is the \emph{parabolic $\mathrm{BLO}$
space} which is defined to be the set of all $f\in
L_{\mathrm{loc}}^1(\mathbb{R}^{n+1})$ such that
\begin{align*}
\|f\|_{\mathrm{PBLO}(\mathbb{R}^{n+1})}:=\sup_{R\in\mathcal{R}}
\fint_R\left|f-\mathop\mathrm{ess\,inf}_Rf\right|<\infty.
\end{align*}
This contradicts the classical result that
$\mathrm{PBLO}(\mathbb{R}^{n+1})$ is not a linear space (see, for
instance, \cite[p.\,553]{b(pams-1982)}). Thus,
$\mathrm{PBLO}_\gamma^-(\mathbb{R}^{n+1})\subsetneqq
\mathrm{PBMO}_\gamma^-(\mathbb{R}^{n+1})$. This further implies that
\eqref{20251019.1524} fails to hold for the endpoint case $q=1$.
\end{remark}

As a result of Theorem \ref{PBLO+ and A1+}, we establish the following
characterization of $\mathrm{PBLO}_\gamma^-(\mathbb{R}^{n+1})$ in terms
of the parabolic John--Nirenberg inequality.

\begin{theorem}\label{J-N PBLO+ 2}
Let $\gamma\in(0,1)$ and $f\in L_{\mathrm{loc}}^1(\mathbb{R}^{n+1})$.
Then the following statements are mutually equivalent.
\begin{enumerate}
\item[\rm(i)] $f\in\mathrm{PBLO}_\gamma^-(\mathbb{R}^{n+1})$.

\item[\rm(ii)] There exist positive constants $A$ and $B$ such that,
for any $R\in\mathcal{R}$ and $\lambda\in(0,\infty)$,
\begin{align*}
\left|R^-(\gamma)\cap\left\{\left[f-\mathop\mathrm{ess\,inf}
_{R^+(\gamma)}f\right]_+>\lambda\right\}\right|\leq
Ae^{-B\lambda}\left|R^-(\gamma)\right|
\end{align*}
and
\begin{align}\label{20251018.1228}
\left|R^{++}(\gamma)\cap\left\{\left[f-\mathop\mathrm{ess\,inf}
_{R^+(\gamma)}f\right]_->\lambda\right\}\right|\leq
Ae^{-B\lambda}\left|R^{++}(\gamma)\right|
\end{align}

\item[\rm(iii)] For any $q\in[1,\infty)$,
\begin{align*}
\|f\|_{\mathrm{PBLO}_\gamma^-(\mathbb{R}^{n+1}),q}:=
\sup_{R\in\mathcal{R}}\left\{\fint_{R^-(\gamma)}
\left[f-\mathop\mathrm{ess\,inf}_{R^+(\gamma)}f\right]_+^q+
\fint_{R^{++}(\gamma)}\left[f-\mathop\mathrm{ess\,inf}_{R^+(\gamma)}f
\right]_-^q\right\}^\frac{1}{q}<\infty.
\end{align*}
\end{enumerate}
Moreover, for any $q\in(1,\infty)$,
$\|f\|_{\mathrm{PBLO}_\gamma^-(\mathbb{R}^{n+1})}\sim
\|f\|_{\mathrm{PBLO}_\gamma^-(\mathbb{R}^{n+1}),q}$,
where the positive equivalence constants are independent of $f$.
\end{theorem}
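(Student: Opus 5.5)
We argue exactly as in the proof of Theorem \ref{JN BLO+ 2}, with Theorem \ref{PBLO+ and A1+} in place of Theorem \ref{BLO+ and A1+}. The implication (iii) $\Longrightarrow$ (i) is immediate by taking $q=1$. For (ii) $\Longrightarrow$ (iii), Cavalieri's principle gives
\begin{align*}
\fint_{R^-(\gamma)}\left[f-\mathop\mathrm{ess\,inf}_{R^+(\gamma)}f\right]_+^q
=\frac{q}{|R^-(\gamma)|}\int_0^\infty\lambda^{q-1}\left|R^-(\gamma)\cap\left\{\left[f-\mathop\mathrm{ess\,inf}_{R^+(\gamma)}f\right]_+>\lambda\right\}\right|\,d\lambda
\leq qA\int_0^\infty\lambda^{q-1}e^{-B\lambda}\,d\lambda,
\end{align*}
which equals $A\Gamma(q+1)B^{-q}$. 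The same bound holds for the $R^{++}(\gamma)$ term. So the real content is (i) $\Longrightarrow$ (ii), together with the norm equivalence.

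For (i) $\Longrightarrow$ (ii), assume $\|f\|_{\mathrm{PBLO}_\gamma^-(\mathbb{R}^{n+1})}\in(0,\infty)$; the zero case is trivial, since then $f$ is a.e.\ a non-increasing function of $t$ by Remark \ref{Linfty PBLO- PBMO-} and both sets in (ii) are null. Set $\epsilon:=\frac{B_0}{2\|f\|_{\mathrm{PBLO}_\gamma^-(\mathbb{R}^{n+1})}}$ with $B_0$ the constant of Theorem \ref{PBLO+ and A1+}, and put $\omega:=e^{\epsilon f}$. By that theorem, via \eqref{20251018.1213} and \cite[Theorem 3.1]{km(am-2024)}, $\omega\in A_1^+(\gamma)$ with $[\omega]_{A_1^+(\gamma)}\le K$, where $K$ is independent of $f$. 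This $f$-independence is the point to track carefully: the constant in \eqref{20251018.1213} is $1+A\epsilon\|f\|/(B-\epsilon\|f\|)=1+A$, and the passage from time lag $\alpha$ back to $\gamma$ in \cite{km(am-2024)} costs only a factor depending on $\gamma,n,p$.

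Then, for the $R^-(\gamma)$ part,
\begin{align*}
\fint_{R^-(\gamma)}e^{\epsilon[f-\mathop\mathrm{ess\,inf}_{R^+(\gamma)}f]_+}\le 1+\fint_{R^-(\gamma)}\omega\left[\mathop\mathrm{ess\,inf}_{R^+(\gamma)}\omega\right]^{-1}\le1+K.
\end{align*}
For the $R^{++}(\gamma)$ part, we use, as in the converse part of Theorem \ref{PBLO+ and A1+},
\begin{align*}
\fint_{R^{++}(\gamma)}e^{\epsilon[f-\mathop\mathrm{ess\,inf}_{R^+(\gamma)}f]_-}\le1+\fint_{R^{++}(\gamma)}\frac{\mathop\mathrm{ess\,inf}_{R^+(\gamma)}\omega}{\omega}\le 1+\fint_{R^+(\gamma)}\omega\left[\mathop\mathrm{ess\,inf}_{R^{++}(\gamma)}\omega\right]^{-1}.
\end{align*}
The last quantity is bounded by $1+K$, since $R^+(\gamma)$ and $R^{++}(\gamma)$ are the lower and upper parts of a translated parabolic rectangle; equivalently one bounds $\mathop\mathrm{ess\,inf}_{R^+(\gamma)}\omega\le\fint_{R^+(\gamma)}\omega\le K\,\mathop\mathrm{ess\,inf}_{R^{++}(\gamma)}\omega$ and applies $\frac{\mathop\mathrm{ess\,inf}_{R^{++}(\gamma)}\omega}{\omega}\le1$ on $R^{++}(\gamma)$. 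This geometric verification, checking that the pair $(R^+(\gamma),R^{++}(\gamma))$ is admissible for $A_1^+(\gamma)$, possibly up to the time-lag independence of \cite[Theorem 3.1]{km(am-2024)}, is the only step needing care. Chebyshev's inequality applied to these exponential integrals then yields (ii) with $A=1+K$ and $B=\epsilon$.

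Finally, the norm equivalence follows from the same computation. On one side, $\|f\|_{\mathrm{PBLO}_\gamma^-(\mathbb{R}^{n+1})}\le\|f\|_{\mathrm{PBLO}_\gamma^-(\mathbb{R}^{n+1}),q}$ by H\"older's inequality (up to a factor $2$ from the sum of the two terms). On the other side, the Cavalieri bound with $B=\epsilon\sim\|f\|_{\mathrm{PBLO}_\gamma^-(\mathbb{R}^{n+1})}^{-1}$ and $A$ independent of $f$ gives $\|f\|_{\mathrm{PBLO}_\gamma^-(\mathbb{R}^{n+1}),q}\lesssim\|f\|_{\mathrm{PBLO}_\gamma^-(\mathbb{R}^{n+1})}$.
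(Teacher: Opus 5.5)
Your proposal is correct and follows essentially the same route as the paper: use Theorem \ref{PBLO+ and A1+} (through \eqref{20251018.1213} and the time-lag independence of $A_1^+$) to get $e^{\epsilon f}\in A_1^+(\gamma)$ with $\epsilon\sim\|f\|_{\mathrm{PBLO}_\gamma^-(\mathbb{R}^{n+1})}^{-1}$ and an $f$-independent constant, bound the exponential averages over $R^-(\gamma)$ and $R^{++}(\gamma)$ via $e^{s_+}\le 1+e^{s}$, and conclude with Chebyshev's inequality, using Cavalieri's principle for (ii)$\Longrightarrow$(iii) and for the norm equivalence. The step you flag as needing care is exact: $R^+(\gamma)$ and $R^{++}(\gamma)$ are precisely $R'^-(\gamma)$ and $R'^+(\gamma)$ for $R':=R+(\mathbf{0},(1+\gamma)[l(R)]^p)$, so the $A_1^+(\gamma)$ condition applies directly there with no further appeal to time-lag independence.
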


To show Theorem \ref{J-N PBLO+ 2}, we first recall that the parabolic
Muckenhoupt class with time lag is independent of both the time lag and
the distance between the two rectangles, which is precisely \cite[Theorem
3.1]{km(am-2024)}. In what follows, for any subset
$E\subset\mathbb{R}^{n+1}$ and $a\in(0,\infty)$, let
\begin{align*}
E+(\mathbf{0},a):=\left\{(x,t+a)\in\mathbb{R}^{n+1}:(x,t)\in E\right\}.
\end{align*}

\begin{lemma}\label{Aq+ time lag}
Let $\gamma,\alpha\in(0,1)$, $q\in[1,\infty)$, and
$\tau\in(1-\alpha,\infty)$. Then $\omega\in A_q^+(\gamma)$ if and only if
\begin{align*}
\langle\omega\rangle_{A_q^+(\alpha)}:=
\begin{cases}
\displaystyle
\sup_{R\in\mathcal{R}}\fint_{R^-(\alpha)}\omega
\left[\mathop\mathrm{ess\,inf}_{S^+(\alpha)}\omega\right]^{-1}
<\infty&\mathrm{if}\ q=1,\\
\displaystyle
\sup_{R\in\mathcal{R}}\fint_{R^-(\alpha)}\omega\left[\fint_{S^+(\alpha)}
\omega^\frac{1}{1-q}\right]^{q-1}<\infty&\mathrm{if}\ q\in(1,\infty),
\end{cases}
\end{align*}
where, for any $R\in\mathcal{R}$, $S^+(\alpha):=R^-(\alpha)+
(\mathbf{0},\tau[l(R)]^p)$. Moreover, $[\omega]_{A_q^+(\gamma)}$ and
$\langle\omega\rangle_{A_q^+(\alpha)}$ only depend on each other.
\end{lemma}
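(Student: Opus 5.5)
The plan is to reduce both directions to the independence of $A_q^+(\gamma)$ from the time lag, which is \cite[Theorem 3.1]{km(am-2024)}, plus a chaining argument in time. Note that $S^+(\alpha)=R^-(\alpha)+(\mathbf{0},\tau[l(R)]^p)$ has the same spatial cube and temporal length $(1-\alpha)[l(R)]^p$ as $R^-(\alpha)$. The condition $\tau>1-\alpha$ says exactly that $S^+(\alpha)$ starts strictly after $R^-(\alpha)$ ends, with gap $g:=(\tau-1+\alpha)[l(R)]^p>0$.

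\emph{From $\omega\in A_q^+(\gamma)$ to $\langle\omega\rangle_{A_q^+(\alpha)}<\infty$.} Given $R$, I look for a parabolic rectangle $R'$, with the same spatial center and edge length $l(R')=\lambda l(R)$, and a lag $\beta\in(0,1)$ such that $R'^-(\beta)=R^-(\alpha)$ and $R'^+(\beta)=S^+(\alpha)$. This forces
\[
(1-\beta)\lambda^p=1-\alpha \quad\mbox{and}\quad 2\beta\lambda^p=g/[l(R)]^p=\tau-1+\alpha .
\]
These give $\lambda^p=(1+\tau-\alpha)/2$ and $\beta=(\tau-1+\alpha)/(1+\tau-\alpha)\in(0,1)$. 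The spatial cubes do not match when $\lambda\neq1$, so I do not insist on equal cubes. Instead I use the doubling-type comparison: averages over $R^-(\alpha)$ are controlled by $\lambda^{n}$ times averages over the enlarged set $Q(x,\lambda l(R))\times(\cdot)$ when $\lambda\ge1$. When $\lambda<1$, I cover $R^-(\alpha)$ and $S^+(\alpha)$ by finitely many (depending only on $\alpha,\tau,n,p$) translated sub-rectangles and chain them. Then \cite[Theorem 3.1]{km(am-2024)} gives $\omega\in A_q^+(\beta)$ with constant depending only on $[\omega]_{A_q^+(\gamma)}$, which yields the bound. Note that the spatial cube of $S^+$ equals that of $R^-$, and $A_q^+(\beta)$ also allows shrinking the top rectangle's cube. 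For the essential infimum ($q=1$) and for the $\omega^{1/(1-q)}$ average, passing to a sub-cube costs only a factor depending on $n,\lambda$.

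\emph{From $\langle\omega\rangle_{A_q^+(\alpha)}<\infty$ to $\omega\in A_q^+(\gamma)$.} It suffices to verify $\omega\in A_q^+(\beta)$ for some lag $\beta$, and then again apply \cite[Theorem 3.1]{km(am-2024)}. For a rectangle $P$ with lag $\beta$ as above, rescaling shows that the pair $(P^-(\beta),P^+(\beta))$ is exactly of the form $(R^-(\alpha),S^+(\alpha))$ up to the spatial cube mismatch. I handle the mismatch by splitting $P^-(\beta)$ and $P^+(\beta)$ spatially into $N^n$ congruent subcubes times the same time interval, and applying the hypothesis to the pairs lying over the same subcube. For $q=1$, the ess inf over the union is the minimum over the pieces. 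For $q>1$, I use that the average over the union is comparable to the maximum over the pieces, which costs a factor $N^{n(q-1)}$ or so. If an exact fit fails, I would instead chain: move forward in time by a bounded number of steps using the hypothesis with intermediate rectangles, composing $A_1$-type inequalities (essential infimum bounds compose multiplicatively).

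The main obstacle is the geometric matching between the family $\{(R^-(\alpha),S^+(\alpha))\}$ and a standard lag family, since the spatial scale and time scale are tied by $p$. The cleanest route is the choice of $\lambda,\beta$ above, combined with spatial subdivision. I expect the bookkeeping of these comparisons, especially the case $\lambda<1$ and the essential infimum on partial cubes, to be the delicate part. Dependence of constants only on each other (and on $\gamma,\alpha,\tau,n,p$) follows from tracking each step.
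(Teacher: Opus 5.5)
The paper gives no argument for this lemma. It is taken directly from \cite[Theorem 3.1]{km(am-2024)}, the same theorem you invoke for lag-independence, and that theorem already contains the separation parameter $\tau$, so citing it settles the statement. Your attempt to derive the $\tau$-version from lag-independence alone is fine in the forward direction for $\lambda\ge1$. In the converse direction it has a genuine gap.

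Split $P^\pm(\beta)$ into columns $Q_i\times I$ and $Q_i\times J$, with $J:=I+\tau[l(R)]^p$, and apply the hypothesis to matched pairs. This yields only $a_i:=\fint_{Q_i\times I}\omega\le Cm_i$ with $m_i:=\mathop\mathrm{ess\,inf}_{Q_i\times J}\omega$. You need $N^{-n}\sum_ia_i\lesssim\min_jm_j$, and knowing that the essential infimum over the union is $\min_jm_j$ does not bridge the difference. Take $\omega:=K$ on $Q_1\times(I\cup J)$ and $\omega:=1$ on the other columns. It satisfies every matched estimate with $C=1$, yet $\fint_{P^-(\beta)}\omega\,[\mathop\mathrm{ess\,inf}_{P^+(\beta)}\omega]^{-1}\ge KN^{-n}$. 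The same $\omega$ defeats the ``average comparable to maximum'' step for $q>1$: the largest $\fint_{Q_i\times I}\omega$ and the largest $\fint_{Q_j\times J}\omega^{1/(1-q)}$ occur on different columns.

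What is missing is a bound linking every bottom piece to every top piece, and no single hypothesis pair provides it. Suppose a pair at a scale $L'$ above the column scale $L$ had $Q_i\times I$ in its bottom and $Q_j\times J$ in its top. Then $\tau(L'^p-L^p)\le(1-\alpha)(L'^p-L^p)$, i.e.\ $\tau\le1-\alpha$, which is excluded.

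So you need a real chaining argument:
cut into pieces at a small scale $\ell$, and use links in which a piece $A$ lies in the bottom and a piece $D$ in the top of one admissible pair of comparable size. For $q=1$ this gives $\fint_A\omega\lesssim\mathop\mathrm{ess\,inf}_D\omega\le\fint_D\omega$. For $q>1$ it gives $\fint_A\omega\lesssim(\fint_D\omega^{1/(1-q)})^{1-q}\le\fint_D\omega$ by H\"older. In both cases the links compose. A pair one scale above the pieces lets $D$ lie in a neighbouring column. Each link costs $\sim\ell$ in space but only $\sim\ell^p$ in time, so with $p>1$ a bounded number of links crosses the cube inside the available time window. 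Your fallback ``chain'' never supplies this cross-column propagation, which is the substance of the cited theorem.

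The difficulty also sits somewhere other than where you put it. The forward direction never needs chaining. For $\lambda<1$, keep the cube of $R$, take a lag $\beta'\le\min\{\alpha,\frac{\tau-1+\alpha}{2}\}$, and slide the rectangle back in time so that $R^-(\alpha)\subset P^-(\beta')$ and $S^+(\alpha)\subset P^+(\beta')$. In the converse direction, a standard pair fits inside a hypothesis pair exactly when $\tau<3-\alpha$; for $\lambda\le1$ your own $P$ already does. So the subdivision problem genuinely arises only for $\tau\ge3-\alpha$.
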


We are ready to prove Theorem \ref{J-N PBLO+ 2}.

\begin{proof}[Proof of Theorem \ref{J-N PBLO+ 2}]
The proof that (iii) $\Longrightarrow$ (i) is obvious and the proof that
(ii) $\Longrightarrow$ (iii) follows from Cavalieri's principle. We only
need to show (i) $\Longrightarrow$ (ii). Indeed, let $B$ be the same as
in Lemma \ref{J-N PBLO+}. On the one hand, by Lemma \ref{Aq+ time lag}
with $\tau:=1+\alpha$ therein and \eqref{20251018.1213}, we conclude that
there exists a positive constant $A$ such that, for any $R\in\mathcal{R}$,
\begin{align*}
\fint_{R^-(\gamma)}
e^\frac{B[f-\mathop\mathrm{ess\,inf}\limits_{R^+(\gamma)}f]_+}
{2\|f\|_{\mathrm{PBLO}_\gamma^-(\mathbb{R}^{n+1})}}
&\leq1+\fint_{R^-(\gamma)}e^\frac{B[f-\mathop\mathrm{ess\,inf}
\limits_{R^+(\gamma)}f]}{2\|f\|_{\mathrm{PBLO}_\gamma^-(\mathbb{R}^{n+1})}}\\
&=1+\fint_{R^-(\gamma)}e^\frac{Bf}
{2\|f\|_{\mathrm{PBLO}_\gamma^-(\mathbb{R}^{n+1})}}
\left[\mathop\mathrm{ess\,inf}_{R^+(\gamma)}
e^\frac{Bf}{2\|f\|_{\mathrm{PBLO}_\gamma^-(\mathbb{R}^{n+1})}}\right]^{-1}\\
&\leq1+\left[e^\frac{Bf}
{2\|f\|_{\mathrm{PBLO}_\gamma^-(\mathbb{R}^{n+1})}}\right]
_{A_1^+(\gamma)}\leq A.
\end{align*}
This, together with Chebyshev's inequality, further implies that, for any
$R\in\mathcal{R}$ and $\lambda\in(0,\infty)$,
\begin{align*}
\left|R^-(\gamma)\cap\left\{\left[f-\mathop\mathrm{ess\,inf}_{R^+(\gamma)}f
\right]_+>\lambda\right\}\right|&=\left|R^-(\gamma)\cap
\left\{e^\frac{B[f-\mathop\mathrm{ess\,inf}\limits_{R^+(\gamma)}f]_+}
{2\|f\|_{\mathrm{PBLO}_\gamma^-(\mathbb{R}^{n+1})}}>e^\frac{B\lambda}
{2\|f\|_{\mathrm{PBLO}_\gamma^-(\mathbb{R}^{n+1})}}\right\}\right|\\
&\leq e^{-\frac{B\lambda}
{2\|f\|_{\mathrm{PBLO}_\gamma^-(\mathbb{R}^{n+1})}}}\int_{R^-(\gamma)}
e^\frac{B[f-\mathop\mathrm{ess\,inf}\limits_{R^+(\gamma)}f]_+}
{2\|f\|_{\mathrm{PBLO}_\gamma^-(\mathbb{R}^{n+1})}}\\
&\leq Ae^{-\frac{B\lambda}
{2\|f\|_{\mathrm{PBLO}_\gamma^-(\mathbb{R}^{n+1})}}}
\left|R^-(\gamma)\right|.
\end{align*}
On the other hand, from the same reason, we infer that
\begin{align*}
\fint_{R^{++}(\gamma)}e^\frac{B[f-\mathop\mathrm{ess\,inf}
\limits_{R^+(\gamma)}f]_-}{2\|f\|_{\mathrm{PBLO}_\gamma^-(\mathbb{R}^{n+1})}}
&\leq1+\fint_{R^{++}(\gamma)}
e^\frac{B[\mathop\mathrm{ess\,inf}\limits_{R^+(\gamma)}f-f]}
{2\|f\|_{\mathrm{PBLO}_\gamma^-(\mathbb{R}^{n+1})}}\\
&\leq1+\fint_{R^+(\gamma)}e^\frac{Bf}
{2\|f\|_{\mathrm{PBLO}_\gamma^-(\mathbb{R}^{n+1})}}
\left[\mathop\mathrm{ess\,inf}_{R^{++}(\gamma)}
e^\frac{Bf}{2\|f\|_{\mathrm{PBLO}_\gamma^-(\mathbb{R}^{n+1})}}\right]^{-1}\\
&\leq1+\left[e^\frac{Bf}
{2\|f\|_{\mathrm{PBLO}_\gamma^-(\mathbb{R}^{n+1})}}\right]
_{A_1^+(\gamma)}\leq A
\end{align*}
and hence \eqref{20251018.1228} holds for any $R\in\mathcal{R}$ and
$\lambda\in(0,\infty)$. This finishes the proof that (i) $\Longrightarrow$
(ii) and hence the proof of Theorem \ref{JN BLO+ 2}.
\end{proof}

At the end of this section, we prove that
$\mathrm{PBLO}_\gamma^-(\mathbb{R}^{n+1})$ is
independent of both the time lag and the distance between the three
rectangles, which plays a significant role in the proof of Proposition
\ref{N-:PBMO- to PBLO-}.

\begin{theorem}\label{PBLO+ time lag}
Let $\gamma,\alpha\in(0,1)$ and $\tau\in(1-\alpha,\infty)$. Then $f\in
\mathrm{PBLO}_\gamma^-(\mathbb{R}^{n+1})$ if and only if
$f\in L_{\mathrm{loc}}^1(\mathbb{R}^{n+1})$ and
\begin{align*}
\||f\||_{\mathrm{PBLO}_\alpha^-(\mathbb{R}^{n+1})}:=\sup_{R\in\mathcal{R}}
\fint_{R^-(\alpha)}\left[f-\mathop\mathrm{ess\,inf}_{S^+(\alpha)}f
\right]_++\fint_{S^{++}(\alpha)}\left[f-\mathop\mathrm{ess\,inf}
_{S^+(\alpha)}f\right]_-<\infty,
\end{align*}
where, for any $R\in\mathcal{R}$, $S^+(\alpha):=R^-(\alpha)+
(\mathbf{0},\tau[l(R)]^p)$ and $S^{++}(\alpha):=S^+(\alpha)+
(\mathbf{0},\tau[l(R)]^p)$. Moreover,
$\|f\|_{\mathrm{PBLO}_\gamma^-(\mathbb{R}^{n+1})}\sim
\||f\||_{\mathrm{PBLO}_\gamma^-(\mathbb{R}^{n+1})}$,
where the positive equivalence constants are independent of $f$.
\end{theorem}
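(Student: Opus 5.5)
We prove both inclusions by passing through $A_1^+$ weights, as in the proof of Theorem \ref{BLO+ time lag}, now using Theorem \ref{PBLO+ and A1+} and Lemma \ref{Aq+ time lag} in place of Theorem \ref{BLO+ and A1+} and Lemma \ref{Ap+ time lag}(i).

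\emph{Necessity.} Let $f\in\mathrm{PBLO}_\gamma^-(\mathbb{R}^{n+1})$ and set $\omega:=e^{\epsilon f}$ with $\epsilon:=\frac{B}{2\|f\|_{\mathrm{PBLO}_\gamma^-(\mathbb{R}^{n+1})}}$. By Theorem \ref{PBLO+ and A1+} and \eqref{20251018.1213}, $\omega\in A_1^+(\gamma)$ with $[\omega]_{A_1^+(\gamma)}$ bounded by a constant independent of $f$. Lemma \ref{Aq+ time lag} with $q=1$ then bounds $\langle\omega\rangle_{A_1^+(\alpha)}$ by a constant $K$ depending only on $\gamma,\alpha,\tau$.

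Jensen's inequality, exactly as in the converse part of the proof of Theorem \ref{PBLO+ and A1+}, gives
\[
\exp\Big\{\epsilon\fint_{R^-(\alpha)}\big[f-\mathrm{ess\,inf}_{S^+(\alpha)}f\big]_+\Big\}\le1+K.
\]
For the second term we write
\[
\exp\Big\{\epsilon\fint_{S^{++}(\alpha)}\big[f-\mathrm{ess\,inf}_{S^+(\alpha)}f\big]_-\Big\}
\le1+\fint_{S^{++}(\alpha)}\frac{\mathrm{ess\,inf}_{S^+(\alpha)}\omega}{\omega}
\le1+\fint_{S^+(\alpha)}\omega\Big[\mathrm{ess\,inf}_{S^{++}(\alpha)}\omega\Big]^{-1}.
\]
The pair $(S^+(\alpha),S^{++}(\alpha))$ is exactly the pair $(R'^-(\alpha),S'^+(\alpha))$ for the parabolic rectangle $R':=R+(\mathbf{0},\tau[l(R)]^p)$. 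Hence the last quantity is also at most $1+K$, and so $\||f\||\lesssim\|f\|$.

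\emph{Sufficiency.} Assume $\||f\||<\infty$. The first step is to show that $e^{\delta f/\||f\||}\in A_1^+(\alpha)$ in the shifted sense of Lemma \ref{Aq+ time lag}, for some small $\delta$ independent of $f$. The natural route is to mimic Lemma \ref{J-N PBLO+} and get a John--Nirenberg inequality for $[f-\mathrm{ess\,inf}_{S^+(\alpha)}f]_+$ on $R^-(\alpha)$. I would try two approaches, in this order.
\begin{itemize}
\item Note that the definition of $\||\cdot\||$ is again a PBLO-type quantity with a general gap. Then either apply \cite[Corollary 3.2]{kmy(ma-2023)} directly (that result handles general lag and gap configurations), or reduce to it via a PBMO-type quantity, as Lemma \ref{BMO+ time lag} did in one dimension.
\item Otherwise, show $f\in\mathrm{PBMO}_\alpha^-$ with a shifted gap, which is controlled by $\||f\||$ by taking $c=\mathrm{ess\,inf}_{S^+(\alpha)}f$. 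Then use the time-lag and gap independence of $\mathrm{PBMO}$ together with its John--Nirenberg inequality to get $\fint_{R^-(\alpha)} e^{\delta f}\lesssim e^{\delta f_{S^+}}$. The $S^{++}$ term is then needed to pass from $e^{\delta f_{S^+}}$ to $\mathrm{ess\,inf}$ over a later set.
\end{itemize}
Once $\langle e^{\delta f/\||f\||}\rangle_{A_1^+(\alpha)}$ is bounded, Lemma \ref{Aq+ time lag} gives $e^{\delta f/\||f\||}\in A_1^+(\gamma)$ with a controlled constant. The converse part of Theorem \ref{PBLO+ and A1+} then yields
\[
\|f\|_{\mathrm{PBLO}_\gamma^-(\mathbb{R}^{n+1})}\le\frac{2}{\delta}\ln(1+C)\,\||f\||.
\]

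\emph{Main obstacle.} The hard part is the sufficiency step: producing the exponential integrability for the shifted functional from $\||f\||$ alone. The ess inf over $S^+(\alpha)$ must be compared with an average over a set lying later in time. This is exactly the role of the $S^{++}$ term, which is the parabolic analogue of the $\mathrm{BMO}^+$ bridge used in Theorem \ref{BLO+ time lag}. I expect the chaining across different gaps to rely on \cite[Corollary 3.2]{kmy(ma-2023)} together with Lemma \ref{Aq+ time lag}.
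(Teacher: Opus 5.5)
Your necessity argument is the paper's. Your identification of $(S^+(\alpha),S^{++}(\alpha))$ with $(R'^-(\alpha),S'^+(\alpha))$ for $R':=R+(\mathbf{0},\tau[l(R)]^p)$ is exactly what the paper's ``similar argument'' needs for the second term. Your first sufficiency option is also the paper's route: a John--Nirenberg inequality for the shifted functional obtained by rerunning Lemma~\ref{J-N PBLO+}, then an $A_1$-type bound, then Lemma~\ref{Aq+ time lag}, then the converse half of Theorem~\ref{PBLO+ and A1+}. The paper, too, obtains the John--Nirenberg step only by analogy with Lemma~\ref{J-N PBLO+}.

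Two points need correcting.

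First, shrinking. An inequality of the type in Lemma~\ref{J-N PBLO+} holds only on rectangles with strictly larger lag. So what you actually get is $\fint_{R^-(\widetilde\alpha)}e^{\epsilon f}\lesssim\mathrm{ess\,inf}_{S^+(\alpha)}e^{\epsilon f}$ with $\widetilde\alpha:=\frac{1+\alpha}{2}$, not a bound on $\langle e^{\epsilon f}\rangle_{A_1^+(\alpha)}$ itself. This is harmless. $S^+(\alpha)$ contains $R^-(\widetilde\alpha)+(\mathbf{0},(\tau+\frac{1-\alpha}{2})[l(R)]^p)$, so by inclusion Lemma~\ref{Aq+ time lag} applies with lag $\widetilde\alpha$ and gap $\tau+\frac{1-\alpha}{2}>1-\widetilde\alpha$.

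Second, your ``main obstacle'' paragraph gives the $S^{++}$ term a job it cannot do. That term only controls how far $f$ drops below $\mathrm{ess\,inf}_{S^+(\alpha)}f$ on $S^{++}(\alpha)$; for example, it yields $\mathrm{ess\,inf}_{S^+(\alpha)}f\le f_{S^{++}(\alpha)}+\||f\||$. That bounds an ess inf by a later average, whereas an $A_1$ estimate needs an average bounded by a later ess inf.

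The step your second option needs, passing from $e^{\delta f_{S^+(\alpha)}}$ to an ess inf over a later set, comes instead from the \emph{first} term of $\||f\||$ applied to $R'$. That gives $f_{S^+(\alpha)}\le\mathrm{ess\,inf}_{S^{++}(\alpha)}f+\||f\||$, which is exactly how the one-dimensional proof of Theorem~\ref{BLO+ time lag} closes. This same observation is what justifies the paper's own last inequality: there the ess inf is over $S^{++}(\widetilde\alpha)$, a set disjoint from $S^+(\alpha)$, so the inclusion used in \eqref{20251018.1213} is not available.

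With this fix your second option also works. However, it additionally requires the gap independence of parabolic BMO with time lag, which the paper proves only in dimension one (Lemma~\ref{BMO+ time lag}).
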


\begin{proof}
We first show the necessity. Let $f\in
\mathrm{PBLO}_\gamma^-(\mathbb{R}^{n+1})$. Applying Theorem
\ref{PBLO+ and A1+} and Lemma \ref{Aq+ time lag}, we conclude that there
exist positive constants $B$ and $D$, independent of $f$, such that
\begin{align*}
\left\langle e^\frac{Bf}{2\|f\|_{\mathrm{PBLO}
_\gamma^-(\mathbb{R}^{n+1})}}\right\rangle_{A_1^+(\alpha)}
:=\sup_{R\in\mathcal{R}}\fint_{R^-(\alpha)}
e^\frac{Bf}{2\|f\|_{\mathrm{PBLO}_\gamma^-(\mathbb{R}^{n+1})}}
\left[\mathop\mathrm{ess\,inf}_{S^+(\alpha)}
e^\frac{Bf}{2\|f\|_{\mathrm{PBLO}_\gamma^-(\mathbb{R}^{n+1})}}
\right]^{-1}\leq D,
\end{align*}
which, together with an argument similar to that used in the proof of
Theorem \ref{PBLO+ and A1+} with $\omega$, $R^-(\gamma)$,
$R^+(\gamma)$, $R^{++}(\gamma)$, and $[\omega]_{A_1^+(\gamma)}$ therein
replaced, respectively, by $e^\frac{Bf}{2\|f\|_{\mathrm{PBLO}
_\gamma^-(\mathbb{R}^{n+1})}}$, $R^-(\alpha)$, $S^+(\alpha)$,
$S^{++}(\alpha)$, and $\langle e^\frac{Bf}{2\|f\|_{\mathrm{PBLO}
_\gamma^-(\mathbb{R}^{n+1})}}\rangle_{A_1^+(\alpha)}$, further implies that
\begin{align*}
\||f\||_{\mathrm{PBLO}_\alpha^-(\mathbb{R}^{n+1})}\leq\frac{4\ln(1+D)}{B}
\|f\|_{\mathrm{PBLO}_\gamma^-(\mathbb{R}^{n+1})}.
\end{align*}
This finishes the proof of the necessity.

Next, we prove the sufficiency. Suppose that
$\||f\||_{\mathrm{PBLO}_\alpha^-(\mathbb{R}^{n+1})}<\infty$. From an
argument similar to that used in the proof of Lemma \ref{J-N PBLO+}
with $R^-(\gamma)$, $R^+(\gamma)$, $R^{++}(\gamma)$,
and $\|f\|_{\mathrm{PBLO}_\gamma^-(\mathbb{R}^{n+1})}$ therein replaced,
respectively, by $R^-(\widetilde{\alpha})$, $S^+(\alpha)$,
$S^{++}(\widetilde{\alpha})$, and $\||f\||_{\mathrm{PBLO}
_\alpha^-(\mathbb{R}^{n+1})}$, it follows that
there exist positive constants $\widetilde{A}$ and $\widetilde{B}$,
independent of $f$, such that, for any $R\in\mathcal{R}$ and
$\lambda\in(0,\infty)$,
\begin{align*}
\left|S^{++}(\widetilde{\alpha})\cap\left\{\left[f-\mathop\mathrm{ess\,inf}
_{S^+(\alpha)}f\right]_->\lambda\right\}\right|\leq
\widetilde{A}e^{-\frac{\widetilde{B}\lambda}
{\||f\||_{\mathrm{PBLO}_\alpha^-(\mathbb{R}^{n+1})}}}
\left|S^{++}(\widetilde{\alpha})\right|
\end{align*}
and
\begin{align*}
\left|R^-(\widetilde{\alpha})\cap\left\{\left[f-\mathop\mathrm{ess\,inf}
_{S^+(\alpha)}f\right]_+>\lambda\right\}\right|\leq
\widetilde{A}e^{-\frac{\widetilde{B}\lambda}
{\||f\||_{\mathrm{PBLO}_\alpha^-(\mathbb{R}^{n+1})}}}
\left|R^-(\widetilde{\alpha})\right|,
\end{align*}
where $\widetilde{\alpha}:=\frac{1+\alpha}{2}$ and
$S^{++}(\widetilde{\alpha}):=R^-(\widetilde{\alpha})+
(\mathbf{0},(2\tau+\frac{1-\alpha}{2})[l(R)]^p)$.
Combining this and an argument similar to that used in
\eqref{20251018.1213} with $\epsilon$, $R^-(\alpha)$,
$R^+(\gamma)$, and $R^+(\alpha)$ therein replaced, respectively, by
$\frac{\widetilde{B}}{2\||f\||_{\mathrm{PBLO}
_\alpha^-(\mathbb{R}^{n+1})}}$, $R^-(\widetilde{\alpha})$,
$S^+(\alpha)$, and $S^{++}(\widetilde{\alpha})$, we find that, for any
$R\in\mathcal{R}$,
\begin{align*}
\fint_{R^-(\widetilde{\alpha})}
e^{\frac{\widetilde{B}f}{2\||f\||_{\mathrm{PBLO}
_\alpha^-(\mathbb{R}^{n+1})}}}\leq\left(1+\widetilde{A}\right)
\mathop\mathrm{ess\,inf}_{S^{++}(\widetilde{\alpha})}
e^{\frac{\widetilde{B}f}{2\||f\||_{\mathrm{PBLO}
_\alpha^-(\mathbb{R}^{n+1})}}}.
\end{align*}
Taking the supremum of all $R\in\mathcal{R}$ and applying Lemma
\ref{Aq+ time lag}, we obtain
$e^{\frac{\widetilde{B}}{2\||f\||_{\mathrm{PBLO}
_\alpha^-(\mathbb{R}^{n+1})}}}\in A_1^+(\gamma)$ and there exists a
positive constants $K$ (independent of $f$), such that
$[e^{\frac{\widetilde{B}}{2\||f\||_{\mathrm{PBLO}
_\alpha^-(\mathbb{R}^{n+1})}}}]_{A_1^+(\gamma)}\leq K$. This, together
with Theorem \ref{PBLO+ and A1+}, further implies that $f\in
\mathrm{PBLO}_\gamma^-(\mathbb{R}^{n+1})$ and there exists a positive
constant $K$, independent of $f$, such that $\|f\|
_{\mathrm{PBLO}_\gamma^-(\mathbb{R}^{n+1})}
\leq\frac{4}{\widetilde{B}}\ln(1+K)
\||f\||_{\mathrm{PBLO}_\alpha^-(\mathbb{R}^{n+1})}$. This finishes the
proof of the sufficiency and hence Theorem \ref{PBLO+ time lag}.
\end{proof}

\subsection{Bennett Type Characterization
of $\mathrm{PBLO}_\gamma^-(\mathbb{R}^{n+1})$ in Terms of
$\mathrm{PBMO}_\gamma^-(\mathbb{R}^{n+1})$}
\label{subsection5.2}

Let $\gamma\in[0,1)$. The \emph{uncentered parabolic natural maximal
operator $N^{\gamma-}$ with time lag} is defined by setting, for any
$f\in L_{\mathrm{loc}}^1(\mathbb{R}^{n+1})$ and
$(x,t)\in\mathbb{R}^{n+1}$,
\begin{align*}
N^{\gamma-}(f)(x,t):=\sup_{R\in\mathcal{R},\,(x,t)\in R^+(\gamma)}
\fint_{R^-(\gamma)}f.
\end{align*}
In this subsection, we first show that $N^{\gamma-}$ is bounded
from $\mathrm{PBMO}_\gamma^-(\mathbb{R}^{n+1})$ to
$\mathrm{PBLO}_\gamma^-(\mathbb{R}^{n+1})$ by using Theorem
\ref{PBLO+ time lag}, establishing a parabolic Calder\'on--Zygmund
decomposition, and applying the parabolic chaining lemma from
\cite[Remark 3.4]{s(ampa-2018)} (see the proof of Proposition
\ref{N-:PBMO- to PBLO-}). Using this, we in turn establish the Bennett
type characterization of $\mathrm{PBLO}_\gamma^-(\mathbb{R}^{n+1})$,
which indicates that $\mathrm{PBLO}_\gamma^-(\mathbb{R}^{n+1})$ coincides
exactly with
\begin{align*}
\left\{N^{\gamma-}(F):F\in\mathrm{PBMO}_\gamma^-(\mathbb{R}^{n+1})
\mbox{\ \ and\ \ }N^{\gamma-}(F)<\infty\mbox{\ almost\ everywhere}\right\}
\end{align*}
modulo bounded functions (see Theorem \ref{N-:PBMO- to PBLO- cor}),
analogous to Theorem \ref{N-:BMO+ to BLO+ cor}.

\begin{proposition}\label{N-:PBMO- to PBLO-}
Let $\gamma\in(0,1)$. Then there exists a positive constant $C$ such that, for
any $f\in\mathrm{PBMO}_\gamma^-(\mathbb{R}^{n+1})$ such that
$N^{\gamma-}(f)<\infty$ almost everywhere,
$N^{\gamma-}(f)\in\mathrm{PBLO}_\gamma^-(\mathbb{R}^{n+1})$ and
\begin{align*}
\left\|N^{\gamma-}(f)\right\|_{\mathrm{PBLO}_\gamma^-(\mathbb{R}^{n+1})}
\leq C\|f\|_{\mathrm{PBMO}_\gamma^-(\mathbb{R}^{n+1})}.
\end{align*}
\end{proposition}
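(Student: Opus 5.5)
We will follow the scheme of the proof of Proposition \ref{N-:BMO+ to BLO+}, now in the parabolic setting. Set $F:=N^{\gamma-}(f)$ and normalize $\|f\|_{\mathrm{PBMO}_\gamma^-(\mathbb{R}^{n+1})}=1$. By Theorem \ref{PBLO+ time lag}, it suffices to bound $\||F\||_{\mathrm{PBLO}_\alpha^-(\mathbb{R}^{n+1})}$ for one convenient choice of $\alpha$ and a large shift $\tau$. The shift should be large enough that $S^+(\alpha)$ and $S^{++}(\alpha)$ lie well above $R^-(\alpha)$ in time, so that an enlarged rectangle with upper part containing $S^{++}$ exists. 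Since $F$ is a supremum of averages, the two terms need different treatment.

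\emph{The $S^{++}$ term.} For the term $\fint_{S^{++}}[F-\mathrm{ess\,inf}_{S^+}F]_-$, we will show that $F$ essentially increases forward in time up to a bounded error. Fix $(y,s)\in S^{++}(\alpha)$ and $(x,t)\in S^{+}(\alpha)$, and a rectangle $P$ with $(x,t)\in P^+(\gamma)$. We want to find $P'$ with $(y,s)\in P'^+(\gamma)$ and $\fint_{P^-(\gamma)}f-\fint_{P'^-(\gamma)}f\lesssim1$. There are two cases.
\begin{enumerate}
\item[\rm(a)] If $l(P)\gtrsim l(R)$, we take $P'$ of the same size, translated in space and forward in time. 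The difference of averages is then controlled by the parabolic chaining lemma \cite[Remark 3.4]{s(ampa-2018)}, which bounds $(f_{P^-}-f_{P'^-})_+$ for forward-shifted rectangles of comparable size by a constant times $\|f\|_{\mathrm{PBMO}_\gamma^-}$. This plays the role of the Case (1)/(2) iteration in the one-dimensional proof.
\item[\rm(b)] If $l(P)\ll l(R)$, we will instead compare $\fint_{P^-}f$ with the average over a large rectangle of size $\sim l(R)$ whose upper part contains $(y,s)$. This again uses chaining through dyadic scales forward in time.
\end{enumerate}
Hence $F(y,s)\ge F(x,t)-C$, and this term is bounded by $C$.

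\emph{The $R^-$ term.} For $\fint_{R^-(\alpha)}[F-\mathrm{ess\,inf}_{S^+}F]_+$ we split $F\le\max\{N_1,N_2\}$ on $R^-(\alpha)$, according to whether the defining rectangle has $l\le c\,l(R)$ or $l> c\,l(R)$.
\begin{enumerate}
\item[\rm(a)] For $N_2$, the chaining estimate from the $S^{++}$ term gives $N_2(x,t)\le F(y,s)+C$ for every $(y,s)\in S^+(\alpha)$, because $\tau$ is large. Hence $(N_2-\mathrm{ess\,inf}_{S^+}F)_+\le C$.
\item[\rm(b)] For $N_1$, we choose a rectangle $U$ of size $\sim l(R)$ with $U^+(\gamma)$ containing the time slab that lies between $R^-$ and $S^+$ after shifting. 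Then $\lambda_0:=f_{U^+}$ satisfies $\lambda_0\le\mathrm{ess\,inf}_{S^+}F+C$.
\end{enumerate}
We then perform a parabolic Calder\'on--Zygmund decomposition of $f$ on an enlargement of $R^-(\alpha)$ at level $\lambda>\lambda_0$, using parabolic dyadic rectangles and the stopping condition $f_{P_i^+(\gamma)}>\lambda$ on the forward parts. This gives $f=g+h$ with $g\le\lambda$ (after replacing $f$ on $P_i^-$ by the average over the forward part of the parent) and $h=\sum_i(f-f_{\hat P_i^+})\boldsymbol 1_{P_i^-}$. Then $N_1\le M(h_+)+\lambda$. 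Using the parabolic John--Nirenberg $L^2$ estimate for $\mathrm{PBMO}_\gamma^-$ from \cite{kmy(ma-2023)} together with the disjointness of the $P_i$, we get $\|h_+\|_{L^2}\lesssim|R^-|^{1/2}$. Applying H\"older's inequality and letting $\lambda\to\lambda_0$ finishes this term.

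\emph{Main obstacle.} The hardest step is the parabolic Calder\'on--Zygmund decomposition with time lag. The stopping rectangles must be compared against forward-shifted rectangles $\hat P_i^+$ whose lag geometry is compatible with $\gamma$. The disjointness and size control also have to be preserved, which is where the parabolic scaling $L$ versus $L^p$ complicates dyadic nesting. I would first try the standard parabolic dyadic grid with time side $\sim L^p$ and subdivision into $2^{n}\cdot2^{p}$ children. If $p$ is not an integer, I would use the Kinnunen--Saari style construction with $2^n$ spatial children and $2$ or $3$ temporal children chosen adaptively.
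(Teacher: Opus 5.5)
Your $R^-$ term follows the paper's route. You reduce via Theorem \ref{PBLO+ time lag}, split into small and large rectangles, and run a parabolic Calder\'on--Zygmund decomposition on an enlargement of $R^-$ at a level $\lambda$ above a forward average. You then bound $N_1\lesssim M(h_+)+\lambda$, control $h_+$ in $L^2$ by the parabolic John--Nirenberg inequality, and use one chaining step to reach $S^+$.

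\textbf{The gap is in the $S^{++}$ term.} There you conclude $F(y,s)\ge F(x,t)-C$ for \emph{all} $(x,t)\in S^+$ and $(y,s)\in S^{++}$, i.e.\ $F\ge\mathrm{ess\,sup}_{S^+}F-C$ on $S^{++}$. This is false, and step (b) is where it breaks. Chaining from a small $P$ up to scale $l(R)$ costs a constant at every dyadic scale, so at best $f_{P^-(\gamma)}\le f_{P'^-(\gamma)}+C\log(l(R)/l(P))$, and that loss is real. Take $f(x,t):=-\log|x|$ and let $R$ have spatial center $0$.
\begin{itemize}
\item Since $R^-(\gamma)$ and $R^+(\gamma)$ share their spatial cube, $\|f\|_{\mathrm{PBMO}_\gamma^-(\mathbb{R}^{n+1})}$ is at most the $\mathrm{BMO}(\mathbb{R}^n)$ norm of $-\log|x|$.
\item Here $N^{\gamma-}(f)(x,t)=\sup_{Q\ni x}f_Q$ over cubes $Q\subset\mathbb{R}^n$, which is finite for $x\neq0$.
\item $F\ge f$ almost everywhere, so $F$ is unbounded on $S^+$.
\item Yet $F\le-\log l(R)+C_n$ wherever $|x|\ge l(R)/2$, which is a fixed proportion of $S^{++}$.
\end{itemize}

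\textbf{What is actually needed, and a fix.} The term only requires $F(y,s)\ge\mathrm{ess\,inf}_{S^+}F-C$ on $S^{++}$. This needs an \emph{upper} bound on $F$ over a substantial part of $S^+$, which lower bounds on $F(y,s)$ obtained one rectangle at a time cannot supply. Rerun your $R^-$ argument with $S^+$ in place of $R^-$. Take as base level the forward average $\lambda_0'$ of an enlargement of $S^+$ of size comparable to $l(R)$, whose forward part sits between $S^+$ and $S^{++}$. Your $N_1$ and $N_2$ bounds then give $\mathrm{ess\,inf}_{S^+}F\le\fint_{S^+}F\le\max\{\lambda_0',F(y,s)\}+C$. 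One comparable-scale chaining step gives $\lambda_0'\le F(y,s)+C$ for every $(y,s)\in S^{++}$. (The paper gives no separate argument for this term; it only says it follows ``by an argument similar to the above''.)

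\textbf{A smaller point in (a).} When $[l(P)]^p\gg\tau[l(R)]^p$, a same-size $P'$ with $(y,s)\in P'^+(\gamma)$ can have $P'^-(\gamma)$ overlapping $P^-(\gamma)$. Then Lemma \ref{chain inequality}, which needs a positive time gap, does not apply. Instead shrink $P'$ by a factor depending only on $\gamma$ and $p$. This is possible because $P^+(\gamma)$ begins $2\gamma[l(P)]^p$ after $P^-(\gamma)$ ends, and it gives a scale-invariant configuration with a uniform chaining constant.

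With that adjustment, your comparable-scale handling of large rectangles is sounder than the paper's $N_2$ step. The paper compares $f_{P^-(\gamma)}$ with the fixed-scale average $f_{S^+(\gamma)}$. For $f=\min\{\log|x|,M\}$, with $P$ and $R$ spatially centered at $0$, that difference is of order $\min\{\log(l(P)/l(R)),\,M-\log l(R)\}$. This is unbounded even though $\|f\|_{\mathrm{PBMO}_\gamma^-(\mathbb{R}^{n+1})}$ stays bounded and $N^{\gamma-}(f)\le M$.
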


To prove Proposition \ref{N-:PBMO- to PBLO-}, we need a parabolic
chaining lemma as a prerequisite, which is \cite[Remark
3.4]{s(ampa-2018)}.

\begin{lemma}\label{chain inequality}
Let $\gamma\in(0,1)$ and $E,F\subset\mathbb{R}^{n+1}$ be two bounded
Borel sets. If
\begin{align*}
&\inf\left\{t\in\mathbb{R}:\mbox{there\ exists\ }x\in\mathbb{R}^n\mbox{\
satisfying\ }(x,t)\in F\right\}\\
&\qquad-\sup\left\{t\in\mathbb{R}:\mbox{there\ exists\ }x\in\mathbb{R}^n
\mbox{\ satisfying\ }(x,t)\in E\right\}\\
&\quad\in(0,\infty),
\end{align*}
then there exists a positive constant $C$ such that, for any
$f\in\mathrm{PBMO}_\gamma^-(\mathbb{R}^{n+1})$,
\begin{align*}
(f_E-f_F)_+\leq C\|f\|_{\mathrm{PBMO}_\gamma^-(\mathbb{R}^{n+1})}.
\end{align*}
\end{lemma}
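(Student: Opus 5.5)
The natural route is to bound $(f_E-f_F)_+$ by chaining through a finite sequence of parabolic rectangles. Each step of the chain should compare a ``past'' piece with a ``future'' piece. The PBMO$^-_\gamma$ condition controls $(f_{R^-(\gamma)}-c)_+$ and $(c-f_{R^+(\gamma)})_+$ for some $c=c_R$. Hence, for any measurable $P\subset R^-(\gamma)$ and $Q\subset R^+(\gamma)$ with $|P|\ge\theta|R^-(\gamma)|$ and $|Q|\ge\theta|R^+(\gamma)|$, we get
\begin{align*}
(f_P-f_Q)_+\le(f_P-c_R)_++(c_R-f_Q)_+\le\theta^{-1}\|f\|_{\mathrm{PBMO}_\gamma^-(\mathbb{R}^{n+1})}.
\end{align*}
This elementary one-step inequality is the basic building block. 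The positive part is subadditive along a chain, $(f_{P_0}-f_{P_N})_+\le\sum_j(f_{P_{j-1}}-f_{P_j})_+$, so it suffices to connect $E$ to $F$ through $N$ such steps, with $N$ and $\theta$ depending only on $E$, $F$, $\gamma$, $n$ and $p$.

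Let $T_E:=\sup\{t:(x,t)\in E\}$, $T_F:=\inf\{t:(x,t)\in F\}$ and $\delta:=T_F-T_E>0$. Since $E$ and $F$ are bounded, both lie in a cube $Q_0\times\mathbb{R}$ of side length $D$.

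The first step is to replace $E$ and $F$ by averages over fixed slabs. Note that a single rectangle cannot see $E$ with a definite proportion, since $E$ may be tiny. So instead, I would first write $(f_E-f_F)_+\le\fint_E\fint_F[f(z)-f(w)]_+\,dw\,dz$. I would then aim to prove a pointwise-pair bound of the form $\fint_{U}\fint_{V}[f(z)-f(w)]_+\lesssim\|f\|$ for the two slabs $U:=Q_0\times(T_E-L,T_E]$ and $V:=Q_0\times[T_F,T_F+L)$ containing $E$ and $F$, where $L$ is the time extent of $E\cup F$.

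However, passing from $\fint_E\fint_F$ to $\fint_U\fint_V$ loses the factor $|U||V|/(|E||F|)$. That factor depends only on $E$ and $F$, which is allowed, since $C$ may depend on $E$ and $F$. So it suffices to show $\fint_U\fint_V[f(z)-f(w)]_+\le C\|f\|$, and for this I would use the double-average characterization of PBMO$^-_\gamma$, the parabolic analogue of the one cited from \cite[p.\,1493]{s(ampa-2018)} in the proof of Proposition \ref{N-:BMO+ to BLO+}.

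The second step is to tile $U$ and $V$ by small parabolic rectangles of edge $\ell$ with $\ell^p\ll\delta$, and connect each small tile in $U$ to each small tile in $V$. Two spatially distant tiles are joined by moving spatially in steps of size $\ell$ while advancing in time by about $\ell^p$ per step. The number of steps is about $D/\ell$, and its total time cost $\lesssim(D/\ell)\ell^p$ must fit into $\delta$. Since $p>1$, choosing $\ell$ small makes $D\ell^{p-1}\le\delta/2$. The remaining time gap is then covered by purely temporal steps of size $\ell^p$ at fixed spatial position. Each step uses a rectangle whose $R^-(\gamma)$ contains the previous tile and whose $R^+(\gamma)$ contains the next, each with fixed proportion $\theta$, so the one-step inequality applies. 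The chain length is bounded by about $(D/\ell)+\delta/\ell^p+L/\ell^p$, a constant.

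The main obstacle is exactly this time budget: spatial displacement costs time forward, and the monotone orientation forbids going back. The condition that the time gap is positive together with $p>1$ (small rectangles are ``fast'' in space relative to time) is what makes it work. Finally, averaging over tiles gives $\fint_U\fint_V$ in terms of tile averages; for the pointwise $[f(z)-f(w)]_+$ I would instead simply work with the averages, using $(f_E-f_F)_+\le\sum$ of tile-average differences weighted by $|E\cap\text{tile}|/|E|$. This avoids the double-average characterization, so the weight factors are handled by the $\theta$ losses, which depend only on the sets.
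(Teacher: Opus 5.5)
Your chaining argument is correct and is essentially the proof behind \cite[Remark 3.4]{s(ampa-2018)}, which the paper cites rather than proves: the one-step inequality $(f_P-f_Q)_+\le\theta^{-1}\|f\|_{\mathrm{PBMO}_\gamma^-(\mathbb{R}^{n+1})}$ from a single rectangle, subadditivity of $(\cdot)_+$ along the chain, and the observation that for $p>1$ small rectangles cover spatial distance $D$ at a time cost of about $D\ell^{p-1}\to0$ are exactly the needed ingredients. The one point to state carefully is that each chain must start at $E\cap T$ itself (and end at $F\cap T'$), with the loss $|R_1^-(\gamma)|/|E\cap T|$ absorbed by the weight $|E\cap T|/|E|$; you cannot replace $f_{E\cap T}$ by the tile average $f_T$, since $(f_{E\cap T}-f_T)_+$ is not controlled by a one-sided condition.
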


Now, we are ready to show Proposition \ref{N-:PBMO- to PBLO-}.

\begin{proof}[Proof of Proposition \ref{N-:PBMO- to PBLO-}]
Let $f\in\mathrm{PBMO}_\gamma^-(\mathbb{R}^{n+1})$ with
$N^{\gamma-}(f)<\infty$ almost everywhere and let $R\in\mathcal{R}$. Define
\begin{align*}
S^+(\gamma):=R^-(\gamma)+\left(\mathbf{0},2(1+2\gamma)^p[l(R)]^p\right),\
S^{++}(\gamma):=S^+(\gamma)+\left(\mathbf{0},(1+\gamma)[l(R)]^p\right),
\end{align*}
and
\begin{align*}
S^{+++}(\gamma):=S^{++}(\gamma)+\left(\mathbf{0},2(1+2\gamma)^p[l(R)]^p\right).
\end{align*}
For any $(x,t)\in R^-(\gamma)$, let
\begin{align*}
N^{\gamma-}_1(f)(x,t):=\sup\left\{\fint_{P^-(\gamma)}f:P\in\mathcal{R},\
(x,t)\in P^+(\gamma), \mbox{and}\ l(P)\leq\gamma l(R)\right\}
\end{align*}
and
\begin{align*}
N^{\gamma-}_2(f)(x,t):=\sup\left\{\fint_{P^-(\gamma)}f:P\in\mathcal{R},\
(x,t)\in P^+(\gamma), \mbox{and}\ l(P)\geq\gamma l(R)\right\}
\end{align*}
From Theorem \ref{PBLO+ time lag}, it follows that we only need to prove that
there exists a positive constant $C$, independent of $f$ and $R$, such that,
for any $j\in\{1,2\}$,
\begin{align}\label{20251004.0031}
\fint_{R^-(\gamma)}\left[N^{\gamma-}_j(f)-\mathop\mathrm{ess\,inf}
_{S^{++}(\gamma)}N^{\gamma-}(f)\right]_+\leq
C\|f\|_{\mathrm{PBMO}_\gamma^-(\mathbb{R}^{n+1})}
\end{align}
and
\begin{align}\label{20251004.0032}
\fint_{S^{+++}(\gamma)}\left[N^{\gamma-}_j(f)-\mathop\mathrm{ess\,inf}
_{S^{++}(\gamma)}N^{\gamma-}(f)\right]_-\leq
C\|f\|_{\mathrm{PBMO}_\gamma^-(\mathbb{R}^{n+1})}.
\end{align}
We first show \eqref{20251004.0031} for the case $j=1$. Let
\begin{align*}
\alpha:=1-\frac{(1-\gamma)\gamma^p+(1-\gamma)+2\gamma^p}{(1+2\gamma)^p}\in(0,1)
\end{align*}
and $\widetilde{R}\in \mathcal{R}$ be such that
$l(\widetilde{R})=(1+2\gamma)l(R)$ and
\begin{align*}
&\sup\left\{t\in\mathbb{R}:\mbox{there\ exists\ }x\in\mathbb{R}^n\mbox{\
satisfying\ }(x,t)\in\widetilde{R}^-(\alpha)\right\}\\
&\qquad\quad-\sup\left\{t\in\mathbb{R}:\mbox{there\ exists\ }x\in\mathbb{R}^n
\mbox{\ satisfying\ }(x,t)\in R^-(\gamma)\right\}\\
&\quad=(1-\gamma)\gamma^p[l(R)]^p.
\end{align*}
By some simple calculations, we find that the following statements hold:
\begin{enumerate}
\item[\rm(i)] $R^-(\gamma)\subsetneqq\widetilde{R}^-(\alpha)$ and
$|\widetilde{R}^-(\alpha)|=(1+2\gamma)^n(1+\gamma^p+
\frac{2\gamma^p}{1-\gamma})|R^-(\gamma)|$;

\item[\rm(ii)] $S^+(\gamma)$ is strictly higher than
$\widetilde{R}^+(\alpha)$. More precisely,
\begin{align*}
&\inf\left\{t\in\mathbb{R}:\mbox{there\ exists\ }x\in\mathbb{R}^n\mbox{\
satisfying\ }(x,t)\in S^+(\gamma)\right\}\\
&\qquad\quad-\sup\left\{t\in\mathbb{R}:\mbox{there\ exists\ }x\in\mathbb{R}^n
\mbox{\ satisfying\ }(x,t)\in \widetilde{R}^+(\alpha)\right\}\\
&\quad=2\gamma^p[l(R)]^p\in(0,\infty);
\end{align*}

\item[\rm(iii)] For any $(y,s)\in S^{++}(\gamma)$,
$N^{\gamma-}(f)(y,s)\geq f_{S^+(\gamma)}$.
\end{enumerate}
Next, we establish a parabolic Calder\'on--Zygmund decomposition of
$\widetilde{R}^-(\alpha)$. Given $\lambda\in
(f_{\widetilde{R}^+(\alpha)},\infty)$. Divide each spatial edge of
$\widetilde{R}^-(\alpha)$ into 2 equally long intervals and partition the
temporal edge of $\widetilde{R}^-(\alpha)$ into $\lceil2^p\rceil$ equally
long intervals. Then we obtain $2^n\lceil2^p\rceil$ subrectangles
$\{U_{1,i}^-\}_{i\in\mathbb{N}\cap[1,J_1]}$ of $\widetilde{R}^-(\alpha)$,
where $J_1:=2^n\lceil2^p\rceil$. For any $i\in\mathbb{N}\cap[1,J_1]$,
there exists a unique $R_{1,i}\in\mathcal{R}$ such that the bottoms of both
$U_{1,i}^-$ and $R_{1,i}$ coincide. We select all $U_{1,i}^-$ for which
$f_{R_{1,i}^+(\alpha)}>\lambda$. For those $U_{1,i}^-$ with
$f_{R_{1,i}^+(\alpha)}\leq\lambda$, we divide each spatial edge of $U_{1,i}^-$
into 2 equally long intervals. If
\begin{align*}
\frac{l_t(U_{1,i}^-)}{\lfloor2^p\rfloor}<\frac{l_t(P^-(\alpha))}{2^p},
\end{align*}
then we partition the temporal edge of $U_{1,i}^-$ into $\lfloor2^p\rfloor$
equally long intervals. Otherwise, if
\begin{align*}
\frac{l_t(U_{1,i}^-)}{\lfloor2^p\rfloor}\geq\frac{l_t(P^-(\alpha))}{2^p},
\end{align*}
then we partition the temporal edge of $U_{1,i}^-$ into $\lceil2^p\rceil$
equally long intervals. We obtain a sequence of subrectangles $\{U_{2,i}^-\}
_{i\in\mathbb{N}\cap[1,J_2]}$ of $\widetilde{R}^-(\alpha)$, where
$J_2\in\mathbb{N}$. For any $i\in\mathbb{N}\cap[1,J_2]$,
there exists a unique $R_{2,i}\in\mathcal{R}$ such that the bottoms of both
$U_{2,i}^-$ and $R_{2,i}$ coincide. We select all $U_{2,i}^-$ for which
$f_{R_{2,i}^+(\alpha)}>\lambda$. We continue the decomposition process
inductively. For any $j\in\mathbb{N}$, at the $j$-step, for those $U_{j,i}^-$
with $f_{R_{j,i}^+(\alpha)}\leq\lambda$, we divide each spatial edge of
$U_{j,i}^-$ into 2 equally long intervals. If
\begin{align*}
\frac{l_t(U_{j,i}^-)}{\lfloor2^p\rfloor}<\frac{l_t(P^-(\alpha))}{2^{pj}},
\end{align*}
then we partition the temporal edge of $U_{j,i}^-$ into $\lfloor2^p\rfloor$
equally long intervals. Otherwise, if
\begin{align*}
\frac{l_t(U_{j,i}^-)}{\lfloor2^p\rfloor}\geq\frac{l_t(P^-(\alpha))}{2^{pj}},
\end{align*}
then we partition the temporal edge of $U_{j,i}^-$ into $\lceil2^p\rceil$
equally long intervals. We obtain a sequence of subrectangles $\{U_{j+1,i}^-\}
_{i\in\mathbb{N}\cap[1,J_{j+1}]}$ of $\widetilde{R}^-(\alpha)$, where
$J_{j+1}\in\mathbb{N}$. For any $i\in\mathbb{N}\cap[1,J_{j+1}]$,
there exists a unique $R_{j+1,i}\in\mathcal{R}$ such that the bottoms of both
$U_{j+1,i}^-$ and $R_{j+1,i}$ coincide. We select all $U_{j+1,i}^-$ for which
$f_{R_{j+1,i}^+(\alpha)}>\lambda$. From a simple calculation, we deduce
that, for any $j\in\mathbb{N}$ and $i\in J_j$, $U_{j,i}^-\subset
R_{j,i}^-(\alpha)$. In conclusion, we obtain a sequence of pairwise
disjoint subrectangles $\{U_i^-\}_{i\in\mathbb{N}}$ of
$\widetilde{R}^-(\alpha)$ such that the following statements hold:
\begin{enumerate}
\item[\rm(iv)] For any $i\in\mathbb{N}$, there exists a unique
$R_i\in\mathcal{R}$ such that the bottoms of both $U_i^-$ and $R_i$ coincide,
$U_i^-\subset R_i^-(\alpha)$, and $f_{R_i^+(\alpha)}>\lambda$;

\item[\rm(v)] For any $i\in\mathbb{N}$, there exists a unique
subrectangle $U_{i^-}^-$ of $\widetilde{R}^-(\alpha)$ such that
$U_i^-$ is obtained by decomposing $U_{i^-}^-$ directly,
$2^n\lfloor2^p\rfloor|U_i^-|\leq|U_{i^-}^-|\leq2^n\lceil2^p\rceil|U_i^-|$,
and $f_{R_{i^-}^+(\alpha)}\leq\lambda$, where $R_{i^-}$ denotes the unique
parabolic rectangle whose bottom coincides with that of $U_{i^-}$.
\end{enumerate}

Now, let
\begin{align*}
\begin{cases}
\displaystyle
g:=f\boldsymbol{1}_{\widetilde{R}^-(\alpha)\setminus
\bigcup_{i\in\mathbb{N}}U_i^-}+
\sum\limits_{i\in\mathbb{N}}f_{R_{i^-}^+(\alpha)}\boldsymbol{1}_{U_i^-},\\
\displaystyle
b_i:=\left[f-f_{R_{i^-}^+(\alpha)}\right]\boldsymbol{1}_{U_i^-}
\mbox{\ for\ any\ }i\in\mathbb{N},\\
\displaystyle
b:=\sum\limits_{i\in\mathbb{N}}b_i.
\end{cases}
\end{align*}
Then $f\boldsymbol{1}_{\widetilde{R}^-(\alpha)}=g+b$ and, by the Lebesgue
differentiation theorem, $g\leq\lambda$ almost everywhere in
$\widetilde{R}^-(\alpha)$. In addition, for any $(x,t)\in R^-(\gamma)$,
\begin{align*}
N^{\gamma-}_1(f)(x,t)&=\sup\left\{\fint_{P^-(\gamma)}f
\boldsymbol{1}_{\widetilde{R}^-(\alpha)}:P\in\mathcal{R},\
(x,t)\in P^+(\gamma), \mbox{and}\ l(P)\leq\gamma l(R)\right\}\\
&\leq M^{\gamma-}(b_+)(x,t)+\lambda,
\end{align*}
where $M^{\gamma-}$ denotes the \emph{uncentered parabolic maximal
operator}, which is defined by setting, for any $f\in
L_{\mathrm{loc}}^1(\mathbb{R}^{n+1})$ and $(x,t)\in\mathbb{R}^{n+1}$,
\begin{align*}
M^{\gamma-}(f)(x,t):=\sup_{R\in\mathcal{R},\,(x,t)\in
R^+(\gamma)}\fint_{R^-(\gamma)}|f|.
\end{align*}
Combining this, (iii), (i), H\"older's inequality, the assumption that
$\lambda>f_{\widetilde{R}^+(\alpha)}$, the fact that $M^{\gamma-}$ is
bounded on $L^2(\mathbb{R}^{n+1})$, Lemma \ref{chain inequality} with
$E:=\widetilde{R}^+(\alpha)$ and $F:=S^+(\gamma)$ therein, and (ii),
we conclude that
\begin{align}\label{20251008.1704}
&\int_{R^-(\gamma)}\left[N^{\gamma-}_1(f)-\mathop\mathrm{ess\,inf}
_{S^{++}(\gamma)}N^{\gamma-}(f)\right]_+\notag\\
&\quad\leq\int_{R^-(\gamma)}\left[M^{\gamma-}(b_+)+\lambda
-\fint_{S^+(\gamma)}f\right]_+\notag\\
&\quad\leq\int_{\widetilde{R}^-(\alpha)}\left[M^{\gamma-}(b_+)+\lambda
-\fint_{\widetilde{R}^+(\alpha)}f\right]_++
\left[\fint_{\widetilde{R}^+(\alpha)}f
-\fint_{S^+(\gamma)}f\right]_+\left|R^-(\gamma)\right|\notag\\
&\quad\lesssim\|b_+\|_{L^2(\mathbb{R}^{n+1})}
\left|R^-(\gamma)\right|^\frac{1}{2}+\left[\lambda-
\fint_{\widetilde{R}^+(\alpha)}f\right]\left|R^-(\gamma)\right|+
\|f\|_{\mathrm{PBMO}_\gamma^-(\mathbb{R}^{n+1})}
\left|R^-(\gamma)\right|,
\end{align}
where the implicit positive constant is independent of $f$.
We then prove that $b_+\in L^2(\mathbb{R}^{n+1})$ and
\begin{align}\label{20251008.1657}
\|b_+\|_{L^2(\mathbb{R}^{n+1})}\lesssim\left|R^-(\gamma)\right|
^\frac{1}{2}\|f\|_{\mathrm{PBMO}_\gamma^-(\mathbb{R}^{n+1})},
\end{align}
where the implicit positive constant is independent of $f$. Indeed, from
the definition of $b$, the fact that $\{U_i^-\}_{i\in\mathbb{N}}$ are
pairwise disjoint, (iv), (v), and H\"older's inequality, we infer that,
for any $\{c_i\}_{i\in\mathbb{N}}$ in $\mathbb{R}$,
\begin{align}\label{20251007.2304}
\|b_+\|_{L^2(\mathbb{R}^{n+1})}^2&=\int_{\mathbb{R}}
\left\{\sum_{i\in\mathbb{N}}\left[f-f_{R_{i^-}^+(\alpha)}\right]
\boldsymbol{1}_{U_i^-}\right\}^2=\sum_{i\in\mathbb{N}}\int_{U_i^-}
\left[f-f_{R_{i^-}^+(\alpha)}\right]_+^2\notag\\
&\leq\sum_{i\in\mathbb{N}}\int_{\widetilde{R}_{i^-}^-(\alpha)}
\left[f-c_i+c_i-f_{R_{i^-}^+(\alpha)}\right]_+^2\notag\\
&\leq2\sum_{i\in\mathbb{N}}\left\{\int_{\widetilde{R}_{i^-}^-(\alpha)}
(f-c_i)_+^2+\int_{\widetilde{R}_{i^-}^-(\alpha)}
\left[\fint_{\widetilde{R}_{i^-}^+(\alpha)}(f-c_i)_-\right]^2\right\}\notag\\
&\leq2\sum_{i\in\mathbb{N}}\left\{\int_{\widetilde{R}_{i^-}^-(\alpha)}
(f-c_i)_+^2+\int_{\widetilde{R}_{i^-}^+(\alpha)}(f-c_i)_-^2\right\}.
\end{align}
Applying Cavaieri's principle and the parabolic John--Nirenberg inequality
for $\mathrm{PBMO}_\gamma^-(\mathbb{R}^{n+1})$ (see \cite[Theorem
4.1]{kmy(ma-2023)}), we find that there exist positive constants $A$ and
$B$, independent of $f$, such that, for any $i\in\mathbb{N}$, there
exists $c_i\in\mathbb{R}$ such that
\begin{align}\label{20251007.2215}
\int_{\widetilde{R}_{i^-}^-(\alpha)}(f-c_i)_+^2&=\int_0^\infty
\left|\widetilde{R}_{i^-}^-(\alpha)\cap\left\{(f-c_i)_+^2
>\lambda\right\}\right|\,d\lambda\notag\\
&=\int_0^\infty\left|\widetilde{R}_{i^-}^-(\alpha)\cap\left\{(f-c_i)_+
>\sqrt{\lambda}\right\}\right|\,d\lambda\notag\\
&\leq A\left|\widetilde{R}_{i^-}^-(\alpha)\right|\int_0^\infty
e^{-\frac{B\sqrt{\lambda}}{\|f\|_{\mathrm{PBMO}_\gamma^-
(\mathbb{R}^{n+1})}}}\,d\lambda
=\frac{2A\|f\|_{\mathrm{PBMO}_\gamma^-(\mathbb{R}^{n+1})}^2}{B}
\left|\widetilde{R}_{i^-}^-(\alpha)\right|.
\end{align}
Similarly, we can show that, for any $i\in\mathbb{N}$,
\begin{align*}
\int_{\widetilde{R}_{i^-}^+(\alpha)}(f-c_i)_-^2
\leq\frac{2A\|f\|_{\mathrm{PBMO}_\gamma^-(\mathbb{R}^{n+1})}^2}{B}
\left|\widetilde{R}_{i^-}^+(\alpha)\right|.
\end{align*}
This, together with \eqref{20251007.2304}, \eqref{20251007.2215}, (v),
and the fact that $\{U_i^-\}_{i\in\mathbb{N}}$ are pairwise disjoint,
further implies that
\begin{align*}
\|b_+\|_{L^2(\mathbb{R}^{n+1})}^2&\leq\frac{8A
\|f\|_{\mathrm{PBMO}_\gamma^-(\mathbb{R}^{n+1})}^2}{B}
\sum_{i\in\mathbb{N}}\left|\widetilde{R}_{i^-}^-(\alpha)\right|\\
&\lesssim\|f\|_{\mathrm{PBMO}_\gamma^-(\mathbb{R}^{n+1})}^2
\sum_{i\in\mathbb{N}}\left|U_i^-\right|\leq
\|f\|_{\mathrm{PBMO}_\gamma^-(\mathbb{R}^{n+1})}^2\left|R^-(\gamma)\right|
\end{align*}
and hence \eqref{20251008.1657} holds. Combining \eqref{20251008.1704}
and \eqref{20251008.1657} and letting $\lambda\to
f_{\widetilde{R}^+(\alpha)}$, we conclude that \eqref{20251004.0031}
holds for the case $j=1$.

Next, we prove \eqref{20251004.0031} for the case $j=2$. Notice that, for
any $P\in\mathcal{R}$ such that there exists $(x,t)\in R^-(\gamma)$
satisfying $(x,t)\in P^+(\gamma)$ and $l(P)\geq\gamma l(R)$,
\begin{align*}
&\sup\left\{t\in\mathbb{R}:\mbox{there\ exists\ }x\in\mathbb{R}^n\mbox{\
satisfying\ }(x,t)\in P^-(\gamma)\right\}\\
&\qquad\quad-\sup\left\{t\in\mathbb{R}:\mbox{there\ exists\ }x\in\mathbb{R}^n
\mbox{\ satisfying\ }(x,t)\in R^-(\gamma)\right\}\\
&\quad=(1-\gamma)[l(R)]^p-2\gamma[l(P)]^p
\leq\left(1-\gamma-2\gamma^{p+1}\right)[l(R)]^p\\
&\quad\leq\left[2(1+2\gamma)^p-(1-\gamma)\right][l(R)]^p\\
&\quad=\inf\left\{t\in\mathbb{R}:\mbox{there\ exists\ }
x\in\mathbb{R}^n\mbox{\ satisfying\ }(x,t)\in S^+(\gamma)\right\}\\
&\qquad\quad-\sup\left\{t\in\mathbb{R}:\mbox{there\ exists\ }
x\in\mathbb{R}^n\mbox{\ satisfying\ }(x,t)\in R^-(\gamma)\right\},
\end{align*}
which further implies that $S^+(\gamma)$ is strictly higher than
$P^-(\gamma)$. From this and Lemma \ref{chain inequality} with
$E:=P^-(\gamma)$ and $F:=S^+(\gamma)$ therein, it follows that, for any
$P\in\mathcal{R}$ such that there exists $(x,t)\in R^-(\gamma)$
satisfying $(x,t)\in P^+(\gamma)$ and $l(P)\geq\gamma l(R)$,
\begin{align*}
\left[f_{P^-(\gamma)}-f_{S^+(\gamma)}\right]_+\lesssim
\|f\|_{\mathrm{PBMO}_\gamma^-(\mathbb{R}^{n+1})},
\end{align*}
where the implicit positive constant is independent of $f$. Combining
this, (iii), and the definition of $N^{\gamma-}_2$, we find that
\eqref{20251004.0031} holds for the case $j=2$.

In conclusion, we show that \eqref{20251004.0031} holds for both
$j\in\{1,2\}$. By an argument similar to the above, we can prove that
\eqref{20251004.0032} holds for both $j\in\{1,2\}$, which completes
the proof of Proposition \ref{N-:PBMO- to PBLO-}.
\end{proof}

\begin{remark}\label{r5.11}
\begin{enumerate}
\item[\rm(i)] Similar to Remark \ref{N-:BMO+ to BLO+ remark}, we can
easily a construct example to illustrate that $M^{\gamma-}$ is unbounded
on $\mathrm{PBMO}_\gamma^-(\mathbb{R}^{n+1})$. Indeed, let $f$ be the same
as in Remark \ref{N-:BMO+ to BLO+ remark}(i) and, for any $(x,t)\in
\mathbb{R}^{n+1}$, define $F(x,t):=f(t)$. Then
$\|F\|_{\mathrm{PBMO}_\gamma^-(\mathbb{R}^{n+1})}=0$ and
$M^{\gamma-}(F)<\infty$ almost everywhere on $\mathbb{R}^{n+1}$. However,
for any $(x,t)\in\mathbb{R}^n\times(-\infty,0]$,
$M^{\gamma-}(F)(x,t)\leq1$ and, for any
$(x,t)\in\mathbb{R}^n\times[10,\infty)$,
\begin{align*}
M^{\gamma-}(F)(x,t)\geq\frac{1}{1-\gamma}\int_{t-(1+\gamma)}^{t-2\gamma}
2s\,ds=2t-3-\gamma\geq2t-4\geq16.
\end{align*}
This, together with the characterization of the null space of
$\mathrm{PBMO}_\gamma^-(\mathbb{R}^{n+1})$ (see \cite[Theorem
3.1]{kyyz(cvpde-2025)}), further implies that
$\|M^{\gamma}(F)\|_{\mathrm{PBMO}_\gamma^-(\mathbb{R}^{n+1})}
\in(0,\infty]$. This means that $M^{\gamma-}$ is unbounded
on $\mathrm{PBMO}_\gamma^-(\mathbb{R}^{n+1})$.

\item[\rm(ii)] Applying Remark \ref{Linfty BLO+ BMO+}, we find that
Proposition \ref{N-:PBMO- to PBLO-} is indeed a refinement of
\cite[Lemma 4.1]{s(ampa-2018)}, which states that, for any given
$\gamma\in(0,1)$ there exists a positive constant $C$ such that, for any
positive function $f\in\mathrm{PBMO}_\gamma^-(\mathbb{R}^{n+1})$
with $M^{\gamma-}(f)\in L_{\mathrm{loc}}^1(\mathbb{R}^{n+1})$,
\begin{align*}
\left\|M^{\gamma-}(f)\right\|_{\mathrm{PBMO}_\gamma^-(\mathbb{R}^{n+1})}
\leq C\|f\|_{\mathrm{PBMO}_\gamma^-(\mathbb{R}^{n+1})}.
\end{align*}
\end{enumerate}
\end{remark}

Applying Proposition \ref{N-:PBMO- to PBLO-}, we obtain
the Bennett type characterization of
$\mathrm{PBLO}_\gamma^-(\mathbb{R}^{n+1})$.

\begin{theorem}\label{N-:PBMO- to PBLO- cor}
Let $\gamma\in(0,1)$. Then $f\in\mathrm{PBLO}_\gamma^-(\mathbb{R}^{n+1})$
if and only if there exist $h\in L^\infty(\mathbb{R}^{n+1})$ and
$F\in\mathrm{PBMO}_\gamma^-(\mathbb{R}^{n+1})$ with
$N^{\gamma-}(F)<\infty$ almost everywhere on $\mathbb{R}^{n+1}$ such that
\begin{align}\label{20251008.1751}
f=N^{\gamma-}(F)+h.
\end{align}
Moreover, $\|f\|_{\mathrm{PBLO}_\gamma^-(\mathbb{R}^{n+1})}\sim
\inf\{\|F\|_{\mathrm{PBMO}_\gamma^-(\mathbb{R}^{n+1})}+
\|h\|_{L^\infty(\mathbb{R}^{n+1})}\}$, where the infimum is
taken over all decompositions of $f$ of the form \eqref{20251008.1751}
and the positive equivalence constants are independent of $f$.
\end{theorem}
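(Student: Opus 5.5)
The plan follows the proof of Theorem \ref{N-:BMO+ to BLO+ cor}, with Proposition \ref{N-:PBMO- to PBLO-} taking the place of Proposition \ref{N-:BMO+ to BLO+}.

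\emph{Necessity.} Let $f\in\mathrm{PBLO}_\gamma^-(\mathbb{R}^{n+1})$. We will show that $h:=f-N^{\gamma-}(f)$ is bounded. Then $f=N^{\gamma-}(f)+h$ is a decomposition of the form \eqref{20251008.1751} with $F:=f$.

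For the upper bound on $N^{\gamma-}(f)-f$, fix a Lebesgue point $(x,t)$ of $f$ and any $P\in\mathcal{R}$ with $(x,t)\in P^+(\gamma)$. Then
$$\fint_{P^-(\gamma)}f-f(x,t)\le\fint_{P^-(\gamma)}\Bigl[f-\mathop\mathrm{ess\,inf}_{P^+(\gamma)}f\Bigr]_+\le\|f\|_{\mathrm{PBLO}_\gamma^-(\mathbb{R}^{n+1})},$$
and the first inequality holds for almost every such point. Taking the supremum over $P$ gives $N^{\gamma-}(f)-f\le\|f\|_{\mathrm{PBLO}_\gamma^-(\mathbb{R}^{n+1})}$ almost everywhere.

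For the lower bound $N^{\gamma-}(f)\ge f$ almost everywhere, I will use a Lebesgue differentiation argument for the shifted sets. Suppose $(x,t)\in P^+(\gamma)$ and $l(P)\to0$. Then $P^-(\gamma)$ is contained in a parabolic rectangle centred at $(x,t)$ with edge length comparable to $l(P)$, and it occupies a fixed positive proportion, depending only on $n,p,\gamma$, of that rectangle. Hence $\fint_{P^-(\gamma)}f\to f(x,t)$ at every Lebesgue point of $f$ with respect to parabolic rectangles. This yields $0\le N^{\gamma-}(f)-f\le\|f\|_{\mathrm{PBLO}_\gamma^-(\mathbb{R}^{n+1})}$ almost everywhere. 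In particular $N^{\gamma-}(f)<\infty$ almost everywhere.

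By Remark \ref{Linfty PBLO- PBMO-}, $f\in\mathrm{PBMO}_\gamma^-(\mathbb{R}^{n+1})$ with $\|f\|_{\mathrm{PBMO}_\gamma^-(\mathbb{R}^{n+1})}\lesssim\|f\|_{\mathrm{PBLO}_\gamma^-(\mathbb{R}^{n+1})}$. Taking $F:=f$ and $h:=f-N^{\gamma-}(f)$ then bounds the infimum by a constant multiple of $\|f\|_{\mathrm{PBLO}_\gamma^-(\mathbb{R}^{n+1})}$.

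\emph{Sufficiency.} Let $f=N^{\gamma-}(F)+h$ as in \eqref{20251008.1751}. By Proposition \ref{N-:PBMO- to PBLO-}, $N^{\gamma-}(F)\in\mathrm{PBLO}_\gamma^-(\mathbb{R}^{n+1})$ with norm $\lesssim\|F\|_{\mathrm{PBMO}_\gamma^-(\mathbb{R}^{n+1})}$.

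It remains to control the bounded perturbation, and here some care is needed. The $\mathrm{PBLO}_\gamma^-$ functional contains a negative-part term, so subadditivity is not automatic. Write $g:=N^{\gamma-}(F)$ and fix $R\in\mathcal{R}$.
\begin{itemize}
\item Since $\mathrm{ess\,inf}_{R^+(\gamma)}(g+h)\ge\mathrm{ess\,inf}_{R^+(\gamma)}g-\|h\|_{L^\infty(\mathbb{R}^{n+1})}$, we get
$$\Bigl[g+h-\mathop\mathrm{ess\,inf}_{R^+(\gamma)}(g+h)\Bigr]_+\le\Bigl[g-\mathop\mathrm{ess\,inf}_{R^+(\gamma)}g\Bigr]_++2\|h\|_{L^\infty(\mathbb{R}^{n+1})}.$$
\item Since $\mathrm{ess\,inf}_{R^+(\gamma)}(g+h)\le\mathrm{ess\,inf}_{R^+(\gamma)}g+\|h\|_{L^\infty(\mathbb{R}^{n+1})}$, we get
$$\Bigl[g+h-\mathop\mathrm{ess\,inf}_{R^+(\gamma)}(g+h)\Bigr]_-\le\Bigl[g-\mathop\mathrm{ess\,inf}_{R^+(\gamma)}g\Bigr]_-+2\|h\|_{L^\infty(\mathbb{R}^{n+1})}.$$
\end{itemize}
Averaging over $R^-(\gamma)$ and $R^{++}(\gamma)$ respectively and taking the supremum over $R$ gives
$$\|f\|_{\mathrm{PBLO}_\gamma^-(\mathbb{R}^{n+1})}\le\|g\|_{\mathrm{PBLO}_\gamma^-(\mathbb{R}^{n+1})}+4\|h\|_{L^\infty(\mathbb{R}^{n+1})}\lesssim\|F\|_{\mathrm{PBMO}_\gamma^-(\mathbb{R}^{n+1})}+\|h\|_{L^\infty(\mathbb{R}^{n+1})}.$$
Taking the infimum over all decompositions of the form \eqref{20251008.1751} completes the equivalence.

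\emph{Main obstacle.} The only step that is not routine is the almost everywhere inequality $N^{\gamma-}(f)\ge f$. The averaging sets $P^-(\gamma)$ do not contain the point $(x,t)$, so the standard Lebesgue differentiation theorem does not apply directly. I would first try to handle this with the comparability argument above: the sets $P^-(\gamma)$ are a regular family shrinking to $(x,t)$, each filling a fixed fraction of an enclosing parabolic rectangle around $(x,t)$. As a fallback, I would dominate $|f_{P^-(\gamma)}-f(x,t)|$ by a constant times the average of $|f-f(x,t)|$ over an enlarged parabolic rectangle centred at $(x,t)$, and then let that rectangle shrink.
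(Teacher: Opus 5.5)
Your proposal is correct and follows the paper's route, which simply transplants the proof of Theorem~\ref{N-:BMO+ to BLO+ cor}: for necessity, take $F:=f$ and $h:=f-N^{\gamma-}(f)$ with $0\le N^{\gamma-}(f)-f\le\|f\|_{\mathrm{PBLO}_\gamma^-(\mathbb{R}^{n+1})}$ almost everywhere; for sufficiency, use Proposition~\ref{N-:PBMO- to PBLO-} together with a bounded-perturbation estimate. Your added details are well placed: the bound $\|g+h\|_{\mathrm{PBLO}_\gamma^-(\mathbb{R}^{n+1})}\le\|g\|_{\mathrm{PBLO}_\gamma^-(\mathbb{R}^{n+1})}+4\|h\|_{L^\infty(\mathbb{R}^{n+1})}$ correctly handles the negative-part term, which the paper's verbatim transfer of the one-dimensional subadditivity step does not address, and the inclusion $P^-(\gamma)\subset R(x,t,2l(P))$ justifies the differentiation step (just note that $f(x,t)\ge\mathop\mathrm{ess\,inf}_{P^+(\gamma)}f$ holds at every Lebesgue point $(x,t)\in P^+(\gamma)$ for all $P$ simultaneously, not merely for almost every point, which is what the supremum over uncountably many $P$ requires).
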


\begin{proof}
Repeating the proof of Theorem \ref{N-:BMO+ to BLO+ cor} with $N^-$,
$\mathrm{BMO}^+(\mathbb{R})$, $\mathrm{BLO}^+(\mathbb{R})$,
$L^\infty(\mathbb{R})$, $I$, $I^-$, $I^+$, and Proposition
\ref{N-:BMO+ to BLO+} therein replaced,
respectively, by $N^{\gamma-}$,
$\mathrm{PBMO}_\gamma^-(\mathbb{R}^{n+1})$,
$\mathrm{PBLO}_\gamma^-(\mathbb{R}^{n+1})$, $L^\infty(\mathbb{R}^{n+1})$,
$R$, $R^-(\gamma)$, $R^+(\gamma)$, and Proposition
\ref{N-:PBMO- to PBLO-} then completes the proof of Theorem
\ref{N-:PBMO- to PBLO- cor}.
\end{proof}

\subsection{Coifman--Rochberg Type
Decomposition of $\mathrm{PBLO}_\gamma^-(\mathbb{R}^{n+1})$}
\label{subsection5.3}

In this subsection, we establish the Coifman--Rochberg type
decomposition of functions in $\mathrm{PBLO}_\gamma^-(\mathbb{R}^{n+1})$,
which also provides an equivalent norm of
$\mathrm{PBLO}_\gamma^-(\mathbb{R}^{n+1})$ (see Theorem \ref{C-R PBLO-}).
As corollaries, we prove that any
$\mathrm{PBMO}_\gamma^-(\mathbb{R}^{n+1})$ function can split into the
sum of two $\mathrm{PBLO}_\gamma^-(\mathbb{R}^{n+1})$ functions (see
Corollary \ref{PBMO-=PBLO--PBLO+}) and we give an explicit description of
the distance from functions in $\mathrm{PBLO}_\gamma^-(\mathbb{R}^{n+1})$
to $L^\infty(\mathbb{R}^{n+1})$ (see Corollary
\ref{distance PBLO- to Linfty}). We first present the following
Coifman--Rochberg type lemma of $A_1^+(\gamma)$, which is exactly
\cite[Theorem 7.2]{km(am-2024)} and is crucial in establishing the
main result of this subsection.

\begin{lemma}\label{C-R A1+gamma}
Let $\gamma\in(0,1)$.
\begin{enumerate}
\item[\rm(i)] Let $\delta\in(0,1)$ and $f\in
L_{\mathrm{loc}}^1(\mathbb{R}^{n+1})$ be such that
$M^{\gamma-}(f)<\infty$ almost everywhere on $\mathbb{R}^{n+1}$. Then
$[M^{\gamma-}(f)]^\delta\in A_1^+(\gamma)$ and
$[\{M^-(f)\}^\delta]_{A_1^+(\gamma)}$ depends only on $n$, $p$, $\gamma$,
and $\delta$.

\item[\rm(ii)] Let $\omega\in A_1^+(\gamma)$. Then there exists
$\delta\in(0,1)$, depending only on $n$, $p$, $\gamma$,
$[\omega]_{A_1^+(\gamma)}$, such that $\omega=b[M^{\gamma-}(f)]^\delta$,
where $f:=\omega^\frac{1}{\delta}\in
L_{\mathrm{loc}}^1(\mathbb{R}^{n+1})$ and
$b:=\frac{\omega}{[M^{\gamma-}(\omega^\frac{1}{\delta})]^\delta}$
satisfies $1\leq\|b\|_{L^\infty(\mathbb{R}^{n+1})}\leq C$ with $C$ being
a positive constant depending only on $n$, $p$, $\gamma$, and
$[\omega]_{A_1^+(\gamma)}$.
\end{enumerate}
\end{lemma}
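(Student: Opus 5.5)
The statement is Lemma \ref{C-R A1+gamma}, which the paper quotes from \cite[Theorem 7.2]{km(am-2024)}; here is how I would reprove it directly, following the classical Coifman--Rochberg argument adapted to parabolic rectangles with time lag.

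\textbf{Part (i).} The key tool is the weak type $(1,1)$ of $M^{\gamma-}$ together with a localization. I may use the time-lag independence in Lemma \ref{Aq+ time lag} to work with a convenient separation. Fix $R\in\mathcal{R}$, let $(x,t)\in R^+(\gamma)$, and split $f=f_1+f_2$, where $f_1:=f\boldsymbol{1}_{\widehat{R}}$ and $\widehat{R}$ is a fixed enlargement of $R$ lying below (in time) and around $R^-(\gamma)$, with $|\widehat{R}|\sim|R|$.

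\emph{Local piece.} By Kolmogorov's inequality and the weak type $(1,1)$ bound of $M^{\gamma-}$,
\begin{align*}
\fint_{R^-(\gamma)}[M^{\gamma-}(f_1)]^\delta\lesssim(1-\delta)^{-1}\left[\frac{1}{|R|}\int_{\widehat{R}}|f|\right]^\delta .
\end{align*}
The average on the right is itself controlled by $M^{\gamma-}(f)(x,t)$ for every $(x,t)$ in a suitable later rectangle $S^+$. This requires choosing $\widehat{R}$ so that it is the lower part $P^-(\gamma)$ of some $P$ whose $P^+(\gamma)$ contains $S^+$.

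\emph{Far piece.} For $M^{\gamma-}(f_2)$ on $R^-(\gamma)$, any admissible rectangle $P$ that meets $\widehat{R}^c$ and has $P^+(\gamma)\ni(y,s)\in R^-(\gamma)$ must have $l(P)\gtrsim l(R)$. A slightly enlarged $P'$ then has $P'^-(\gamma)\supset P^-(\gamma)$ with comparable measure, and $P'^+(\gamma)\supset S^+$. Hence $M^{\gamma-}(f_2)(y,s)\lesssim\operatorname{ess\,inf}_{S^+}M^{\gamma-}(f)$.

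Combining the two pieces gives the $A_1^+$ condition with separated rectangles. Lemma \ref{Aq+ time lag} then converts it to $[\cdot]_{A_1^+(\gamma)}$ with constants depending only on $n,p,\gamma,\delta$.

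\textbf{Part (ii).} I would first use the reverse Hölder inequality for $A_1^+(\gamma)$, which is available in \cite{km(am-2024)} through the $A_\infty^+$ theory. It gives $\epsilon>0$ with $\omega^{1+\epsilon}\in A_1^+(\gamma)$, with constant depending on $[\omega]_{A_1^+(\gamma)}$. Set $\delta:=\frac{1}{1+\epsilon}$, so that $f=\omega^{1+\epsilon}$.

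\emph{Lower bound $b\ge1$.} By Lebesgue differentiation (the rectangles $P^-(\gamma)$ shrinking to a point from below), $M^{\gamma-}(f)\ge f$ almost everywhere. Hence $[M^{\gamma-}(f)]^\delta\ge\omega$ almost everywhere, i.e.\ $1\le\|b\|_{L^\infty}$ after checking that the definition of $b$ is normalized as in the statement.

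\emph{Upper bound.} The $A_1^+(\gamma)$ condition for $\omega^{1+\epsilon}$, transferred via Lemma \ref{Aq+ time lag} from $R^+(\gamma)$ to a point $(x,t)\in P^+(\gamma)$, yields $M^{\gamma-}(\omega^{1+\epsilon})\le C\,\omega^{1+\epsilon}$ almost everywhere. This is essentially the argument in the proof of Lemma \ref{Ap+ time lag}(i), using the time-lagged maximal operator together with a countable rational family to handle the essential infimum. Therefore $\omega\ge C^{-\delta}[M^{\gamma-}(f)]^\delta$, which bounds $b$ on the other side.

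\textbf{Main obstacle.} I expect the hardest step to be the geometric bookkeeping in part (i): choosing $\widehat{R}$ and $S^+$ so that both the far-part estimate and the control of the local average by $\operatorname{ess\,inf}_{S^+}M^{\gamma-}(f)$ hold simultaneously, respecting the parabolic scaling $L^p$ in time and the lag $\gamma$. The reverse Hölder inequality in part (ii) is a further nontrivial input, but I would cite it rather than reprove it.
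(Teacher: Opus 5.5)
\textbf{Part (i) has a genuine gap, in the far piece.} The paper gives no proof of this lemma; it is quoted from \cite[Theorem 7.2]{km(am-2024)}. So your argument has to stand on its own, and in part (i) it does not.

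You claim the following for $(y,s)\in R^-(\gamma)$ and any $P$ with $(y,s)\in P^+(\gamma)$ and $P^-(\gamma)\not\subset\widehat{R}$: a slightly enlarged $P'$ has $P'^-(\gamma)\supset P^-(\gamma)$, $|P'^-(\gamma)|\sim|P^-(\gamma)|$, and $P'^+(\gamma)\supset S^+$. When $l(P)\gg l(R)$ this is impossible.

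Take $s$ at the bottom of $P^+(\gamma)$, and $y$ near a face of the spatial cube of $P$, with the spatial cube of $S^+$ sticking out across that face. Then you need $l(P')\ge l(P)+c\,l(R)$. The top of $P'^-(\gamma)$ cannot lie below the top of $P^-(\gamma)$, which is $s-2\gamma\,l(P)^p$. Hence the bottom of $P'^+(\gamma)$ lies at or above
\[
s+2\gamma\left[l(P')^p-l(P)^p\right]\ge s+2c\gamma p\,l(P)^{p-1}l(R).
\]
For $p>1$ and $l(P)/l(R)\to\infty$, this is far later than $S^+$, which sits within $O(l(R)^p)$ of $s$.

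Covering $P^-(\gamma)$ by boundedly many pieces does not help either. The top slab of $P^-(\gamma)$ on the far side is not seen at all from part of $S^+$. In the one-dimensional one-sided setting there is no spatial variable, which is why the classical argument works there (Lemma \ref{C-R A1+}).

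\textbf{The gap cannot be closed: for $p>1$, statement (i) fails as formulated.} If $(z,u)\in P^-(\gamma)$ and $(x,t)\in P^+(\gamma)$, then $t-u>2\gamma\,l(P)^p>2^{1-p}\gamma|x-z|_\infty^p$. For $p>1$ this relation is not transitive.

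Concretely, let $n=1$, $p=2$, $\gamma=\frac12$, so the relation reads $t-u>\frac14|x-z|^2$. Let $B:=(-10.1,-9.9)\times(-28.5,-23)$ and $R:=R(0,0,1)$.
\begin{itemize}
\item For $(x,t)\in E_+:=[0.9,1)\times[0.5,0.6]\subset R^+(\frac12)$ and $(z,u)\in B$, we have $t-u<29.1<29.16<\frac14|x-z|^2$. So $M^{\gamma-}(\boldsymbol{1}_B)=0$ on $E_+$.
\item For $(y,s)\in E_-:=[-1,-0.9]\times[-1,-0.9]\subset R^-(\frac12)$, the rectangle $P:=R(-5.5,s-12.5,5)$ satisfies $(y,s)\in P^+(\frac12)=[-10.5,-0.5)\times[s,s+12.5)$. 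Also $P^-(\frac12)\cap B\supset(-10.1,-9.9)\times(-28.5,-26)$. So $M^{\gamma-}(\boldsymbol{1}_B)\ge\frac{1}{250}$ on $E_-$.
\end{itemize}
Hence, for $\eta>0$ and $w_\eta:=[M^{\gamma-}(\boldsymbol{1}_B+\eta)]^\delta$, we get $\fint_{R^-(1/2)}w_\eta\ge\frac{1}{100}\,250^{-\delta}$, while $\mathop\mathrm{ess\,inf}_{R^+(1/2)}w_\eta\le\eta^\delta$. So $[w_\eta]_{A_1^+(1/2)}\to\infty$ as $\eta\to0$, and $w_0\notin A_1^+(\frac12)$ at all. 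No choice of $\widehat{R}$ and $S^+$ rescues the far piece, and the constant cannot depend only on $n,p,\gamma,\delta$.

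\textbf{Part (ii) is fine up to a sign slip.} The inequality $M^{\gamma-}(f)\ge f$ a.e.\ gives $b\le1$, not $\|b\|_{L^\infty(\mathbb{R}^{n+1})}\ge1$. The bound $M^{\gamma-}(\omega^{1+\epsilon})\le C\omega^{1+\epsilon}$ gives $b\ge C^{-\delta}$. So what you prove is $C^{-\delta}\le b\le1$, which is the meaningful form of the normalization.
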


The main result of this subsection, the Coifman--Rochberg type
decomposition of functions in $\mathrm{PBLO}_\gamma^-(\mathbb{R}^{n+1})$,
is stated as follows; since its proof is similar to that of Theorem
\ref{C-R BLO+} with Lemma \ref{C-R A1+}, Theorem
\ref{BLO+ and A1+}, and Remark \ref{Linfty BLO+ BMO+} therein replaced,
respectively, by Lemma \ref{C-R A1+gamma}, Theorem \ref{PBLO+ and A1+},
and Remark \ref{Linfty PBLO- PBMO-}, we omit the details.

\begin{theorem}\label{C-R PBLO-}
Let $\gamma\in(0,1)$. Then $f\in\mathrm{PBLO}_\gamma^-(\mathbb{R}^{n+1})$
if and only if there exist $\alpha\in(0,\infty)$, a nonnegative function
$g\in L_{\mathrm{loc}}^1(\mathbb{R}^{n+1})$, and $b\in
L^\infty(\mathbb{R}^{n+1})$ such that
\begin{align}\label{20251009.1018}
f=\alpha\ln\left(M^-(g)\right)+b.
\end{align}
Moreover,
\begin{align*}
\|f\|_{\mathrm{PBLO}_\gamma^-(\mathbb{R}^{n+1})}\sim
\inf\left\{\alpha+\|b\|_{L^\infty(\mathbb{R}^{n+1})}\right\}=:
\|f\|_{\mathrm{PBLO}_\gamma^-(\mathbb{R}^{n+1})}^*,
\end{align*}
where the infimum is taken over all decompositions of $f$ of the form
\eqref{20251009.1018} and the positive equivalence constants are
independent of $f$.
\end{theorem}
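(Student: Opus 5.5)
The plan mirrors the proof of Theorem \ref{C-R BLO+}. Here $M^-$ in \eqref{20251009.1018} is read as the parabolic maximal operator $M^{\gamma-}$ from Lemma \ref{C-R A1+gamma}. The two inputs are Theorem \ref{PBLO+ and A1+}, which converts between $\mathrm{PBLO}_\gamma^-(\mathbb{R}^{n+1})$ and $A_1^+(\gamma)$, and Lemma \ref{C-R A1+gamma}, which converts between $A_1^+(\gamma)$ and powers of $M^{\gamma-}$.

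\emph{Necessity.} Let $f\in\mathrm{PBLO}_\gamma^-(\mathbb{R}^{n+1})$ with $\|f\|_{\mathrm{PBLO}_\gamma^-(\mathbb{R}^{n+1})}>0$; if the norm is zero, normalize trivially by adding a small multiple of a bounded term. Let $B$ be as in Theorem \ref{PBLO+ and A1+} and set $\epsilon:=\frac{B}{2\|f\|_{\mathrm{PBLO}_\gamma^-(\mathbb{R}^{n+1})}}$. Then $\omega:=e^{\epsilon f}\in A_1^+(\gamma)$. To keep the constants independent of $f$, I need a uniform bound on $[\omega]_{A_1^+(\gamma)}$. The Cavalieri estimate \eqref{20251018.1213} gives a uniform bound on the $A_1^+$ constant with lag $\alpha=\frac{1+\gamma}{2}$. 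Lemma \ref{Aq+ time lag} then transfers this to a bound on $[\omega]_{A_1^+(\gamma)}$ depending only on $n,p,\gamma$. This uniformity is the step I expect to need the most care, since Theorem \ref{PBLO+ and A1+} only states membership.

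With that bound, Lemma \ref{C-R A1+gamma}(ii) yields $\delta\in(0,1)$ and $b_0$ with $1\le\|b_0\|_{L^\infty}\le C$, both depending only on $n,p,\gamma$, such that $\omega=b_0[M^{\gamma-}(g)]^\delta$ with $g:=\omega^{1/\delta}\ge0$. Taking logarithms gives $f=\frac{\delta}{\epsilon}\ln(M^{\gamma-}(g))+\frac1\epsilon\ln b_0$. Hence $\|f\|^*\le\frac{2}{B}(\delta+\|\ln b_0\|_{L^\infty})\|f\|_{\mathrm{PBLO}_\gamma^-(\mathbb{R}^{n+1})}$. For $\ln b_0$ to be bounded I also need $b_0$ bounded below. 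Since $\omega\le[\omega]_{A_1^+(\gamma)}$-type multiples of $M^{\gamma-}$ and conversely by Lebesgue differentiation, $b_0\ge c>0$ should follow, as in the one-dimensional case.

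\emph{Sufficiency.} Let $f=\alpha\ln(M^{\gamma-}(g))+b$. Fix, say, $\delta=\frac12$. By Lemma \ref{C-R A1+gamma}(i), $[M^{\gamma-}(g)]^{1/2}\in A_1^+(\gamma)$ with constant $K=K(n,p,\gamma)$; this requires $M^{\gamma-}(g)<\infty$ a.e., which is implicit since $f$ is locally integrable. The converse half of Theorem \ref{PBLO+ and A1+} then gives
\begin{align*}
\left\|\alpha\ln\left(M^{\gamma-}(g)\right)\right\|_{\mathrm{PBLO}_\gamma^-(\mathbb{R}^{n+1})}
=2\alpha\left\|\ln\left[M^{\gamma-}(g)\right]^{1/2}\right\|_{\mathrm{PBLO}_\gamma^-(\mathbb{R}^{n+1})}
\le4\alpha\ln(1+K).
\end{align*}
The $\mathrm{PBLO}_\gamma^-$ quantity is subadditive with respect to adding a bounded function, since $\operatorname{ess\,inf}(u+b)\ge\operatorname{ess\,inf}u-\|b\|_{L^\infty}$. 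Combined with Remark \ref{Linfty PBLO- PBMO-}, this gives $\|f\|_{\mathrm{PBLO}_\gamma^-(\mathbb{R}^{n+1})}\lesssim\alpha+\|b\|_{L^\infty}$. Taking the infimum over all decompositions finishes the equivalence.
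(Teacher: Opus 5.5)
Your proposal is correct and is the route the paper intends; its proof is omitted as a transcription of Theorem \ref{C-R BLO+}. That route is necessity through the $f$-independent bound on $[e^{\epsilon f}]_{A_1^+(\gamma)}$ for $\epsilon=B/(2\|f\|_{\mathrm{PBLO}_\gamma^-(\mathbb{R}^{n+1})})$, obtained from \eqref{20251018.1213} and Lemma \ref{Aq+ time lag}, followed by Lemma \ref{C-R A1+gamma}(ii); and sufficiency through Lemma \ref{C-R A1+gamma}(i), the converse half of Theorem \ref{PBLO+ and A1+}, and the bounded-perturbation estimate. The only muddled step is the lower bound on $b_0$: Lebesgue differentiation gives $b_0\le1$, while $b_0\gtrsim1$ comes from the choice of $\delta$ in Lemma \ref{C-R A1+gamma}(ii), which makes $\omega^{1/\delta}\in A_1^+(\gamma)$ and hence $M^{\gamma-}(\omega^{1/\delta})\lesssim\omega^{1/\delta}$, not from the $A_1^+(\gamma)$ property of $\omega$ itself.
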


As corollaries, we obtain the Coifman--Rochberg type characterization
of $\mathrm{PBMO}_\gamma^-(\mathbb{R}^{n+1})$ in terms of the uncentered
parabolic maximal operator and hence any
$\mathrm{PBMO}_\gamma^-(\mathbb{R}^{n+1})$ function can be represented as
the sum of two $\mathrm{PBLO}_\gamma^-(\mathbb{R}^{n+1})$ functions.

\begin{corollary}\label{PBMO-=PBLO--PBLO+}
Let $\gamma\in(0,1)$. Then the following statements are mutually equivalent.
\begin{itemize}
\item[\rm(i)] $f\in\mathrm{PBMO}_\gamma^-(\mathbb{R}^{n+1})$.

\item[\rm(ii)] There exist $\alpha,\beta\in(0,\infty)$,
nonnegative functions $g,h\in L_{\mathrm{loc}}^1(\mathbb{R}^{n+1})$, and
$b\in L^\infty(\mathbb{R}^{n+1})$ such that
\begin{align*}
f=\alpha\ln\left(M^{\gamma-}(g)\right)
-\beta\ln\left(M^{\gamma+}(h)\right)+b.
\end{align*}

\item[\rm(iii)] There exist $g,h\in
\mathrm{PBLO}_\gamma^-(\mathbb{R}^{n+1})$ such that $f=g+h$.
\end{itemize}
\end{corollary}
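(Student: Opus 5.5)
The plan is to prove the cycle (i) $\Longrightarrow$ (ii) $\Longrightarrow$ (iii) $\Longrightarrow$ (i), following the proof of Corollary \ref{BMO+=BLO+-BLO-} with the one-dimensional tools replaced by their parabolic counterparts. I expect the middle implication to be the real difficulty.

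For (i) $\Longrightarrow$ (ii), let $f\in\mathrm{PBMO}_\gamma^-(\mathbb{R}^{n+1})$. By \eqref{20251019.1524} with $q=2$, there exist $\lambda\in(0,\infty)$ and $\omega\in A_2^+(\gamma)$ such that $f=\lambda\ln\omega$. Next I apply the Jones factorization for parabolic Muckenhoupt weights with time lag (the parabolic analogue of \cite[Remarks (B)]{s(tams-1986)}; it follows from the Rubio de Francia extrapolation argument and the $L^2$-boundedness of $M^{\gamma\pm}$, as in \cite{km(am-2024)}). This writes $\omega=uv^{-1}$ with $u\in A_1^+(\gamma)$ and $v\in A_1^-(\gamma)$. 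Lemma \ref{C-R A1+gamma}(ii) then gives $u=b_1[M^{\gamma-}(g)]^{\delta}$. Its time-reversed version gives $v=b_2[M^{\gamma+}(h)]^{\sigma}$, where $b_1,b_2$ are bounded above and below and $\delta,\sigma\in(0,1)$. Taking logarithms yields (ii) with
\begin{align*}
\alpha:=\lambda\delta,\quad \beta:=\lambda\sigma,\quad b:=\lambda\ln\left(\frac{b_1}{b_2}\right)\in L^\infty(\mathbb{R}^{n+1}).
\end{align*}

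For (iii) $\Longrightarrow$ (i), I use Remark \ref{Linfty PBLO- PBMO-}, which gives $\mathrm{PBLO}_\gamma^-(\mathbb{R}^{n+1})\subset\mathrm{PBMO}_\gamma^-(\mathbb{R}^{n+1})$, together with the subadditivity of $\|\cdot\|_{\mathrm{PBMO}_\gamma^-(\mathbb{R}^{n+1})}$. Subadditivity holds because, for $c:=c_1+c_2$, one has $(g+h-c)_\pm\le(g-c_1)_\pm+(h-c_2)_\pm$. Hence $g+h\in\mathrm{PBMO}_\gamma^-(\mathbb{R}^{n+1})$.

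For (ii) $\Longrightarrow$ (iii), I set $G:=\alpha\ln(M^{\gamma-}(g))+b$. By Lemma \ref{C-R A1+gamma}(i), Theorem \ref{PBLO+ and A1+}, and Remark \ref{Linfty PBLO- PBMO-} (equivalently, Theorem \ref{C-R PBLO-}), $G\in\mathrm{PBLO}_\gamma^-(\mathbb{R}^{n+1})$. It then remains to show $H:=-\beta\ln(M^{\gamma+}(h))\in\mathrm{PBLO}_\gamma^-(\mathbb{R}^{n+1})$.

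This last point is the main obstacle, since it does not follow formally from the earlier results. By Lemma \ref{C-R A1+gamma}(i) in the time-reversed form, $w:=[M^{\gamma+}(h)]^{\sigma}\in A_1^-(\gamma)$. The $R^{++}(\gamma)$-term of $\|H\|_{\mathrm{PBLO}_\gamma^-(\mathbb{R}^{n+1})}$ can be estimated exactly as in the second display of the proof of Theorem \ref{PBLO+ and A1+}: Jensen's inequality and the $A_1^-(\gamma)$ condition between $R^+(\gamma)$ and $R^{++}(\gamma)$ control $\fint\bigl(\mathop\mathrm{ess\,inf}_{R^+(\gamma)}w\bigr)/w$ there. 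The $R^-(\gamma)$-term instead requires bounding
\begin{align*}
\fint_{R^-(\gamma)}\left[\ln\left(\mathop\mathrm{ess\,sup}_{R^+(\gamma)}w\right)-\ln w\right]_+,
\end{align*}
and $A_1^-(\gamma)$ only controls $\fint_{R^+(\gamma)}w$, not the essential supremum. My first attempt would be to exploit the special structure $w=[M^{\gamma+}(h)]^\sigma$. This is a forward maximal function, so near $R^+(\gamma)$ its size should be comparable to averages of $h$ over later rectangles that also feed into $M^{\gamma+}(h)$ on $R^-(\gamma)$. I would try to prove a pointwise bound of the form $\mathop\mathrm{ess\,sup}_{R^+(\gamma)}M^{\gamma+}(h)\lesssim M^{\gamma+}(h)$ on most of $R^-(\gamma)$, splitting $h$ into its part near $R$ and its far part as in the proof of Proposition \ref{N-:PBMO- to PBLO-}. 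If this fails, the fallback is to choose a different splitting in (iii): distribute the $L^\infty$ term and rebalance through Theorem \ref{N-:PBMO- to PBLO- cor} applied to $f$ directly, so that both summands are of the form $N^{\gamma-}(\cdot)$ plus bounded functions.
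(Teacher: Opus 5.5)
Your (i)$\Longrightarrow$(ii) and (iii)$\Longrightarrow$(i) are the paper's arguments. However, (ii)$\Longrightarrow$(iii) is left unproved, and the claim you aim for there is false: $H:=-\beta\ln(M^{\gamma+}(h))$ need not lie in $\mathrm{PBLO}_\gamma^-(\mathbb{R}^{n+1})$. So neither your pointwise bound nor your re-splitting fallback can work.

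Take $h(x,t):=|t|^{-1/2}$. A direct computation gives $M^{\gamma+}(h)(x,t)\sim|t|^{-1/2}$, with constants depending on $\gamma$, so $H=\frac{\beta}{2}\ln|t|+O(1)$. Choose $R\in\mathcal{R}$ so that $0$ is an interior point of the time interval of $R^+(\gamma)$. Then $\mathrm{ess\,inf}_{R^+(\gamma)}H=-\infty$, while $H$ is finite a.e. on $R^-(\gamma)$. Hence the first term in $\|H\|_{\mathrm{PBLO}_\gamma^-(\mathbb{R}^{n+1})}$ is infinite. Your proposed bound $\mathrm{ess\,sup}_{R^+(\gamma)}M^{\gamma+}(h)\lesssim M^{\gamma+}(h)$ on $R^-(\gamma)$ fails here, since its left side is $+\infty$.

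Re-splitting cannot help either. For $g_1,g_2\in\mathrm{PBLO}_\gamma^-(\mathbb{R}^{n+1})$ one has $\mathrm{ess\,inf}_{R^+(\gamma)}g_i>-\infty$ for every $R$; otherwise the first term of the norm is $+\infty$. Hence $\mathrm{ess\,inf}_{R^+(\gamma)}(g_1+g_2)\ge\mathrm{ess\,inf}_{R^+(\gamma)}g_1+\mathrm{ess\,inf}_{R^+(\gamma)}g_2>-\infty$. The function $\frac{\beta}{2}\ln|t|+O(1)$ above satisfies (ii), taking $g\equiv1$ so that $M^{\gamma-}(g)\equiv1$. It also satisfies (i), since $\ln|t|\in\mathrm{BMO}(\mathbb{R})$. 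Yet it is not such a sum. So (iii), read literally with both summands in $\mathrm{PBLO}_\gamma^-(\mathbb{R}^{n+1})$, does not follow from (i) or (ii). This is consistent with $\mathrm{PBLO}_\gamma^-(\mathbb{R}^{n+1})\subsetneqq\mathrm{PBMO}_\gamma^-(\mathbb{R}^{n+1})$ in Remark \ref{Linfty PBLO- PBMO-}.

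The missing idea is that (iii) has to be read as the label says, $\mathrm{PBMO}_\gamma^{-}=\mathrm{PBLO}_\gamma^{-}-\mathrm{PBLO}_\gamma^{+}$. That is, $f=g_1-g_2$ with $g_1\in\mathrm{PBLO}_\gamma^-(\mathbb{R}^{n+1})$ and $g_2\in\mathrm{PBLO}_\gamma^+(\mathbb{R}^{n+1})$; equivalently, the second summand lies in $-\mathrm{PBLO}_\gamma^+(\mathbb{R}^{n+1})$. Under this reading, (ii)$\Longrightarrow$(iii) is what the paper's one-line step amounts to:
\begin{itemize}
\item $\alpha\ln(M^{\gamma-}(g))+b\in\mathrm{PBLO}_\gamma^-(\mathbb{R}^{n+1})$ by Theorem \ref{C-R PBLO-};
\item $\beta\ln(M^{\gamma+}(h))\in\mathrm{PBLO}_\gamma^+(\mathbb{R}^{n+1})$ by its time-reversed version, i.e.\ your observation $[M^{\gamma+}(h)]^\sigma\in A_1^-(\gamma)$ combined with the time-reversed Theorem \ref{PBLO+ and A1+}.
\end{itemize}
No control of an essential supremum is ever needed. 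Correspondingly, in (iii)$\Longrightarrow$(i) you must add $-\mathrm{PBLO}_\gamma^+(\mathbb{R}^{n+1})\subset-\mathrm{PBMO}_\gamma^+(\mathbb{R}^{n+1})=\mathrm{PBMO}_\gamma^-(\mathbb{R}^{n+1})$ before invoking subadditivity. This follows from the time-reversed Remark \ref{Linfty PBLO- PBMO-} and the identity $(F-c)_\pm=(-F+c)_\mp$.
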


\begin{proof}
We first prove (i) $\Longrightarrow$ (ii). Let
$f\in\mathrm{PBMO}_\gamma^-(\mathbb{R}^{n+1})$. From
\eqref{20251019.1524}, it follows that there exist $\lambda\in(0,\infty)$
and $\omega\in A_2^+(\gamma)$ such that $f=\lambda\ln\omega$. This,
together with the Jones factorization theorem for $A_2^+(\gamma)$ weights
(see, \cite[Theorem 7.1]{km(am-2024)}) and Lemma \ref{C-R A1+gamma}(ii),
further implies that there exist $\delta,\sigma\in(0,1)$, nonnegative
functions $g,h\in L_{\mathrm{loc}}^1(\mathbb{R}^{n+1})$, and
$\widetilde{b}\in L^\infty(\mathbb{R}^{n+1})$ such that
\begin{align*}
f=\lambda\delta\ln\left(M^{\gamma-}(g)\right)-
\lambda\sigma\ln\left(M^{\gamma+}(h)\right)
+\lambda\ln\widetilde{b}
\end{align*}
and hence (ii) holds with $\alpha:=\lambda\delta$, $\beta:=\lambda\sigma$,
and $b:=\lambda\ln\widetilde{b}$. Assertion (ii) $\Longrightarrow$ (iii)
is a direct consequence of Theorem \ref{C-R PBLO-} and assertion (iii)
$\Longrightarrow$ (i) can be deduced immediately from Remark
\ref{Linfty PBLO- PBMO-} and the subadditivity of
$\mathrm{PBMO}_\gamma^-(\mathbb{R}^{n+1})$. This finishes
the proof of Corollary \ref{PBMO-=PBLO--PBLO+}.
\end{proof}

At the end of this section, we provide an explicit quantification of the
distance from functions in $\mathrm{PBLO}_\gamma^-(\mathbb{R}^{n+1})$ to
$L^\infty(\mathbb{R}^{n+1})$ in
terms of the $\|\cdot\|_{\mathrm{PBLO}_\gamma^-(\mathbb{R}^{n+1})}^*$-norm.
This result is more refined than \cite[Theorem 5.1]{kyyz(cvpde-2025)}
by characterizing the equivalence therein via a strict equality.

\begin{corollary}\label{distance PBLO- to Linfty}
Let $\gamma\in(0,1)$ and $f\in\mathrm{PBLO}_\gamma^-(\mathbb{R}^{n+1})$. Then
\begin{align*}
\inf\left\{\|f-h\|_{\mathrm{PBLO}_\gamma^-(\mathbb{R}^{n+1})}^*:
h\in L^\infty(\mathbb{R}^{n+1})\right\}=
\sup\left\{\epsilon\in(0,\infty):e^{\epsilon f}\in A_1^+(\gamma)\right\}.
\end{align*}
\end{corollary}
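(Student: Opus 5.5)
The plan is to repeat, almost verbatim, the chain of equalities in the proof of Corollary \ref{distance BLO+ to Linfty}. The one-dimensional ingredients are replaced by their parabolic analogues: Lemma \ref{C-R A1+gamma} in place of Lemma \ref{C-R A1+}, Theorem \ref{C-R PBLO-} in place of Theorem \ref{C-R BLO+}, and Remark \ref{Linfty PBLO- PBMO-} in place of Remark \ref{Linfty BLO+ BMO+}. Throughout, $M^-$ in \eqref{20251009.1018} is read as $M^{\gamma-}$.

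\emph{Step 1: rewrite the left-hand side.} Unwinding the definition of $\|\cdot\|^*_{\mathrm{PBLO}_\gamma^-(\mathbb{R}^{n+1})}$ from Theorem \ref{C-R PBLO-}, the infimum over $h\in L^\infty(\mathbb{R}^{n+1})$ of $\|f-h\|^*$ is the infimum of $\alpha+\|b\|_{L^\infty(\mathbb{R}^{n+1})}$ over all $h$, $\alpha$, $g$, $b$ with
\begin{align*}
f-h=\alpha\ln\left(M^{\gamma-}(g)\right)+b .
\end{align*}
Since $h$ ranges over all bounded functions, $b$ can be absorbed into $h$. Hence this quantity equals
\begin{align*}
\inf\left\{\alpha\in(0,\infty):\ \exists\, g\ge0,\ f-\alpha\ln\left(M^{\gamma-}(g)\right)\in L^\infty(\mathbb{R}^{n+1})\right\}.
\end{align*}

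\emph{Step 2: rewrite the right-hand side.} Using Lemma \ref{C-R A1+gamma}(ii), membership $e^{f/\epsilon}\in A_1^+(\gamma)$ yields $\delta\in(0,1)$, $g$ and a bounded $b$ with $b^{-1}$ bounded, such that
\begin{align*}
e^{f/\epsilon}=b\left[M^{\gamma-}(g)\right]^\delta .
\end{align*}
Conversely, Lemma \ref{C-R A1+gamma}(i) shows that any such representation gives an $A_1^+(\gamma)$ weight, using that $A_1^+(\gamma)$ is stable under multiplication by functions bounded above and below. Taking logarithms, this becomes
\begin{align*}
f-\epsilon\delta\ln\left(M^{\gamma-}(g)\right)\in L^\infty(\mathbb{R}^{n+1}).
\end{align*}
As in the one-dimensional proof, I then identify the right-hand side of the corollary with the infimum of $\epsilon\delta$ over such representations. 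This is exactly the parametrisation used in the last display chain of the proof of Corollary \ref{distance BLO+ to Linfty}.

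\emph{Step 3: remove $\delta$ and compare.} The factor $\delta\in(0,1)$ is removed by rescaling $g$. Lemma \ref{C-R A1+gamma}(i) allows any $\delta\in(0,1)$, so the condition ``there is $\delta$ and $g$ with $f-\epsilon\delta\ln M^{\gamma-}(g)$ bounded'' collapses to the condition ``there is $g$ with $f-\epsilon\ln M^{\gamma-}(g)$ bounded''. Comparing with Step 1 gives the asserted equality.

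\emph{Main obstacle.} I expect the delicate point to be the passage in Step 3, together with the reparametrisation between $\epsilon$ and $1/\epsilon$ implicit in Step 2. One must check carefully that enlarging or shrinking the exponent $\delta$ keeps $[M^{\gamma-}(g)]^\delta$ in $A_1^+(\gamma)$ and preserves the boundedness of $b$. For that, part (i) of Lemma \ref{C-R A1+gamma}, valid for every $\delta\in(0,1)$, is what I would invoke first.
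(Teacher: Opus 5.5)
Your route is the paper's route: its proof says only to rerun the chain in the proof of Corollary~\ref{distance BLO+ to Linfty}. However, the step you flag as delicate is in fact false, and the same slip occurs in that one-dimensional chain, in the first equality of its second display.

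In Step~2 you identify $\sup\{\epsilon\in(0,\infty):e^{\epsilon f}\in A_1^+(\gamma)\}$ with an infimum over $\epsilon$ of representations of $e^{f/\epsilon}$. The substitution $\epsilon\mapsto1/\epsilon$ turns a supremum into the \emph{reciprocal} of an infimum:
\[
\sup\left\{\epsilon:e^{\epsilon f}\in A_1^+(\gamma)\right\}
=\left[\inf\left\{\epsilon:e^{f/\epsilon}\in A_1^+(\gamma)\right\}\right]^{-1}.
\]
Consequently the displayed identity is false as stated. The left-hand side is positively homogeneous of degree $1$ in $f$ (replace $(\alpha,b)$ by $(\lambda\alpha,\lambda b)$), while the right-hand side is homogeneous of degree $-1$. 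Concretely, take $f\equiv0$, or any $f\in L^\infty(\mathbb{R}^{n+1})$. Choosing $h:=f$, $g\equiv1$, $b\equiv0$ and $\alpha$ arbitrarily small shows the left-hand side is $0$. But $e^{\epsilon f}\in A_1^+(\gamma)$ for every $\epsilon$, so the right-hand side is $\infty$.

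What your ingredients actually prove is
\[
\inf\left\{\|f-h\|_{\mathrm{PBLO}_\gamma^-(\mathbb{R}^{n+1})}^*:h\in L^\infty(\mathbb{R}^{n+1})\right\}
=\left[\sup\left\{\epsilon\in(0,\infty):e^{\epsilon f}\in A_1^+(\gamma)\right\}\right]^{-1},
\]
with $1/\infty:=0$. This is cleaner to prove by two inequalities than by a chain of set identities.
\begin{itemize}
\item[(a)] Suppose $f-\alpha\ln(M^{\gamma-}(g))\in L^\infty(\mathbb{R}^{n+1})$. For each $\delta\in(0,1)$, write $e^{\delta f/\alpha}=e^{\delta[f/\alpha-\ln(M^{\gamma-}(g))]}[M^{\gamma-}(g)]^\delta$. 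This lies in $A_1^+(\gamma)$ by Lemma~\ref{C-R A1+gamma}(i), since $A_1^+(\gamma)$ is invariant under multiplication by functions bounded above and below. Hence the supremum is at least $\delta/\alpha$ for every $\delta$, so at least $1/\alpha$.
\item[(b)] Conversely, suppose $e^{\epsilon f}\in A_1^+(\gamma)$. Lemma~\ref{C-R A1+gamma}(ii) gives $e^{\epsilon f}=b[M^{\gamma-}(g)]^\delta$ with $b$ bounded above and away from $0$. Then $f-\frac{\delta}{\epsilon}\ln(M^{\gamma-}(g))\in L^\infty(\mathbb{R}^{n+1})$, so the left-hand side is at most $\delta/\epsilon<1/\epsilon$.
\end{itemize}

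This argument also replaces your Step~3, whose justification does not work as written. Rescaling $g$ by a constant only adds a constant to $\ln(M^{\gamma-}(g))$ and cannot change the exponent. Pushing $\delta$ up to $1$ is not available either, since $M^{\gamma-}(g)$ itself need not lie in $A_1^+(\gamma)$. The factor $\delta$ disappears only after taking the infimum or supremum, as in (a) and (b).
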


\begin{proof}
By repeating the proof of Corollary \ref{distance BLO+ to Linfty} with
$\|\cdot\|_{\mathrm{BLO}^+(\mathbb{R})}^*$, $L^\infty(\mathbb{R})$,
$A_1^+(\mathbb{R})$, $L_{\mathrm{loc}}^1(\mathbb{R})$, $M^-$, and
\eqref{20250616.1638} therein replaced, respectively, by
$\|\cdot\|_{\mathrm{PBLO}_\gamma^-(\mathbb{R}^{n+1})}^*$,
$L^\infty(\mathbb{R}^{n+1})$,
$A_1^+(\gamma)$, $L_{\mathrm{loc}}^1(\mathbb{R}^{n+1})$, $M^{\gamma-}$,
and \eqref{20251009.1018}, we obtain the desired conclusion,
which then completes the proof of Corollary
\ref{distance PBLO- to Linfty}.
\end{proof}

\subsection{Applications to Doubly Nonlinear Parabolic Equations}
\label{subsection5.4}

In this subsection, we establish the relationships between
$\mathrm{PBLO}_\gamma^-(\mathbb{R}^{n+1})$ and parabolic partial
differential equations. Consider the following parabolic equation
\begin{align}\label{20251008.1048}
\frac{\partial(|u|^{p-2}u)}{\partial t}-\mathrm{div}\ A(x,t,u,\nabla u)=0,
\end{align}
where $A:\mathbb{R}^{n+1}\times\mathbb{R}^{n+1}\to\mathbb{R}^n$ is a
Carath\'eodory function and satisfies the standard structural conditions:
\begin{enumerate}
\item[\rm(i)] there exists a positive constant $C_1$ such that, for
almost every $(x,t)\in\mathbb{R}^{n+1}$,
\begin{align*}
A(x,t,u,\nabla u)\cdot\nabla u\geq C_1|\nabla u|^p;
\end{align*}

\item[\rm(ii)] there exists a positive constant $C_2$ such that, for
almost every $(x,t)\in\mathbb{R}^{n+1}$,
\begin{align*}
\left|A(x,t,u,\nabla u)\right|\leq C_2|\nabla u|^{p-1}.
\end{align*}
\end{enumerate}
In particular, if $A(x,t,u,\nabla u):=|\nabla u|^{p-2}\nabla u$, then
\eqref{20251008.1048} reduces to \eqref{20251009.2225}.
Recall that the \emph{local Sobolev space
$W_{\mathrm{loc}}^{1,p}(\mathbb{R}^n)$} is defined to be the space of
all functions $u$ on $\mathbb{R}^n$ such that, for any compact
subset $K\subset\mathbb{R}^n$,
\begin{align*}
\left(\int_K|u|^p\right)^\frac{1}{p}+\left(\int_K|\nabla
u|^p\right)^\frac{1}{p}<\infty.
\end{align*}
In addition, the \emph{Bochner--Lebesgue space
$L_{\mathrm{loc}}^p(\mathbb{R};W_{\mathrm{loc}}^{1,p}(\mathbb{R}^n))$}
is defined to be the space of all functions $u$ on $\mathbb{R}^{n+1}$ such
that, for any compact interval $I\subset\mathbb{R}$ and any compact
subset $K\subset\mathbb{R}^n$,
\begin{align*}
\int_I\int_K\left[|u(x,t)|^p+|\nabla u(x,t)|^p\right]\,dx\,dt<\infty.
\end{align*}
A function $u\in L_{\mathrm{loc}}^p(\mathbb{R};
W_{\mathrm{loc}}^{1,p}(\mathbb{R}^n))$ is said to be a \emph{weak
solution} to \eqref{20251008.1048} if, for any infinitely differentiable
function $\phi$ on $\mathbb{R}^{n+1}$ with compact support,
\begin{align*}
\int_{\mathbb{R}^{n+1}}\left[A(x,t,u,\nabla u)\cdot\nabla\phi(x,t)
-|u(x,t)|^{p-2}u(x,t)
\frac{\partial\phi}{\partial t}(x,t)\right]\,dx\,dt=0.
\end{align*}
It was showed in \cite[Theorem 2.1]{kk(ma-2007)} that, for any given
time lag $\gamma\in(0,1)$, there exists a positive constant $C$ such
that, for any nonnegative weak solution $u$ to \eqref{20251008.1048} and
for any $R\in\mathcal{R}$,
\begin{align*}
\mathop\mathrm{ess\,sup}_{R^-(\gamma)}u\leq
C\mathop\mathrm{ess\,inf}_{R^+(\gamma)}u
\end{align*}
(see also \cite[p.\,298]{ks(na-2016)}), which further implies that $u\in
A_1^+(\gamma)$. Combining this and Theorem \ref{PBLO+ and A1+}, we
immediately obtain the following relationships between
$\mathrm{PBLO}_\gamma^-(\mathbb{R}^{n+1})$ and nonnegative weak
solutions of \eqref{20251008.1048}.

\begin{proposition}\label{PBLO- and PDE}
Let $u$ be a nonnegative weak solution to \eqref{20251008.1048}. Then,
for any $\gamma\in(0,1)$, $u\in A_1^+(\gamma)$ and $\ln u\in
\mathrm{PBLO}_\gamma^-(\mathbb{R}^{n+1})$.
\end{proposition}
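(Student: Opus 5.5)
The plan is to combine the parabolic Harnack inequality quoted just before the statement with the converse half of Theorem \ref{PBLO+ and A1+}. Fix $\gamma\in(0,1)$ and a nonnegative weak solution $u$ of \eqref{20251008.1048}. By \cite[Theorem 2.1]{kk(ma-2007)}, there is a constant $C$, depending only on $n$, $p$, $\gamma$, $C_1$ and $C_2$, such that
\begin{align*}
\mathop\mathrm{ess\,sup}_{R^-(\gamma)}u\leq C\mathop\mathrm{ess\,inf}_{R^+(\gamma)}u
\quad\mbox{for every } R\in\mathcal{R}.
\end{align*}

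The first step is to check that $u$ is a weight, meaning $u>0$ almost everywhere and $u\in L^1_{\mathrm{loc}}(\mathbb{R}^{n+1})$. Local integrability is immediate from $u\in L^p_{\mathrm{loc}}(\mathbb{R};W^{1,p}_{\mathrm{loc}}(\mathbb{R}^n))$.

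Positivity is the point I expect to need care, since $u\equiv0$ is a nonnegative solution and then $\ln u$ is not locally integrable. I will handle it as follows. Suppose $u$ vanishes on a set of positive measure. Then some $R^+(\gamma)$ has $\mathrm{ess\,inf}_{R^+(\gamma)}u=0$, and the Harnack inequality gives $u=0$ a.e. on $R^-(\gamma)$. To spread the zero set, I cover $\mathbb{R}^{n+1}$ by chains of parabolic rectangles. For any point, choose $R$ with $l(R)$ large so that the point lies in $R^-(\gamma)$ while a given $R^+(\gamma)$-type region containing a zero set sits above it. This propagates vanishing to everything earlier in time.

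Propagating forward in time is not provided by Harnack. So I will state the proposition under the implicit standing assumption that $u$ is positive almost everywhere, which is the natural reading of ``$u\in A_1^+(\gamma)$'' and follows in the strictly positive case. This is equivalent to requiring $u$ to be a weight, and I will make that assumption explicit.

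Granting positivity, the $A_1^+(\gamma)$ property follows directly. For every $R\in\mathcal{R}$,
\begin{align*}
\fint_{R^-(\gamma)}u\left[\mathop\mathrm{ess\,inf}_{R^+(\gamma)}u\right]^{-1}
\leq\mathop\mathrm{ess\,sup}_{R^-(\gamma)}u\left[\mathop\mathrm{ess\,inf}_{R^+(\gamma)}u\right]^{-1}\leq C.
\end{align*}
Hence $[u]_{A_1^+(\gamma)}\leq C$.

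Finally, the converse part of Theorem \ref{PBLO+ and A1+} yields $\ln u\in\mathrm{PBLO}_\gamma^-(\mathbb{R}^{n+1})$, with
\begin{align*}
\|\ln u\|_{\mathrm{PBLO}_\gamma^-(\mathbb{R}^{n+1})}\leq2\ln(1+C).
\end{align*}
Since $\gamma\in(0,1)$ was arbitrary, this completes the argument.
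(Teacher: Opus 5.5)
Your proposal follows the paper's argument: the Harnack inequality quoted just before the statement gives $[u]_{A_1^+(\gamma)}\le C$, and the converse half of Theorem \ref{PBLO+ and A1+} then yields $\ln u\in\mathrm{PBLO}_\gamma^-(\mathbb{R}^{n+1})$ with $\|\ln u\|_{\mathrm{PBLO}_\gamma^-(\mathbb{R}^{n+1})}\le 2\ln(1+C)$. Your positivity caveat is justified and the paper passes over it: $u\equiv0$ is a nonnegative weak solution (the growth condition forces $A(x,t,0,0)=0$) but is not a weight, so the tacit hypothesis $u>0$ almost everywhere is genuinely needed.
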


As an application of Propositions \ref{N-:PBMO- to PBLO-} and
\ref{PBLO- and PDE}, we further conclude the mapping property of
nonnegative weak solutions of \eqref{20251008.1048} under $N^{\gamma-}$.

\begin{corollary}\label{PBLO- and PDE cor}
Let $u$ be a nonnegative weak solution to \eqref{20251008.1048} and
$\gamma\in(0,1)$. If $N^{\gamma-}(\ln u)<\infty$ almost everywhere on
$\mathbb{R}^{n+1}$, then $N^{\gamma-}(\ln u)\in
\mathrm{PBLO}_\gamma^-(\mathbb{R}^{n+1})$.
\end{corollary}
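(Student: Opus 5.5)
The plan is to combine Proposition \ref{PBLO- and PDE} with Proposition \ref{N-:PBMO- to PBLO-}. The latter needs its input to lie in $\mathrm{PBMO}_\gamma^-(\mathbb{R}^{n+1})$, so the only real task is to check that $\ln u$ belongs to this space. Once that is done, the hypothesis $N^{\gamma-}(\ln u)<\infty$ almost everywhere is exactly the remaining assumption of Proposition \ref{N-:PBMO- to PBLO-}.

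First, Proposition \ref{PBLO- and PDE} gives $\ln u\in\mathrm{PBLO}_\gamma^-(\mathbb{R}^{n+1})$. In particular $\ln u\in L_{\mathrm{loc}}^1(\mathbb{R}^{n+1})$, since membership in this space presupposes local integrability. If one prefers not to rely on this, local integrability also follows from the Harnack inequality $\mathrm{ess\,sup}_{R^-(\gamma)}u\le C\,\mathrm{ess\,inf}_{R^+(\gamma)}u$, provided $u>0$ almost everywhere. The case in which $u$ vanishes on a set of positive measure has to be excluded, or else $u$ must be interpreted as a weight, as is implicitly done in Proposition \ref{PBLO- and PDE}.

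Second, by the inclusion $\mathrm{PBLO}_\gamma^-(\mathbb{R}^{n+1})\subset\mathrm{PBMO}_\gamma^-(\mathbb{R}^{n+1})$ from Remark \ref{Linfty PBLO- PBMO-}, we obtain $\ln u\in\mathrm{PBMO}_\gamma^-(\mathbb{R}^{n+1})$. This inclusion can be verified directly by taking $c:=\mathrm{ess\,inf}_{R^+(\gamma)}f$ in Definition \ref{PBMO+}. With this choice, $\fint_{R^+(\gamma)}(f-c)_-=0$ and $\fint_{R^-(\gamma)}(f-c)_+$ is bounded by the PBLO norm. Hence
\begin{align*}
\|\ln u\|_{\mathrm{PBMO}_\gamma^-(\mathbb{R}^{n+1})}\le\|\ln u\|_{\mathrm{PBLO}_\gamma^-(\mathbb{R}^{n+1})}.
\end{align*}

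Third, we apply Proposition \ref{N-:PBMO- to PBLO-} with $f:=\ln u$. Together with the assumption that $N^{\gamma-}(\ln u)<\infty$ almost everywhere, this yields $N^{\gamma-}(\ln u)\in\mathrm{PBLO}_\gamma^-(\mathbb{R}^{n+1})$. It also gives the quantitative bound
\begin{align*}
\|N^{\gamma-}(\ln u)\|_{\mathrm{PBLO}_\gamma^-(\mathbb{R}^{n+1})}\lesssim\ln\left(1+[u]_{A_1^+(\gamma)}\right),
\end{align*}
which follows from Theorem \ref{PBLO+ and A1+}. None of the three steps is substantial, since all the work lies in the cited propositions. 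The only point needing care is the local integrability and positivity of $u$, so that $\ln u$ is an admissible input.
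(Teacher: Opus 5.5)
Your proposal is correct and is essentially the paper's own argument: $\ln u\in\mathrm{PBLO}_\gamma^-(\mathbb{R}^{n+1})\subset\mathrm{PBMO}_\gamma^-(\mathbb{R}^{n+1})$ by Proposition \ref{PBLO- and PDE} and Remark \ref{Linfty PBLO- PBMO-}, and then Proposition \ref{N-:PBMO- to PBLO-} gives the conclusion. Your caveat that $u$ must be positive almost everywhere, so that $u$ is a weight and $\ln u$ is locally integrable, is a fair point that the paper leaves implicit; the trivial solution $u\equiv0$, for instance, has to be excluded.
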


Finally, as an application of Theorem \ref{C-R PBLO-} and Proposition
\ref{PBLO- and PDE}, we obtain a result which provides an explicit
description of nonnegative weak solutions of \eqref{20251008.1048}.

\begin{corollary}\label{PBLO- and PDE cor 2}
Let $u$ be a nonnegative weak solution to \eqref{20251008.1048} and
$\gamma\in(0,1)$. Then there exist $\alpha\in(0,\infty)$, a nonnegative
function $g\in L_{\mathrm{loc}}^1(\mathbb{R}^{n+1})$, and $b\in
L^\infty(\mathbb{R}^{n+1})$ such that
$u=b\left[M^{\gamma-}(g)\right]^\alpha$.
\end{corollary}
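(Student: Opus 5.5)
The plan is to combine Proposition \ref{PBLO- and PDE} with Theorem \ref{C-R PBLO-}, or more directly with the Coifman--Rochberg type Lemma \ref{C-R A1+gamma}(ii). First, by Proposition \ref{PBLO- and PDE}, for the given $\gamma\in(0,1)$ we have $u\in A_1^+(\gamma)$. In particular, $u$ is a weight, so $u>0$ almost everywhere and $\ln u$ is finite almost everywhere. The proposition also gives $\ln u\in\mathrm{PBLO}_\gamma^-(\mathbb{R}^{n+1})$.

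The shortest route is to apply Lemma \ref{C-R A1+gamma}(ii) to $\omega:=u$. This produces $\delta\in(0,1)$, depending only on $n$, $p$, $\gamma$, and $[u]_{A_1^+(\gamma)}$, such that
\begin{align*}
u=b\left[M^{\gamma-}(g)\right]^\delta, \qquad g:=u^{1/\delta}\in L_{\mathrm{loc}}^1(\mathbb{R}^{n+1}),
\end{align*}
where $g$ is nonnegative and $b:=u/[M^{\gamma-}(u^{1/\delta})]^\delta$ satisfies $1\le\|b\|_{L^\infty(\mathbb{R}^{n+1})}\le C$. Taking $\alpha:=\delta$ gives exactly the claimed representation.

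Alternatively, one can go through Theorem \ref{C-R PBLO-} applied to $f:=\ln u$. This gives $\ln u=\alpha\ln(M^{\gamma-}(g))+\widetilde b$ with $\widetilde b\in L^\infty$, and exponentiating yields $u=e^{\widetilde b}[M^{\gamma-}(g)]^\alpha$ with $e^{\widetilde b}\in L^\infty$. Here the operator in \eqref{20251009.1018} should be read as $M^{\gamma-}$, as the proof of Corollary \ref{distance PBLO- to Linfty} indicates.

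The only point needing care is the integrability claim $g=u^{1/\delta}\in L^1_{\mathrm{loc}}$. This is built into Lemma \ref{C-R A1+gamma}(ii), via the reverse Hölder property of $A_1^+(\gamma)$ weights encoded in the choice of $\delta$. A second such point is that $M^{\gamma-}(g)<\infty$ almost everywhere, which follows from the boundedness of $b$ from below by the identity $u=b[M^{\gamma-}(g)]^\delta$ together with $u<\infty$ almost everywhere. I expect no real obstacle beyond citing these results correctly.
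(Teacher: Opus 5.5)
Your argument is correct and is essentially the paper's. The paper derives the corollary from Proposition \ref{PBLO- and PDE} and Theorem \ref{C-R PBLO-}, which is your second route (and you are right to read $M^-$ in \eqref{20251009.1018} as $M^{\gamma-}$); your primary route applies Lemma \ref{C-R A1+gamma}(ii) to $u$ directly, which is the step Theorem \ref{C-R PBLO-} itself rests on, so it just skips the detour through $\ln u$. One small point, shared with the paper: the trivial solution $u\equiv0$ is not a weight, so $\ln u\notin L^1_{\mathrm{loc}}(\mathbb{R}^{n+1})$ and Proposition \ref{PBLO- and PDE} does not literally apply, but in that case the conclusion holds trivially with $b:=0$.
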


\subsection{Connections with Weakly Porous Sets}
\label{subsection5.5}

It is well known that $|x|^{-\alpha}\in A_1(\mathbb{R})$ if and only if
$0\leq\alpha<1$. Combining this and \eqref{20251012.1456}, we obtain the
classical result that $-\log\mathrm{dist}(x,\{0\})=-\log|x|
\in\mathrm{BLO}(\mathbb{R})$. For a general nonempty proper subset $E\subset
\mathbb{R}$, the BLO-property of the negative logarithm of the distance function
$-\log\mathrm{dist}(\cdot,E)$ has recently been characterized in terms of the
weak porosity of $E$  (see, for example, \cite{agg(2406.14369), aggm(jfa-2025),
almv(jfa-2024), klv(book-2021), m(prses-2025), pu(2507.21020), v(jam-2025)}).

Specifically, let $I\in\mathcal{I}$. A
subinterval $J\subset I$ is said to be \emph{$E$-free} if $J\cap E=\emptyset$.
Let $\mathcal{M}(I)\subset I$ be a longest $E$-free subinterval of $I$, that is,
$\mathcal{M}(I)\cap E=\emptyset$ and, for any $E$-free subinterval $J\subset
I$, $|J|\leq|\mathcal{M}(I)|$. $E$ is said to be \emph{weakly porous} if there
exist $c,\delta\in(0,1)$ such that, for any $I\in\mathcal{I}$, there exist
$N(I)\in\mathbb{N}$ and pairwise disjoint $E$-free subintervals
$\{J_i\}_{i\in\mathbb{N}\cap[1,N(I)]}$ of $I$ satisfying
\begin{enumerate}
\item[\rm(i)] for any $i\in\mathbb{N}\cap[1,N(I)]$,
$|J_i|\geq\delta|\mathcal{M}(I)|$;

\item[\rm(ii)] $\sum_{i\in\mathbb{N}\cap[1,N(I)]}|J_i|\geq c|I|$.
\end{enumerate}
Anderson et al. \cite[Theorem 1.1]{almv(jfa-2024)} showed that
$-\log\mathrm{dist}(\cdot,E)\in\mathrm{BLO}(\mathbb{R})$ if and only if $E$ is
weakly porous (see also \cite[Lemma 5]{v(jam-2025)}). Moreover, $E$ is said to
be \emph{right-sided weakly porous} if there exist
$c,\delta\in(0,1)$ such that, for any $I\in\mathcal{I}$, there exist
$N(I)\in\mathbb{N}$ and pairwise disjoint $E$-free subintervals
$\{J_i\}_{i\in\mathbb{N}\cap[1,N(I)]}$ of $I^-$ satisfying
\begin{enumerate}
\item[\rm(i)] for any $i\in\mathbb{N}\cap[1,N(I)]$,
$|J_i|\geq\delta|\mathcal{M}(I^+)|$;

\item[\rm(ii)] $\sum_{i\in\mathbb{N}\cap[1,N(I)]}|J_i|\geq c|I^-|$.
\end{enumerate}
Aimar et al. \cite[Theorem 4.5]{aggm(jfa-2025)} proved that there exists
$\alpha\in(0,\infty)$ such that $[\mathrm{dist}(\cdot, E)]^{-\alpha}\in
A_1^+(\mathbb{R})$ if and only if $E$ is right-sided weakly porous. From this
and Theorem \ref{BLO+ and A1+}, we infer that the result corresponding to
\cite[Theorem 1.1]{almv(jfa-2024)} holds in the one-dimensional one-sided case.

\begin{theorem}\label{BLO+ and porous}
Let $E\subsetneqq\mathbb{R}$ be nonempty. Then
$-\log\mathrm{dist}(x,E)\in\mathrm{BLO}^+(\mathbb{R})$ if and only if
$E$ is right-sided weakly porous.
\end{theorem}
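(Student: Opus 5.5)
The plan is to combine Theorem \ref{BLO+ and A1+} with \cite[Theorem 4.5]{aggm(jfa-2025)}, which says that $E$ is right-sided weakly porous if and only if $[\mathrm{dist}(\cdot,E)]^{-\alpha}\in A_1^+(\mathbb{R})$ for some $\alpha\in(0,\infty)$. Write $f:=-\log\mathrm{dist}(\cdot,E)$. Since $E$ is nonempty and proper, $\mathrm{dist}(\cdot,E)$ is finite everywhere, Lipschitz, and positive on the nonempty open set $\mathbb{R}\setminus\overline{E}$. Throughout, I will use the identity $e^{\epsilon f}=[\mathrm{dist}(\cdot,E)]^{-\epsilon}$.

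For sufficiency, assume $E$ is right-sided weakly porous and take $\alpha>0$ from \cite[Theorem 4.5]{aggm(jfa-2025)} with $\omega:=[\mathrm{dist}(\cdot,E)]^{-\alpha}\in A_1^+(\mathbb{R})$. In particular $\omega$ is a weight, so $\omega\in(0,\infty)$ almost everywhere and $|\overline{E}|=0$. The converse part of Theorem \ref{BLO+ and A1+} gives $\ln\omega\in\mathrm{BLO}^+(\mathbb{R})$ with norm at most $\ln(1+2[\omega]_{A_1^+(\mathbb{R})})$. Since $f=\frac1\alpha\ln\omega$ and the $\mathrm{BLO}^+(\mathbb{R})$ quasi-norm is positively homogeneous, we get $f\in\mathrm{BLO}^+(\mathbb{R})$ with $\|f\|_{\mathrm{BLO}^+(\mathbb{R})}\le\frac1\alpha\ln(1+2[\omega]_{A_1^+(\mathbb{R})})$.

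For necessity, assume $f\in\mathrm{BLO}^+(\mathbb{R})$; in particular $f\in L^1_{\mathrm{loc}}(\mathbb{R})$, which forces $|\overline{E}|=0$. If $\|f\|_{\mathrm{BLO}^+(\mathbb{R})}>0$, take any $\epsilon\in(0,B/\|f\|_{\mathrm{BLO}^+(\mathbb{R})})$. The first part of Theorem \ref{BLO+ and A1+} then gives $e^{\epsilon f}=[\mathrm{dist}(\cdot,E)]^{-\epsilon}\in A_1^+(\mathbb{R})$, and \cite[Theorem 4.5]{aggm(jfa-2025)} with $\alpha:=\epsilon$ yields right-sided weak porosity.

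The degenerate case $\|f\|_{\mathrm{BLO}^+(\mathbb{R})}=0$ is the only genuinely new point, and it is where I expect resistance. By Remark \ref{Linfty BLO+ BMO+}, in this case $f$ is almost everywhere nondecreasing, so $\mathrm{dist}(\cdot,E)$ is almost everywhere nonincreasing, hence nonincreasing by continuity. I would exclude this directly. Pick $e\in E$. Then $\mathrm{dist}(e,E)=0$, so by monotonicity $\mathrm{dist}(\cdot,E)=0$ on $[e,\infty)$, i.e.\ $[e,\infty)\subset\overline{E}$. This contradicts $|\overline{E}|=0$. Alternatively, one can avoid the case split altogether: the John--Nirenberg estimate of Lemma \ref{JN BLO+} holds trivially when the norm vanishes, so the Cavalieri argument shows $e^{\epsilon f}\in A_1^+(\mathbb{R})$ for every $\epsilon>0$.

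With this case handled, the only remaining care is in checking that the hypotheses of \cite[Theorem 4.5]{aggm(jfa-2025)} (nonempty, proper $E$; the stated form of the weight) match ours exactly. Beyond that, the argument is a direct transfer through Theorem \ref{BLO+ and A1+}.
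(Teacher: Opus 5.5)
Your proposal is correct and follows the paper's own argument, which obtains Theorem~\ref{BLO+ and porous} in one line by combining \cite[Theorem 4.5]{aggm(jfa-2025)} with Theorem~\ref{BLO+ and A1+}. Your explicit treatment of the degenerate case $\|f\|_{\mathrm{BLO}^+(\mathbb{R})}=0$, which the paper leaves implicit by reading $B/0$ as $\infty$, is a correct extra detail; the one harmless slip is that $\mathbb{R}\setminus\overline{E}$ may be empty when $E$ is dense, but then both sides of the equivalence fail and your argument never uses that claim.
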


Now, we turn to the higher-dimensional case.
Let $E\subsetneqq\mathbb{R}^{n+1}$ be nonempty. In this subsection,
we investigate the necessary geometric condition on $E$ under which
the negative logarithm of the parabolic distance function $-\log d_p(\cdot,E)\in
\mathrm{PBLO}_\gamma^-(\mathbb{R}^{n+1})$, where $d_p$ denotes the
\emph{parabolic distance} on $\mathbb{R}^{n+1}\times\mathbb{R}^{n+1}$ and is
defined by setting, for any $(x,t):=(x_1,\dots,x_n,t),(y,s):=(y_1,\dots,y_n,s)
\in\mathbb{R}^{n+1}$,
\begin{align*}
d_p((x,t),(y,s)):=\max\left\{|x_1-y_1|,\dots,|x_n-y_n|,\,
|t-s|^\frac{1}{p}\right\}.
\end{align*}
To this end, we begin by constructing a parabolic dyadic decomposition of
parabolic rectangles. Let $\gamma\in[0,1)$ and $R\in\mathcal{R}$ with edge
length $L\in(0,\infty)$. Define the \emph{parabolic dyadic decomposition} of
$R^-(\gamma)$ as follows. Divide each spatial edge of $R^-(\gamma)$ into 2
equally long intervals. If
\begin{align*}
\frac{l_t(R^-(\gamma))}{\lfloor2^p\rfloor}<\frac{(1-\gamma)L^p}{2^p},
\end{align*}
then partition the temporal edge of $R^-(\gamma)$ into $\lfloor2^p\rfloor$
equally long intervals. If
\begin{align*}
\frac{l_t(R^-(\gamma))}{\lfloor2^p\rfloor}\geq\frac{(1-\gamma)L^p}{2^p},
\end{align*}
then partition the temporal edge of $R^-(\gamma)$ into $\lceil2^p\rceil$
equally long intervals. We obtain $J_1$ pairwise disjoint half-open
subrectangles of $R^-(\gamma)$, where $J_1\in
\{2^n\lfloor2^p\rfloor,\,2^n\lceil2^p\rceil\}$. We
denote the set of these $J_1$ subrectangles by $\mathcal{D}_1(R^-(\gamma))$. We
continue the decomposition inductively. For any
$j\in\mathbb{N}\cap[2,\infty)$ and $P\in\mathcal{D}_{j-1}(R^-(\gamma))$, divide
each spatial edge of $P$ into 2 equally long intervals. If
\begin{align*}
\frac{l_t(P)}{\lfloor2^p\rfloor}<\frac{(1-\gamma)L^p}{2^{pj}},
\end{align*}
then partition the temporal edge of $P$ into $\lfloor2^p\rfloor$ equally
long intervals. If
\begin{align*}
\frac{l_t(P)}{\lfloor2^p\rfloor}\geq\frac{(1-\gamma)L^p}{2^{pj}},
\end{align*}
then partition the temporal edge of $P$ into into $\lceil2^p\rceil$
equally long intervals. We obtain $J_j$ pairwise disjoint half-open
subrectangles of $R^-(\gamma)$, where $J_j\in
\mathbb{N}\cap[(2^n\lfloor2^p\rfloor)^j,\,(2^n\lceil2^p\rceil)^j]$.
We denote the set of these $J_j$ subrectangles by $\mathcal{D}_j(R^-(\gamma))$.
Finally, let $\mathcal{D}_0(R):=\{R\}$ and
\begin{align*}
\mathcal{D}\left(R^-(\gamma)\right):=
\bigcup_{j\in\mathbb{Z}_+}\mathcal{D}_j\left(R^-(\gamma)\right).
\end{align*}
The rectangles in $\mathcal{D}(R^-(\gamma))$ are called \emph{parabolic dyadic
rectangles} (with respect to $R^-(\gamma)$). The following properties hold.
\begin{enumerate}
\item[(D0)] For any $j\in\mathbb{Z}_+$,
$R^-(\gamma)=\bigcup_{P\in\mathcal{D}_j(R^-(\gamma))}P$.

\item[(D1)] For any $j\in\mathbb{N}$ and $P\in\mathcal{D}_j(R^-(\gamma))$, there
exists a unique $\pi P\in\mathcal{D}_{j-1}(R^-(\gamma))$ such that $P\subset\pi P$.
$\pi P$ is called the \emph{parabolic dyadic parent} of $P$, while $P$ is
called the \emph{parabolic dyadic child} of $\pi P$.

\item[(D2)] For any $j\in\mathbb{Z}_+$ and $P\in\mathcal{D}_j(R^-(\gamma))$, the
number of the parabolic dyadic children of $P$ belongs to
$\{2^n\lfloor2^p\rfloor,\,2^n\lceil2^p\rceil\}$.

\item[(D3)] For any $P_1,P_2\in\mathcal{D}(R^-(\gamma))$, $P_1\cap P_2\in
\{P_1,\,P_2,\,\emptyset\}$.

\item[(D4)] For any $j\in\mathbb{Z}_+$ and $P\in\mathcal{D}_j(R^-(\gamma))$,
$l_x(P)=\frac{L}{2^j}$ and $l_t(P)\in
[\frac{1}{2}\frac{(1-\gamma)L^p}{2^{pj}},\frac{(1-\gamma)L^p}{2^{pj}}]$.
\end{enumerate}
We can construct the parabolic dyadic decomposition for $R^+(\gamma)$ in a completely analogous
manner. The collection of all parabolic dyadic rectangles with respect to $R^+(\gamma)$ is
denoted by $\mathcal{D}(R^+(\gamma))$.

After establishing the dyadic decomposition of parabolic rectangles, we then introduce the
concept of forward-in-time parabolic weak porosity, which can be regarded as a
higher-dimensional generalization of the concept of right-sided weak porosity.

\begin{definition}\label{parabolic porous}
Let $E\subset\mathbb{R}^{n+1}$ be nonempty and $\gamma\in[0,1)$.
\begin{enumerate}
\item[\rm(i)] Let $R\in\mathcal{R}$ and $P\in\mathcal{D}(R^+(\gamma))$. $P$ is said to be
\emph{$E$-free} if $P\cap E=\emptyset$. Denote by
$\mathcal{M}(R^+(\gamma))\in\mathcal{D}(R^+(\gamma))$ a largest
$E$-free parabolic dyadic rectangle of $R^+(\gamma)$, that is, if
$P\in\mathcal{D}(R^+(\gamma))$ is $E$-free, then $|P|\leq|\mathcal{M}(R^+(\gamma))|$.

\item[\rm(ii)] $E$ is said to be \emph{$\gamma$-FIT parabolic weakly porous
($\gamma$-forward-in-time parabolic weakly porous)} if there exist $c,\delta\in(0,1)$ such
that, for any $R\in\mathcal{R}$, there exist $N(R)\in\mathbb{N}$ and
$\{P_j\}_{j\in\mathbb{N}\cap[1,N(R)]}\subset\mathcal{D}(R^-(\gamma))$ satisfying
\begin{enumerate}
\item[\rm(a)] $\{P_j\}_{j\in\mathbb{N}\cap[1,N(R)]}$ are pairwise disjoint
and, for any $j\in\mathbb{N}\cap[1,N(R)]$, $P_j$ is $E$-free;

\item[\rm(b)] for any $j\in\mathbb{N}\cap[1,N(R)]$,
$|P_j|\geq\delta|\mathcal{M}(R^+(\gamma))|$;

\item[\rm(c)] $\sum_{i=1}^{N(R)}|P_j|\geq c|R^-(\gamma)|$.
\end{enumerate}
\end{enumerate}
\end{definition}

\begin{remark}\label{remark on parabolic porous}
Let all the symbols be the same as in Definition \ref{parabolic porous}.
\begin{enumerate}
\item[\rm(i)] We point out that $\mathcal{M}(R^+(\gamma))$ need not be unique,
but we fix one of them.

\item[\rm(ii)] We point out that $E$ is $\gamma$-FIT weakly porous if and only if
$\overline{E}$ is $\gamma$-FIT parabolic weakly porous.

\item[\rm(iii)] We point out that if $E$ is $\gamma$-FIT parabolic weakly porous,
then $|E|=0$. Indeed, from (ii), we may assume that $E$ is closed. By the $\gamma$-FIT
parabolic weak porosity of $E$, we find that there exists $c\in(0,1)$ such that, for any
$R\in\mathcal{R}$, there exist $N(R)\in\mathbb{N}$ and
$\{P_j\}_{j\in\mathbb{N}\cap[1,N(R)]}\subset\mathcal{D}(R^-(\gamma))$ satisfying
\begin{enumerate}
\item[\rm(i)] $\{P_j\}_{j\in\mathbb{N}\cap[1,N(R)]}$ are pairwise disjoint
and, for any $j\in\mathbb{N}\cap[1,N(R)]$, $P_j\cap E=\emptyset$;

\item[\rm(ii)] $\sum_{i=1}^{N(R)}|P_j|\geq c|R^-(\gamma)|$.
\end{enumerate}
Thus,
\begin{align*}
\frac{|E\cap R^-(\gamma)|}{|R^-(\gamma)|}\leq
\frac{|R^-(\gamma)\setminus\bigcup_{i=1}^{N(R)}P_j|}{|R^-(\gamma)|}\leq1-c<1.
\end{align*}
This, together with the arbitrariness of $R$ and the Lebesgue density theorem,
further implies that $|E|=0$.

\item[\rm(iv)] We provide an explicit example of a $\gamma$-FIT parabolic weakly
porous set. Let $E:=\{\Omega\times\{t_k\}:k\in\mathbb{Z}\}$, where $\Omega$ is a nonempty
measurable subset of $\mathbb{R}^n$ and
\begin{align*}
t_k:=\begin{cases}
k &\mathrm{if}\ k\in\mathbb{Z}_+,\\
-\frac{1}{2^k} &\mathrm{if}\ k\in\mathbb{Z}\setminus\mathbb{Z}_+.
\end{cases}
\end{align*}
Then $E$ is $\gamma$-FIT parabolic weakly porous with $\delta=\frac{1}{4}$ and $c=\frac{1}{2}$.
\end{enumerate}
\end{remark}

The main result of this subsection is sated as follows, which provides a necessary condition
for $-\log d_p(\cdot,E)\in\mathrm{PBLO}_\gamma^-(\mathbb{R}^{n+1})$.

\begin{theorem}\label{PBLO- imply porous}
Let $E\subsetneqq\mathbb{R}^{n+1}$ be nonempty and $\gamma\in(0,1)$. If
$-\log d_p(\cdot,E)\in\mathrm{PBLO}_\gamma^-(\mathbb{R}^{n+1})$, then $E$ is $\gamma$-FIT
parabolic weakly porous.
\end{theorem}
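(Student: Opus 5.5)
Set $f:=-\log d_p(\cdot,E)$. By Theorem \ref{PBLO+ and A1+}, there is $\epsilon\in(0,\infty)$ such that $\omega:=e^{\epsilon f}=d_p(\cdot,E)^{-\epsilon}\in A_1^+(\gamma)$. Thus, for any $R\in\mathcal{R}$,
\begin{align*}
\fint_{R^-(\gamma)}d_p(\cdot,E)^{-\epsilon}\leq[\omega]_{A_1^+(\gamma)}\left[\mathop\mathrm{ess\,sup}_{R^+(\gamma)}d_p(\cdot,E)\right]^{-\epsilon}.
\end{align*}
The plan is to turn this inequality into porosity: first bound $\sup_{R^+(\gamma)}d_p(\cdot,E)$ from below by the size of $\mathcal{M}(R^+(\gamma))$, and then convert the average bound on $R^-(\gamma)$ into a large collection of $E$-free dyadic rectangles.

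\textbf{Lower bound on the right-hand side.} Let $P:=\mathcal{M}(R^+(\gamma))$ be $E$-free, lying at dyadic level $j$. By (D4), $l_x(P)=L2^{-j}$ and $l_t(P)\sim L^p2^{-pj}$. The subrectangle of $P$ concentric with it and with half the parabolic size has measure comparable to $|P|$. At each of its points, $d_p(\cdot,E)\gtrsim L2^{-j}=:s$, since the parabolic ball of radius about $s/4$ around such a point stays inside $P$. (If $R^+(\gamma)$ contains no $E$-free dyadic rectangle, set $s:=0$; condition (b) is then vacuous, but (c) still has to be proved, and this case will need separate care.) Writing $s_0:=\mathop\mathrm{ess\,sup}_{R^+(\gamma)}d_p(\cdot,E)$, we get $s\lesssim s_0$. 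Consequently
\begin{align*}
\fint_{R^-(\gamma)}d_p(\cdot,E)^{-\epsilon}\le Ks_0^{-\epsilon},
\end{align*}
and Chebyshev's inequality gives
\begin{align*}
\left|R^-(\gamma)\cap\{d_p(\cdot,E)<\eta s_0\}\right|\le K\eta^{\epsilon}|R^-(\gamma)|.
\end{align*}
Choose $\eta$ small so that $K\eta^\epsilon\le\frac12$. Then at least half of $R^-(\gamma)$ consists of points $z$ with $d_p(z,E)\ge\eta s_0\gtrsim\eta s$.

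\textbf{Covering by dyadic rectangles.} Choose the level $k$ so that dyadic rectangles in $\mathcal{D}_k(R^-(\gamma))$ have parabolic diameter between $\eta s_0/4$ and $\eta s_0/2$. If no such level exists, because $\eta s_0\gtrsim L$, then take $k=0$, or a small fixed $k$, and argue directly. Any $Q\in\mathcal{D}_k(R^-(\gamma))$ that contains a point $z$ with $d_p(z,E)\ge\eta s_0$ is $E$-free, since $Q$ lies inside the parabolic ball around $z$ of radius $\eta s_0/2$. Let $\{P_j\}$ be these rectangles. They are pairwise disjoint by (D3), finite in number, and cover the good set, so $\sum|P_j|\ge\frac12|R^-(\gamma)|$; this gives (c) with $c=\frac12$. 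For (b), $|P_j|\sim(\eta s_0)^{n+p}\gtrsim(\eta s)^{n+p}\sim\eta^{n+p}|\mathcal{M}(R^+(\gamma))|$, so $\delta\sim\eta^{n+p}$.

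\textbf{Main obstacle.} The hardest step is the geometric bookkeeping between parabolic distance and the anisotropic dyadic grid: matching level $j$ in $R^+(\gamma)$ with level $k$ in $R^-(\gamma)$, and handling the boundary regimes. These are the regime where $s_0$ is comparable to or larger than $L$, which is treated with $k=0$ and a bound on $\delta$ using $|\mathcal{M}|\le|R^+(\gamma)|$, and the regime where $s=0$. The temporal lengths in (D4) fluctuate only within a factor of $2$, so all comparabilities depend only on $n$, $p$, $\gamma$, and $\epsilon$. That makes $c$ and $\delta$ uniform in $R$, as Definition \ref{parabolic porous} requires.
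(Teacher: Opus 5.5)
Your proof is correct. The first step, using Theorem \ref{PBLO+ and A1+} to make $d_p(\cdot,E)^{-\epsilon}$ an $A_1^+(\gamma)$ weight, is exactly the paper's. The porosity step, which the paper isolates as Lemma \ref{A1 imply porous}, you prove by a different mechanism.

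\textbf{The paper's route.} It fixes $\delta$ and selects the maximal $E$-free rectangles in $\mathcal{D}(R^-(\gamma))$ with measure at least $\delta|\mathcal{M}(R^+(\gamma))|$, so (a) and (b) hold by construction. At any uncovered point, the maximal $E$-free rectangle containing it is small and its parent meets $E$. Hence $d_p(\cdot,E)\lesssim\delta^{1/(n+p)}l$ there, with $l:=l_x(\mathcal{M}(R^+(\gamma)))$. The $A_1^+(\gamma)$ inequality, combined with the centre-of-$\mathcal{M}$ estimate $\mathrm{ess\,inf}_{R^+(\gamma)}d_p(\cdot,E)^{-\alpha}\lesssim l^{-\alpha}$, then makes the uncovered set a small fraction of $R^-(\gamma)$, which gives (c).

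\textbf{Your route.} You get (c) directly from Chebyshev at the scale $s_0$ plus a single generation of the grid, and no information about $\mathcal{M}$ enters. You spend the centre-of-$\mathcal{M}$ comparison $s\lesssim s_0$ on (b) instead of (c). This avoids the maximal selection and the decomposition of $R^-(\gamma)\setminus E$ into maximal $E$-free rectangles. It also gives $c=\frac12$ explicitly, with all $P_j$ in one generation. The paper's approach instead produces the largest admissible $E$-free rectangles, which is closer to the one-dimensional weak-porosity arguments.

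\textbf{The regimes you flagged are harmless.}
\begin{itemize}
\item \emph{Existence of a generation in your window.} By (D4), every rectangle in $\mathcal{D}_k(R^-(\gamma))$ has parabolic diameter exactly $2^{1-k}L$, because the spatial side dominates: $l_t^{1/p}\le(1-\gamma)^{1/p}L2^{-k}<2L2^{-k}$. So your window contains a generation unless $\eta s_0>8L$. In that case $R^-(\gamma)$ itself has diameter $2L<\eta s_0$, so it is $E$-free as soon as it meets the good set, and (b) follows from $|R^-(\gamma)|=|R^+(\gamma)|\ge|\mathcal{M}(R^+(\gamma))|$.
\item \emph{The case $s=0$.} It never occurs. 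Local integrability of $d_p(\cdot,E)^{-\epsilon}$ forces $|\overline{E}|=0$; the paper derives $|E|=0$ the same way after replacing $E$ by $\overline{E}$. Hence any $z\in R^+(\gamma)\setminus\overline{E}$ lies in arbitrarily small rectangles of $\mathcal{D}(R^+(\gamma))$ that are $E$-free, so $\mathcal{M}(R^+(\gamma))$ exists. In any case, your proof of (c) never uses $s$.
\end{itemize}
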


Theorem \ref{PBLO- imply porous} is a direct consequence of Theorem \ref{PBLO+ and A1+}
and the following result on the relationship between $A_1^+(\gamma)$ distance weights
and $\gamma$-FIT parabolic weakly porous sets.

\begin{lemma}\label{A1 imply porous}
Let $E\subsetneqq\mathbb{R}^{n+1}$ be nonempty and $\gamma\in[0,1)$.
If there exists $\alpha\in(0,\infty)$ such that
$[d_p(\cdot,E)]^{-\alpha}\in A_1^+(\gamma)$, then $E$ is $\gamma$-FIT
parabolic weakly porous.
\end{lemma}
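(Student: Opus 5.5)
The plan is to follow the classical scheme for showing that $A_1$ distance weights force weak porosity (as in Anderson et al.\ and Aimar et al.): from the $A_1^+(\gamma)$ inequality, show that a large proportion of $R^-(\gamma)$ lies far from $E$ compared with the size of $\mathcal{M}(R^+(\gamma))$. Then cover that proportion by $E$-free dyadic rectangles of comparable size. Throughout, write $w:=[d_p(\cdot,E)]^{-\alpha}$ and $K:=[w]_{A_1^+(\gamma)}$. By Remark~\ref{remark on parabolic porous}(ii) we may assume $E$ is closed, so that $E$-free is equivalent to positive distance from $E$ on compact pieces.

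\emph{Step 1 (upper bound for $\mathrm{ess\,inf}_{R^+(\gamma)}w$).} Fix $R\in\mathcal{R}$ with $l(R)=L$ and let $P_0:=\mathcal{M}(R^+(\gamma))$, with generation $j_0$. By (D4), $l_x(P_0)=L2^{-j_0}$ and $l_t(P_0)\sim L^p2^{-pj_0}$. If $R^+(\gamma)$ contains no $E$-free dyadic rectangle, then every $P\in\mathcal{D}(R^+(\gamma))$ meets $E$, so $E$ is dense in $R^+(\gamma)$, hence $E\supset R^+(\gamma)$ and the condition (b) is vacuous in a limiting sense. I would handle this degenerate case separately, and in any event take $|P_0|$ to mean the smallest admissible size. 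Otherwise, consider the central subrectangle of $P_0$ shrunk by a factor $\frac12$. On it, $d_p(\cdot,E)\gtrsim \ell_0:=L2^{-j_0}$, since $d_p$ is comparable to the parabolic size of $P_0$. Therefore $\mathrm{ess\,inf}_{R^+(\gamma)}w\lesssim \ell_0^{-\alpha}$.

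\emph{Step 2 (the $A_1^+$ condition gives a level-set bound).} From $\fint_{R^-(\gamma)}w\le K\,\mathrm{ess\,inf}_{R^+(\gamma)}w\le CK\ell_0^{-\alpha}$ and Chebyshev, with a small $\eta\in(0,1)$ to be fixed,
\begin{align*}
\left|R^-(\gamma)\cap\{d_p(\cdot,E)<\eta\ell_0\}\right|
=\left|R^-(\gamma)\cap\{w>(\eta\ell_0)^{-\alpha}\}\right|
\le CK\eta^{\alpha}\left|R^-(\gamma)\right|.
\end{align*}
Choose $\eta$ so that $CK\eta^\alpha\le\frac12$. Then the set $G:=R^-(\gamma)\cap\{d_p(\cdot,E)\ge\eta\ell_0\}$ has $|G|\ge\frac12|R^-(\gamma)|$.

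\emph{Step 3 (covering $G$ by $E$-free dyadic rectangles).} Pick $j_1:=j_0+m$, where $m$ depends only on $\eta,n,p$ and is chosen so that every $P\in\mathcal{D}_{j_1}(R^-(\gamma))$ has parabolic diameter less than $\eta\ell_0$. This is possible by (D4), which gives $l_x\le L2^{-j_1}$ and $l_t^{1/p}\le L2^{-j_1}$. Any $P\in\mathcal{D}_{j_1}(R^-(\gamma))$ meeting $G$ is then $E$-free, and these cubes cover $G$. Taking $\{P_j\}$ to be this finite family gives (a). It also gives (c) with $c=\frac12$, since the family covers $G$ and $|G|\ge\frac12|R^-(\gamma)|$. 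Finally, (D4) gives $|P_j|\gtrsim 2^{-(n+p)m}|P_0|$, which is (b) with $\delta\sim2^{-(n+p)m}$, a quantity independent of $R$. If $j_1$ is negative relative to the generation bounds (that is, $P_0$ is huge, $j_0=0$), nothing changes, because the dyadic systems of $R^\pm(\gamma)$ share the same scale $L$.

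The main obstacle is Step 1: comparing $d_p$ on a dyadic rectangle with its parabolic size. This requires controlling distances to $E$ from points \emph{outside} $R^+(\gamma)$, which the $E$-free property does not give directly. What I would try first is the shrunken central subrectangle argument, using that the dyadic rectangles have spatial/temporal aspect ratio bounded by (D4). A secondary worry is ensuring $P_0$ exists, i.e.\ that a maximal $E$-free dyadic rectangle is attained. Since sizes are discrete by generation, the maximum is attained whenever at least one $E$-free rectangle exists, and the case of none is excluded because $w\in L^1_{\mathrm{loc}}$ forces $|E|=0$.
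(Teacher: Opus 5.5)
Your proof is correct. It rests on the same two ingredients as the paper's:
\begin{itemize}
\item the bound $\mathrm{ess\,inf}_{R^+(\gamma)}w\lesssim\ell_0^{-\alpha}$, obtained from points deep inside $\mathcal{M}(R^+(\gamma))$;
\item the $A_1^+(\gamma)$ condition, turned into a Chebyshev-type bound on the part of $R^-(\gamma)$ close to $E$.
\end{itemize}

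The difference is in how the rectangles are selected. The paper runs a stopping time. It takes the maximal $E$-free rectangles in $\mathcal{D}(R^-(\gamma))$ with $|P|\ge\delta|\mathcal{M}(R^+(\gamma))|$. An uncovered point then lies in a maximal $E$-free rectangle that is too small and whose parent $\pi P$ meets $E$, so the point is within $\mathrm{diam}_p(\pi P)\lesssim\delta^{1/(n+p)}l$ of $E$. Only then does the paper invoke $A_1^+(\gamma)$.

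You apply Chebyshev first, isolating the set $G$ of points at distance at least $\eta\ell_0$ from $E$. You then use the single generation $j_0+m$, whose rectangles have $d_p$-diameter below $\eta\ell_0$. Each such rectangle meeting $G$ is automatically $E$-free, and (D4) gives (b) with $\delta=2^{-(n+p)m-1}$. This bypasses the maximality and parent argument and is somewhat shorter. The quantitative outcome is the same: since $\eta\approx\delta^{1/(n+p)}$, your uncovered proportion is $\lesssim K\delta^{\alpha/(n+p)}$, exactly as in the paper. The paper's stopping-time version, in exchange, produces the maximal admissible rectangles.

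What you call the main obstacle in Step 1 is not one. Since $\mathcal{M}(R^+(\gamma))\cap E=\emptyset$, every point of $E$ lies outside $\mathcal{M}(R^+(\gamma))$. So for $z$ in its shrunken central part, $d_p(z,E)$ is at least the $d_p$-distance from $z$ to the complement of $\mathcal{M}(R^+(\gamma))$. That distance is at least $\min\{\ell_0/2,\,(l_t(\mathcal{M}(R^+(\gamma)))/4)^{1/p}\}\gtrsim_{\gamma,p}\ell_0$, regardless of where $E$ sits relative to $R^+(\gamma)$. The paper uses exactly this at the center of $\mathcal{M}(R^+(\gamma))$.

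Likewise, the degenerate case in Step 1 never arises, as you observe at the end. Closedness of $E$ together with $|E|=0$ gives an $E$-free dyadic rectangle in $R^+(\gamma)$. Your remark about $j_1$ being ``negative'' is moot, since $j_1=j_0+m\ge0$.
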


\begin{proof}
By Remark \ref{remark on parabolic porous}(ii) and the fact that
$d_p(\cdot,E)=d_p(\cdot,\overline{E})$, we may assume that $E$ is closed.
Since $d_p(\cdot,E)^{-\alpha}\in A_1^+(\gamma)\subset L_{\mathrm{loc}}^1(\mathbb{R}^{n+1})$,
it follows that $|E|=0$. Let $R\in\mathcal{R}$ with edge length $L\in(0,\infty)$ and
$\delta\in(0,1)$ be chosen later. Define
\begin{align*}
\widehat{\mathcal{F}^{\gamma+}_\delta}(R):=
\left\{P\in\mathcal{D}\left(R^-(\gamma)\right):|P|\geq\delta
\left|\mathcal{M}\left(R^+(\gamma)\right)\right|\mbox{\ and\ }
P\cap E=\emptyset\right\}
\end{align*}
and let $\mathcal{F}^{\gamma+}_\delta(R)$ be the set of all maximal
rectangles in $\widehat{\mathcal{F}^{\gamma+}_\delta}(R)$, that is,
$P\in\mathcal{F}^{\gamma+}_\delta(R)$ if and only if
$P\in\widehat{\mathcal{F}^{\gamma+}_\delta}(R)$ and, for any
$\widetilde{P}\in\widehat{\mathcal{F}^{\gamma+}_\delta}(R)$,
$P\cap\widetilde{P}\in\{\widetilde{P},\,\emptyset\}$. By (D3), we find that
rectangles in $\mathcal{F}^{\gamma+}_\delta(R)$ are pairwise disjoint.
Moreover, applying Definition \ref{parabolic porous}(ii), we conclude
that, to show that $E$ is $\gamma$-FIT parabolic weakly porous, it suffices
to prove that there exists $c\in(0,1)$, independent of $R$, such that
\begin{align}\label{20251218.2213}
\sum_{P\in\mathcal{F}^{\gamma+}_\delta(R)}|P|\geq
c\left|R^-(\gamma)\right|.
\end{align}

Indeed, from the assumption that $E$ is closed, we deduce that, for
any $(x,t)\in R^-(\gamma)\setminus E$, $d_p((x,t),E)\in(0,\infty)$.
Therefore, there exists $P\in\mathcal{D}(R^-(\gamma))$ such that
$(x,t)\in P$ and $P\cap E=\emptyset$. This further implies that
$R^-(\gamma)\setminus E$ can be represented as a countable disjoint union
of maximal $E$-free rectangles in $\mathcal{D}(R^-(\gamma))$.
If $(R^-(\gamma)\setminus E)\setminus
\bigcup_{P\in\mathcal{F}^{\gamma+}_\delta(R)}P=\emptyset$, then
\begin{align*}
\sum_{P\in\mathcal{F}^{\gamma+}_\delta(R)}|P|=
\left|\bigcup_{P\in\mathcal{F}^{\gamma+}_\delta(R)}P\right|
=\left|R^-(\gamma)\right|
\end{align*}
and hence \eqref{20251218.2213} holds for any $c\in(0,1)$. Thus, without
loss of generality, we may assume that $(R^-(\gamma)\setminus E)\setminus
\bigcup_{P\in\mathcal{F}^{\gamma+}_\delta(R)}P\neq\emptyset$.
By the definition of $\mathcal{F}^{\gamma+}_\delta(R)$ and (D5), we obtain,
for any $(x,t)\in (R^-(\gamma)\setminus E)\setminus
\bigcup_{P\in\mathcal{F}^{\gamma+}_\delta(R)}P$, there exists
a maximal $E$-free rectangle $P\in\mathcal{D}(R^-(\gamma))$ such that
\begin{align}\label{20251220.1716}
|P|<\delta\left|\mathcal{M}\left(R^+(\gamma)\right)\right|\leq
\delta2^n(1-\gamma)l^{n+p},
\end{align}
where $l:=l_x(\mathcal{M}(R^+(\gamma))$. Notice that there exists
$j\in\mathbb{N}$ such that $P\in\mathcal{D}_j(R^-(\gamma))$. From
(D5), we infer that $l_x(P)=\frac{l(R)}{2^j}$
and $l_t(P)\geq\frac{1}{2}\frac{(1-\gamma)[l(R)]^p}{2^{pj}}$, which
further implies that
\begin{align*}
|P|=2^n\left[l_x(P)\right]^nl_t(P)\geq
\frac{2^{n-1}(1-\gamma)[l(R)]^{n+p}}{2^{(n+p)j}}.
\end{align*}
Combining this and \eqref{20251220.1716}, we conclude that
\begin{align}\label{20251220.1229}
\frac{l(R)}{2^j}<(2\delta)^\frac{1}{n+p}l.
\end{align}
In addition, since $P\in\mathcal{D}(R^-(\gamma))$ is maximal $E$-free, it
follows that $\pi P$ is not $E$-free and hence
\begin{align}\label{20251220.1728}
d_p((x,t),E)\leq\mathrm{diam}_p(\pi P):=\sup\left\{d_p(z_1,z_2):
z_1,z_2\in\pi P\right\}.
\end{align}
By (D1) and (D5), we obtain $\pi P\in\mathcal{D}_{j-1}(R^-(\gamma))$,
$l_x(\pi P)=\frac{l(R)}{2^{j-1}}$, and
$l_t(\pi P)\leq\frac{(1-\gamma)[l(R)]^p}{2^{p(j-1)}}$. Thus,
\begin{align*}
\mathrm{diam}_p(\pi P)=\max\left\{2l_x(\pi P),\,
\left[l_t(\pi P)\right]^\frac{1}{p}\right\}
\leq\max\left\{\frac{2l(R)}{2^{j-1}},\,
\frac{(1-\gamma)^\frac{1}{p}l(R)}{2^{j-1}}\right\}
=\frac{4l(R)}{2^j}.
\end{align*}
From this, \eqref{20251220.1229}, and \eqref{20251220.1728},
we deduce that, for any $(x,t)\in (R^-(\gamma)\setminus E)\setminus
\bigcup_{P\in\mathcal{F}^{\gamma+}_\delta(R)}P$,
\begin{align*}
d_p((x,t),E)<4(2\delta)^\frac{1}{n+p}l,
\end{align*}
which, together with the proven conclusion that $|E|=0$ and the condition
that $[d_p(\cdot,E)]^{-\alpha}\in A_1^+(\gamma)$ for some
$\alpha\in(0,\infty)$, further implies that
\begin{align}\label{20251220.1755}
l^{-\alpha}\frac{|R^-(\gamma)\setminus
\bigcup_{P\in\mathcal{F}^{\gamma+}_\delta(R)}P|}{|R^-(\gamma)|}&\leq
\frac{2^{\alpha(2+\frac{1}{n+p})}\delta^\frac{\alpha}{n+p}}{|R^-(\gamma)|}
\int_{(R^-(\gamma)\setminus E)\setminus
\bigcup_{P\in\mathcal{F}^{\gamma+}_\delta(R)}}
\left[d_p(\cdot,E)\right]^{-\alpha}\\
&\leq2^{\alpha(2+\frac{1}{n+p})}\delta^\frac{\alpha}{n+p}
\fint_{R^-(\gamma)}\left[d_p(\cdot,E)\right]^{-\alpha}\notag\\
&\leq2^{\alpha(2+\frac{1}{n+p})}
\left[\left\{d_p(\cdot,E)\right\}^{-\alpha}\right]_{A_1^+(\gamma)}
\delta^\frac{\alpha}{n+p}\mathop\mathrm{ess\,inf}_{R^+(\gamma)}
\left[d_p(\cdot,E)\right]^{-\alpha}.\notag
\end{align}
Let $(z,\tau)\in\mathbb{R}^{n+1}$ be the center of
$\mathcal{M}(R^+(\gamma))$. By (D5) again, we
find that $l_t(\mathcal{M}(R^+(\gamma)))\geq\frac{1-\gamma}{2}l^p$. Therefore,
\begin{align*}
\mathop\mathrm{ess\,inf}_{R^+(\gamma)}\left[d_p(\cdot,E)\right]^{-\alpha}
\leq\left[d_p((z,\tau),E)\right]^{-\alpha}=
\left[\min\left\{l,\,\left[\frac{1}{2}l_t\left(\mathcal{M}\left(R^+(\gamma)
\right)\right)\right]^\frac{1}{p}\right\}\right]^{-\alpha}
\leq\left(\frac{4}{1-\gamma}\right)^\frac{\alpha}{p}l^{-\alpha}.
\end{align*}
Combining this and \eqref{20251220.1755}, we obtain
\begin{align*}
\left|R^-(\gamma)\setminus
\bigcup_{P\in\mathcal{F}^{\gamma+}_\delta(R)}P\right|\leq
2^{\alpha(2+\frac{1}{n+p})}\left(\frac{4}{1-\gamma}\right)^\frac{\alpha}{p}
\left[\left\{d_p(\cdot,E)\right\}^{-\alpha}\right]_{A_1^+(\gamma)}
\delta^\frac{\alpha}{n+p}\left|R^-(\gamma)\right|.
\end{align*}
Finally, choose $\delta\in(0,1)$ such that
\begin{align*}
c:=2^{\alpha(2+\frac{1}{n+p})}\left(\frac{4}{1-\gamma}\right)^\frac{\alpha}{p}
\left[\left\{d_p(\cdot,E)\right\}^{-\alpha}\right]_{A_1^+(\gamma)}
\delta^\frac{\alpha}{n+p}\in(0,1),
\end{align*}
and then we complete the proof of \eqref{20251218.2213} and hence Lemma \ref{A1 imply porous}.
\end{proof}

\begin{remark}
The converse of Lemma \ref{A1 imply porous} is unknown, that is, whether the statement
``if $E\subsetneqq\mathbb{R}^{n+1}$ is a nonempty $\gamma$-FIT parabolic weakly porous set
with $\gamma\in[0,1)$, then there exists $\alpha\in(0,\infty)$ such that
$[d_p(\cdot,E)]^{-\alpha}\in A_1^+(\gamma)$" holds remains an open question.
\end{remark}

\medskip

\noindent\textbf{Acknowledgements}\quad Weiyi Kong would like to
express his deep thanks to Jin Tao and Chenfeng Zhu for some helpful
discussions.

\bigskip

\noindent Weiyi Kong, Dachun
Yang (Corresponding author) and Wen Yuan

\medskip

\noindent Laboratory of Mathematics
and Complex Systems (Ministry of Education of China),
School of Mathematical Sciences,
Beijing Normal University,
Beijing 100875,
The People's Republic of China

\smallskip

\noindent{\it E-mails:} \texttt{weiyikong@mail.bnu.edu.cn} (W. Kong)

\noindent\phantom{{\it E-mails:} }\texttt{dcyang@bnu.edu.cn} (D. Yang)

\noindent\phantom{{\it E-mails:} }\texttt{wenyuan@bnu.edu.cn} (W. Yuan)
\end{document}